\theoremstyle{plain}
\newtheorem{thm}{Th\'eor\`eme}[chapter]
\newtheorem{pro}[thm]{Proposition}
\newtheorem{lem}[thm]{Lemme}
\newtheorem{cor}[thm]{Corollaire}
\theoremstyle{definition}
\newtheorem*{defis}{D\'efinitions}
\newtheorem{eg}[thm]{Exemple}
\newtheorem{egs}[thm]{Exemples}
\newtheorem{rem}[thm]{Remarque}
\def\transp #1{\vphantom{#1}^{\mathrm t}\! {#1}}
\def\og{\leavevmode\raise.3ex\hbox{$\scriptscriptstyle\langle\!\langle$~}}
\def\fg{\leavevmode\raise.3ex\hbox{~$\!\scriptscriptstyle\,\rangle\!\rangle$}}
\def\noteB#1#2{{\begin{small}#1\end{small}} \hfill {\begin{small}\pageref{#2}\end{small}} \\}
\def\noteBB#1#2#3{{\begin{small}#1\end{small}} \hfill {\begin{small}\pageref{#2},\,\pageref{#3}\end{small}} \hfill \\}
\numberwithin{equation}{chapter}       % Number formulas within sections
\begin{document}
\selectlanguage{french}

\frontmatter

\thispagestyle{empty}

\vspace*{3.5cm}

\begin{flushright}
\textbf{Julie \textsc{D\'eserti}}
\end{flushright}

\vspace{3cm}

\noindent\hrulefill

\vspace{2cm}

\begin{center}
\begin{LARGE}
\begin{flushleft}
\textbf{AUTOMORPHISMES D'ENTROPIE}\\
\textbf{POSITIVE, LE CAS DES SURFACES}\\
\textbf{RATIONNELLES}
\end{flushleft}
\end{LARGE}
\end{center}

\vspace{2cm}

\noindent\hrulefill

\newpage

\noindent Julie \textsc{D\'eserti}

\noindent Institut de Math\'ematiques de Jussieu, Universit\'e Paris $7,$ Projet G\'eom\'etrie et Dynamique, Site Chevaleret, Case $7012,$ $75205$ Paris Cedex 13, France.

\noindent deserti@math.jussieu.fr

\vspace{6cm}

\noindent\hrulefill

\medskip

\noindent {\it Keywords :} Birational maps, birational geometry, iteration problems, topological entropy, rotation domains and linearization, \textsc{Fatou} sets.

\medskip

\noindent\hrulefill

\newpage

\selectlanguage{french}

\begin{abstract}
Si $\mathcal{Z}$ est une surface complexe compacte munie d'un automorphisme d'entropie positive, un th\'eor\`eme de \textsc{Cantat} assure qu'ou bien la dimension de \textsc{Kodaira} de $\mathcal{Z}$ est nulle et dans ce cas $f$ est conjugu\'e \`a un automorphisme de l'unique mod\`ele minimal de $\mathcal{Z}$ qui doit \^etre un tore, une surface K$3$ ou une surface d'\textsc{Enriques}; ou bien la surface $\mathcal{Z}$ est rationnelle non minimale et $f$ est birationnellement conjugu\'ee \`a une transformation birationnelle du plan. Nous nous int\'eressons aux r\'esultats obtenus dans ce dernier cas. Apr\`es quelques rappels sur les transformations birationnelles, quelques rappels de g\'eom\'etrie alg\'ebrique et de dynamique, on \'evoquera les travaux de \textsc{Bedford} et \textsc{Kim} (\cite{BK1, BK2, BK3, BK4}) mais aussi ceux de \textsc{McMullen} (\cite{Mc}) et plus r\'ecemment celui de \textsc{Grivaux} and the author (\cite{DG}).

\noindent{\it Classification math\'ematique par sujets (2010). --- 14E07, 32H50, 37F10, 37B40, 37F50.}

\bigskip\bigskip

\selectlanguage{english}
\noindent \textsc{Abstract} Let $\mathcal{Z}$ be a complex compact surface which carries an automorphism $f$ of positive topological entropy. By \textsc{Cantat} either the \textsc{Kodaira} dimension of $\mathcal{Z}$ is zero and $f$ is conjugated to an automorphism on the unique minimal model of $\mathcal{Z}$ which is either a torus, or a K$3$ surface, or an \textsc{Enriques} surface; or $\mathcal{Z}$ is a non-minimal rational surface and $f$ is conjugated to a birational map of the complex projective plane. We deal with results obtained in this last case. After some recalls on birational maps, algebraic geometry and dynamic, we will speak about \textsc{Bedford} and \textsc{Kim}'s works (\cite{BK1, BK2, BK3, BK4}), but also about \textsc{McMullen} (\cite{Mc}) and more recently about \cite{DG}. 

\noindent{\it 2010 Mathematics Subject Classification. --- 14E07, 32H50, 37F10, 37B40, 37F50.}

\end{abstract}

\selectlanguage{french}

\mainmatter

\chapter*{Introduction}

\noindent Si $f$ est un automorphisme sur une surface complexe compacte $\mathcal{Z}$ d'entropie topologique positive, alors ou bien la dimension de \textsc{Kodaira} de $\mathcal{Z}$ est nulle et dans ce cas $f$ est conjugu\'e \`a un automorphisme de l'unique mod\`ele minimal de $\mathcal{Z}$ qui doit \^etre un tore, une surface~K$3$ ou une surface d'\textsc{Enriques}; ou bien la surface $\mathcal{Z}$ est rationnelle non minimale et $f$ est birationnellement conjugu\'ee \`a une transformation birationnelle du plan (\cite{Can1}). Le cas des surfaces K$3$ a \'et\'e en particulier \'etudi\'e par \cite{Can2, Mc2, Og, Si, Wa}. Un des premiers exemples donn\'e dans le contexte des surfaces rationnelles est d\^u \`a \textsc{Coble} (\cite{Co}); nous en reparlerons au Chapitre \ref{chapdyn}. Donnons un autre exemple bien connu: soient $\Lambda=\mathbb{Z}[\mathrm{i}]$ et $E=\mathbb{C}/\Lambda.$ Le groupe $\mathrm{SL}_2(\Lambda)$ agit lin\'eairement sur~$\mathbb{C}^2$ et pr\'eserve le r\'eseau $\Lambda\times\Lambda;$ par suite tout \'el\'ement $A$ de $\mathrm{SL}_2(\Lambda)$ induit un automorphisme $f_A$ sur~$E\times E$ qui commute avec $\iota(x,y)=(\mathrm{i}x,\mathrm{i}y).$ L'automorphisme $f_A$ se rel\`eve en un automorphisme $\widetilde{f_A}$ sur la d\'esingularis\'ee de~$(E\times E)/\iota,$ qui est une surface de \textsc{Kummer}. Cette surface est rationnelle et l'entropie de~$\widetilde{f_A}$ est positive d\`es que le module de l'une des valeurs propres de $A$ est strictement plus grand que $1.$

\medskip

\noindent On va s'int\'eresser aux surfaces obtenues en \'eclatant le plan projectif complexe en un nombre fini de points; ceci est justifi\'e par le th\'eor\`eme de \textsc{Nagata} (\cite{Na}, Theorem $5$): soient $\mathcal{Z}$ une surface rationnelle et $f$ un automorphisme sur $\mathcal{Z}$ tel que $f_*$ soit d'ordre infini; alors il existe une suite d'applications holomorphes~$\pi_{j+1}\colon \mathcal{Z}_{j+1}\to \mathcal{Z}_j$ telles que $\mathcal{Z}_1=\mathbb{P}^2(\mathbb{C}),$ $\mathcal{Z}_{N+1}=\mathcal{Z}$ et $\pi_{j+1}$ soit l'\'eclatement de $p_j\in\mathcal{Z}_j.$ De telles surfaces sont appel\'ees {\it surfaces rationnelles basiques}. N\'eanmoins une surface obtenue \`a partir de $\mathbb{P}^2(\mathbb{C})$ apr\`es des \'eclatements g\'en\'eriques n'a pas d'automorphisme non trivial (\cite{Hi, Ko}). 

\medskip

\noindent Dans \cite{Mc} \textsc{McMullen}, en s'appuyant sur des travaux de \textsc{Nagata} et \textsc{Harbourne}, donne un analogue du th\'eor\`eme de \textsc{Torelli} pour les surfaces K$3:$ il construit des automorphismes sur certaines surfaces rationnelles en prescrivant l'action de l'automorphisme sur les groupes de cohomologie de la surface. Ces surfaces sont des surfaces rationnelles poss\'edant, \`a facteur multiplicatif pr\`es, une unique $2$-forme m\'eromorphe $\Omega$ qui ne s'annule pas. Si $f$ est un automorphisme sur $\mathcal{Z}$ obtenu via cette construction, $f^*\Omega$ est proportionnelle \`a $\Omega$ et $f$ pr\'eserve le lieu des p\^oles de $\Omega.$ Notons que lorsqu'on projette $\mathcal{Z}$ sur le plan projectif complexe,~$f$ induit une transformation birationnelle qui pr\'eserve une cubique. 

\medskip

\noindent Dans \cite{BK1, BK2, BK3} le point de vue est quelque peu diff\'erent. Les auteurs consid\`erent des transformations birationnelles de $\mathbb{P}^2(\mathbb{C})$ et ajustent les coefficients de celles-ci afin de trouver une suite finie d'\'eclatements $\pi\colon \mathcal{Z}\to\mathbb{P}^2 (\mathbb{C})$ telle que l'application indui\-te~$f_\mathcal{Z}=\pi^{-1}f\pi$ soit un automorphisme de $\mathcal{Z}.$ Alors ils calculent le premier degr\'e dynamique et \'etablissent si la famille est triviale ou non. Certains de leurs travaux sont inspir\'es par \cite{HV, HV2, Ta1, Ta2, Ta3}. Notons que la construction de McMullen ne permet pas d'obtenir tous les automorphismes construits par \textsc{Bedford} et \textsc{Kim}. Ces derniers produisent en particuliers des exemples ne pr\'eservant aucune courbe mais aussi des familles continues. Ils montrent des propri\'et\'es de nature dynamique comme par exemple la coexistence de domaines de rotation de rang $1$ et $2.$

\medskip

\noindent Dans \cite{DG} les auteurs utilisent la th\'eorie g\'en\'erale des d\'eformations des vari\'et\'es complexes pour d\'ecrire explicitement les petites d\'eformations des surfaces rationnelles. Cela leur permet de donner un crit\`ere simple pour compter le nombre de param\`etres d'une d\'eformation d'une surface rationnelle basique donn\'ee. Ils \'etudient ensuite une famille de transformations birationnelles $(\Phi_n)_{n \geq 2};$ ils construisent, pour tout $n,$ deux points infiniment proches $\widehat{P}_1$ et $\widehat{P}_2$ de $\mathbb{P}^2(\mathbb{C})$ ayant la propri\'et\'e suivante: $\Phi_n$ induit un isomorphisme entre $\mathbb{P}^2(\mathbb{C})$ \'eclat\'e en $\widehat{P}_1$ et~$\mathbb{P}^2( \mathbb{C})$ \'eclat\'e en $\widehat{P}_2.$ Ensuite ils donnent des conditions g\'en\'erales portant sur $\Phi_n$ permettant de trouver des automorphismes $\varphi$ de $\mathbb{P}^2(\mathbb{C})$ tels que $\varphi \, \Phi_n$ soit un automorphisme de $\mathbb{P}^2(\mathbb{C})$ \'eclat\'e en $\widehat{P}_1,$ $\varphi(\widehat{P}_2),$ $(\varphi \, \Phi_n) \, \varphi(\widehat{P}_2),$ $\ldots,$ $(\varphi \, \Phi_n)^k \, \varphi(\widehat{P}_2)=\widehat{P}_1.$ Ils appliquent cette m\'ethode \`a d'autres transformations que $\Phi_n.$ Cette d\'emarche n'\'etant pas propre \`a $\Phi_n$ ils l'appliquent \`a d'autres transformations.

\bigskip

\noindent Dans un premier chapitre nous rappelons des propri\'et\'es des automorphismes polynomiaux de $\mathbb{C}^2$ et des transformations birationnelles du plan projectif complexe. Dans le second nous donnons quelques r\'esultats de g\'eom\'etrie birationnelle comme le th\'eor\`eme de factorisation de \textsc{Zariski}, la notion de dimension de \textsc{Kodaira}, la description des surfaces complexes compactes k\"{a}hl\'eriennes. Ensuite nous rappelons des \'enonc\'es de lin\'earisation, des propri\'et\'es des automorphismes d'entropie positive, la notion de composantes de \textsc{Fatou} etc. Le chapitre~\ref{mcmullen} est consacr\'e au travail de \textsc{McMullen} (\cite{Mc}), les chapitres \ref{chapbedkim1} et \ref{chapbedkim2} aux r\'esultats de \textsc{Bedford} et \textsc{Kim} (\cite{BK2, BK3, BK4}) et le dernier \`a \cite{DG}. 

\subsection*{Remerciements} 
Je tiens \`a remercier D. \textsc{Cerveau} pour sa g\'en\'erosit\'e, ses encouragements et son enthousiasme permanents. Merci \`a P. \textsc{Sad} pour son invitation au sud de l'\'equateur, les s\'eminaires bis... et \`a J. \textsc{Grivaux} pour sa pr\'ecieuse aide. Merci \`a l'IMPA pour son accueil formidable et au CNRS pour la d\'el\'egation qui m'a permis de r\'ealiser ce projet.

\tableofcontents

\chapter{Groupe des automorphismes polynomiaux de $\mathbb{C}^2$ et groupe de \textsc{Cremona}}

\section{Groupe des automorphismes polynomiaux de $\mathbb{C}^2$}

\noindent Un {\it automorphisme polynomial de $\mathbb{C}^2$}\label{ind1} est une 
application bijective de la forme suivante
\begin{align*}
& \mathbb{C}^2\to\mathbb{C}^2,\,\,(x,y)\mapsto(f_1(x,y),f_2(x,y)), 
&& f_i\in\mathbb{C}[x,y].
\end{align*}

\noindent L'ensemble de ces automorphismes forme un groupe, 
appel\'e {\it groupe des automorphismes polynomiaux de~$\mathbb{C}^2,$}\label{ind2}
que nous noterons $\mathrm{Aut}[\mathbb{C}^2].$

\noindent Introduisons deux sous-groupes naturels de $\mathrm{Aut}[
\mathbb{C}^2]$
$$\mathrm{E}=\{(x,y)\mapsto(\alpha x+P(y),\beta y+\gamma)\,\vert
\,\alpha,\,\beta,\,\gamma\in\mathbb{C},\,\alpha\beta\not=0,\,
P\in\mathbb{C}[y]\},$$
$$\mathrm{A}=\{(x,y)\mapsto(a_1x+b_1y+c_1,a_2x+b_2y+c_2)\,\vert\,
a_i,\,b_i,\,c_i\in\mathbb{C},\,a_1b_2-a_2b_1\not=0\}.$$
On dit que $\mathrm{A}$ est le {\it groupe des automorphismes affines}\label{ind3}
et $\mathrm{E}$ le {\it groupe des automorphismes \'el\'ementaires}\label{ind4};
ce sont les exactement les automorphismes qui pr\'eservent le 
feuilletage \og horizontal\fg\, $\mathrm{d}y=0.$

\noindent Soit $\mathrm{S}=\mathrm{A}\cap\mathrm{E}$ le {\it groupe des
automorphismes affines triangulaires}\label{ind5} 
$$\mathrm{S}=\{(x,y)\mapsto(a_1x+b_1y+c_1,b_2y+c_2)\,\vert\,a_i,\,
b_i,\,c_i\in\mathbb{C},\,a_1b_2\not=0\}.$$

\noindent Le groupe $\mathrm{Aut}[\mathbb{C}^2]$ a une structure
de produit amalgam\'e.

\begin{thm}[\cite{Ju}]
{\sl Le groupe $\mathrm{Aut}[\mathbb{C}^2]$ est le produit 
amalgam\'e des sous-groupes~$\mathrm{A}$ et $\mathrm{E}$ 
le long de leur intersection $\mathrm{S}.$ 

\noindent Autrement dit tout
\'el\'ement $\varphi$ de $\mathrm{Aut}[\mathbb{C}^2]\setminus
\mathrm{S}$ s'\'ecrit $(a_1)e_1\ldots a_n(e_n)$ avec $a_i$
dans $\mathrm{A}\setminus\mathrm{E},$ $e_i$ dans $\mathrm{E}
\setminus\mathrm{A};$ de plus, cette \'ecriture est unique 
modulo les relations naturelles
\begin{align*}
& a_ie_i=(a_is)(s^{-1}e_i),&& e_{i-1}a_i=(e_{i-1}s')(s'^{-1}a_i), 
&& s,\,s'\in\mathrm{S}.
\end{align*} }
\end{thm}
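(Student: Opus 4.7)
\noindent\emph{Esquisse de preuve.} La strat\'egie repose sur la fonction degr\'e $\deg \varphi = \max(\deg f_1, \deg f_2)$ pour $\varphi = (f_1,f_2) \in \mathrm{Aut}[\mathbb{C}^2]$. Je traiterais successivement l'existence de la d\'ecomposition puis son unicit\'e.

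Pour l'existence, je raisonnerais par r\'ecurrence sur $\deg \varphi$, les \'el\'ements de degr\'e $1$ \'etant exactement ceux de $\mathrm{A}$. Le pas inductif repose sur le lemme structurel suivant : si $\varphi$ est un automorphisme polynomial avec $d_i = \deg f_i$ et $\max(d_1, d_2) \geq 2$, alors, apr\`es \'eventuellement une composition par un \'el\'ement de $\mathrm{A}$, on a $d_1 \mid d_2$ (ou $d_2 \mid d_1$) et la partie homog\`ene dominante $F_2$ de $f_2$ est un multiple scalaire de $F_1^{d_2/d_1}$. Ce lemme r\'esulte de ce que $\det J\varphi$ est une constante non nulle (par inversibilit\'e polynomiale), ce qui force sa composante de plus haut degr\'e $\det J(F_1, F_2)$ \`a s'annuler ; en deux variables cela entra\^\i ne la d\'ependance alg\'ebrique de $F_1, F_2$, et la factorialit\'e de $\mathbb{C}[x,y]$ donne la relation $F_1^{d_2} = \lambda F_2^{d_1}$. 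Un raffinement exploitant l'existence d'une r\'eciproque polynomiale permet ensuite de passer \`a la divisibilit\'e des degr\'es et \`a la proportionnalit\'e annonc\'ee. Un \'el\'ement ad\'equat de $\mathrm{E}$, \'eventuellement conjugu\'e par l'\'echange $(x,y) \mapsto (y,x) \in \mathrm{A}$, retranche alors la partie dominante, faisant chuter strictement le degr\'e et bouclant la r\'ecurrence.

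Pour l'unicit\'e, je consid\'ererais un mot altern\'e $\varphi = (a_1)e_1\cdots a_n(e_n)$ avec $a_i \in \mathrm{A}\setminus\mathrm{E}$ et $e_i \in \mathrm{E}\setminus\mathrm{A}$, et j'\'etablirais par r\'ecurrence sur $n$ l'\'egalit\'e $\deg \varphi = \prod_i \deg e_i$, quantit\'e sup\'erieure ou \'egale \`a $2$ d\`es qu'un facteur \'el\'ementaire est pr\'esent, d'o\`u $\varphi \notin \mathrm{S}$. L'ingr\'edient d\'ecisif est que la condition $a \in \mathrm{A}\setminus\mathrm{E}$ \'equivaut au fait que la seconde composante de $a$ d\'epend effectivement de la premi\`ere variable ; dans une composition du type $e' \circ a \circ e$, cela garantit que la partie dominante de la seconde coordonn\'ee de $a \circ e$, proportionnelle \`a $y^{\deg e}$, est non nulle, si bien que la substitution dans $e'$ produit un terme de degr\'e $\deg e \cdot \deg e'$ non nul, interdisant toute simplification. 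Les relations $a_i e_i = (a_i s)(s^{-1} e_i)$ et $e_{i-1} a_i = (e_{i-1} s')(s'^{-1} a_i)$ avec $s, s' \in \mathrm{S}$ rendent alors compte exactement des ambigu\"it\'es de repr\'esentation compatibles avec la structure altern\'ee.

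Le principal obstacle technique r\'esidera dans le lemme structurel : d\'eduire de la seule relation $F_1^{d_2} = \lambda F_2^{d_1}$ la divisibilit\'e pr\'ecise des degr\'es et la proportionnalit\'e de $F_2$ avec $F_1^{d_2/d_1}$ exige d'exploiter finement l'existence d'une r\'eciproque polynomiale, c'est-\`a-dire d'aller au-del\`a du seul argument jacobien, et c'est l\`a le point le plus d\'elicat de la preuve.
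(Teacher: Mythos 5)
Le texte ne d\'emontre pas cet \'enonc\'e~: il est simplement attribu\'e \`a \textsc{Jung} via la r\'ef\'erence \cite{Ju}, il n'y a donc pas de preuve du papier \`a laquelle comparer la v\^otre. Votre esquisse suit la strat\'egie classique de \textsc{Jung}--van der Kulk~: r\'ecurrence sur le degr\'e pour l'existence, multiplicativit\'e du degr\'e sur les mots altern\'es pour l'unicit\'e. La partie unicit\'e est correcte dans ses grandes lignes~: pour $a$ dans $\mathrm{A}\setminus\mathrm{E}$ la seconde composante d\'epend effectivement de la premi\`ere variable, la forme dominante de la seconde coordonn\'ee de $a\circ e$ est une puissance non nulle d'une forme lin\'eaire, et sa substitution dans le facteur \'el\'ementaire suivant ne peut se simplifier~; on en tire $\deg\varphi=\prod_i\deg e_i\geq 2$ pour tout mot altern\'e contenant un facteur de $\mathrm{E}\setminus\mathrm{A},$ donc aucun mot r\'eduit non trivial ne tombe dans $\mathrm{S}=\mathrm{A}\cap\mathrm{E},$ ce qui est exactement le crit\`ere d'injectivit\'e pour le produit amalgam\'e (c'est le m\^eme m\'ecanisme que la croissance des degr\'es dans \cite{FM}).

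En revanche, l'existence repose enti\`erement sur votre lemme structurel, et c'est l\`a qu'il y a une lacune r\'eelle. L'argument jacobien donne seulement ceci~: $\det J(F_1,F_2)=0,$ donc $F_1$ et $F_2$ sont alg\'ebriquement d\'ependantes et, \'etant homog\`enes en deux variables, s'\'ecrivent $F_1=aH^{p},$ $F_2=bH^{q}$ pour une m\^eme forme $H$ (d'o\`u votre relation $F_1^{d_2}=\lambda F_2^{d_1}$). Cela n'entra\^ine ni $d_1\mid d_2$ ni la proportionnalit\'e de $F_2$ \`a une puissance de $F_1$~: le cas $d_1=4,$ $d_2=6,$ $F_1=H^2,$ $F_2=\mu H^3$ n'est exclu que par l'hypoth\`ese d'inversibilit\'e, et la phrase \og un raffinement exploitant l'existence d'une r\'eciproque polynomiale\fg\, n'est pas un argument. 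Or c'est pr\'ecis\'ement ce point qui constitue le c\oe ur du th\'eor\`eme de \textsc{Jung}~: toutes les preuves connues y consacrent l'essentiel de l'effort (polygone de \textsc{Newton}, valuations \`a l'infini, th\'eorie de l'intersection sur une compactification, ou analyse fine de l'inverse et des degr\'es minimaux). En l'\'etat, le pas de votre r\'ecurrence sur le degr\'e n'est pas fond\'e~; il faudrait ou bien d\'emontrer ce lemme, ou bien le citer pr\'ecis\'ement comme r\'esultat ext\'erieur.
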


\begin{rem}
Hormis dans certains cas (cas r\'esonants) un automorphisme \'el\'ementaire
est conjugu\'e \`a un automorphisme affine. Cependant il existe dans 
$\mathrm{Aut}[\mathbb{C}^2]$ des \'el\'ements non conjugu\'es \`a des 
automorphismes affines; c'est par exemple le cas des {\it automorphismes de 
\textsc{H\'enon} g\'en\'eralis\'es}\label{ind6}
\begin{align*}
& (y,P(y)-\delta x), && \delta\in\mathbb{C}^*,
\,P\in\mathbb{C}[y],\,\deg P\geq 2, 
\end{align*}
\noindent qui s'\'ecrivent aussi $(y,x)(-\delta x +P(y),y)$ avec $(y,x)$
dans $\mathrm{A}\setminus\mathrm{E}$ et $(-\delta x +P(y),y)$
dans $\mathrm{E}\setminus\mathrm{A}.$
\end{rem}

\noindent \`A l'aide du th\'eor\`eme de \textsc{Jung}, \textsc{Friedland} et \textsc{Milnor}
ont d\'emontr\'e l'\'enonc\'e suivant.

\begin{pro}[\cite{FM}]
{\sl Soit $f$ un automorphisme polynomial de $\mathbb{C}^2.$ On
a l'alternative
\smallskip
\begin{itemize}
\item $f$ est conjugu\'e \`a un \'el\'ement de $\mathrm{E};$
\smallskip
\item $f$ est conjugu\'e \`a une compos\'ee d'applications de
\textsc{H\'enon} g\'en\'eralis\'ees, {\it i.e.} $$f=\varphi g_1\ldots
g_n\varphi^{-1}$$ o\`u $\varphi$ est dans $\mathrm{Aut}[\mathbb{C}^2]$
et les $g_i$ des applications de \textsc{H\'enon} g\'en\'eralis\'ees. 
\end{itemize}}
\end{pro}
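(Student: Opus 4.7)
The plan is to apply \textsc{Jung}'s theorem to bring $f$ into a normal form within the amalgamated product $\mathrm{A}*_{\mathrm{S}}\mathrm{E}$, and then to dispose of the two cases corresponding to the \textsc{Bass}--\textsc{Serre} dichotomy: either $f$ is conjugate in $\mathrm{Aut}[\mathbb{C}^2]$ into one of the two amalgamands, or it admits, up to cyclic conjugation, a reduced alternating word of length at least two.

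Suppose first that $f$ is conjugate in $\mathrm{Aut}[\mathbb{C}^2]$ to an element of $\mathrm{A}\cup\mathrm{E}$. If this conjugate lies in $\mathrm{E}$, the first alternative holds immediately. If it lies in $\mathrm{A}$, a \textsc{Jordan} reduction of the linear part together with an absorption of the constant term by a translation shows that every affine automorphism is itself conjugate in $\mathrm{A}$ to an upper-triangular affine map, hence to an element of $\mathrm{S}\subset\mathrm{E}$. Composing the two conjugacies once more places $f$ into $\mathrm{E}$ up to conjugation.

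Assume now that $f$ is not conjugate to any element of $\mathrm{A}\cup\mathrm{E}$. By the uniqueness part of \textsc{Jung}'s theorem, $f$ admits a reduced expression $f=h_1h_2\ldots h_n$ whose letters alternate between $\mathrm{A}\setminus\mathrm{S}$ and $\mathrm{E}\setminus\mathrm{S}$, and successive conjugations by the leading letters reduce this expression to a cyclically reduced word of the shape $f=a_1e_1\ldots a_ke_k$ with $a_i\in\mathrm{A}\setminus\mathrm{E}$ and $e_i\in\mathrm{E}\setminus\mathrm{A}$. The remark preceding the statement exhibits a generalized \textsc{H\'enon} map as precisely such a two-letter product $(y,x)\cdot(-\delta x+P(y),y)$. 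I would therefore conjugate $f$ globally by a well-chosen $\varphi\in\mathrm{Aut}[\mathbb{C}^2]$ so that each pair $(a_ie_i)$ is put into \textsc{H\'enon} normal form, using the ambiguity relations $a_ie_i=(a_is)(s^{-1}e_i)$ and $e_{i-1}a_i=(e_{i-1}s')(s'^{-1}a_i)$ from \textsc{Jung}'s theorem to absorb the triangular factors required by each local normalization into its neighbours.

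The main obstacle is the closure of this cyclic normalization. The triangular factors used at consecutive pairs must be compatible so that, after going once around the word, they cancel exactly rather than leaving a residual element of $\mathrm{S}$. This consistency rests on the uniqueness clause of \textsc{Jung}'s theorem together with the hypothesis $\deg P\geq 2$ in the definition of a generalized \textsc{H\'enon} map, which guarantees that the alternating structure does not collapse during the normalization.
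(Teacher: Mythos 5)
Le texte de l'article ne contient aucune d\'emonstration de cette proposition (elle est simplement cit\'ee d'apr\`es \cite{FM}); votre strat\'egie est bien celle de \textsc{Friedland} et \textsc{Milnor}: dichotomie de \textsc{Bass}--\textsc{Serre} (ou bien $f$ est conjugu\'e dans un des facteurs, ou bien il admet un mot cycliquement r\'eduit de longueur paire $a_1e_1\ldots a_ke_k$), puis normalisation de chaque paire en application de \textsc{H\'enon} en absorbant des facteurs triangulaires. Votre premier cas est correct: la triangulation de la partie lin\'eaire place tout \'el\'ement de $\mathrm{A}$ dans $\mathrm{S}\subset\mathrm{E}$ \`a conjugaison pr\`es (la constante n'a m\^eme pas besoin d'\^etre \'elimin\'ee, puisque $\mathrm{S}$ contient toutes les applications affines triangulaires).

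En revanche, le second cas comporte une lacune r\'eelle, pr\'ecis\'ement \`a l'endroit que vous signalez vous-m\^eme comme \og obstacle principal\fg: affirmer que la coh\'erence cyclique \og repose sur l'unicit\'e dans le th\'eor\`eme de \textsc{Jung} et sur $\deg P\geq 2$\fg\ n'est pas un argument. L'unicit\'e de l'\'ecriture dans le produit amalgam\'e ne dit rien sur la possibilit\'e d'absorber le facteur r\'esiduel de $\mathrm{S}$ tout en gardant chaque facteur exactement de \textsc{H\'enon}. Ce qu'il faut \'ecrire explicitement est le c\oe ur de la preuve de \cite{FM}: (i) d\'ecomposer chaque $a_i\in\mathrm{A}\setminus\mathrm{E}$ sous la forme $s_i\,\tau\, s_i'$ avec $\tau(x,y)=(y,x)$ et $s_i,s_i'\in\mathrm{S}$ (d\'ecomposition de type Bruhat, possible car $a_i\notin\mathrm{S}$); apr\`es conjugaison par $s_1$, on obtient $f\sim(\tau\widetilde{e}_1)\cdots(\tau\widetilde{e}_k)$ o\`u $\widetilde{e}_i=s_i'e_is_{i+1}\in\mathrm{E}\setminus\mathrm{A}$; (ii) v\'erifier par calcul que chaque $\tau\widetilde{e}_i$, avec $\widetilde{e}_i(x,y)=(\alpha x+P(y),\beta y+\gamma)$ et $\deg P\geq 2$, s'\'ecrit $h_i\,t_i$ o\`u $h_i$ est de \textsc{H\'enon} g\'en\'eralis\'e et $t_i(x,y)=(ax,\beta y+\gamma)$ est triangulaire d'une forme sp\'eciale; (iii) montrer que ces $t_i$ se poussent \`a travers les facteurs de \textsc{H\'enon} ($t\,h=h'\,t'$ avec $h'$ encore de \textsc{H\'enon}, le polyn\^ome \'etant remplac\'e par $z\mapsto dP(z/a)+e$, ce qui pr\'eserve $\deg\geq 2$), de sorte qu'on les accumule en un unique facteur triangulaire final; (iv) \'eliminer ce r\'esidu par une derni\`ere conjugaison explicite, ce qui revient \`a r\'esoudre des \'equations monomiales inversibles sur les param\`etres (c'est ici, et non dans l'unicit\'e de \textsc{Jung}, que se joue la \og fermeture du cycle\fg). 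Sans ces quatre \'etapes, votre texte d\'ecrit le plan de la preuve mais n'en fournit pas la substance.
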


\noindent Dans le premier cas $f$ est dit {\it de type \'el\'ementaire}\label{ind7},
dans le second {\it de type \textsc{H\'enon}}\label{ind8}; $\mathcal{H}$ d\'esigne le 
semi-groupe des compos\'ees d'applications de \textsc{H\'enon} g\'en\'eralis\'ees.

\begin{rem}
Les automorphismes affines sont de type \'el\'ementaire (triangulation
des matrices).
\end{rem}

\noindent Nous allons voir que cette dichotomie type \'el\'ementaire/type
\textsc{H\'enon} se poursuit:

\begin{itemize}
\item Tout \'el\'ement de type \'el\'ementaire pr\'eserve une fibration
rationnelle alors qu'un automorphisme de type \textsc{H\'enon} n'en pr\'eserve pas.

\item Les automorphismes de type \'el\'ementaire ont tous un 
centralisateur non d\'enombrable (en fait presque tous se plongent
dans un groupe \`a un param\`etre de $\mathrm{Aut}[\mathbb{C}^2]$) 
alors que le centralisateur d'un automorphisme de type \textsc{H\'enon} est 
d\'enombrable, souvent m\^eme r\'eduit \`a ses it\'er\'es (\cite{La}).

\item Un automorphisme de type \textsc{H\'enon} poss\`ede une infinit\'e de points 
p\'eriodiques hyperboliques.
\end{itemize}

\noindent Le {\it degr\'e alg\'ebrique}\label{ind9} d'un automorphisme polynomial
\begin{align*}
&f\colon\mathbb{C}^2\to\mathbb{C}^2, &&(x,y)\mapsto(f_1(x,y),f_2(x,y)),
&& f_i\in\mathbb{C}[x,y]
\end{align*}

\noindent est d\'efini par $\deg f=\max(\deg f_1,\deg f_2).$ Ce degr\'e 
n'est pas un invariant dynamique (en g\'en\'eral si $f,$ $g$ sont 
dans~$\mathrm{Aut}[\mathbb{C}^2],$ alors $\deg(fgf^{-1})\not=\deg g$); on
introduit donc un second degr\'e, le degr\'e dynamique (\cite{Di, FM}),
d\'efini par $$\lambda(f)=\lim_{n\to +\infty}(\deg f^n)^{1/n}.$$ On
peut v\'erifier que celui-ci est un invariant dynamique (en fait il
existe deux constantes strictement positives~$\alpha$ et $\beta$ 
telles que $$\alpha\deg g^n\leq\deg(fg^nf^{-1})\leq\beta\deg g^n$$
pour tout entier $n$).

\noindent Si $f$ est un automorphisme \'el\'ementaire, on a $\deg f^n=\deg f$ d'o\`u $\lambda(f)=1.$ 
Au contraire, pour un \'el\'ement~$f$ de $\mathcal{H},$ on a $\lambda(f)=\deg f;$
plus pr\'ecis\'ement si $f=f_1\ldots f_n,$ avec~$f_i$ application
de \textsc{H\'enon} g\'en\'eralis\'ee, on a $\deg f=\displaystyle\prod_i\deg f_i\geq 2$
(\emph{voir} \cite{FM}). Par suite
\smallskip
\begin{itemize}
\item $f$ est de type \'el\'ementaire si et seulement si 
$\lambda(f)=1;$
\smallskip
\item $f$ est de type \textsc{H\'enon} si et seulement si $\lambda(f)>1.$
\end{itemize}
\smallskip

\noindent La plupart des propri\'et\'es qui pr\'ec\`edent ont 
\'et\'e d\'emontr\'ees en s'appuyant sur la structure de produit
amalgam\'e de~$\mathrm{Aut}[\mathbb{C}^2].$ Plus pr\'ecis\'ement
puisque $\mathrm{Aut}[\mathbb{C}^2]=\mathrm{A}\ast_\mathrm{S}
\mathrm{E}$ la th\'eorie de \textsc{Bass}-\textsc{Serre} assure que $\mathrm{Aut}
[\mathbb{C}^2]$ agit de mani\`ere non triviale par translation
\`a gauche sur un arbre~$\mathcal{T}$ (\emph{voir} \cite{Se}).
L'arbre $\mathcal{T}$ est d\'efini comme suit: l'ensemble des
sommets est l'union disjointe des classes \`a gauche 
$(\mathrm{Aut}[\mathbb{C}^2])/\mathrm{A}$ et $(\mathrm{Aut}
[\mathbb{C}^2])/\mathrm{E}$ et celui des ar\^etes l'union 
disjointe des classes \`a gauche $(\mathrm{Aut}[\mathbb{C}^2])/
\mathrm{S}.$ Pour tout automorphisme polynomial $f$ l'ar\^ete
$f\mathrm{S}$ relie les sommets $f\mathrm{A}$ et $f\mathrm{E}.$
On h\'erite ainsi d'une action fid\`ele de $\mathrm{Aut}
[\mathbb{C}^2]$ dans le groupe des isom\'etries de l'arbre
$\mathcal{T}.$ L'\'etude de cette action permet de d\'emontrer
de nombreuses propri\'et\'es sur~$\mathrm{Aut}[\mathbb{C}^2].$

\section{Groupe des transformations birationnelles}

\noindent Une {\it transformation rationnelle}\label{ind10} de $\mathbb{P}^2
(\mathbb{C})$ dans lui-m\^eme est une transformation de la 
forme
\begin{align*}
&\mathbb{P}^2(\mathbb{C})\dashrightarrow\mathbb{P}^2(\mathbb{C}), 
&& (x:y:z)\mapsto(f_0(x,y,z):f_1(x,y,z):f_2(x,y,z))
\end{align*}

\noindent o\`u les $f_i$ sont des polyn\^omes homog\`enes de 
m\^eme degr\'e sans facteur commun.

\noindent Une {\it transformation birationnelle}\label{ind11} du plan projectif complexe
dans lui-m\^eme est une transformation rationnelle qui admet un inverse 
rationnel.

\begin{egs}
\begin{itemize}
\item Un automorphisme de $\mathbb{P}^2(\mathbb{C}),$ {\it i.e.} un 
\'el\'ement de $\mathrm{PGL}_3(\mathbb{C}),$ est une transformation
birationnelle.\smallskip

\item Tout automorphisme polynomial de $\mathbb{C}^2$ se prolonge en 
une transformation birationnelle du plan projectif complexe; par 
exemple on a dans le cas des automorphismes \'el\'ementaires
\begin{align*}
&\mathbb{P}^2(\mathbb{C})\dashrightarrow\mathbb{P}^2(\mathbb{C}),
&&(x:y:z)\mapsto(\alpha xz^{n-1}+z^nP\left(\frac{y}{z}\right):\beta yz^{n-1}+\gamma
z^n:z^n)
\end{align*}
\noindent o\`u $\alpha,$ $\beta,$ $\gamma$ sont des \'el\'ements de 
$\mathbb{C}$ tels que $\alpha\beta\not=0$ et $P$ un polyn\^ome de 
$\mathbb{C}[y]$ de degr\'e~$n,$ alors que dans le cas des automorphismes
de \textsc{H\'enon} g\'en\'eralis\'e on a
\begin{align*}
&\mathbb{P}^2(\mathbb{C})\dashrightarrow\mathbb{P}^2(\mathbb{C}),
&&(x:y:z)\mapsto(yz^{n-1}:z^nP\left(\frac{y}{z}\right)-\delta xz^{n-1}:
z^n),
\end{align*}

\noindent o\`u $\delta$ d\'esigne un complexe non nul et $P$ un 
polyn\^ome de degr\'e sup\'erieur ou \'egal \`a $2.$\smallskip

\item La transformation 
\begin{align*}
& \sigma\colon\mathbb{P}^2(\mathbb{C})\dashrightarrow\mathbb{P}^2(\mathbb{C}),
&& (x:y:z)\mapsto(yz:xz:xy)
\end{align*}
\noindent est rationnelle; elle s'\'ecrit aussi $\left(\frac{1}{x}:\frac{1}{y}:
\frac{1}{z}\right).$ On 
remarque en particulier que c'est une involution, elle est donc birationnelle.  
\end{itemize}
\end{egs}

\noindent On d\'esigne par $\mathrm{Bir}(\mathbb{P}^2)$
le {\it groupe des transformations birationnelles du plan projectif 
dans lui-m\^eme} encore appel\'e {\it groupe de \textsc{Cremona}}\label{ind12}; les \'el\'ements 
de $\mathrm{Bir}(\mathbb{P}^2)$ s'appellent aussi
{\it transformations de \textsc{Cremona}}\label{ind13}.

\noindent Le groupe $\mathrm{Aut}(\mathbb{P}^2(\mathbb{C}))$ des 
automorphismes de $\mathbb{P}^2(\mathbb{C})$ est un sous-groupe de 
$\mathrm{Bir}(\mathbb{P}^2),$ de m\^eme que $\mathrm{Aut}[\mathbb{C}^2].$

\noindent Une transformation birationnelle
\begin{align*}
&f\colon\mathbb{P}^2(\mathbb{C})\dashrightarrow\mathbb{P}^2(\mathbb{C}), 
&& (x:y:z)\mapsto(f_0(x,y,z):f_1(x,y,z):f_2(x,y,z))
\end{align*}

\noindent a un {\it ensemble d'ind\'etermination}\label{ind14} $\mathrm{Ind}\,f$
d\'efini par les z\'eros communs des $f_i,$ on dit aussi que ce sont les
{\it points \'eclat\'es}\label{ind15} par~$f.$ Elle poss\`ede aussi un {\it 
ensemble exceptionnel}\label{ind16} $\mathrm{Exc}\,f$ dont les composantes
sont les {\it courbes contract\'ees}\label{ind17} par $f;$ c'est \og l'image\fg\, par $f^{-1}$ de $\mathrm{Ind}\,f^{-1}.$ Lorsque $f$ est birationnelle, $\mathrm{Exc}\,f$ est le lieu des z\'eros de $\det\,\mathrm{jac}\,f.$ 

\begin{egs}
\begin{itemize}
\item Si $f$ est un automorphisme de $\mathbb{P}^2(\mathbb{C}),$
alors $f$ est du type
$$(x:y:z)\mapsto(a_0x+b_0y+c_0z:a_1x+b_1y+c_1z:a_2x+b_2y+c_2z)$$
\noindent avec $\left\vert\begin{array}{ccc}
a_0 & b_0 & c_0\\
a_1 & b_1 & c_1\\
a_2 & b_2 & c_2
\end{array}
\right\vert\not=0$ et $\mathrm{Ind}\,f=\mathrm{Exc}\,f=\emptyset.$
\smallskip

\item Soit $f\colon\mathbb{P}^2(\mathbb{C})\dashrightarrow
\mathbb{P}^2(\mathbb{C})$ le prolongement
d'un automorphisme \'el\'ementaire \`a $\mathbb{P}^2(\mathbb{C})$
$$(x:y:z)\mapsto(\alpha xz^{n-1}+z^n
P\left(\frac{y}{z}\right):\beta yz^{n-1}+\gamma z^n:z^n),$$
\noindent avec $\alpha,\,\beta,\,\gamma\in\mathbb{C},\,\alpha
\beta\not=0,\,P\in\mathbb{C}[y],\,\deg P=n,$ alors 
\begin{align*}
&\mathrm{Exc}\,f=\{z=0\} &&\text{ et } && \mathrm{Ind}\, f=\{(1:0:0)\}.
\end{align*}
De m\^eme le prolongement d'un automorphisme de \textsc{H\'enon} g\'en\'eralis\'e \`a $\mathbb{P}^2(\mathbb{C})$ contracte une unique droite et \'eclate un seul
point situ\'e sur cette droite. 
\smallskip

\item Consid\'erons l'involution $\sigma\colon\mathbb{P}^2(\mathbb{C})
\dashrightarrow\mathbb{P}^2(\mathbb{C}),$ $(x:y:z)\mapsto(yz:xz:xy).$
On remarque que $$\mathrm{Ind}\,\sigma=\{(1:0:0),\,(0:1:0),\,(0:0:1)\}$$
et $\mathrm{Exc}\,\sigma=\{x=0\}\cup\{y=0\}\cup\{z=0\}.$ Plus pr\'ecis\'ement
$\sigma$ contracte la droite d'\'equation $x=0$ (resp. $y=0,$ resp $z=0$)
sur le point $(1:0:0)$ (resp. $(0:1:0),$ resp. $(0:0:1)$). Puisque 
$\sigma$ est une involution~$\sigma$ \'eclate le point $(1:0:0)$ 
(resp. $(0:1:0),$ resp. $(0:0:1)$) sur la droite $x=0$ (resp. $y=0,$
resp. $z=0$).
\end{itemize}
\end{egs}

\noindent L'application d'{\it \'eclatement}\label{ind26} en un point est 
un exemple de transformation birationnelle. Soient~$\mathcal{Z}$ une surface et $p$
un point de $\mathcal{Z}.$ Il existe une surface $\widetilde{\mathcal{Z}}$ et un 
morphisme $\pi\colon\widetilde{\mathcal{Z}}\to\mathcal{Z}$ tels que
\begin{itemize}
\item $\mathrm{E}=\pi^{-1}(p)$ soit isomorphe \`a $\mathbb{P}^1(\mathbb{C});$

\item la restriction $\pi_{\vert\widetilde{\mathcal{Z}}\setminus\mathrm{E}}\colon \widetilde{\mathcal{Z}}\setminus \mathrm{E}\to\mathcal{Z}\setminus\{p\}$ de $\pi$ \`a $\widetilde{\mathcal{Z}}\setminus\mathrm{E}$ soit un isomorphisme.
\end{itemize}

\noindent \`A isomorphisme pr\`es $\widetilde{\mathcal{Z}}$ et $\pi$ sont uniques. On dit que $\pi$ est l'application d'\'eclatement en~$p$ et $\widetilde{\mathcal{Z}}$ est l'\'eclat\'e de $\mathcal{Z}$ en $p.$ On dit aussi qu'on passe de $\mathcal{Z}$ \`a $\widetilde{\mathcal{Z}}$ en \'eclatant le point $p$ et qu'on passe de $\widetilde{\mathcal{Z}}$ \`a $\mathcal{Z}$ en contractant la courbe $\mathrm{E}.$ La courbe rationnelle $\mathrm{E}$ est appel\'ee {\it diviseur exceptionnel}\label{ind26b}.

\noindent Une pr\'esentation concr\`ete de l'\'eclatement 
de $\mathbb{C}^2$ en $0$ est la suivante: on 
consid\`ere $$\Gamma=\{((x,y),[\xi:\eta])\in
\mathbb{C}^2\times\mathbb{P}^1(\mathbb{C})\,\vert\,
x\eta=y\xi\}$$ avec la projection $\pi$ sur le premier 
facteur; le couple $(\Gamma,\pi)$ est l'\'eclatement de 
$\mathbb{C}^2$ en $0.$ Notons que $\pi^{-1}\colon
\mathbb{C}^2\setminus\{0\}\to\Gamma$ est l'application 
d\'efinie par $\pi^{-1}(x,y)=((x,y),[x:y]);$ on peut 
aussi \'ecrire $$\pi^{-1}(x,y)=((x,y),[1:y/x])=((x,y),
[x/y:1]).$$ Ces deux repr\'esentations permettent de 
d\'efinir des cartes $\mathcal{U}'=\mathbb{C}^2_{s,
\eta}$ et $\mathcal{U}''=\mathbb{C}^2_{\xi,t}$ o\`u 
les coordonn\'ees sont d\'efinies \`a l'aide de $\pi$
\begin{align*}
&\pi'\colon(s,\eta)\mapsto(s,s\eta)=(x,y), &&
\pi''\colon(\xi,t)\mapsto(\xi t,t)=(x,y).
\end{align*}

\noindent Par suite $\Gamma=\mathcal{U}'\cup\mathcal{U}''$
et $\mathrm{E}'=\mathrm{E}\cap\mathcal{U}'=\{s=0\}$ est \'equivalent \`a
$\mathbb{C}.$ Dans l'\'eclat\'e
la transform\'ee stricte (\emph{voir} Chapitre \ref{ga}) de $\{x=0\}$ (resp. $\{y=0\}$)
dans $\Gamma$ est donn\'ee par $\{\xi=0\}$ (resp.~$\{\eta=0\}$);
il s'en suit qu'on utilise le syst\`eme de coordonn\'ees 
$(\xi,t)$ (resp. $(s,\eta)$) si on veut travailler au 
voisinage de~$\{x=0\}$ (resp. $\{y=0\}$).

\noindent Il existe un th\'eor\`eme de g\'en\'eration pour $\mathrm{Bir}
(\mathbb{P}^2).$

\begin{thm}[\cite{Ca, No, No2, No3}]\label{nono}
{\sl Le groupe de \textsc{Cremona} est engendr\'e par~$\sigma$ et $\mathrm{Aut}(\mathbb{P}^2
(\mathbb{C})).$}
\end{thm}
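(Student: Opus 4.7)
The plan is to proceed by induction on the algebraic degree $d$ of a Cremona transformation $f$. The base case $d=1$ is trivial: a birational map of degree one is exactly an element of $\mathrm{PGL}_3(\mathbb{C})=\mathrm{Aut}(\mathbb{P}^2(\mathbb{C}))$. For the inductive step, given $f\in\mathrm{Bir}(\mathbb{P}^2)$ of degree $d>1$, the goal is to exhibit $\alpha\in\mathrm{PGL}_3(\mathbb{C})$ such that the composition $\sigma\alpha f$ has degree strictly less than $d$; iterating this procedure will express $f$ as a word in $\sigma$ and $\mathrm{Aut}(\mathbb{P}^2(\mathbb{C}))$.

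The main tool is the \emph{homaloidal net} associated to $f$, namely the linear system $\Lambda_f=f^{-1}\vert\mathcal{O}(1)\vert$ of pullbacks of lines. Its generic members are irreducible rational curves of degree $d$ passing through the base points $p_1,\ldots,p_k$ (possibly infinitely near) of $f^{-1}$ with prescribed multiplicities $m_1\geq m_2\geq \cdots \geq m_k$. These satisfy the two Noether equalities
\begin{align*}
\sum_{i} m_i = 3(d-1), && \sum_{i} m_i^2 = d^2-1,
\end{align*}
which come respectively from the adjunction formula and from computing the self-intersection of a member of $\Lambda_f$. A short combinatorial argument (Noether's inequality) then deduces from these two identities that whenever $d>1$ one has $m_1+m_2+m_3>d$ for the three largest multiplicities.

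Assuming momentarily that $p_1,p_2,p_3$ are three proper, non-collinear points of $\mathbb{P}^2(\mathbb{C})$, one chooses $\alpha\in\mathrm{PGL}_3(\mathbb{C})$ sending them to the three fundamental points $(1\!:\!0\!:\!0)$, $(0\!:\!1\!:\!0)$, $(0\!:\!0\!:\!1)$ of $\sigma$. A degree computation on the new homaloidal net shows that $\sigma\alpha f$ has degree $2d-m_1-m_2-m_3<d$, which closes the induction. To perform this computation cleanly one can pass to a common resolution of $f$ and $\sigma\alpha$ and use the factorization of birational morphisms into blow-ups (which is precisely the Zariski factorization recalled in the following chapter), counting how the multiplicities are absorbed at the three fundamental points.

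The main obstacle is that in general the three base points with highest multiplicity need not be proper points in general position: some may be infinitely near, or they may lie on a common line. This is the classical gap in Noether's original argument that was subsequently repaired by Castelnuovo. The accepted fix is to observe that an \emph{arbitrary} quadratic transformation, centered at three base points which are either proper or infinitely near, already lies in the subgroup generated by $\sigma$ and $\mathrm{Aut}(\mathbb{P}^2(\mathbb{C}))$: indeed, one realizes it as $\alpha_1 \sigma \alpha_2 \sigma \alpha_3$ for suitable linear maps, or equivalently one verifies that the quadratic de Jonquières-type transformations that handle infinitely near configurations are themselves products of $\sigma$ and automorphisms. Combining this with the inductive degree-reduction argument above yields the theorem.
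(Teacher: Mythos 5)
The paper itself gives no proof of this statement: it is quoted with references to \textsc{Noether} and \textsc{Castelnuovo}, so your argument must stand on its own. Its skeleton is Noether's classical degree induction, and the easy half is correct: the two equalities $\sum_i m_i=3(d-1)$ and $\sum_i m_i^2=d^2-1$, Noether's inequality $m_1+m_2+m_3\geq d+1$, and the degree drop $2d-m_1-m_2-m_3<d$ after composing with a quadratic map based at the three points of largest multiplicity. Two remarks on this part: for the left composition $\sigma\alpha f$ the relevant multiplicities are those of the net of images of lines, i.e.\ attached to the base points of $f^{-1}$ rather than of $f$ (harmless, since $f$ and $f^{-1}$ have the same characteristic); and when the three points of largest multiplicity are proper, the collinear case you worry about cannot occur, because a generic member of the net meets the line through them in $d$ points, forcing $m_1+m_2+m_3\leq d$, against Noether's inequality.

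The genuine gap is the infinitely near case, and the repair you propose does not close it. When some of $p_1,p_2,p_3$ are infinitely near, this triple need not be the base scheme of \emph{any} quadratic transformation (for instance two of them infinitely near the same point in its first neighbourhood, or a configuration violating the proximity inequalities), so the inductive step ``compose with a quadratic map centred at the three points of largest multiplicity'' simply cannot be performed; and nothing guarantees that another admissible triple of base points still has multiplicity sum $>d$. The obstruction is thus the \emph{existence} of a suitable quadratic map based at the given points, not the membership of the special quadratic involutions $\rho$ and $\tau$ in $\langle\sigma,\mathrm{Aut}(\mathbb{P}^2(\mathbb{C}))\rangle$, which is the only thing your last paragraph establishes (and which is indeed true). \textsc{Castelnuovo}'s actual repair is a different induction: one first proves that every plane \textsc{Cremona} transformation is a composition of de Jonqui\`eres maps and automorphisms, by descending on a finer invariant such as the pair (degree, maximal multiplicity) — or, in modern treatments, via elementary links and the Sarkisov program — and only then decomposes each de Jonqui\`eres map into quadratic maps, and each quadratic map into $\sigma$ and automorphisms. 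Without this intermediate step, or an equivalent careful treatment of the infinitely near configurations, your induction does not close.
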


\noindent On peut reformuler cet \'enonc\'e d'une autre fa\c{c}on; avant 
remarquons que les transformations birationnelles d'une surface $\mathcal{Z}$
dans elle-m\^eme forment un groupe que l'on note $\mathrm{Bir}(\mathcal{Z}).$

\begin{thm}[\cite{Is}]
{\sl Consid\'erons le groupe $\mathrm{Bir}(\mathbb{P}^1(\mathbb{C})
\times\mathbb{P}^1(\mathbb{C}))$ des transformations birationnelles 
de $\mathbb{P}^1(\mathbb{C})
\times\mathbb{P}^1(\mathbb{C})$ dans lui-m\^eme, $\pi\colon\mathbb{P}^1(\mathbb{C})
\times\mathbb{P}^1(\mathbb{C})\to\mathbb{P}^1(\mathbb{C})$ la 
projection sur le premier facteur et $\tau\colon(u,v)\mapsto(v,u).$ 
Le grou\-pe~$\mathrm{Bir}(\mathbb{P}^1(\mathbb{C})\times\mathbb{P}^1
(\mathbb{C}))$ est engendr\'e par $\tau$ et le groupe des transformations
qui pr\'eservent la fibration $\pi.$}
\end{thm}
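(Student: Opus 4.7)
On r\'eduit l'\'enonc\'e au th\'eor\`eme de \textsc{Noether}-\textsc{Castelnuovo} (th\'eor\`eme \ref{nono}) via l'identification birationnelle entre $\mathbb{P}^2(\mathbb{C})$ et $\mathbb{P}^1(\mathbb{C})\times\mathbb{P}^1(\mathbb{C}).$ Fixons l'application birationnelle $\rho\colon\mathbb{P}^2(\mathbb{C})\dashrightarrow\mathbb{P}^1(\mathbb{C})\times\mathbb{P}^1(\mathbb{C})$ donn\'ee dans les cartes affines par l'identit\'e $(x,y)\leftrightarrow(u,v).$ La conjugaison par $\rho$ induit un isomorphisme de groupes $\mathrm{Bir}(\mathbb{P}^2)\simeq\mathrm{Bir}(\mathbb{P}^1(\mathbb{C})\times\mathbb{P}^1(\mathbb{C})).$ Notons $J$ le sous-groupe des transformations de $\mathbb{P}^1(\mathbb{C})\times\mathbb{P}^1(\mathbb{C})$ pr\'eservant la fibration $\pi;$ il s'agit d'\'etablir $\mathrm{Bir}(\mathbb{P}^1(\mathbb{C})\times\mathbb{P}^1(\mathbb{C}))\subset\langle J,\tau\rangle,$ l'inclusion inverse \'etant imm\'ediate.

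\medskip

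\noindent D'apr\`es le th\'eor\`eme \ref{nono}, $\mathrm{Bir}(\mathbb{P}^2)$ est engendr\'e par $\mathrm{PGL}_3(\mathbb{C})$ et l'involution $\sigma;$ il suffit donc de v\'erifier que leurs images par $\rho$ appartiennent \`a $\langle J,\tau\rangle.$ L'image de $\sigma\colon(x,y)\mapsto(1/x,1/y)$ est la transformation $(u,v)\mapsto(1/u,1/v),$ dont la premi\`ere coordonn\'ee ne d\'epend que de $u$: elle appartient donc \`a $J.$ Pour $\mathrm{PGL}_3(\mathbb{C})$ on utilise le syst\`eme classique de g\'en\'erateurs form\'e des matrices diagonales, des transvections $E_{ij}(\lambda)=\mathrm{Id}+\lambda E_{ij}$ et de la permutation $(x,y,z)\mapsto(y,x,z).$

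\medskip

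\noindent Un calcul explicite en coordonn\'ees affines montre alors que les matrices diagonales ainsi que les transvections $E_{13},$ $E_{23},$ $E_{31}$ pr\'eservent $\pi$ et appartiennent donc \`a $J;$ les transvections $E_{12},$ $E_{21},$ $E_{32}$ pr\'eservent au contraire la seconde fibration et s'\'ecrivent sous la forme $\tau g\tau$ avec $g\in J;$ enfin la permutation $(x,y,z)\mapsto(y,x,z)$ devient exactement $\tau,$ tandis que les autres permutations (comme $(x,y,z)\mapsto(z,y,x),$ qui donne $(u,v)\mapsto(1/u,v/u)$) pr\'eservent $\pi$ et appartiennent \`a $J.$ On en d\'eduit $\mathrm{Bir}(\mathbb{P}^1(\mathbb{C})\times\mathbb{P}^1(\mathbb{C}))\subset\langle J,\tau\rangle.$

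\medskip

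\noindent \emph{Principal obstacle.} La v\'eritable difficult\'e est essentiellement calculatoire: pour chaque type de g\'en\'erateur de $\mathrm{PGL}_3(\mathbb{C}),$ il faut d\'eterminer s'il pr\'eserve $\pi$ ou la seconde fibration, et dans chaque cas produire une d\'ecomposition explicite en \'el\'ements de $J$ et de $\tau.$ L'intervention de $\tau$ est indispensable pour les transvections $E_{12},$ $E_{21},$ $E_{32}$ qui, apr\`es transport par $\rho,$ pr\'eservent la fibration oppos\'ee \`a $\pi$ et non $\pi$ lui-m\^eme.
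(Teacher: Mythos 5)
Votre preuve est correcte et constitue une r\'eduction l\'egitime au th\'eor\`eme \ref{nono}: l'identification birationnelle $\rho\colon\mathbb{P}^2(\mathbb{C})\dashrightarrow\mathbb{P}^1(\mathbb{C})\times\mathbb{P}^1(\mathbb{C}),$ $(x:y:z)\dashrightarrow((x:z),(y:z)),$ conjugue bien les deux groupes, et il suffit alors de v\'erifier que les images de $\sigma$ et des g\'en\'erateurs de $\mathrm{PGL}_3(\mathbb{C})$ tombent dans $\langle J,\tau\rangle,$ ce que vos calculs affines \'etablissent. Notez toutefois que le texte ne d\'emontre pas cet \'enonc\'e: il le pr\'esente comme une reformulation du th\'eor\`eme \ref{nono} et renvoie \`a \textsc{Iskovskikh} (\cite{Is}), dont l'argument original est direct (d\'ecomposition en transformations de de Jonqui\`eres et en $\tau$ via des liens \'el\'ementaires, dans l'esprit du programme de Sarkisov) et est d'ailleurs souvent utilis\'e dans l'autre sens, pour red\'emontrer \textsc{Noether}--\textsc{Castelnuovo}; votre d\'emarche, qui suppose acquis le th\'eor\`eme \ref{nono}, est plus \'el\'ementaire mais logiquement d\'ependante de celui-ci, alors que la r\'ef\'erence cit\'ee en est ind\'ependante. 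Un d\'etail \`a rectifier: la r\'epartition des transvections entre $J$ et $\tau J\tau$ d\'epend de la convention (action sur vecteurs colonnes ou lignes); avec la convention usuelle $E_{ij}(\lambda)=\mathrm{Id}+\lambda E_{ij}$ agissant sur les colonnes, $E_{21}$ donne $(x,y)\mapsto(x,y+\lambda x)$ et pr\'eserve donc $\pi$ (elle est dans $J$), tandis que $E_{12}$ et $E_{32}$ pr\'eservent la seconde fibration; cela ne change rien \`a la conclusion, chaque transvection appartenant dans tous les cas \`a $J$ ou \`a $\tau J\tau.$
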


\noindent Contrairement \`a $\mathrm{Aut}[\mathbb{C}^2]$
le groupe $\mathrm{Bir}(\mathbb{P}^2)$ n'est pas le 
produit amalgam\'e de deux groupes le long de leur intersection
(\cite{Wr}). Les techniques utilis\'ees pour d\'ecrire ces deux 
groupes sont donc essentiellement diff\'erentes. 

\noindent Il y a une notion de premier degr\'e dynamique
sur $\mathrm{Bir}(\mathbb{P}^2)$ qui prolonge la notion 
d\'efinie sur~$\mathrm{Aut}[\mathbb{C}^2].$ Si $f\colon\mathbb{P}^2
(\mathbb{C})\dashrightarrow\mathbb{P}^2(\mathbb{C}),$ $(x:y:z)
\mapsto(f_0(x,y,z):f_1(x,y,z):f_2(x,y,z))$ d\'esigne une transformation
birationnelle, le {\it degr\'e alg\'ebrique}\label{ind18} de $f$ est par d\'efinition 
\'egal au degr\'e des~$f_i.$ Quant au {\it degr\'e dynamique}\label{ind19} il est donn\'e
par la m\^eme d\'efinition, \`a savoir par $$\lambda(f)=\displaystyle\lim_{n
\to +\infty}(\deg f^n)^{1/n}.$$

\noindent Soit $f$ dans $\mathrm{Bir}(\mathbb{P}^2);$ la 
transformation $f$ est dite {\it alg\'ebriquement stable}\label{ind20} s'il n'existe
pas de courbe $\mathcal{C}$ dans~$\mathbb{P}^2(\mathbb{C})$
telle que $f^k(\mathcal{C})$ appartienne \`a $\mathrm{Ind}\,f$
pour un certain entier $k\geq 0.$ Autrement dit une transformation
est alg\'ebriquement stable si la situation suivante n'arrive pas

\begin{figure}[H]
\begin{center}
\input{as.pstex_t}
\end{center}
\end{figure}

\noindent Les deux conditions qui suivent sont \'equivalentes (\cite{DiFa})
\smallskip
\begin{itemize}
\item il n'existe pas de courbe $\mathcal{C}$ dans $\mathbb{P}^2
(\mathbb{C})$ telle que $f^k(\mathcal{C})$ appartienne \`a 
$\mathrm{Ind}\,f$ pour un certain entier $k\geq 0;$

\item pour tout entier $n$ on a $\deg f^n=(\deg f)^n.$
\end{itemize}

\begin{egs}
\begin{itemize}
\item Un automorphisme de $\mathbb{P}^2(\mathbb{C})$ est alg\'ebriquement stable.

\item L'involution $\sigma\colon\mathbb{P}^2(\mathbb{C})
\dashrightarrow\mathbb{P}^2(\mathbb{C}),$ $(x:y:z)
\mapsto(yz:xz:xy)$ n'est pas alg\'ebriquement stable:
si $\mathcal{C}$ appartient \`a $\mathrm{Exc}\,\sigma$
alors $\sigma(\mathcal{C})$ est un point de $\mathrm{Ind}\,\sigma;$
par ailleurs $\deg\sigma^2=1$ alors que $(\deg\sigma)^2=4.$

\item Si $A$ est un \'el\'ement \og g\'en\'erique\fg\, de $\mathrm{Aut}(\mathbb{P}^2(\mathbb{C})),$ alors $A\sigma$ est alg\'ebriquement stable (\cite{CeDe}). 
\end{itemize}
\end{egs}

\noindent D'apr\`es \textsc{Diller} et \textsc{Favre} toute transformation birationnelle sur 
une surface complexe compacte est alg\'ebriquement stable modulo conjugaison birationnelle.

\begin{pro}[\cite{DiFa}]
{\sl Soit $f\colon \mathcal{Z}\dashrightarrow \mathcal{Z}$ une transformation birationnelle sur
une surface complexe compacte $\mathcal{Z}.$ Il existe une suite d'\'eclatements 
$\pi\colon\mathcal{X}\to\mathcal{Z}$ telle que~$\pi^{-1}f\pi$ soit alg\'ebriquement stable.}
\end{pro}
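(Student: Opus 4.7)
\noindent Le plan est de suivre la strat\'egie de \textsc{Diller}-\textsc{Favre}: on introduit un invariant cohomologique mesurant le d\'efaut de stabilit\'e alg\'ebrique, que l'on fait d\'ecro\^itre strictement par des \'eclatements bien choisis, jusqu'\`a son annulation.

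\medskip

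\noindent Notons d'abord qu'\`a tout $f \in \mathrm{Bir}(\mathcal{Z})$ est associ\'e un op\'erateur $f^*$ sur $\mathrm{NS}(\mathcal{Z})_{\mathbb{R}}$ (image inverse des classes de courbes). L'in\'egalit\'e $(f^n)^* \leq (f^*)^n$ est toujours v\'erifi\'ee dans le c\^one effectif, et devient une \'egalit\'e pour tout $n \geq 1$ si et seulement si $f$ est alg\'ebriquement stable (ceci d\'ecoule de l'\'equivalence rappel\'ee dans le texte). La diff\'erence $(f^*)^n - (f^n)^*$ est pr\'ecis\'ement support\'ee par les it\'er\'ees des courbes $\mathcal{C} \in \mathrm{Exc}\,f$ dont une orbite vient heurter un point de $\mathrm{Ind}\,f.$

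\medskip

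\noindent Supposons $f$ non alg\'ebriquement stable. Il existe alors une composante $\mathcal{C}$ de $\mathrm{Exc}\,f$ et un entier $k \geq 0$ tels que $p = f^{k+1}(\mathcal{C})$ soit un point de $\mathrm{Ind}\,f.$ Soit $\pi\colon \widetilde{\mathcal{Z}} \to \mathcal{Z}$ l'\'eclatement de $\mathcal{Z}$ en $p,$ $E = \pi^{-1}(p) \simeq \mathbb{P}^1(\mathbb{C})$ le diviseur exceptionnel, et $\widetilde{f} = \pi^{-1}f\pi$ le rel\`evement. La transform\'ee stricte $\widetilde{\mathcal{C}}$ de $\mathcal{C}$ est maintenant envoy\'ee par $\widetilde{f}^{k+1}$ sur le diviseur $E$ tout entier: l'obstruction associ\'ee \`a cette orbite est r\'esolue.

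\medskip

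\noindent L'ingr\'edient clef de \textsc{Diller}-\textsc{Favre} est de montrer que l'op\'eration ci-dessus fait d\'ecro\^itre strictement un invariant num\'erique convenable (essentiellement la norme de $(f^*)^n - (f^n)^*$ par intersection avec une classe ample, agr\'eg\'ee sur tous les $n$), et que cet invariant prend ses valeurs dans un ensemble discret positif. Comme il est minor\'e par $0,$ le processus s'ach\`eve apr\`es un nombre fini d'\'etapes, et la compos\'ee des \'eclatements successifs fournit le morphisme $\pi\colon \mathcal{X} \to \mathcal{Z}$ annonc\'e. L'obstacle principal r\'eside dans l'analyse de $\widetilde{f}$ au voisinage de $E$: il faut v\'erifier que l'\'eclatement ne cr\'ee pas de nouvelles obstructions plus co\^uteuses \`a la stabilit\'e (par exemple, que $E$ ne devienne pas une composante de $\mathrm{Exc}\,\widetilde{f}$ dont une it\'er\'ee retombe dans $\mathrm{Ind}\,\widetilde{f}$ avec une contribution cohomologique plus grande). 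C'est pour cette raison que l'argument de d\'ecroissance stricte s'appuie sur un comptage fin des multiplicit\'es d'intersection dans $\widetilde{\mathcal{Z}},$ plut\^ot que sur un argument na\"{\i}f de cardinalit\'e d'orbites.
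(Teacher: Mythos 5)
Votre sch\'ema g\'en\'eral (faire d\'ecro\^itre une obstruction par \'eclatements) est bien dans l'esprit de \textsc{Diller} et \textsc{Favre}, mais tel quel l'argument comporte deux lacunes r\'eelles. La premi\`ere est l'\'etape locale: vous n'\'eclatez que le point terminal $p=f^{k+1}(\mathcal{C})\in\mathrm{Ind}\,f$ et affirmez que la transform\'ee stricte $\widetilde{\mathcal{C}}$ est alors envoy\'ee par $\widetilde{f}^{\,k+1}$ sur $E$ tout entier, ce qui \og r\'esoudrait\fg\, l'obstruction. C'est inexact d\`es que $k\geq 1$: rien n'a chang\'e au voisinage des points interm\'ediaires $q_1=f(\mathcal{C}),\ldots,q_k=f^k(\mathcal{C}),$ donc $\widetilde{\mathcal{C}}$ est toujours contract\'ee sur $q_1$ et $\widetilde{f}^{\,k}(\widetilde{\mathcal{C}})=q_k;$ or $f(q_k)=p$ et $\pi^{-1}$ est ind\'etermin\'ee en $p,$ de sorte que $q_k$ devient (g\'en\'eriquement) un point d'ind\'etermination de $\widetilde{f}.$ La m\^eme courbe reste donc d\'estabilisante, avec une orbite simplement raccourcie d'un cran; et m\^eme pour $k=0$ un seul \'eclatement ne suffit pas toujours, la courbe pouvant encore \^etre contract\'ee sur un point de $E$ (il faut alors \'eclater des points infiniment proches). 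L'id\'ee de \textsc{Diller} et \textsc{Favre}, reprise dans le texte, est pr\'ecis\'ement d'\'eclater tous les points $p_1,\ldots,p_k$ de l'orbite, autant de fois que n\'ecessaire, pour que l'image de $\mathcal{C}$ soit une courbe \`a chaque \'etape.

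La seconde lacune est la terminaison, qui est le c\oe ur de l'\'enonc\'e: vous invoquez un \og invariant num\'erique convenable\fg\, (une norme de $(f^*)^n-(f^n)^*$ agr\'eg\'ee sur tous les $n$) sans le d\'efinir, sans v\'erifier qu'il est fini, \`a valeurs dans un ensemble discret, ni qu'il d\'ecro\^it strictement sous l'\'eclatement choisi; vous signalez vous-m\^eme l'obstacle principal (l'\'eclatement peut cr\'eer de nouvelles configurations d\'estabilisantes faisant intervenir $E$) sans le lever. En l'\'etat, l'argument se r\'eduit \`a l'assertion qu'un processus non sp\'ecifi\'e s'arr\^ete apr\`es un nombre fini d'\'etapes. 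Pour conclure, il faudrait ou bien construire effectivement cet invariant et d\'emontrer sa d\'ecroissance stricte, ou bien reproduire l'argument de finitude de \textsc{Diller} et \textsc{Favre}; c'est exactement le point que l'id\'ee de d\'emonstration du texte laisse, elle aussi, \`a la r\'ef\'erence originale.
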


\begin{proof}[{\sl Id\'ee de d\'emonstration}]
Raisonnons par l'absurde, {\it i.e.} supposons que $f$ ne soit pas alg\'ebriquement 
stable; il existe alors une cour\-be~$\mathcal{C}$ et un 
entier $k$ tels que
\begin{figure}[H]
\begin{center}
\input{as4.pstex_t}
\end{center}
\end{figure}

\noindent L'id\'ee mise en \oe uvre par \textsc{Diller} et 
\textsc{Favre} est la suivante: quitte \`a \'eclater les $p_i$
l'image de~$\mathcal{C}$ est, pour $i=1,$ $\ldots,$ 
$k,$ une courbe. En renouvelant un nombre fini de fois 
cette op\'eration on obtient le r\'esultat annonc\'e.
\end{proof}

\noindent Lorsque $f$ est une transformation de \textsc{Cremona}
 alg\'ebriquement stable, on a $\lambda(f)=\deg f.$

\bigskip

\noindent Nous allons voir que pour $\mathrm{Bir}(\mathbb{P}^2)$
il n'y a pas d'analogue \`a la dichotomie automorphisme de 
\textsc{H\'enon}/automorphisme \'el\'ementaire. Pour ce faire introduisons
la famille de transformations birationnelles
$(f_{\alpha,\beta})$ donn\'ee par 
\begin{align*}
&\mathbb{P}^2(\mathbb{C})\dashrightarrow\mathbb{P}^2(\mathbb{C}),
&&(x:y:z)\mapsto((\alpha x+y)z:\beta y(x+z):z(x+z)), &&\alpha,\,
\beta\in\mathbb{C}^*
\end{align*}

\noindent soit dans la carte affine $z=1$
$$f_{\alpha,\beta}(x,y)=\left(\frac{\alpha x+y}{x+1},\beta y\right).$$

\begin{thm}[\cite{De}]\label{linearisation}
{\sl Le premier degr\'e dynamique de $f_{\alpha,\beta}$ vaut $1;$
plus pr\'ecis\'e\-ment~$\deg f_{\alpha,\beta}^n\sim n.$

\noindent Supposons que $\alpha$ et $\beta$ soient de module $1$
et g\'en\'eriques. Si $g$ commute \`a $f_{\alpha,\beta},$
alors $g$ est une puissance de $f_{\alpha,\beta};$ en particulier,
le centralisateur de $f_{\alpha,\beta}$ est d\'enombrable.

\noindent Les \'el\'ements $f_{\alpha,\beta}^2$ poss\`edent 
deux points fixes $m_1,$ $m_2$ et
\smallskip 
\begin{itemize}
\item il existe un voisinage $\mathcal{V}_1$ de $m_1$ sur lequel
$f_{\alpha,\beta}$ est conjugu\'e \`a $(\alpha x,\beta y);$
en particulier l'adh\'erence de l'orbite d'un point de 
$\mathcal{V}_1$ sous l'action de $f_{\alpha,\beta}$ est un tore
de dimension~$2;$\smallskip

\item il existe un voisinage $\mathcal{V}_2$ de $m_2$ tel que 
la dynamique de $f_{\alpha,\beta}^2$ soit localement lin\'eaire
sur $\mathcal{V}_2;$ l'adh\'erence de l'orbite g\'en\'erique d'un 
point de $\mathcal{V}_2$ sous l'action de $f_{\alpha,\beta}^2$
est un cercle.
\end{itemize}
}
\end{thm}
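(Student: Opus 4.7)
My plan is to combine a birational analysis of $f_{\alpha,\beta}$ on $\mathbb{P}^2(\mathbb{C})$ with classical linearization theorems à la \textsc{Siegel}.

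\emph{Degree growth.} A first step is to identify the three indeterminacy points $p_1=(1:0:0)$, $p_2=(0:1:0)$, $p_3=(-1:\alpha:1)$ of $f_{\alpha,\beta}$ and the three contracted lines $\{z=0\}\to p_2$, $\{x+z=0\}\to p_1$, and $\{y=\alpha z\}\to q$ for some point $q\in\mathbb{P}^2(\mathbb{C})$ depending on $(\alpha,\beta)$. A direct computation gives $\deg f_{\alpha,\beta}^2 = 2 < 4$, so $f_{\alpha,\beta}$ is not algebraically stable on $\mathbb{P}^2(\mathbb{C})$. Following the \textsc{Diller}--\textsc{Favre} procedure recalled earlier in the chapter, I would blow up $p_1$, $p_2$, $p_3$ and enough points in the subsequent orbits of the exceptional curves to obtain a smooth rational surface $X$ on which the lift $\tilde{f}$ is algebraically stable. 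The induced action $\tilde{f}^*$ on $\mathrm{Pic}(X)$ is an isometry of the intersection form; an explicit computation of this action shows that it is parabolic with a nontrivial \textsc{Jordan} block at the eigenvalue $1$, which forces $\deg\tilde{f}^n\sim n$ and in particular $\lambda(f_{\alpha,\beta})=1$.

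\emph{Centralizer.} The map $f_{\alpha,\beta}$ preserves the pencil of lines $\{y=c\}_{c\in\mathbb{C}}$, and this pencil can be characterized intrinsically as the unique rational pencil whose class is $\tilde{f}^*$-fixed on $\mathrm{Pic}(X)$. Any $g\in\mathrm{Bir}(\mathbb{P}^2(\mathbb{C}))$ commuting with $f_{\alpha,\beta}$ therefore preserves it, so it descends on the base $\mathbb{P}^1_y$ to a \textsc{Möbius} transformation commuting with $y\mapsto\beta y$. The genericity of $\beta$ (of modulus one, Diophantine, not a root of unity) forces this \textsc{Möbius} transformation to be an integer power of $y\mapsto\beta y$; after composing with the corresponding power of $f_{\alpha,\beta}^{-1}$, one reduces to the case where $g$ acts as the identity on the base, and the commutation with the fiberwise dynamics (rigid thanks to the genericity of $\alpha$) then forces $g$ to be the identity. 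The centralizer of $f_{\alpha,\beta}$ is consequently exactly the cyclic group it generates.

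\emph{Local dynamics at $m_1$ and $m_2$.} At $(0,0)$ the Jacobian of $f_{\alpha,\beta}$ equals $\begin{pmatrix}\alpha&1\\0&\beta\end{pmatrix}$, with distinct modulus-one eigenvalues $\alpha$ and $\beta$. For $(\alpha,\beta)$ satisfying the usual Diophantine condition, \textsc{Siegel}'s theorem provides a local biholomorphism conjugating $f_{\alpha,\beta}$ to $(x,y)\mapsto(\alpha x,\beta y)$ on a neighborhood $\mathcal{V}_1$ of $m_1=(0,0)$, and \textsc{Kronecker}'s equidistribution identifies the closure of a generic orbit with a real $2$-torus. The second fixed point $m_2$ of $f_{\alpha,\beta}^2$ sits after resolution on the exceptional configuration created by the blowups of $p_1$, $p_2$, $p_3$; a \textsc{Jacobian} computation on $X$ shows that one eigenvalue of $Df_{\alpha,\beta}^2$ at $m_2$ equals $1$ (the direction tangent to an invariant curve on which $f_{\alpha,\beta}^2$ acts as the identity) while the other is a Diophantine modulus-one rotation $\mu$. \textsc{Siegel}'s theorem applies once more and conjugates $f_{\alpha,\beta}^2$ on $\mathcal{V}_2$ to the linear map $(x,y)\mapsto(x,\mu y)$; the closure of a generic orbit is consequently a circle and not a $2$-torus.

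The hardest step is, I believe, the explicit construction of the algebraically stable model $X$ and the extraction from it of both the parabolic \textsc{Jordan}-block structure on $\mathrm{Pic}(X)$ (used for the degree-growth statement) and the precise normal form of $\tilde{f}^2$ near $m_2$ (used for the circular orbits). Once these birational data are in hand, the \textsc{Siegel} linearizations and the rigidity of the centralizer follow essentially from the generic Diophantine assumption on $(\alpha,\beta)$.
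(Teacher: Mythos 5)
The paper itself gives no proof of this statement (it is quoted from \cite{De} and only illustrated numerically), so I can only assess your argument on its own terms; it contains two genuine problems.

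The most serious one is your local analysis at $m_2.$ The relevant point is $m_2=(0:1:0)$: it is an indeterminacy point of $f_{\alpha,\beta}$ but a bona fide fixed point of $f_{\alpha,\beta}^2$ in $\mathbb{P}^2(\mathbb{C})$ (this is exactly why the statement passes to $f_{\alpha,\beta}^2$), not a point sitting on the exceptional configuration of a resolution. Writing $f^2_{\alpha,\beta}$ in the chart $y=1,$ its second coordinate is exactly $w\mapsto w/\beta^2$ and the differential at $m_2$ is $\begin{pmatrix} 1/\beta & (\alpha+\beta)/\beta^2\\ 0 & 1/\beta^2\end{pmatrix},$ with eigenvalues $\beta^{-1}$ and $\beta^{-2}.$ There is no eigenvalue $1$ and no curve of fixed points of $f^2_{\alpha,\beta}$ through $m_2,$ contrary to what you assert; moreover the true eigenvalues are resonant ($\lambda_2=\lambda_1^2$), and an eigenvalue equal to $1$ would be resonant as well, so in either case the two-variable \textsc{Siegel} theorem recalled in the chapter (simultaneously Diophantine, non-resonant multipliers) cannot be invoked \og once more\fg\, as you do. One must instead use the special structure of the map near $m_2$ (skew product over the exactly linear coordinate $w\mapsto w/\beta^2,$ with the resonant coefficient absent), e.g.\ by a fiberwise linearization of the M\"obius maps in the first coordinate, to obtain the local linear model $\mathrm{diag}(\beta^{-1},\beta^{-2})$; only then does one read off that a generic orbit closure is the circle $\{(t x_0,t^2y_0)\,\vert\,\vert t\vert=1\},$ which is the actual reason the closure is one-dimensional rather than a torus.

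The centralizer step also has a gap. From the uniqueness of the $f_{\alpha,\beta}$-invariant rational fibration you correctly deduce that a commuting $g$ preserves the pencil $y=\mathrm{cte}$ and induces on the base a M\"obius map commuting with $y\mapsto\beta y;$ but with $\beta$ not a root of unity this only forces $\bar g(y)=\mu y$ for \emph{some} $\mu\in\mathbb{C}^*,$ and nothing at the level of the base makes $\mu$ an integer power of $\beta.$ The restriction $\mu\in\beta^{\mathbb{Z}},$ and then $g=f_{\alpha,\beta}^n,$ has to come from the two-dimensional commutation relation — for instance from the cocycle equation satisfied by the fiberwise M\"obius maps, or from the linearization at $m_1$ together with the multiplicative independence of $\alpha$ and $\beta$ (a germ commuting with the generic rotation $(\alpha x,\beta y)$ is necessarily diagonal). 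As written, your reduction \og compose with a power of $f^{-1}$ so that $g$ is the identity on the base\fg\, is not available. The degree-growth part of your sketch is fine (and could even be done more simply, since the skew-product form gives $\deg f_{\alpha,\beta}^n\leq n+1$ directly, without constructing the algebraically stable model).
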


\noindent Supposons que $|\beta|=1;$ dans la carte affine
$(x,y)$ les $f_{\alpha,\beta}$ laissent 
les $3$-vari\'et\'es $|y|=$ cte invariantes. Les orbites pr\'esent\'ees sont born\'ees dans un
$\mathbb{R}^2\times\mathbb{S}^1.$ La dynamique se passe
essentiellement en dimension $3,$ diff\'erentes projections
permettent d'avoir une bonne repr\'esentation de l'orbite d'un
point. Dans la carte affine $z=1$ on d\'esigne par $p_1$ et
$p_2$ les deux projections standards.
Les figures que nous proposons
sont des repr\'esentations (en perspective) des projections
suivantes.
\begin{itemize}
\item On consid\`ere d'abord l'ensemble
\begin{align*}
\Omega_1(m,\alpha,\beta)=\{(p_1(f_{\alpha,\beta}^n(m)),\mathrm{Im} (p_2(f_{
\alpha, \beta}^n(m))))\,\vert\, n=1..30000\};
\end{align*} 
\noindent cet ensemble est contenu dans le produit de $\mathbb{R}^2$ par un
intervalle. L'orbite d'un point sous l'action de
$f_{\alpha,\beta}$ est comprim\'ee par le rev\^etement
double $(x,\rho e^{\mathrm{i}\theta})\to(x,\rho\sin\theta).$

\item De m\^eme on introduit
\begin{align*}
\Omega_2(m,\alpha,\beta)=\{(\mathrm{Re}(p_1(f_{\alpha,
\beta}^n(m))),p_2(f_{\alpha,\beta}^n(m)))\,\vert
\, n=1..30000\}
\end{align*} 
\noindent qui est inclus dans un cylindre $\mathbb{R}\times\mathbb{S}^1;$
cette seconde projection montre comment \og d\'ecomprimer\fg\hspace{1mm}
$\Omega_1$ pour avoir l'allure de l'orbite.
\end{itemize}

\bigskip

\noindent On suppose que $\alpha=\exp(2\mathrm{i}\sqrt{3})$ et $\beta=\exp(2\mathrm{i}\sqrt{2})$ et on note $\Omega_i(m)$ au lieu de $\Omega_i(m,\alpha,\beta).$

\noindent Les figures qui suivent illustrent tr\`es bien le Th\'eor\`eme \ref{linearisation}.

\begin{tabular}{ll}
\hspace{10mm}\begin{picture}(0,0)%
\includegraphics{des1scan.pstex}%
\end{picture}%
\setlength{\unitlength}{3947sp}%
\begingroup\makeatletter\ifx\SetFigFont\undefined%
\gdef\SetFigFont#1#2#3#4#5{%
  \reset@font\fontsize{#1}{#2pt}%
  \fontfamily{#3}\fontseries{#4}\fontshape{#5}%
  \selectfont}%
\fi\endgroup%
\begin{picture}(1200,1200)(1201,-1561)
\end{picture}%
\hspace{18mm} &\hspace{18mm}
\begin{picture}(0,0)%
\includegraphics{des2scan.pstex}%
\end{picture}%
\setlength{\unitlength}{3947sp}%
\begingroup\makeatletter\ifx\SetFigFont\undefined%
\gdef\SetFigFont#1#2#3#4#5{%
  \reset@font\fontsize{#1}{#2pt}%
  \fontfamily{#3}\fontseries{#4}\fontshape{#5}%
  \selectfont}%
\fi\endgroup%
\begin{picture}(1200,1200)(1201,-1561)
\end{picture}%

\hspace{5mm}\\
\hspace{10mm}$\Omega_1(10^{-4}\mathrm{i},10^{-4} \mathrm{i})$\hspace{18mm} &\hspace{18mm}
$\Omega_2(10^{-4}\mathrm{i},10^{-4} \mathrm{i})$
\hspace{5mm}\\
\end{tabular}

\vspace{4mm}

\noindent Il s'agit de l'orbite d'un point situ\'e dans la zone de
lin\'earisation du point $(0:0:1);$ on observe que
l'adh\'erence d'une orbite est un tore.

\vspace{8mm}

\begin{tabular}{ll}
\hspace{10mm}\begin{picture}(0,0)%
\includegraphics{des3scan.pstex}%
\end{picture}%
\setlength{\unitlength}{3947sp}%
\begingroup\makeatletter\ifx\SetFigFont\undefined%
\gdef\SetFigFont#1#2#3#4#5{%
  \reset@font\fontsize{#1}{#2pt}%
  \fontfamily{#3}\fontseries{#4}\fontshape{#5}%
  \selectfont}%
\fi\endgroup%
\begin{picture}(1200,1200)(1201,-1561)
\end{picture}%
\hspace{12mm} & \hspace{16mm}
\begin{picture}(0,0)%
\includegraphics{des4scan.pstex}%
\end{picture}%
\setlength{\unitlength}{3947sp}%
\begingroup\makeatletter\ifx\SetFigFont\undefined%
\gdef\SetFigFont#1#2#3#4#5{%
  \reset@font\fontsize{#1}{#2pt}%
  \fontfamily{#3}\fontseries{#4}\fontshape{#5}%
  \selectfont}%
\fi\endgroup%
\begin{picture}(1200,1200)(1201,-1561)
\end{picture}%

\hspace{5mm}\\
$\Omega_1(10000+10^{-4}\mathrm{i},10000+10^{-4}
\mathrm{i})$\hspace{8mm} & \hspace{8mm}$ \Omega_2(10000+10^{-4}\mathrm{i},10000+10^{-4}
\mathrm{i})$
\hspace{5mm}
\end{tabular}

\vspace{4mm}

\noindent C'est \og l'orbite\fg\, sous l'action de $f_{\alpha,\beta}^2$ d'un point dans la zone de
lin\'earisation du point $(0:1:0);$ ici l'adh\'erence d'une \og orbite\fg\,
est un cercle topologique. Les singularit\'es
sont des art\'efacts de projection.

\begin{rem}
La droite $z=0$ est contract\'ee par $f_{\alpha,\beta}$ sur 
$(0:1:0)$ lui-m\^eme \'eclat\'e sur~$z=0:$ la 
transforma\-tion~$f_{\alpha,\beta}$ n'est pas alg\'ebriquement stable; c'est
pour cette raison que l'on a consid\'er\'e $f_{\alpha,\beta}^2$
et non~$f_{\alpha,\beta}.$
\end{rem}

\noindent La th\'eorie ne permet pas de pr\'evoir ce qui se
passe \`a l'ext\'erieur de ces deux zones de lin\'earisation. Entre les deux zones de lin\'earisation $\mathcal{V}_1$ et
$\mathcal{V}_2$ les exp\'eriences sugg\`erent une dynamique chaotique
comme le montrent les dessins qui suivent. 

\bigskip

\begin{tabular}{cc}
\hspace{5mm}\begin{picture}(0,0)%
\includegraphics{des9scan.pstex}%
\end{picture}%
\setlength{\unitlength}{3947sp}%
\begingroup\makeatletter\ifx\SetFigFont\undefined%
\gdef\SetFigFont#1#2#3#4#5{%
  \reset@font\fontsize{#1}{#2pt}%
  \fontfamily{#3}\fontseries{#4}\fontshape{#5}%
  \selectfont}%
\fi\endgroup%
\begin{picture}(1200,1200)(1201,-1561)
\end{picture}%
\hspace{9mm} & \hspace{9mm}
\begin{picture}(0,0)%
\includegraphics{des10scan.pstex}%
\end{picture}%
\setlength{\unitlength}{3947sp}%
\begingroup\makeatletter\ifx\SetFigFont\undefined%
\gdef\SetFigFont#1#2#3#4#5{%
  \reset@font\fontsize{#1}{#2pt}%
  \fontfamily{#3}\fontseries{#4}\fontshape{#5}%
  \selectfont}%
\fi\endgroup%
\begin{picture}(1200,1200)(1201,-1561)
\end{picture}%

\hspace{5mm}\\
\hspace{8mm}$\Omega_1(0.4+10^{-4}\mathrm{i},0.4+10^{-4} \mathrm{i})$\hspace{9mm} & \hspace{9mm}
$\Omega_2(0.4+10^{-4}\mathrm{i},0.4+10^{-4} \mathrm{i})$
\hspace{5mm}
\end{tabular}

\vspace{4mm}

\noindent On constate une d\'eformation des tores invariants.

\vspace{10mm}

\begin{tabular}{cc}
\hspace{5mm} \begin{picture}(0,0)%
\includegraphics{des5scan.pstex}%
\end{picture}%
\setlength{\unitlength}{3947sp}%
\begingroup\makeatletter\ifx\SetFigFont\undefined%
\gdef\SetFigFont#1#2#3#4#5{%
  \reset@font\fontsize{#1}{#2pt}%
  \fontfamily{#3}\fontseries{#4}\fontshape{#5}%
  \selectfont}%
\fi\endgroup%
\begin{picture}(1200,1200)(1201,-1561)
\end{picture}%
 \hspace{9mm} &
\hspace{9mm}\begin{picture}(0,0)%
\includegraphics{des6scan.pstex}%
\end{picture}%
\setlength{\unitlength}{3947sp}%
\begingroup\makeatletter\ifx\SetFigFont\undefined%
\gdef\SetFigFont#1#2#3#4#5{%
  \reset@font\fontsize{#1}{#2pt}%
  \fontfamily{#3}\fontseries{#4}\fontshape{#5}%
  \selectfont}%
\fi\endgroup%
\begin{picture}(1200,1200)(1201,-1561)
\end{picture}%

\hspace{5mm}\\
& \\
\hspace{5mm} $\Omega_1(0.9+10^{-4}\mathrm{i},0.9 +10^{-4}
\mathrm{i})$\hspace{9mm} &
\hspace{9mm}$\Omega_2(0.9+10^{-4}\mathrm{i},0.9 +10^{-4}
\mathrm{i})$
\hspace{5mm}\\
& \\
\hspace*{5mm} \begin{picture}(0,0)%
\includegraphics{des7scan.pstex}%
\end{picture}%
\setlength{\unitlength}{3947sp}%
\begingroup\makeatletter\ifx\SetFigFont\undefined%
\gdef\SetFigFont#1#2#3#4#5{%
  \reset@font\fontsize{#1}{#2pt}%
  \fontfamily{#3}\fontseries{#4}\fontshape{#5}%
  \selectfont}%
\fi\endgroup%
\begin{picture}(1200,1200)(1201,-1561)
\end{picture}%
 \hspace{9mm} &
\hspace{9mm} \begin{picture}(0,0)%
\includegraphics{des8scan.pstex}%
\end{picture}%
\setlength{\unitlength}{3947sp}%
\begingroup\makeatletter\ifx\SetFigFont\undefined%
\gdef\SetFigFont#1#2#3#4#5{%
  \reset@font\fontsize{#1}{#2pt}%
  \fontfamily{#3}\fontseries{#4}\fontshape{#5}%
  \selectfont}%
\fi\endgroup%
\begin{picture}(1200,1200)(1201,-1561)
\end{picture}%

\hspace{5mm}\\
& \\
\hspace{5mm} $\Omega_1(1+10^{-4}\mathrm{i},1 +10^{-4}\mathrm{i})$\hspace{9mm} &
\hspace{9mm}$\Omega_2(1+10^{-4}\mathrm{i},1 +10^{-4}\mathrm{i})$
\hspace{5mm}\\
& \\
\hspace*{5mm}\begin{picture}(0,0)%
\includegraphics{etrange6scan.pstex}%
\end{picture}%
\setlength{\unitlength}{3947sp}%
\begingroup\makeatletter\ifx\SetFigFont\undefined%
\gdef\SetFigFont#1#2#3#4#5{%
  \reset@font\fontsize{#1}{#2pt}%
  \fontfamily{#3}\fontseries{#4}\fontshape{#5}%
  \selectfont}%
\fi\endgroup%
\begin{picture}(1200,1200)(1201,-1561)
\end{picture}%

\hspace{9mm} &\hspace{9mm} \begin{picture}(0,0)%
\includegraphics{f15scan.pstex}%
\end{picture}%
\setlength{\unitlength}{3947sp}%
\begingroup\makeatletter\ifx\SetFigFont\undefined%
\gdef\SetFigFont#1#2#3#4#5{%
  \reset@font\fontsize{#1}{#2pt}%
  \fontfamily{#3}\fontseries{#4}\fontshape{#5}%
  \selectfont}%
\fi\endgroup%
\begin{picture}(1200,1200)(1201,-1561)
\end{picture}%

\hspace{5mm}\\
& \\
\hspace{5mm} $\Omega_1(1.08+10^{-4}\mathrm{i},1.08+10^{-4}\mathrm{i})$\hspace{9mm} &
\hspace{9mm}$\Omega_2(1.08+10^{-4}\mathrm{i},1.08+10^{-4}\mathrm{i})$
\hspace{5mm}\\
\end{tabular}

\bigskip

\noindent Manifestement les tores invariants ont disparu. Toutefois la figure 
semble s'organiser autour d'une courbe ferm\'ee.

\medskip

\noindent Si, contrairement au cadre des automorphismes polynomiaux
de $\mathbb{C}^2,$ on n'a pas d'\'equivalence entre premier degr\'e
dynamique strictement sup\'erieur \`a $1$ et centralisateur non 
d\'enombrable on a n\'eanmoins une implication. Plus pr\'ecis\'ement
on a l'\'enonc\'e suivant.

\begin{thm}[\cite{Can}]
{\sl Soit $f$ une transformation birationnelle du plan projectif
complexe dont le premier degr\'e dynamique $\lambda(f)$ est
strictement plus grand que $1.$ Si $g$ est un \'el\'ement
de $\mathrm{Bir}(\mathbb{P}^2)$ qui commute avec $f,$ il existe
deux entiers $m$ dans $\mathbb{N}^*$ et $n$ dans $\mathbb{Z}$
tels que~$g^m=f^n.$}
\end{thm}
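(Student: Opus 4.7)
Le plan est de s'appuyer sur l'action du groupe de \textsc{Cremona} sur l'espace de \textsc{Picard}-\textsc{Manin} $\mathcal{Z}(\mathbb{P}^2)$, obtenu comme limite inductive des groupes de \textsc{N\'eron}-\textsc{Severi} $\mathrm{NS}(\mathcal{X})$ le long des suites d'\'eclatements $\mathcal{X}\to\mathbb{P}^2(\mathbb{C})$. La forme d'intersection y est de signature $(1,\infty)$ : la nappe sup\'erieure de son hyperbolo\"{\i}de unit\'e fournit un espace hyperbolique $\mathbb{H}_{\overline{\mathbb{Z}}}$ de dimension infinie sur lequel $\mathrm{Bir}(\mathbb{P}^2)$ agit par isom\'etries, la longueur de translation d'un \'el\'ement $h$ valant pr\'ecis\'ement $\log\lambda(h)$. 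En particulier $\lambda(f)>1$ \'equivaut \`a ce que $f$ soit loxodromique, et l'on dispose alors d'un axe invariant $\mathrm{Ax}(f)\subset\mathbb{H}_{\overline{\mathbb{Z}}}$ et de deux points fixes au bord $\alpha^\pm_f$.

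La premi\`ere \'etape, de nature g\'eom\'etrique, consiste \`a observer que tout $g$ commutant \`a $f$ stabilise la paire $\{\alpha^+_f,\alpha^-_f\}$ ; quitte \`a remplacer $g$ par $g^2$ je suppose que $g$ fixe individuellement chacun des $\alpha^\pm_f$. Il en r\'esulte que $g$ pr\'eserve $\mathrm{Ax}(f)$ et agit dessus par une translation de longueur sign\'ee $\tau(g)\in\mathbb{R}$ : on obtient ainsi un homomorphisme $\tau$, d\'efini sur le sous-groupe (d'indice au plus $2$) du centralisateur de $f$ form\'e des \'el\'ements fixant $\alpha^\pm_f$, \`a valeurs dans $(\mathbb{R},+)$, avec $\tau(f)=\log\lambda(f)>0$.

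La deuxi\`eme \'etape est d'\'etablir le caract\`ere discret de l'image de $\tau$ dans $\mathbb{R}$. Ceci r\'esulte de la minoration uniforme des degr\'es dynamiques loxodromiques dans $\mathrm{Bir}(\mathbb{P}^2)$ par un r\'eel $\lambda_L>1$ (version de la conjecture de \textsc{Lehmer}, ou r\'esultats de \textsc{Blanc}-\textsc{Cantat} sur l'ensemble des degr\'es dynamiques) : toute valeur non nulle prise par $\tau$ est en valeur absolue sup\'erieure ou \'egale \`a $\log\lambda_L$, ce qui force l'image \`a \^etre cyclique, engendr\'ee par un certain $t_0>0$. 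J'\'ecris alors $\tau(f)=k\,t_0$ et $\tau(g)=\ell\,t_0$ avec $k\in\mathbb{N}^*$ et $\ell\in\mathbb{Z}$ ; je pose $h:=g^kf^{-\ell}$, \'el\'ement du noyau de $\tau$.

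L'obstacle principal, et la partie la plus d\'elicate, est de d\'emontrer que $h$ est de torsion. Cet \'el\'ement commute \`a $f$, v\'erifie $\lambda(h)=1$ et fixe ponctuellement l'axe $\mathrm{Ax}(f)$. Par la classification des transformations birationnelles de degr\'e dynamique $1$ (\textsc{Diller}-\textsc{Favre}, \textsc{Gizatullin}), $h$ est birationnellement conjugu\'ee soit \`a une transformation pr\'eservant une fibration rationnelle ou elliptique, soit \`a un automorphisme d'une surface projective. Le premier cas est \`a \'ecarter, car $f$, centralisant $h$, devrait alors pr\'eserver cette m\^eme fibration, ce qu'interdit $\lambda(f)>1$. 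Dans le second cas, $h$ appartient \`a un groupe alg\'ebrique de type fini ; s'il n'\'etait pas de torsion, l'adh\'erence alg\'ebrique du sous-groupe $\langle h\rangle$ contiendrait un groupe \`a un param\`etre que $f$ centraliserait, obligeant \`a nouveau $f$ \`a pr\'eserver une structure g\'eom\'etrique incompatible avec son caract\`ere loxodromique. Il s'ensuit qu'il existe $N\geq 1$ avec $h^N=\mathrm{id}$, d'o\`u $g^{kN}=f^{\ell N}$ ; en posant $m=kN\in\mathbb{N}^*$ et $n=\ell N\in\mathbb{Z}$, quitte \`a doubler $m$ pour absorber le remplacement initial $g\to g^2$, on obtient le r\'esultat.
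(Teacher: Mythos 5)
Le texte ne d\'emontre pas cet \'enonc\'e: il le cite simplement d'apr\`es \textsc{Cantat}, il n'y a donc pas de preuve interne \`a laquelle comparer la v\^otre. Votre strat\'egie est pr\'ecis\'ement celle de la preuve publi\'ee de \textsc{Cantat}: action par isom\'etries sur l'espace de \textsc{Picard}-\textsc{Manin}, \'el\'ement $f$ loxodromique d'axe $\mathrm{Ax}(f),$ homomorphisme de translation $\tau$ d\'efini sur le sous-groupe d'indice au plus $2$ du centralisateur fixant les deux bouts, discr\'etude de l'image, puis \'etude du noyau; la conclusion arithm\'etique ($m=kN,$ $n=\ell N,$ doublement \'eventuel pour absorber $g\to g^2$) est correctement men\'ee. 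Dans ses grandes lignes l'argument est donc correct.

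Deux points doivent n\'eanmoins \^etre pr\'ecis\'es. D'une part la discr\'etude de l'image de $\tau$ ne d\'epend d'aucune \og version\fg\, conjecturale de \textsc{Lehmer}: la minoration $\lambda\geq\lambda_{\text{Lehmer}}$ pour toute transformation birationnelle de surface de degr\'e dynamique $>1$ est un th\'eor\`eme inconditionnel (\textsc{Diller}-\textsc{Favre}: $\lambda$ est un nombre de \textsc{Salem} ou de \textsc{Pisot}; \textsc{McMullen}: borne inf\'erieure dans le cas \textsc{Salem}; voir aussi \textsc{Blanc}-\textsc{Cantat}), et c'est sous cette forme qu'il faut l'invoquer. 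D'autre part le c\oe ur de l'\'enonc\'e est la torsion de $h=g^kf^{-\ell},$ et c'est justement l\`a que votre r\'edaction reste allusive. Comme $h$ fixe $\mathrm{Ax}(f)$ point par point, $h$ est elliptique (degr\'es born\'es): les deux branches \og fibration rationnelle ou elliptique\fg\, de \textsc{Diller}-\textsc{Favre} que vous \'ecartez sont en fait vides, et on est directement dans le cas born\'e, o\`u seul un it\'er\'e $h^N$ appartient \`a $\mathrm{Aut}^0(X)$ pour une surface rationnelle $X$ ($\mathrm{Aut}(X)$ lui-m\^eme n'est pas un groupe alg\'ebrique en g\'en\'eral, contrairement \`a ce que sugg\`ere votre formulation). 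Ensuite \og $f$ centraliserait un groupe \`a un param\`etre, d'o\`u une structure g\'eom\'etrique incompatible\fg\, demande \`a \^etre explicit\'e: le centralisateur de $f$ dans $\mathrm{Aut}^0(X)$ est ferm\'e de Zariski (l'\'egalit\'e de deux applications rationnelles d\'ependant alg\'ebriquement du param\`etre est une condition ferm\'ee), donc contient l'adh\'erence de Zariski $G$ de $\langle h^N\rangle;$ si $\dim G>0,$ sa composante neutre contient un sous-groupe \`a un param\`etre $\mathbb{G}_a$ ou $\mathbb{G}_m,$ dont les orbites g\'en\'eriques sont des courbes rationnelles et constituent, par le th\'eor\`eme de \textsc{Rosenlicht}, les fibres d'une fibration rationnelle; $f,$ qui commute au flot, permute ces orbites et pr\'eserve donc cette fibration, ce qui contredit le fait --- celui-l\`a m\^eme que vous utilisez pour le \og premier cas\fg\, --- qu'une transformation de degr\'e dynamique $>1$ ne pr\'eserve aucun pinceau. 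Si $\dim G=0,$ $h^N$ est d'ordre fini et on conclut directement. Avec ces compl\'ements la d\'emonstration se referme; en l'\'etat, l'\'etape qui porte l'essentiel du contenu du th\'eor\`eme n'est qu'esquiss\'ee.
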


\noindent \'Etant donn\'ee une transformation birationnelle $f,$ \textsc{Diller} et \textsc{Favre} ont \'etudi\'e le comportement de $(\deg f^n)_n$ et donn\'e des propri\'et\'es dans chacun des cas possibles. 

\begin{thm}[\cite{DiFa}]\label{DiFa}
{\sl Une transformation birationnelle de $\mathbb{P}^2(\mathbb{C})$
satisfait, \`a conjugaison birationnelle pr\`es, une et une seule
des propri\'et\'es suivantes.
\smallskip
\begin{itemize}
\item La suite $(\deg f^n)_n$ est born\'ee, $f$ est un automorphisme
d'une surface rationnelle $\mathcal{Z}$ et un it\'er\'e de $f$ 
appartient \`a $\mathrm{Aut}^0(\mathcal{Z}),$ la composante neutre de
$\mathrm{Aut}(\mathcal{Z}).$\smallskip

\item $\deg f^n\sim n$ alors $f$ n'est pas un automorphisme et 
$f$ pr\'eserve une unique fibration qui est rationnelle. \smallskip

\item $\deg f^n\sim n^2$ auquel cas $f$ est un automorphisme 
pr\'eservant une unique fibration qui est elliptique.\smallskip

\item $\deg f^n\sim\alpha^n,$ $\alpha>1.$\smallskip
\end{itemize}

\noindent Dans les trois premi\`eres \'eventualit\'e on a $\lambda(f)=1,$ dans le 
derni\`ere $\alpha=\lambda(f)>1.$
}
\end{thm}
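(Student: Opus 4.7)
Le plan est de passer \`a un mod\`ele alg\'ebriquement stable $\pi\colon\mathcal{X}\to\mathbb{P}^2(\mathbb{C})$ fourni par la proposition pr\'ec\'edente, et de ramener l'\'etude du comportement de $(\deg f^n)_n$ \`a celle de l'action lin\'eaire $f^*$ sur le groupe de N\'eron-Severi $\mathrm{NS}(\mathcal{X})\otimes\mathbb{R}$. Par la condition de stabilit\'e alg\'ebrique on a $(f^*)^n=(f^n)^*$, de sorte que si $[H]$ d\'esigne le tir\'e en arri\`ere par $\pi$ de la classe d'une droite de $\mathbb{P}^2(\mathbb{C})$, alors $\deg f^n=(f^*)^n[H]\cdot[H]$. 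Toute la croissance se lit donc dans la dynamique lin\'eaire de $f^*$.

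Je commencerais par observer, via le th\'eor\`eme de l'indice de \textsc{Hodge}, que la forme d'intersection sur $\mathrm{NS}(\mathcal{X})\otimes\mathbb{R}$ est lorentzienne et que $f^*$ en est une isom\'etrie pr\'eservant le c\^one des classes nef et le diviseur canonique $K_\mathcal{X}$. On applique alors la classification usuelle des isom\'etries d'un espace de type hyperbolique: elliptique ($f^*$ fixe une classe int\'erieure au c\^one positif), parabolique ($f^*$ fixe une unique droite isotrope sur le bord), ou loxodromique ($f^*$ admet deux vecteurs propres isotropes de valeurs propres $\alpha>1$ et $\alpha^{-1}$). Dans le cas elliptique, $(f^*)^n$ est born\'e, donc $(\deg f^n)_n$ aussi; on montre alors qu'un it\'er\'e de $f^*$ est trivial sur $\mathrm{NS}(\mathcal{X})$, ce qui, joint \`a la structure de $\mathrm{Aut}(\mathcal{Z})$ pour $\mathcal{Z}$ rationnelle, force un it\'er\'e de $f$ \`a appartenir \`a $\mathrm{Aut}^0(\mathcal{Z})$: c'est le cas (1). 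Dans le cas loxodromique, un argument de type \textsc{Perron}-\textsc{Frobenius} sur le c\^one nef donne $\deg f^n\sim\alpha^n$ avec $\alpha=\lambda(f)>1$: c'est le cas (4).

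Le c{\oe}ur de la preuve r\'eside dans le cas parabolique, o\`u la croissance est n\'ecessairement polynomiale. Soit $[\theta]$ la classe isotrope nef fix\'ee par $f^*$. L'enjeu est d'\'etablir que $[\theta]$ est proportionnelle \`a la classe d'une fibre d'une fibration invariante par $f$. On invoque pour cela \textsc{Riemann}-\textsc{Roch} et la formule d'adjonction $K_\mathcal{X}\cdot\theta+\theta^2=2g-2$, qui, puisque $\theta^2=0$, donne $K_\mathcal{X}\cdot\theta=2g-2$; on montre que le syst\`eme lin\'eaire $|m\theta|$ est non vide pour $m$ assez grand et d\'efinit un pinceau connexe, d'o\`u la fibration. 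Le cas $g=0$ correspond \`a une fibration rationnelle, conduit \`a $\deg f^n\sim n$ et \`a un $f$ qui n'est pas un automorphisme (cas 2); le cas $g=1$ donne une fibration elliptique, un $f$ automorphisme et, via l'action par translation sur les fibres compos\'ee \`a une action sur la base, la croissance quadratique $\deg f^n\sim n^2$ (cas 3). L'unicit\'e de la fibration invariante r\'esulte de l'unicit\'e de la droite isotrope fix\'ee dans le c\^one nef.

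L'obstacle principal est la construction effective de la fibration \`a partir de la seule classe isotrope $[\theta]$: la non-vacuit\'e et la bonne connexit\'e du pinceau $|m\theta|$ reposent sur une analyse fine du diviseur canonique $K_\mathcal{X}$ et peuvent exiger d'\'eclater encore davantage de points pour \'eliminer une \'eventuelle partie fixe. La distinction quantitative entre les cas (2) et (3) n\'ecessite par ailleurs de contr\^oler pr\'ecis\'ement l'action induite par $f$ sur la base $\mathbb{P}^1(\mathbb{C})$ et sur la fibre g\'en\'erique, et c'est l\`a que se concentre la technicit\'e sp\'ecifique \`a la g\'eom\'etrie birationnelle des surfaces.
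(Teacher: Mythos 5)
Votre strat\'egie g\'en\'erale --- passage \`a un mod\`ele alg\'ebriquement stable, lecture de la croissance des degr\'es sur l'action $f^*$ en cohomologie, classe isotrope invariante produisant une fibration dans le cas interm\'ediaire --- est bien dans l'esprit de \textsc{Diller} et \textsc{Favre} (le pr\'esent texte ne red\'emontre d'ailleurs pas ce th\'eor\`eme, il le cite), mais elle repose sur une affirmation fausse qui fait s'effondrer la trichotomie telle que vous l'employez : pour une transformation birationnelle qui n'est pas un automorphisme, $f^*$ n'est \emph{pas} une isom\'etrie de la forme d'intersection sur $\mathrm{NS}(\mathcal{X})\otimes\mathbb{R}$ et ne fixe pas $K_{\mathcal{X}}$ (d\'ej\`a sur $\mathbb{P}^2(\mathbb{C})$, $\sigma^*\mathrm{H}=2\mathrm{H}$ donne $(\sigma^*\mathrm{H})^2=4\neq \mathrm{H}^2$); la stabilit\'e alg\'ebrique fournit seulement $(f^n)^*=(f^*)^n$, pas la pr\'eservation de la forme. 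La classification elliptique/parabolique/loxodromique des isom\'etries d'un espace lorentzien ne s'applique donc qu'au cas o\`u $f$ est (conjugu\'ee \`a) un automorphisme. Ce n'est pas un d\'etail : si $f^*$ \'etait une isom\'etrie, la croissance lin\'eaire serait impossible (une isom\'etrie parabolique de $\mathrm{O}(1,n)$ a des normes $\Vert (f^*)^n\Vert$ \`a croissance quadratique), et le cas $\deg f^n\sim n$ est pr\'ecis\'ement celui o\`u $f$ n'est conjugu\'ee \`a aucun automorphisme --- c'est l'une des conclusions du th\'eor\`eme, que votre cadre ne peut pas produire.

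En cons\'equence, le c\oe ur de l'\'enonc\'e --- la dichotomie lin\'eaire/quadratique avec les propri\'et\'es correspondantes (non-automorphisme et fibration rationnelle dans un cas, automorphisme et fibration elliptique dans l'autre) --- n'est pas \'etabli par votre esquisse : vous le faites reposer sur le genre de la fibre du pinceau $\vert m\theta\vert$, mais vous ne montrez ni que le genre $0$ entra\^ine $\deg f^n\sim n$ et exclut toute conjugaison \`a un automorphisme, ni que le genre $1$ force la croissance quadratique et permet de conjuguer $f$ \`a un automorphisme; ces implications exigent une analyse sp\'ecifique de l'action relative de $f$ sur la fibration invariante (et, pour le cas quadratique, un r\'esultat \`a la \textsc{Gizatullin} sur les automorphismes paraboliques), qui est exactement la partie technique de la d\'emonstration de \textsc{Diller}--\textsc{Favre} et qui manque ici. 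Les autres \'etapes (cas born\'e, cas exponentiel via un argument de type \textsc{Perron}--\textsc{Frobenius}, construction du pinceau par \textsc{Riemann}--\textsc{Roch}, dont vous signalez honn\^etement les difficult\'es) sont des gloses acceptables au niveau d'une esquisse, mais la lacune ci-dessus est r\'edhibitoire en l'\'etat.
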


\begin{egs}
\begin{itemize}
\item Si $f$ est un automorphisme de $\mathbb{P}^2(\mathbb{C}),$ alors $\deg f^n=1$ pour tout $n.$ Toute transformation birationnelle $f$ d'ordre fini satisfait: $(\deg f^n)_n$ est born\'ee.

\smallskip 

\item La transformation $f=(xz:xy:z^2)$ v\'erifie $\deg f^n
\sim n$ (se placer dans la carte $z=1$).

\smallskip

\item Le prolongement $f$ d'un automorphisme de \textsc{H\'enon} g\'en\'eralis\'e
$(y,P(y)-\delta x),$ avec $P$ dans $\mathbb{C}[y]$ tel que $\deg P
\geq 2$ et $\delta$ dans $\mathbb{C}^*,$ a une croissance des 
degr\'es exponentielle, {\it i.e.} $\deg f^n=(\deg P)^n.$

\smallskip

%\item Toute transformation de la forme $(\alpha yz^2+z^3:
%\beta yz^2:xz^2+y^3+\gamma yz^2)$ avec $\alpha,$ $\beta,$ 
%$\gamma$ dans $\mathbb{C}^*$ et $\alpha+\beta\gamma=0,$
%$\beta^6=-1,$ est conjugu\'ee \`a un automorphisme de 
%$\mathbb{P}^2(\mathbb{C})$ \'eclat\'e en $10$ points; de 
%plus la croissance des degr\'es d'un tel \'el\'ement de
%$\mathrm{Bir}(\mathbb{P}^2)$ est quadratique.
%
%\smallskip

\item Soit $f_M$ l'\'el\'ement de $\mathrm{Bir}(\mathbb{P}^2)$
d\'efini dans la carte $z=1$ par 
\begin{align*}
& f_M=(x^ay^b,x^cy^d)&&\text{avec} && M=\left[\begin{array}{cc}
a & b\\
c & d
\end{array}
\right]\in\mathrm{SL}_2(\mathbb{Z}).
\end{align*}

\noindent On a l'alternative\smallskip
\begin{itemize}
\item ou bien $\vert\text{tr}\,M\vert\leq 2$ et $\lambda(f_M)=1;$\smallskip

\item ou bien $M$ a pour valeurs propres r\'eelles $\lambda$ et $\lambda^{-1}$ avec 
$\lambda^{-1}<1<\lambda$ et $\lambda(f_M)=\lambda.$\smallskip
\end{itemize}

\noindent Par suite $(\deg f_M^n)_n$ est born\'ee (resp. $\deg f_M^n\sim n,$
resp. $\deg f_M^n\sim n^2,$ resp. $\deg f_M^n\sim~\lambda(f_M)^n$) si et 
seulement si $M$ est elliptique (resp. parabolique, resp. parabolique, 
resp. hyperbolique).
\end{itemize}
\end{egs}

\noindent Une {\it vari\'et\'e k\"{a}hl\'erienne}\label{ind22} $M$ est une vari\'et\'e hermitienne $M$ ({\it i.e.} une vari\'et\'e complexe munie d'une m\'etrique hermitienne $h$) telle que la 2-forme $\omega = - \mathrm{im}\,h$ soit ferm\'ee.

\begin{egs}
\begin{itemize}
\item Puisque toute sous-vari\'et\'e d'une vari\'et\'e k\"{a}hl\'erienne est k\"{a}hl\'erienne et que la m\'etrique de Fubini-Study sur $\mathbb{P}^n(\mathbb{C})$ est k\"{a}hl\'erienne toute vari\'et\'e alg\'ebrique projective est k\"{a}hl\'erienne.

\item Soit $\Lambda$ un r\'eseau dans $\mathbb{C}^n;$ le tore $\mathbb{C}^n/\Lambda$ est une vari\'et\'e k\"{a}hl\'erienne.

\item Toute surface de Riemann est k\"{a}hl\'erienne.
\end{itemize}
\end{egs}

\noindent Les notions de premier degr\'e dynamique, de stabilit\'e alg\'ebrique
et le Th\'eor\`eme \ref{DiFa} se g\'en\'eralisent au contexte suivant: les transformations
bim\'eromorphes sur des surfaces compactes k\"{a}hl\'eriennes. Une transformation rationnelle $f$ sur une surface complexe compacte k\"{a}hl\'erienne $\mathcal{Z}$ d\'efinit un op\'erateur lin\'eaire $f^*$ sur les groupes de cohomologie de $\mathcal{Z}$ pr\'eservant la d\'ecomposition de \textsc{Hodge} $\mathrm{H}^k(\mathcal{Z},\mathbb{C})=\displaystyle \bigoplus_{i+j=k}\mathrm{H}^{i,j}(\mathcal{Z}, \mathbb{C}).$ On d\'efinit le {\it premier degr\'e dynamique}\label{ind23} de $f$ par
\begin{align*}
\lambda(f)=\lim_{n\to +\infty}\vert\vert (f^n)^*\vert\vert^{1/n}
\end{align*}

\noindent o\`u $\vert\vert\cdot\vert\vert$ d\'esigne une norme sur $\mathrm{End}(\mathrm{H}^{1,1} (\mathcal{Z},\mathbb{R}));$ toujours d'apr\`es \cite{DiFa} cette quantit\'e est un invariant birationnel. Elle co\"{i}ncide sur le plan projectif complexe avec la notion d\'ej\`a introduite.

\chapter{Un peu de g\'eom\'etrie alg\'ebrique}\label{ga}

\section{Quelques d\'efinitions et propri\'et\'es}

\noindent Soient $\mathcal{X}$ et $\mathcal{Z}$ deux surfaces complexes compactes. Une application m\'eromorphe $f\colon\mathcal{X}\dashrightarrow\mathcal{Z}$ est d\'efinie par son graphe $\Gamma(f)\subset\mathcal{X}\times\mathcal{Z};$ ce graphe est une sous-vari\'et\'e irr\'eductible pour laquelle la projection $\pi_1\colon\Gamma(f)\to\mathcal{X}$ sur le premier facteur est une {\it modification propre}\label{ind26z}, {\it i.e.} une application holomorphe surjective propre dont la fibre g\'en\'erique est un point (\cite{Fi}). Le {\it lieu d'ind\'etermination}\label{ind26w} de $f$ est l'ensemble fini de points o\`u $\pi_1$ n'admet pas d'inverse local. L'application~$f$ est {\it dominante}\label{ind26y} si la seconde projection $\pi_2\colon\Gamma(f)\to\mathcal{Z}$ est surjective. L'{\it ensemble critique}\label{ind26x} de~$f$ est l'image par $\pi_1$ du lieu critique de $\pi_2;$ on le notera $\mathcal{C}(f).$ Notons~$\mathrm{Exc}\,\pi_2$ l'ensemble des points o\`u $\pi_2$ n'est pas une application finie; on d\'efinit l'{\it ensemble exceptionnel}\label{ind26t} de~$f$ par~$\mathrm{Exc}\,f=\pi_1(\mathrm{Exc}\,\pi_2)\subset\mathcal{C}(f).$ Soient $f\colon\mathcal{X}\dashrightarrow \mathcal{Y},$ $g\colon\mathcal{Y}\dashrightarrow\mathcal{Z}$ deux applications m\'eromorphes dominantes; le graphe $\Gamma(g\circ f)$ de $g\circ f$ est l'adh\'erence de $$\{(x, g(f(x))) \in\mathcal{X}\times\mathcal{Z}\,\vert\, x\not \in\mathrm{Ind}\,f,\,f(x)\not\in\mathrm{Ind}\, g\};$$ ce sous-ensemble co\"{i}ncide avec $$\Gamma(g)\circ\Gamma(f)=\{(x,z)\in\mathcal{X}\times \mathcal{Z}\,\vert\,\exists\, y\in\mathcal{Y}, \,(x,y)\in\Gamma(f),\,(y,z)\in\Gamma(g)\}$$ si et seulement si ce dernier est irr\'eductible.

\begin{pro}[\cite{DiFa}]
{\sl Soient $f\colon\mathcal{X}\dashrightarrow \mathcal{Y},$ $g\colon\mathcal{Y}\dashrightarrow\mathcal{Z}$ deux applications m\'eromor\-phes dominantes entre deux surfaces complexes compactes; $\Gamma(g)\circ\Gamma(f)$ est irr\'eductible si et seulement s'il n'existe pas de composante $\mathcal{V}$ dans $\mathrm{Exc}\, f$ telle que $f(\mathcal{V})$ appartienne \`a $\mathrm{Ind}\, g.$}
\end{pro}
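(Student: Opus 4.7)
Le plan est le suivant : on commence par observer que $\Gamma(g\circ f)$, adh\'erence dans $\mathcal{X}\times\mathcal{Z}$ de l'ensemble $\{(x,g(f(x)))\}$ pris sur l'ouvert $\mathcal{X}\setminus(\mathrm{Ind}\, f\cup f^{-1}(\mathrm{Ind}\, g)),$ est une sous-vari\'et\'e irr\'eductible de dimension $2$ contenue dans $\Gamma(g)\circ\Gamma(f).$ Comme ce dernier est ferm\'e dans $\mathcal{X}\times\mathcal{Z}$ et que ses composantes sont de dimension au plus $2$ (il s'identifie \`a l'image du produit fibr\'e $\Gamma(f)\times_\mathcal{Y}\Gamma(g),$ lui-m\^eme de dimension $2$), l'irr\'eductibilit\'e demand\'ee \'equivaut \`a l'\'egalit\'e $\Gamma(g)\circ\Gamma(f)=\Gamma(g\circ f),$ \emph{i.e.} \`a l'absence de composante $2$-dimensionnelle suppl\'ementaire. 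Toute la preuve consiste donc \`a classifier ces \'eventuelles composantes additionnelles.

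Pour la direction \og existence d'une telle $\mathcal{V}$ entra\^{\i}ne la r\'eductibilit\'e\fg, je construirais explicitement une seconde composante. Si $\mathcal{V}\subset\mathrm{Exc}\, f$ est contract\'ee par $f$ sur un point $p\in\mathrm{Ind}\, g,$ la fibre de la premi\`ere projection $\Gamma(g)\to\mathcal{Y}$ au-dessus de $p$ est une courbe $C_p,$ et son image $D_p$ dans $\mathcal{Z}$ est aussi une courbe. Pour tout $x\in\mathcal{V}$ et tout $z\in D_p,$ le choix interm\'ediaire $y=p$ montre que $(x,z)\in\Gamma(g)\circ\Gamma(f).$ On obtient ainsi l'inclusion $\mathcal{V}\times D_p\subset\Gamma(g)\circ\Gamma(f),$ avec $\mathcal{V}\times D_p$ de dimension $2$ ; cette partie ne peut appartenir \`a $\Gamma(g\circ f)$ car la fibre g\'en\'erique de $\Gamma(g\circ f)$ au-dessus d'un point de $\mathcal{V}$ est un singleton (l'image bien d\'efinie du point g\'en\'erique de $\mathcal{V}$ par la compos\'ee). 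Le ferm\'e $\Gamma(g)\circ\Gamma(f)$ contient donc au moins deux composantes irr\'eductibles de dimension $2.$

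R\'eciproquement, supposons qu'aucune composante de $\mathrm{Exc}\, f$ ne s'envoie dans $\mathrm{Ind}\, g.$ J'analyserais alors les fibres de la projection $\pi_1\colon\Gamma(g)\circ\Gamma(f)\to\mathcal{X}$ : elles sont ponctuelles d\`es que $x\notin\mathrm{Ind}\, f$ et $f(x)\notin\mathrm{Ind}\, g,$ et ne peuvent \^etre de dimension $1$ qu'au-dessus de $\mathrm{Ind}\, f$ (fini) ou de $f^{-1}(\mathrm{Ind}\, g).$ Or sous notre hypoth\`ese, ce dernier ensemble est \'egalement fini : une \'eventuelle composante $1$-dimensionnelle serait n\'ecessairement une courbe contract\'ee par $f$ sur un point de $\mathrm{Ind}\, g,$ cas exclu. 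Par cons\'equent, toute fibre $1$-dimensionnelle appara\^{\i}t au-dessus d'un nombre fini de points de $\mathcal{X},$ ce qui exclut l'existence d'une composante $2$-dimensionnelle de $\Gamma(g)\circ\Gamma(f)$ autre que $\Gamma(g\circ f).$ Le point le plus d\'elicat me semble \^etre de s'assurer que ces contributions $1$-dimensionnelles (aux points d'ind\'etermination ou aux points isol\'es de $f^{-1}(\mathrm{Ind}\, g)$) sont en fait absorb\'ees par l'adh\'erence de $\Gamma(g\circ f),$ et ne forment pas de composantes irr\'eductibles parasites de dimension $1$ ; cela se v\'erifie par un argument de limite s\'equentielle $x_n\to x_0$ dans l'ouvert de d\'efinition commune, en contr\^olant la direction d'approche, argument naturel apr\`es passage \`a une d\'esingularisation simultan\'ee de $f$ et $g.$
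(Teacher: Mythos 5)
Le texte ne d\'emontre pas cette proposition: elle est simplement cit\'ee de \textsc{Diller} et \textsc{Favre}, il n'y a donc pas de preuve du papier \`a laquelle comparer la v\^otre; je juge votre argument sur pi\`eces. La mise en place (dimension au plus $2$ de $\Gamma(g)\circ\Gamma(f)$ vu comme image du produit fibr\'e, \'equivalence entre l'irr\'eductibilit\'e et l'\'egalit\'e avec $\Gamma(g\circ f)$) et le sens direct sont corrects: si $\mathcal{V}$ est contract\'ee sur $p\in\mathrm{Ind}\,g,$ l'ensemble $\mathcal{V}\times D_p$ est bien un ferm\'e de dimension $2$ de $\Gamma(g)\circ\Gamma(f)$ qui ne peut \^etre contenu dans $\Gamma(g\circ f),$ car $\mathrm{Ind}(g\circ f)$ est fini et la fibre de $\Gamma(g\circ f)$ au-dessus d'un point g\'en\'erique de $\mathcal{V}$ est donc un singleton; on obtient ainsi deux composantes distinctes de dimension $2.$

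En revanche, dans le sens r\'eciproque il y a un vrai trou, que vous signalez vous-m\^eme sans le combler: votre comptage de fibres n'exclut que les composantes suppl\'ementaires de dimension $2;$ il reste \`a montrer que les fibres de dimension $1$ au-dessus de l'ensemble fini $\mathrm{Ind}\,f\cup f^{-1}(\mathrm{Ind}\,g)$ sont contenues dans $\Gamma(g\circ f),$ faute de quoi elles fournissent des composantes irr\'eductibles de dimension $1$ et $\Gamma(g)\circ\Gamma(f)$ est r\'eductible. C'est pr\'ecis\'ement l\`a que l'hypoth\`ese doit resservir, et la piste esquiss\'ee (d\'esingularisation simultan\'ee puis contr\^ole de la direction d'approche) ne marche pas telle quelle: sur une r\'esolution apparaissent en g\'en\'eral des courbes contract\'ees \`a la fois sur un point de $\mathrm{Ind}\,f$ et sur un point de $\mathrm{Ind}\,g,$ et c'est exactement la configuration o\`u le contr\^ole des directions \'echoue. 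La fa\c{c}on propre de conclure est de rester sur $\Gamma(f),$ o\`u aucune courbe n'est contract\'ee par les deux projections: l'hypoth\`ese entra\^ine qu'aucune courbe de $\Gamma(f)$ passant par un point $(x_0,y_0)$ avec $y_0\in\mathrm{Ind}\,g$ n'est contract\'ee par $\pi_2$ (une telle courbe donnerait une composante de $\mathrm{Exc}\,f$ envoy\'ee sur $y_0$), donc le germe de $\pi_2\colon\Gamma(f)\to\mathcal{Y}$ en $(x_0,y_0)$ est fini, donc ouvert; pour tout $z_0$ tel que $(y_0,z_0)\in\Gamma(g)$ on choisit alors $y_n\to y_0$ hors de $\mathrm{Ind}\,g$ et d'un nombre fini de courbes exceptionnelles avec $g(y_n)\to z_0,$ on rel\`eve $y_n$ en $(x_n,y_n)\in\Gamma(f)$ tendant vers $(x_0,y_0)$ avec $x_n\notin\mathrm{Ind}\,f,$ et $(x_n,g(f(x_n)))\to(x_0,z_0)$ donne $(x_0,z_0)\in\Gamma(g\circ f).$ Sans cette \'etape, votre preuve du sens r\'eciproque est incompl\`ete.
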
 

\noindent La transformation $f$ est {\it alg\'ebriquement stable}\label{ind26r} si et seulement si $\Gamma(f) \circ\Gamma(f^n)=\Gamma(f\circ f^n)$ pour tout $n$ dans $\mathbb{N}.$

\bigskip

\noindent Soit $f\colon\mathcal{X}\dashrightarrow\mathcal{Z}$ une application m\'eromorphe dominante entre deux surfaces complexes compactes, $\Gamma$ une d\'esingularisation de son graphe et $\pi_1,$ $\pi_2$ les deux projections naturelles. Une forme lisse $\alpha\in \mathcal{C}^\infty_{p,q}(\mathcal{Z})$ de bidegr\'e $(p,q)$ peut \^etre tir\'ee en arri\`ere comme une forme lisse~$\pi_2^*\alpha\in\mathcal{C}^\infty_{p,q}(\Gamma)$ et pouss\'ee en avant comme un courant. Si $\{\alpha\}\in\mathrm{H}^{p,q}(\mathcal{Z})$ est la classe de Dolbeault d'une forme lisse $\alpha$ on d\'efinit $f^*\{\alpha\}$ par $f^*\{\alpha\}:=\{\pi_{1*}\pi_2^*\alpha\}\in \mathrm{H}^{p,q}(\mathcal{X})$ d'o\`u l'application lin\'eaire $f^*\colon\mathrm{H}^{p,q}(\mathcal{Z})\to\mathrm{H}^{p,q}(\mathcal{X}).$ De la m\^eme fa\c{c}on on peut d\'efinir $$f_*=\pi_{2*}\pi_1^*\colon\mathrm{H}^{p,q}(\mathcal{X})\to\mathrm{H}^{p,q}(\mathcal{Z}).$$

\bigskip

\noindent Soient $\mathcal{Z}$ une surface et $\pi\colon\widetilde{\mathcal{Z}}\to\mathcal{Z}$ l'application d'\'eclatement d'un point $p$ de $\mathcal{Z}.$ Si $\mathcal{C}$ est une courbe
de $\mathcal{Z}$ passant par $p$ on d\'esigne par $\widetilde{\mathcal{C}}=\overline{\pi^{-1} (\mathcal{C}\setminus\{p\})}$ la {\it transform\'ee stricte}\label{ind27} de~$\mathcal{C};$ la {\it transform\'ee totale}\label{ind27b} de $\mathcal{C}$ est le diviseur $\pi^*\mathcal{C}.$ Remarquons que si $\mathcal{C}$ est lisse en $p$ on a~$$\pi^*\mathcal{C}=\widetilde{\mathcal{C}}+\mathrm{E}.$$

\noindent Un {\it diviseur}\label{ind28} $E$ sur $\mathcal{Z}$ est une 
combinaison lin\'eaire du type $\sum c_iC_i,$ les $C_i$ 
d\'esignant des courbes irr\'eductibles \'eventuellement singuli\`eres de $\mathcal{Z}$ et les $c_i$ des entiers relatifs. Les diviseurs $D'$ et~$D''$ sont {\it lin\'eairement 
\'equivalents}\label{ind29}, $D'\sim D'',$ si $D'-D''$ est le diviseur d'une fonction 
rationnelle~$R,$ autrement dit si $D'-D''$ co\"incide avec le
lieu des z\'eros (avec multiplicit\'e) de $R$ priv\'e de 
l'ensemble des p\^oles de $R.$

\noindent Le {\it groupe de \textsc{Picard}}\label{ind30} de $\mathcal{Z},$ not\'e $\mathrm{Pic}
(\mathcal{Z}),$ est le groupe quotient des diviseurs modulo \'equivalence lin\'eaire.
On a: $\mathrm{Pic}(\mathbb{P}^2(\mathbb{C}))\simeq
\mathbb{Z}.$ Soient $\mathrm{H}$ la classe d'une droite dans 
$\mathrm{Pic}(\mathbb{P}^2(\mathbb{C}))$
et~$\mathcal{C}$ la classe d'une courbe de degr\'e $d;$ on 
constate que $\mathcal{C}\sim d\mathrm{H}.$

\noindent La surface $\mathcal{Z}$ est munie d'une forme d'intersection: si $D$ et $D'$ sont deux courbes distinctes,~$D\cdot D'$ correspond au nombre de points d'intersection de ces courbes compt\'es avec multiplicit\'e; on remarque que dans ce cas $D\cdot D'$ est positif ou nul. On peut \'etendre naturellement cette d\'efinition pour donner un sens \`a l'intersection de deux diviseurs quelconques; un cas particulier est l'auto-intersection, on d\'esigne alors $D\cdot D$ par $D^2.$ Le nombre d'intersection v\'erifie les propri\'et\'es suivantes
\begin{itemize}
\item si $D'\sim D''$ alors $D\cdot D'=D\cdot D'';$

\item $\pi^*D\cdot\pi^*D'=D\cdot D';$

\item $\mathrm{E}\cdot\pi^*D=0;$

\item $\mathrm{E}^2=-1;$

\item $\widetilde{\mathcal{C}}^2=\mathcal{C}^2-1.$
\end{itemize}

%\noindent D\'etaillons par exemple la derni\`ere propri\'et\'e. Comme $\mathcal{C}$ est lisse, les courbes %$\widetilde{\mathcal{C}}$
%et $\mathrm{E}$ le sont aussi. Ainsi $$\mathcal{C}^2=(\pi^*
%\mathcal{C})^2=(\widetilde{\mathcal{C}}+\mathrm{E})^2=\widetilde{\mathcal{C}}^2
%+2\widetilde{\mathcal{C}}\cdot\mathrm{E}+\mathrm{E}^2=
%\widetilde{\mathcal{C}}^2+2-1=\widetilde{\mathcal{C}}^2+1.$$

\bigskip

\noindent Soient $\pi\colon\mathrm{Bl}_p\mathbb{P}^2
\to\mathbb{P}^2(\mathbb{C})$ l'\'eclatement
de $\mathbb{P}^2(\mathbb{C})$ au point $p$ et $\mathrm{E}=\pi^{-1}(p)$ la 
fibre exceptionnelle. Notons $\mathrm{Rat}(\mathrm{S})$ 
l'ensemble des fonctions rationnelles sur $\mathrm{S};$
on a: $$\mathrm{Rat}(\mathrm{Bl}_p\mathbb{P}^2)=\pi^*
(\mathrm{Rat}(\mathbb{P}^2)).$$ Le groupe $\mathrm{Pic}
(\mathrm{Bl}_p\mathbb{P}^2)$ est engendr\'e par $\mathrm{E}$
et $\widetilde{\mathrm{L}}$ o\`u $\mathrm{L}=\{\ell=0\}$ 
d\'esigne une droite ne passant pas par $p$ et $\widetilde{\mathrm{L}}=\pi^*
\mathrm{L}=\{\ell\circ\pi=0\}$ le pull-back de $\mathrm{L}.$

\noindent Plus g\'en\'eralement soient $\mathcal{Z}$ une vari\'et\'e 
complexe obtenue en \'eclatant le plan projectif complexe 
en $N$ points distincts $p_1,$ $\ldots,$ $p_N$ et $\pi$ la composition de ces diff\'erents \'eclatements. Posons $\mathrm{H}_\mathcal{Z}=
\pi^*\mathrm{H},$ $\mathrm{H}$ d\'esignant une droite. Si $\mathrm{H}$
ne contient aucun des $p_i,$ alors $\mathrm{H}_\mathcal{Z}$ est 
repr\'esent\'e par la transform\'ee stricte 
$\widetilde{\mathrm{H}}$ de $\mathrm{H}.$ Notons 
$\mathrm{E}_j=\pi^{-1}(p_j)$ la classe dans $\mathrm{Pic}
(\mathcal{Z})$ de la fibre exceptionnelle obtenue en \'eclatant 
$p_i.$ En g\'en\'eral si $\mathrm{H}$ est une droite de 
$\mathbb{P}^2(\mathbb{C}),$ alors $\mathrm{H}_\mathcal{Z}=
\widetilde{\mathrm{H}}+\displaystyle\sum_{j\vert p_j\in\mathrm{H}}
\mathrm{E}_j.$ Dans ce cas on peut d\'ecrire $\mathrm{Pic}(\mathcal{Z}).$

\begin{thm}
{\sl Soient $p_1,$ $\ldots,$ $p_N$ des points distincts
de $\mathbb{P}^2(\mathbb{C}).$ Notons $\pi\colon
\mathcal{Z}\to\mathbb{P}^2(\mathbb{C})$ la suite d'\'eclatements
des $p_i.$ Si $\mathrm{E}_j=\pi^{-1}(p_j)$ d\'esignent 
les fibres exceptionnelles et $\mathrm{H}$ une droite 
g\'en\'erique de $\mathbb{P}^2(\mathbb{C}),$ alors
$$\mathrm{Pic}(\mathcal{Z})=\mathbb{Z}\mathrm{E}_1\oplus\ldots
\oplus\mathbb{Z}\mathrm{E}_N\oplus\mathbb{Z}\pi^* 
\mathrm{H}.$$}
\end{thm}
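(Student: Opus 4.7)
Le plan est de proc\'eder par r\'ecurrence sur $N.$ Le cas $N=0$ se r\'eduit \`a l'isomorphisme $\mathrm{Pic}(\mathbb{P}^2(\mathbb{C}))\simeq\mathbb{Z}\mathrm{H}$ d\'ej\`a rappel\'e. Pour l'\'etape inductive, je factorise $\pi=\pi'\circ\pi_N$ o\`u $\pi_N\colon\mathcal{Z}\to\mathcal{Z}_{N-1}$ est le dernier \'eclatement (en $p_N$, qui reste un point propre de $\mathcal{Z}_{N-1}$ puisque les $p_i$ sont suppos\'es distincts dans $\mathbb{P}^2(\mathbb{C})$). Tout revient alors \`a \'etablir la formule locale d'\'eclatement
$$\mathrm{Pic}(\mathcal{Z})\;=\;\pi_N^*\,\mathrm{Pic}(\mathcal{Z}_{N-1})\,\oplus\,\mathbb{Z}\,\mathrm{E}_N,$$
puis \`a lui appliquer l'hypoth\`ese de r\'ecurrence.

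Pour la surjectivit\'e, j'\'ecris toute classe de $\mathrm{Pic}(\mathcal{Z})$ comme combinaison $\mathbb{Z}$-lin\'eaire de classes de courbes irr\'eductibles. Une telle courbe $C$ est soit \'egale \`a $\mathrm{E}_N,$ soit la transform\'ee stricte $\widetilde{C'}$ d'une courbe irr\'eductible $C'$ de $\mathcal{Z}_{N-1}.$ Dans le second cas, la relation $\pi_N^*C'=\widetilde{C'}+m\,\mathrm{E}_N,$ o\`u $m$ d\'esigne la multiplicit\'e de $C'$ au point \'eclat\'e (extension de l'\'egalit\'e $\pi^*\mathcal{C}=\widetilde{\mathcal{C}}+\mathrm{E}$ rappel\'ee dans le cas lisse), donne bien $[\widetilde{C'}]=\pi_N^*[C']-m\,[\mathrm{E}_N]$ dans le membre de droite.

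Pour la somme directe, je me sers de la forme d'intersection : si $\pi_N^*D+k\,\mathrm{E}_N\sim 0,$ l'intersection avec $\mathrm{E}_N$ et les propri\'et\'es $\pi_N^*D\cdot\mathrm{E}_N=0$ et $\mathrm{E}_N^2=-1$ donnent $-k=0,$ d'o\`u $k=0.$ Il reste $\pi_N^*D\sim 0;$ appliquant $(\pi_N)_*$ (qui v\'erifie $(\pi_N)_*\pi_N^*=\mathrm{id}$ sur $\mathrm{Pic}(\mathcal{Z}_{N-1})$ car $\pi_N$ est birationnelle propre), on obtient $D\sim 0.$

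Pour conclure, j'applique l'hypoth\`ese de r\'ecurrence \`a $\mathrm{Pic}(\mathcal{Z}_{N-1})=\mathbb{Z}\mathrm{E}_1\oplus\ldots\oplus\mathbb{Z}\mathrm{E}_{N-1}\oplus\mathbb{Z}\pi'^*\mathrm{H}.$ Puisque $p_N$ est distinct des $p_j$ pour $j<N$ dans $\mathbb{P}^2(\mathbb{C}),$ il ne figure sur aucune fibre exceptionnelle ant\'erieure, donc $\pi_N^*\mathrm{E}_j$ est encore repr\'esent\'e par la m\^eme fibre $\mathrm{E}_j$ vue dans $\mathcal{Z},$ et $\pi_N^*\pi'^*\mathrm{H}=\pi^*\mathrm{H}.$ L'obstacle principal sera la partie surjectivit\'e : il faudra justifier proprement la g\'en\'eralisation $\pi_N^*C'=\widetilde{C'}+m\mathrm{E}_N$ aux courbes singuli\`eres au centre via la notion de multiplicit\'e, et s'assurer que toute classe de Picard admet bien une \'ecriture finie en termes de classes de courbes irr\'eductibles.
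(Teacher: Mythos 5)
Your argument is correct, and it is the classical proof of this statement; note that the paper itself states the theorem without any demonstration (it is quoted as a standard fact of surface theory), so there is no internal proof to compare with. Your induction with the key step $\mathrm{Pic}(\mathcal{Z})=\pi_N^*\mathrm{Pic}(\mathcal{Z}_{N-1})\oplus\mathbb{Z}\mathrm{E}_N$, surjectivity via prime divisors (either $\mathrm{E}_N$ or a strict transform, with $\pi_N^*C'=\widetilde{C'}+m\,\mathrm{E}_N$), and directness via intersection with $\mathrm{E}_N$ together with $(\pi_N)_*\pi_N^*=\mathrm{id}$, is exactly the textbook route. The two points you flag at the end are genuinely standard and pose no obstacle: on a smooth projective surface every class in $\mathrm{Pic}$ is the class of a Weil divisor, i.e.\ a finite $\mathbb{Z}$-combination of irreducible curves (Cartier $=$ Weil in the smooth case), and the total-transform formula with $m=\mathrm{mult}_{p_N}(C')$ is proved by a local computation in the blow-up charts. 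If you prefer to avoid the multiplicity formula altogether, surjectivity also follows from excision: restriction gives a surjection $\mathrm{Pic}(\mathcal{Z})\to\mathrm{Pic}(\mathcal{Z}\setminus\mathrm{E}_N)$ whose kernel is generated by $\mathrm{E}_N$, and $\mathcal{Z}\setminus\mathrm{E}_N\simeq\mathcal{Z}_{N-1}\setminus\{p_N\}$ has the same Picard group as $\mathcal{Z}_{N-1}$ since a point has codimension $2$. Finally, your use of the hypothesis that the $p_i$ are distinct points of $\mathbb{P}^2(\mathbb{C})$ (so that $p_N$ lies on no earlier $\mathrm{E}_j$, hence $\pi_N^*\mathrm{E}_j=\mathrm{E}_j$ and $\pi_N^*\pi'^*\mathrm{H}=\pi^*\mathrm{H}$) is precisely where the statement's hypothesis enters, and you have placed it correctly.
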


\bigskip

\noindent Rappelons ce que sont les {\it surfaces de \textsc{Hirzebruch}}\label{ind31}
$\mathrm{F}_n.$ Posons $\mathrm{F}_0=\mathbb{P}^1(\mathbb{C})
\times\mathbb{P}^1(\mathbb{C}).$ La surface $\mathrm{F}_1$
est obtenue en \'eclatant le plan projectif complexe en 
$(1:0:0);$ cette surface est un compactifi\'e de $\mathbb{C}^2$ naturellement muni d'une fibration rationnelle correspondant aux droi\-tes~$y=$ cte. Le diviseur \`a l'infini
est constitu\'e de deux courbes rationnelles s'intersectant 
transversalement en un point. On a \smallskip
\begin{itemize}
\item la transform\'ee stricte de la droite \`a l'infini 
dans $\mathbb{P}^2(\mathbb{C})$ qui est une droite;\smallskip

\item le diviseur exceptionnel de l'\'eclatement qui est 
une section pour la fibration.
\end{itemize}

\noindent Plus g\'en\'eralement $\mathrm{F}_n$ est, pour 
tout $n\geq 1,$ un compactifi\'e de $\mathbb{C}^2$ 
muni d'une fibration rationnelle telle que le diviseur
\`a l'infini soit constitu\'e de deux courbes rationnelles
transverses, une fibre $f$ et une section $s_n$ d'auto-intersection
$-n$ 
\begin{align*}
&\mathrm{F}_n=\mathbb{P}_{\mathbb{P}^1(\mathbb{C})}
(\mathcal{O}_{\mathbb{P}^1(\mathbb{C})}(1)\oplus
\mathcal{O}_{\mathbb{P}^1(\mathbb{C})}(n)),&& n\geq 2.
\end{align*}

\noindent Consid\'erons la surface $\mathrm{F}_n.$ On note $p$
l'intersection de $s_n$ et $f$ une fibre, $\pi_1$ l'\'eclatement de 
$\mathrm{F}_n$ en $p$ et $\pi_2$ la contraction de la 
transform\'ee stricte $\widetilde{f}$ de $f.$ On passe de 
$\mathrm{F}_n$ \`a $\mathrm{F}_{n+1}$ via $\pi_2\pi_1^{-1}$

\begin{figure}[H]
\begin{center}
\input{el1.pstex_t}
\end{center}
\end{figure}

\noindent On peut aussi passer de $\mathrm{F}_{n+1}$ \`a 
$\mathrm{F}_n$ via $\pi_2\pi_1^{-1}$ o\`u 
$\pi_1$ est l'\'eclatement de $\mathrm{F}_{n+1}$ en un 
point $p$ de la fibre $f$ qui n'appartient pas \`a $s_{n+1}$
et $\pi_2$ la contraction de la transform\'ee stricte
$\widetilde{f}$ de $f$

\begin{figure}[H]
\begin{center}
\input{el2.pstex_t}
\end{center}
\end{figure}

\noindent Dans ces deux \'eventualit\'es on dit que 
$\pi_2\pi_1^{-1}$ est une {\it transformation \'el\'ementaire}\label{ind32}
en $p.$

\section{Th\'eor\`eme de factorisation de \textsc{Zariski}}

\noindent Le th\'eor\`eme de factorisation de \textsc{Zariski}
assure que toute transformation de \textsc{Cremona} s'\'ecrit au moyen 
d'\'eclatements; ce r\'esultat est en fait valable pour toute
transformation birationnelle d'une surface projective lisse dans une 
autre. Avant de l'\'enoncer rappelons que si~$\mathcal{X}$ est une vari\'et\'e 
irr\'eductible et $\mathcal{Y}$ une vari\'et\'e, une 
\textit{transformation rationnelle}
$f\colon \mathcal{X}\dashrightarrow~\mathcal{Y}$ est un 
morphisme d'un ouvert $\mathcal{U}$ de $\mathcal{X}$ dans $\mathcal{Y}$ qui n'est pas la
restriction d'un mor\-phisme~$\widetilde{\mathcal{U}}\to~\mathcal{Y}$ avec 
$\mathcal{U}\subsetneq\widetilde{\mathcal{U}}.$

\begin{thm}[\textsc{Zariski}, 1944]\label{Zariski}
{\sl Soient $\mathcal{X},$ $\mathcal{Y}$ deux surfaces projectives lisses et $f$ une 
transformation birationnelle entre $\mathcal{X}$ et $\mathcal{Y}.$ Il existe une 
surface projective lisse $\mathcal{Z}$ et deux suites d'\'eclatements
\begin{align*}
& \pi_1\colon \mathcal{Z}\to \mathcal{X}, &&\pi_2\colon \mathcal{Z}\to \mathcal{Y}
\end{align*}
\noindent telles que 
$f=\pi_2\pi^{-1}_1$
$$\xymatrix{& \mathcal{Z}\ar[dl]_{\pi_1}\ar[dr]^{\pi_2} &\\
\mathcal{X}\ar@{-->}[rr]_f & & \mathcal{Y}}$$}
\end{thm}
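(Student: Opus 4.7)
Le plan consiste \`a proc\'eder en deux \'etapes successives. Dans un premier temps, je construirais une surface projective lisse $\mathcal{Z}$ qui domine \`a la fois $\mathcal{X}$ et $\mathcal{Y}$ de mani\`ere birationnelle. Dans un second temps, je montrerais qu'un tel morphisme birationnel se d\'ecompose n\'ecessairement en une suite finie d'\'eclatements.

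Pour l'\'etape de construction, je consid\'ererais le graphe $\Gamma(f)\subset\mathcal{X}\times\mathcal{Y}$ introduit au d\'ebut du chapitre; il s'agit d'une sous-vari\'et\'e irr\'eductible, et la projection sur le premier facteur est une modification propre. La d\'esingularisation de $\Gamma(f)$ fournit la surface $\mathcal{Z}$ recherch\'ee: l'existence d'une telle r\'esolution des singularit\'es en dimension $2$ par \'eclatements successifs est un r\'esultat classique d\^u \`a \textsc{Zariski} lui-m\^eme. Les deux projections de $\mathcal{X}\times\mathcal{Y}$ se rel\`event alors en deux morphismes birationnels $\pi_1\colon\mathcal{Z}\to\mathcal{X}$ et $\pi_2\colon\mathcal{Z}\to\mathcal{Y}$ tels que $f=\pi_2\circ\pi_1^{-1}$ au sens m\'eromorphe.

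Le c{\oe}ur de la d\'emonstration r\'eside alors dans l'\'enonc\'e clef suivant: tout morphisme birationnel $\pi\colon\mathcal{Z}\to\mathcal{W}$ entre deux surfaces projectives lisses est une composition finie d'\'eclatements de points. Je proc\'ederais par r\'ecurrence sur la diff\'erence des nombres de \textsc{Picard} $\rho(\mathcal{Z})-\rho(\mathcal{W})$, entier positif ou nul qui s'annule exactement lorsque $\pi$ est un isomorphisme. L'\'etape d'induction repose sur deux r\'esultats classiques: d'une part, l'existence dans le lieu exceptionnel de $\pi$ d'une $(-1)$-courbe $\mathrm{E}$, c'est-\`a-dire d'une courbe rationnelle lisse isomorphe \`a $\mathbb{P}^1(\mathbb{C})$ et d'auto-intersection $-1$; d'autre part, le crit\`ere de contractibilit\'e de \textsc{Castelnuovo}, qui affirme qu'une telle courbe peut \^etre contract\'ee sur un point lisse d'une surface projective lisse $\mathcal{Z}'$, l'application associ\'ee $\varepsilon\colon\mathcal{Z}\to\mathcal{Z}'$ \'etant alors pr\'ecis\'ement l'\'eclatement de $\mathcal{Z}'$ en ce point. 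La courbe $\mathrm{E}$ \'etant contract\'ee par $\pi$, le morphisme $\pi$ se factorise sous la forme $\pi=\pi'\circ\varepsilon$ avec $\pi'\colon\mathcal{Z}'\to\mathcal{W}$ morphisme birationnel entre surfaces projectives lisses v\'erifiant $\rho(\mathcal{Z}')-\rho(\mathcal{W})=\rho(\mathcal{Z})-\rho(\mathcal{W})-1$, et la r\'ecurrence aboutit. Appliqu\'e successivement \`a $\pi_1$ et \`a $\pi_2$, ce lemme fournit le th\'eor\`eme.

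L'obstacle principal se concentre dans l'existence d'une $(-1)$-courbe dans le lieu exceptionnel d'un morphisme birationnel non trivial. Cet argument repose sur le lemme de \textsc{Zariski} selon lequel la forme d'intersection restreinte aux composantes d'une fibre contract\'ee est d\'efinie n\'egative, combin\'e \`a une analyse via la formule d'adjonction du diviseur canonique relatif $K_\mathcal{Z}-\pi^*K_\mathcal{W}$, qui s'\'ecrit comme combinaison lin\'eaire \`a coefficients strictement positifs des composantes exceptionnelles et permet de rep\'erer une composante $\mathrm{E}$ satisfaisant simultan\'ement $\mathrm{E}^2=-1$ et $\mathrm{E}\cdot K_\mathcal{Z}=-1$, donc de genre arithm\'etique nul et isomorphe \`a $\mathbb{P}^1(\mathbb{C})$.
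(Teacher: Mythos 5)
Votre d\'emonstration est correcte, mais elle emprunte une route sensiblement diff\'erente de celle que suit le texte, qui renvoie \`a \cite{Be2}: l\`a, on \'elimine d'abord l'ind\'etermination en montrant qu'une transformation rationnelle d'une surface vers $\mathbb{P}^n(\mathbb{C})$ s'\'ecrit $\phi\pi^{-1}$ avec $\pi$ suite d'\'eclatements et $\phi$ morphisme, puis on factorise tout morphisme birationnel entre surfaces en raisonnant \emph{au but}: s'il n'est pas un isomorphisme, son inverse est ind\'efini en un point $p$ et le morphisme se factorise par l'\'eclatement de $p$ (propri\'et\'e universelle de l'\'eclatement), la r\'ecurrence concluant. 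Vous obtenez au contraire $\mathcal{Z}$ comme d\'esingularis\'ee du graphe de $f$ --- ce qui invoque la r\'esolution des singularit\'es des surfaces --- et vous factorisez \emph{\`a la source}: une $(-1)$-courbe du lieu exceptionnel, rep\'er\'ee via le lemme de \textsc{Zariski} et la formule d'adjonction appliqu\'ee \`a $K_\mathcal{Z}-\pi^*K_\mathcal{W},$ est contract\'ee par le crit\`ere de \textsc{Castelnuovo}, et l'on conclut par r\'ecurrence sur $\rho(\mathcal{Z})-\rho(\mathcal{W}).$ La voie de \cite{Be2} est plus \'el\'ementaire et autonome (ni r\'esolution des singularit\'es, ni \textsc{Castelnuovo}, ni lemme de \textsc{Zariski}) et produit directement les deux suites d'\'eclatements; la v\^otre mobilise un outillage plus lourd mais identifie explicitement les $(-1)$-courbes contract\'ees \`a chaque \'etape et rel\`eve du m\^eme cercle d'id\'ees que les arguments \`a la \textsc{Mori}. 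Deux points restent \`a verrouiller dans votre r\'edaction: d'une part la factorisation $\pi=\pi'\circ\varepsilon$ exige de v\'erifier que $\pi\circ\varepsilon^{-1}$ n'a pas de point d'ind\'etermination en $\varepsilon(\mathrm{E})$ --- un tel point forcerait l'inverse $\varepsilon\circ\pi^{-1}$ \`a contracter une courbe de $\mathcal{W}$ sur lui, ce qui est exclu puisque l'inverse d'un morphisme birationnel ne contracte aucune courbe et que $\pi(\mathrm{E})$ est un point; d'autre part la positivit\'e des coefficients de $K_\mathcal{Z}-\pi^*K_\mathcal{W}$ doit \^etre \'etablie sans recourir \`a la factorisation en \'eclatements que l'on cherche pr\'ecis\'ement \`a d\'emontrer, par exemple en identifiant ce diviseur au lieu des z\'eros du jacobien de $\pi.$ Sous ces r\'eserves, votre argument est complet.
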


\begin{eg}
\noindent L'involution $\sigma$ se d\'ecompose en deux suites d'\'eclatements 

\begin{figure}[H]
\begin{center}
\input{decomp.pstex_t}
\end{center}
\end{figure}

\noindent avec
\begin{align*}
 & A=(1:0:0), && B=(0:1:0), && C=(0:0:1),
\end{align*}
\noindent $E_A$ (resp. $E_B,$ resp. $E_C$) le diviseur exceptionnel obtenu en \'eclatant
$A$ (resp. $B,$ resp. $C$) et~$\widetilde{L}_{AB}$ (resp. $\widetilde{L}_{AC},$ 
resp. $\widetilde{L}_{BC}$) la transform\'ee stricte de $L_{AB}$ (resp. 
$L_{AC},$ resp. $L_{BC}$).
\end{eg}

\noindent On trouve une d\'emonstration du Th\'eor\`eme \ref{Zariski} dans \cite{Be2};
elle se d\'ecompose en deux \'etapes:
\begin{itemize}
\item tout d'abord on montre qu'une transformation rationnelle d'une surface
dans $\mathbb{P}^n(\mathbb{C})$ s'\'ecrit~$\phi\pi^{-1}$ o\`u $\pi$ d\'esigne une suite
d'\'eclatements et $\phi$ un morphisme;

\item puis on \'etablit qu'un morphisme entre deux surfaces est la
compos\'ee d'un isomorphisme et d'une suite d'\'eclatements.
\end{itemize}

\noindent Avant tout rappelons que le diviseur exceptionnel $\mathrm{E}$ obtenu en \'eclatant un 
point $m$ d'une surface $\mathcal{Z}$ est appel\'e
\textit{premier voisinage infinit\'esimal} de
$m$ et les points de $\mathrm{E}$ sont dits \textit{infiniment
proches} de $m.$ Le \textit{$k$-i\`eme
voisinage infinit\'esimal} de $m$ est l'ensemble des
points contenus dans le premier voisinage d'un certain point du
$(k-1)$-i\`eme voisinage infinit\'esimal de $m.$ 

\noindent Si $f\colon\mathbb{P}^2(\mathbb{C})\dashrightarrow\mathbb{P}^2(\mathbb{C}),$
$(x:y:z)\mapsto(f_0(x,y,z):f_1(x,y,z):f_2(x,y,z))$
d\'esigne une transformation birationnelle, le \textit{r\'eseau
homalo\"idal} associ\'e \`a $f$ est le syst\`eme de cour\-bes~$\mathscr{H}_f$ 
d\'efini par
\begin{align*}
&\alpha_0f_0+\alpha_1f_1+\alpha_2f_2=0,&& (\alpha_0:\alpha_1:
\alpha_2)\in\mathbb{P}^2(\mathbb{C});
\end{align*}
c'est l'image r\'eciproque par $f$ du r\'eseau de droites
$\alpha_0x+\alpha_1y+\alpha_2z=0.$ Chaque
courbe du r\'eseau $\mathscr{H}_f$ est donc rationnelle. Les points
base de $\mathscr{H}_f$ sont les points par lesquels
passent toutes les courbes du r\'eseau; on les appelle aussi
\textit{points base} de $f.$ Ils peuvent \^etre dans~$\mathbb{P}^2(\mathbb{C})$ ou infiniment proches de $\mathbb{P}^2(\mathbb{C});$
d\`es que l'un de ces points n'est pas dans $\mathbb{P}^2(\mathbb{C}),$
{\it i.e.} n'est pas {\it propre}\label{ind36}, il appartient \`a un
$k$-i\`eme voisinage infinit\'esimal d'un point base. 

\section{Matrices caract\'eristiques}\label{matcar}

\noindent Soit $f$ une transformation birationnelle de $\mathbb{P}^2(\mathbb{C})$ dans lui-m\^eme de degr\'e $\nu.$ D'apr\`es le Th\'eor\`eme \ref{Zariski} il existe~$\pi$ et $\eta$ deux suites d'\'eclatements telles que 
$$\xymatrix{& \mathcal{Z}\ar[dl]_{\pi}\ar[dr]^{\eta} &\\
\mathbb{P}^2(\mathbb{C})\ar@{-->}[rr]_f & & \mathbb{P}^2(\mathbb{C}) }$$
On peut \'ecrire $\pi$ sous la forme $$\pi\colon\mathcal{Z}=\mathcal{Z}_k\stackrel{\pi_k}{\to} \mathcal{Z}_{k-1}\stackrel{\pi_{k-1}}{\to}\ldots\stackrel{\pi_2}{\to}\mathcal{Z}_1\stackrel{\pi_1}{\to} \mathcal{Z}_0=\mathbb{P}^2(\mathbb{C})$$ o\`u $\pi_i$ est l'\'eclatement d'un point $p_{i-1}$ de $\mathcal{Z}_{i-1};$ de m\^eme, pour $j=1,$ $\ldots,$ $k,$ il existe $\eta_j\colon\mathcal{X}_j \to~\mathcal{X}_{j-1}$ \'eclatement du point $p'_{j-1}$ de $\mathcal{X}_{j-1}$ tels que $$\eta\colon\mathcal{Z} =\mathcal{X}_k\stackrel{\eta_k}{\to}\mathcal{X}_{k-1} \stackrel{\eta_{k-1}}{\to}\ldots\stackrel{\eta_2}{\to}\mathcal{X}_1\stackrel{\eta_1}{\to} \mathcal{X}_0=\mathbb{P}^2(\mathbb{C}).$$ Notons $\mathrm{E}_1,$ $\ldots,$ $\mathrm{E}_k$ (resp. $\mathrm{E}'_1,$ $\ldots,$ $\mathrm{E}'_k$) les diviseurs exceptionnels obtenus en \'ecla\-tant~$p_1,\,\ldots,\,p_k$ (resp. $p'_1,$ $\ldots,$ $p'_k$). Une {\it r\'esolution ordonn\'ee}\label{ind37} de $f$ est une d\'ecomposition $f=\eta\pi^{-1}$ o\`u $\eta$ et~$\pi$ sont des suites d'\'eclatements ordonn\'ees. La donn\'ee d'une r\'esolution ordonn\'ee de~$f$ d\'efinit deux bases de $\mathrm{Pic}(\mathcal{Z})$
\begin{itemize}
\item $\mathcal{B}=\{e_0=\pi^*\mathrm{H},\, e_1=\mathrm{E}_1,\,\ldots,\, e_k=\mathrm{E}_k\};$

\item $\mathcal{B}'=\{e'_0=\eta^*\mathrm{H},\, e'_1=\mathrm{E}'_1,\,\ldots,\, e'_k=\mathrm{E}'_k\}.$
\end{itemize}

\noindent On peut \'ecrire les $e'_i$ de la fa\c{c}on suivante
\begin{align*}
&e'_0=\nu e_0-\sum_{i=1}^k m_ie_i, && e'_j=\nu_je_0-\sum_{i=1}^k m_{ij}e_i,\, j\geq 1.
\end{align*}

\noindent La matrice de changement de bases $$M=\left[\begin{array}{cccc} \nu&\nu_1&\ldots&\nu_k\\ -m_1 & -m_{11}&\ldots&-m_{1k}\\
\vdots&\vdots& &\vdots\\ -m_k&-m_{k1}&\ldots&-m_{kk}\end{array}\right]$$ est appel\'ee {\it matrice caract\'eristique}\label{ind38} de $f.$ La premi\`ere colonne de la matrice $M$ est le vecteur $(\nu,-m_1,\ldots,-m_k),$ vecteur caract\'eristique de $f.$ Les autres colonnes $(\nu_i,-m_{1i},\ldots,-m_{ki})$ d\'ecrivent le \og comportement de $\mathrm{E}'_i$\fg: si $\nu_j>0,$ alors $\pi(\mathrm{E}'_j)$ est une courbe dans $\mathbb{P}^2(\mathbb{C})$ de degr\'e~$\nu_j$ passant par les points base $x_\ell$ de $f$ avec multiplicit\'e $m_{\ell j}.$  

\noindent Notons que si $\mathcal{Z}$ domine $\mathbb{P}^2(\mathbb{C})$ on peut identifier $\mathrm{H}^2(\mathcal{Z},\mathbb{Z})$ et $\mathrm{Pic}(\mathcal{Z})$ ce que nous ferons par la suite.

\section{Dimension de \textsc{Kodaira}}

\noindent Soit $\mathcal{M}$ une vari\'et\'e complexe compacte. Si $L$ est un fibr\'e en droites holomorphe sur $\mathcal{M},$ on d\'esigne par $\mathrm{H}^0(\mathcal{M},L)$ le $\mathbb{C}$-espace vectoriel  constitu\'e des sections holomorphes globales de $L.$ La dimension de $\mathrm{H}^0( \mathcal{M},L)$ est finie. Soient $x$ un point de $\mathcal{M}$ et $L_x$ la fibre de~$L$ en ce point; l'\'evaluation des sections de $L$ au point $x$ d\'etermine une application lin\'eai\-re~$\Theta_{L_x}\colon\mathrm{H}^0(\mathcal{M},L)\to L_x.$ Lorsque toutes les sections globales de $L$ s'annulent en $x,$ l'application $\Theta_{L_x}$ est identiquement nulle et $x$ est un point base de $L.$ \'Etant donn\'e un isomorphisme de $L_x$ avec $\mathbb{C},$ l'application $\Theta_{L_x}$ s'interpr\`ete comme une forme lin\'eaire; elle ne d\'epend du choix de l'isomorphisme que par un facteur multiplicatif. Ainsi pour tout point $x$ de $\mathcal{M}$ qui n'est pas un point base de $L$ on obtient un \'el\'ement $[\Theta_{L_x}]$  de $\mathbb{P}( \mathrm{H}^0(\mathcal{M},L)^*).$ Pour les fibr\'es en droites poss\'edant au moins une section non nulle on a une application m\'eromorphe $$\Theta_L\colon\mathcal{M}\dashrightarrow\mathbb{P} (\mathrm{H}^0(\mathcal{M},L)^*)$$ telle que $\mathrm{Ind}\,\Theta_L$ soit contenu dans l'ensemble des points base de $L.$ On peut r\'ep\'eter cette construction en rempla\c{c}ant $L$ par $L^{\otimes \ell}$ o\`u $\ell$ d\'esigne un entier. On d\'efinit la {\it dimension de \textsc{Kodaira}-\textsc{Iitaka}}\label{ind888} de $L$ par $$\mathrm{kod}\,(\mathcal{M},L)=\max_{\ell>0}\dim_\mathbb{C}( \Theta_{L^{\otimes\ell}}(\mathcal{M}))$$ avec la convention suivante: si aucune puissance strictement positive de $L$ ne poss\`ede de section non nulle, on pose $\mathrm{kod}\,( \mathcal{M},L)=-\infty.$ La {\it dimension de \textsc{Kodaira}}\label{ind88} de $\mathcal{M},$ not\'ee $\mathrm{kod}\,\mathcal{M},$ est la dimension de \textsc{Kodaira}-\textsc{Iitaka} du fibr\'e canonique $\mathrm{K}_\mathcal{M}=\det(\mathrm{T}^*\mathcal{M})$ de $\mathcal{M}.$

\medskip

\noindent Soit $\mathcal{Z}$ une surface complexe compacte k\"{a}hl\'erienne; $\mathrm{kod}\, \mathcal{Z}$ appartient \`a $\{-\infty,\,0,\,1,\,2\}.$ Lorsque la dimension de \textsc{Kodaira} de $\mathcal{Z}$ est positive ou nulle, le mod\`ele minimal $\mathcal{Z}'$ de $\mathcal{Z},$ {\it i.e.} la surface obtenue en contractant les \'eventuelles courbes d'auto-intersection $-1$ sur $\mathcal{Z},$ est unique; de plus, les groupes $\mathrm{Bir}(\mathcal{Z}'),$ $\mathrm{Bir}(\mathcal{Z})$ et $\mathrm{Aut}(\mathcal{Z})$ co\"{i}ncident (\cite{BHPV}). Avant de caract\'eriser de telles surfaces $\mathcal{Z}$ rappelons quelques d\'efinitions.

\medskip

\noindent Une {\it surface K$3$}\label{ind33} est une surface $\mathcal{Z}$ 
complexe, compacte, simplement connexe, \`a fibr\'e canonique
trivial. En particulier il existe une $2$-forme holomorphe
$\omega$ sur $\mathcal{Z}$ qui ne s'annule pas; $\omega$ est 
unique \`a multiplication pr\`es par un scalaire.

\begin{egs}
\begin{itemize}
\item Toute surface quartique lisse dans $\mathbb{P}^3(\mathbb{C})$
est une surface K$3.$

\smallskip
\item Tout rev\^etement double de $\mathbb{P}^2(\mathbb{C})$
ramifi\'e le long d'une courbe sextique lisse est une surface
K$3.$
\end{itemize}
\end{egs}

\noindent Soit $\mathcal{Z}$ une surface K$3$ munie d'une 
involution holomorphe $\iota.$ Lorsque $\iota$ est sans point 
fixe, le quotient est ce qu'on appelle une {\it surface
de \textsc{Enriques}}\label{ind34}; sinon il s'agit d'une surface rationnelle.

\noindent Une {\it surface minimale}\label{ind35} est une surface ne 
contenant aucune courbe rationnelle lisse d'auto-intersection
$-1.$ Les surfaces rationnelles minimales sont $\mathbb{P}^2
(\mathbb{C}),$ $\mathbb{P}^1(\mathbb{C})\times\mathbb{P}^1
(\mathbb{C})$ et $\mathrm{F}_n,$ $n\geq 2$ (\emph{voir} \cite{Be2}). 

\noindent Nous pouvons maintenant passer \`a la description des surfaces complexes compactes k\"{a}hl\'eriennes.

\begin{itemize}
\item Lorsque $\mathrm{kod}\,\mathcal{Z}=-\infty$ il y a deux familles: la premi\`ere constitu\'ee des surfaces birationnelles \`a $\mathbb{P}^2(\mathbb{C})$ (par exemple $\mathbb{P}^2(\mathbb{C}),$ $\mathbb{P}^1(\mathbb{C})\times\mathbb{P}^1(\mathbb{C}),$ les surfaces de \textsc{Hirzebruch}); la seconde form\'ee des surfaces r\'egl\'ees non rationnelles (par exemple les fibr\'es en $\mathbb{P}^1(\mathbb{C})$ au dessus d'une courbe de genre $g\geq 1$). 

\item Lorsque $\mathrm{kod}\,\mathcal{Z}=0$ il y a aussi deux familles: celle constitu\'ee des tores $\mathbb{C}^2/\Gamma$ et des surfaces hyperelliptiques (surfaces rev\^etues par des tores \`a l'aide d'un morphisme \'etale); et enfin celle form\'ee des surfaces K$3$ et surfaces de \textsc{Enriques}.

\item Lorsque $\mathrm{kod}\,\mathcal{Z}=1,$ alors $\mathcal{Z}$ est une surface elliptique.

\item Lorsque $\mathrm{kod}\,\mathcal{Z}=2,$ alors $\mathcal{Z}$ est une surface de type g\'en\'eral (exemple: toute hypersurface lisse de $\mathbb{P}^3(\mathbb{C})$ de degr\'e sup\'erieur ou \'egal \`a $5$).
\end{itemize}

\chapter{Un peu de dynamique}\label{chapdyn}

\section{Lin\'earisation}

\noindent Commen\c{c}ons par quelques rappels de lin\'earisation en dimension $1.$ Soit 
\begin{align*}
&f(z)=\alpha z+a_2z^2+a_3z^3+ \ldots, && \alpha=\mathrm{e}^{2\mathrm{i}\theta}, && \theta \in\mathbb{R}\setminus \mathbb{Q}.
\end{align*}

\noindent On cherche $h(z)=z+b_2z^2+\ldots$ tel que $fh(z)=h(\alpha z).$ Formellement
$$b_2=\frac{a_2}{\alpha^2-\alpha},\,\,\,\ldots,\,\,\, b_n=\frac{a_n+Q_n}{\alpha^n-\alpha}\,\,\, \text{ o\`u } Q_n\in\mathbb{Z}[a_i,\,\,i\leq n-1,\,\, b_i,\,\, i\leq n];$$ on dit que $f$ est {\it formellement lin\'earisable}\label{ind45}.

\begin{thm}[\textsc{Cremer}]
{\sl Si $\liminf\vert\alpha^q-\alpha\vert^{1/q}=0,$ il existe un germe analytique $f$ non lin\'earisable.

\noindent Si $\liminf\vert\alpha^q-\alpha\vert^{\frac{1}{\alpha^q}}=0$ aucun germe polynomial $f(z)=\alpha z+a_2z^2+\ldots+z^d$ n'est lin\'earisable.}
\end{thm}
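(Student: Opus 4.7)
Les deux \'enonc\'es se ram\`enent au contr\^ole des petits diviseurs qui apparaissent dans la s\'erie formelle de conjugaison $h$. Je proposerais de traiter successivement l'existence d'un germe analytique non lin\'earisable, puis la non lin\'earisabilit\'e de tout germe polynomial de degr\'e $d$.

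\textbf{Premi\`ere assertion.} La r\'ecurrence $b_n = (a_n + Q_n)/(\alpha^n - \alpha)$, o\`u $Q_n \in \mathbb{Z}[a_i, b_j\,;\,i,j\leq n-1]$ ne d\'epend pas de $a_n$, sugg\`ere une construction adversariale des coefficients. Je choisirais inductivement $a_n \in \{0,1\}$ de fa\c{c}on \`a forcer $|a_n + Q_n| \geq 1/2$, ce qui est toujours possible puisque $|Q_n| + |Q_n + 1| \geq 1$. L'hypoth\`ese $\liminf|\alpha^q - \alpha|^{1/q} = 0$ fournit une sous-suite $(q_k)$ le long de laquelle $|\alpha^{q_k} - \alpha|^{1/q_k} \to 0$; en combinant,
\[
|b_{q_k}|^{1/q_k} \geq \bigl(2\,|\alpha^{q_k} - \alpha|\bigr)^{-1/q_k} \longrightarrow +\infty,
\]
de sorte que le rayon de convergence de $h$ est nul. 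Par unicit\'e de la conjugante formelle, aucun lin\'earisant analytique ne peut exister pour le germe $f$ ainsi construit.

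\textbf{Deuxi\`eme assertion.} (Je lis la formule comme $\liminf|\alpha^q - \alpha|^{1/d^q}=0$.) J'utiliserais l'existence de points p\'eriodiques arbitrairement proches de $0$. Le polyn\^ome $f^q(z) - z$ est unitaire de degr\'e $d^q$, s'annule en $0$, et s'\'ecrit donc $z\cdot g_q(z)$ o\`u $g_q$ est unitaire, de degr\'e $d^q - 1$ et de terme constant $\alpha^q - 1$. Les $d^q - 1$ racines de $g_q$ (compt\'ees avec multiplicit\'e) sont les points p\'eriodiques non nuls de p\'eriode divisant $q$; le produit de leurs modules vaut $|\alpha^q - 1|$, donc il en existe au moins un, not\'e $z_q$, satisfaisant
\[
|z_q| \leq |\alpha^q - 1|^{1/(d^q-1)}.
\]
Comme $|\alpha|=1$ on a $|\alpha^q - \alpha| = |\alpha^{q-1} - 1|$, et l'hypoth\`ese entra\^ine $z_{q_k} \to 0$ le long d'une sous-suite. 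Or tout lin\'earisant analytique sur un disque $D_r$ interdirait l'existence d'orbites p\'eriodiques non triviales dans $D_r$ (puisque la rotation irrationnelle $z\mapsto\alpha z$ n'en poss\`ede pas), d'o\`u la contradiction pour $r > |z_{q_k}|$ assez grand.

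Le point le plus d\'elicat est la construction inductive de la premi\`ere partie: il s'agit de v\'erifier que la minoration $|a_n + Q_n| \geq 1/2$ est effectivement compatible avec la libert\'e $a_n \in \{0,1\}$, ce qui d\'ecoule de l'in\'egalit\'e triangulaire appliqu\'ee aux valeurs $Q_n$ et $Q_n + 1$, et de contr\^oler ensuite la croissance des autres $b_n$ pour ne pas d\'etruire la divergence le long de $(q_k)$. La seconde assertion est, en comparaison, un argument de comptage tr\`es rigide qui n'utilise que le degr\'e de $f^q$, les formules de \textsc{Vi\`ete} et l'absence de points p\'eriodiques non triviaux pour une rotation irrationnelle.
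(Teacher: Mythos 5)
Le papier ne d\'emontre pas cet \'enonc\'e: il le rappelle comme r\'esultat classique de \textsc{Cremer} et renvoie \`a \cite{H} pour les d\'etails, il n'y a donc pas de preuve interne \`a laquelle comparer la v\^otre. Votre argument est pr\'ecis\'ement la d\'emonstration classique et il est correct. Pour la premi\`ere assertion, le choix adversarial $a_n\in\{0,1\}$ assurant $\vert a_n+Q_n\vert\geq 1/2$ est l\'egitime puisque $Q_n$ ne d\'epend que des donn\'ees d'indice $<n$, et donne $\vert b_{q_k}\vert^{1/q_k}\to+\infty$, donc un rayon de convergence nul pour $h$; notez qu'aucun contr\^ole des autres $b_n$ n'est n\'ecessaire (la divergence le long d'une sous-suite suffit pour annuler le rayon de convergence), et que l'appel \`a l'unicit\'e formelle suppose la normalisation $h'(0)=1$, ce qui est inoffensif car tout lin\'earisant analytique se ram\`ene \`a cette normalisation par homoth\'etie, la rotation commutant aux homoth\'eties. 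Pour la seconde assertion, l'exposant $1/\alpha^q$ de l'\'enonc\'e est une coquille pour $1/d^q$ et votre lecture est la bonne; l'argument de \textsc{Vi\`ete} sur $f^q(z)-z=z\,g_q(z)$ (avec $g_q$ monique de degr\'e $d^q-1$ et de terme constant $\alpha^q-1\neq 0$ puisque $\alpha$ n'est pas racine de l'unit\'e) fournit des points p\'eriodiques non nuls $z_q$ avec $\vert z_q\vert\leq\vert\alpha^q-1\vert^{1/(d^q-1)},$ dont l'accumulation en $0$ est incompatible avec une lin\'earisation locale. Le seul point \`a soigner est le d\'ecalage d'indice: l'hypoth\`ese porte sur $\vert\alpha^q-\alpha\vert=\vert\alpha^{q-1}-1\vert,$ il faut donc invoquer les points de p\'eriode divisant $q_k-1;$ comme l'exposant $d^{q_k}/(d^{q_k-1}-1)$ est minor\'e par $1$ et la base tend vers $0,$ la conclusion $z_{q_k-1}\to 0$ subsiste, et la preuve est compl\`ete.
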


\begin{thm}[\textsc{Siegel}]
{\sl S'il existe deux constantes $c$ et $M$ strictement positives telles que $\vert\alpha^q -\alpha\vert\geq\frac{c}{q^M},$ tout germe $f(z)=\alpha z+a_2z^2+\ldots$ est localement lin\'earisable.}
\end{thm}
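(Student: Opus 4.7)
Le plan est de montrer que la s\'erie formellement lin\'earisante $h(z)=z+\sum_{n\geq 2}b_nz^n$, dont les coefficients sont donn\'es par la r\'ecurrence d\'ej\`a pr\'esent\'ee $b_n=(a_n+Q_n)/(\alpha^n-\alpha)$, a en r\'ealit\'e un rayon de convergence strictement positif. Comme $f$ est holomorphe au voisinage de l'origine, il existe $C>0$ et $R>0$ tels que $|a_n|\leq CR^{-n}$ pour tout $n\geq 2$; par ailleurs l'hypoth\`ese diophantienne sur $\alpha$ donne imm\'ediatement $|\alpha^n-\alpha|^{-1}\leq c^{-1}n^M$.

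Je proc\'ederais ensuite par la m\'ethode des s\'eries majorantes. Le polyn\^ome $Q_n$ est \`a coefficients entiers positifs en les variables $a_i$ et $b_j$ avec $i,j<n$; on peut donc majorer $|b_n|$ terme \`a terme en rempla\c{c}ant $|a_i|$ par $CR^{-i}$, $|b_j|$ par un majorant $u_j\geq 0$ et $|\alpha^n-\alpha|^{-1}$ par $c^{-1}n^M$. On cherche alors une fonction analytique $\varphi(z)=\sum_{n\geq 1}u_nz^n$ satisfaisant une \'equation fonctionnelle sch\'ematique du type $\varphi=\Psi(z,\varphi)$ absorbant le facteur polynomial $n^M$; un argument de r\'esolution alg\'ebrique directe, dans la veine de \textsc{Siegel}, ou bien le th\'eor\`eme des fonctions implicites appliqu\'e \`a un choix convenable de $\Psi$, fournira l'analyticit\'e de $\varphi$ au voisinage de $0$, puis par r\'ecurrence l'encadrement $|b_n|\leq u_n$, ce qui donnera la convergence de $h$.

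Le point d\'elicat, que je pr\'evois comme principal obstacle, r\'eside dans le contr\^ole des petits diviseurs. Si l'on majore na\"{i}vement le produit des facteurs $|\alpha^k-\alpha|^{-1}$ intervenant dans l'expression d\'evelopp\'ee de $b_n$, on voit appara\^{i}tre des contributions en $(n!)^M$, ce qui conduirait \`a une divergence de $h$. L'ingr\'edient combinatoire crucial de \textsc{Siegel} consiste \`a d\'evelopper chaque $b_n$ comme somme sur certains arbres enracin\'es, et \`a \'etablir que dans chacun de ces mon\^omes seule une proportion logarithmique des diviseurs est r\'eellement petite, les autres \'etant uniform\'ement minor\'es par une constante ne d\'ependant que de $c$ et $M$; une sommation soigneuse donne alors une majoration du type $|b_n|\leq AB^n$. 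Une alternative serait d'adopter un sch\'ema it\'eratif quadratiquement convergent \`a la \textsc{Kolmogorov}, o\`u la rapidit\'e de convergence compense la perte due aux petits diviseurs, mais dans le cadre unidimensionnel la m\'ethode des majorantes reste la plus directe.
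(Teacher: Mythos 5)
Le texte ne d\'emontre pas cet \'enonc\'e: le th\'eor\`eme de \textsc{Siegel} y est rappel\'e comme un r\'esultat classique, sans preuve, avec un simple renvoi \`a la litt\'erature (\cite{H}) pour les questions de lin\'earisation. Il n'y a donc pas de d\'emonstration interne \`a laquelle comparer votre proposition; je ne peux juger que votre plan en lui-m\^eme.

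Votre feuille de route suit bien la strat\'egie classique de \textsc{Siegel}: lin\'earisation formelle, estim\'ees de Cauchy $|a_n|\leq CR^{-n}$, minoration diophantienne $|\alpha^n-\alpha|^{-1}\leq c^{-1}n^M$, puis s\'eries majorantes. Mais, en l'\'etat, il y a un vrai trou, que vous signalez d'ailleurs vous-m\^eme: le sch\'ema du deuxi\`eme paragraphe (\'equation fonctionnelle $\varphi=\Psi(z,\varphi)$ trait\'ee par fonctions implicites) ne peut pas \og absorber\fg\, le facteur $n^M$, car chaque $b_j$ intervenant dans $Q_n$ tra\^ine ses propres petits diviseurs et leur accumulation produit des contributions du type $(n!)^M$ --- c'est pr\'ecis\'ement pourquoi le cas diophantien ne se r\'eduit pas au cas de Poincar\'e--K\oe nigs. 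Le c\oe ur du th\'eor\`eme est donc le lemme combinatoire que vous ne faites que nommer (d\'ecomposition de $b_n$ en arbres et contr\^ole du nombre d'indices $k\leq n$ pour lesquels $|\alpha^k-\alpha|$ est r\'eellement petit, \`a la \textsc{Siegel}, ou comptage \`a la \textsc{Brjuno}--\textsc{Davie}), lemme dont on d\'eduit $|b_n|\leq AB^n$. Tant que cette estimation quantitative n'est pas \'enonc\'ee pr\'ecis\'ement et d\'emontr\'ee (ou qu'un sch\'ema it\'eratif \`a convergence quadratique n'est pas effectivement mis en place), votre texte reste un plan exact mais pas une preuve: toute la difficult\'e du th\'eor\`eme est concentr\'ee dans l'\'etape manquante.
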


\noindent Passons maintenant au cas de deux variables. Soit $f(x,y)=(\alpha x,\beta y)+$ termes d'ordre sup\'erieur avec $\alpha,$~$\beta$ de module $1,$ non racines de l'unit\'e. On parle de {\it r\'esonance}\label{ind46} lorsqu'on a des relations de la forme $\alpha=\alpha^a \beta^b$ ou~$\beta=~\alpha^a\beta^b$ o\`u $a,$ $b$ d\'esignent des entiers positifs tels que~$a+b\geq 2.$ Un {\it mon\^ome r\'esonant}\label{ind47} est un mon\^ome de la for\-me~$x^ay^b.$ Il y a une obstruction formelle \`a la lin\'earisation: la pr\'esence de mon\^omes r\'esonants. On dit que $\alpha$ et $\beta$ sont {\it multiplicativement ind\'ependants}\label{ind48} si l'unique solution de $\alpha^a \beta^b=1$ avec $a,$ $b$ dans $\mathbb{Z}$ est la solution nulle $(0,0).$ On dit que $\alpha$ et $\beta$ sont {\it simultan\'ement diophantiens}\label{ind49} s'il existe deux constantes strictement positives $c$ et $M$ telles que 
\begin{align*}
&\min\Big(\vert\alpha^a\beta^b-\alpha\vert,\,\vert\alpha^a\beta^b-\beta\vert\Big)\geq \frac{c}{\vert a+b \vert^M}&& \forall a,\, b\in\mathbb{N},\, a+b\geq 2. 
\end{align*}

\begin{thm}
{\sl Si $\alpha$ et $\beta$ sont simultan\'ement diophantiens, $f$ est lin\'earisable.

\noindent Si $\alpha$ et $\beta$ sont alg\'ebriques et multiplicativement ind\'ependants, ils sont simultan\'ement diophantiens.}
\end{thm}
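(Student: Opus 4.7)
Mon plan consiste à adapter au cas bidimensionnel les arguments classiques de \textsc{Siegel} pour la première assertion et à invoquer le théorème de \textsc{Baker} sur les formes linéaires en logarithmes de nombres algébriques pour la seconde.

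Pour la première assertion, je cherche un germe $h(x,y)=(x,y)+\ldots$ tangent à l'identité vérifiant l'équation de conjugaison $f\circ h=h\circ L$ avec $L(x,y)=(\alpha x,\beta y).$ D'abord je construis $h$ formellement: en écrivant $h=\sum_{a+b\geq 1}(h_{1,ab},h_{2,ab})\,x^ay^b$ et en identifiant les coefficients de degré $a+b$ des deux membres, on obtient des équations de la forme $(\alpha^a\beta^b-\alpha)h_{1,ab}=P_{1,ab}$ et $(\alpha^a\beta^b-\beta)h_{2,ab}=P_{2,ab}$ où $P_{i,ab}$ est un polynôme universel en les coefficients de $f$ et les coefficients $h_{i,cd}$ de degré strictement inférieur. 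L'hypothèse de non résonance (contenue dans l'indépendance multiplicative, a fortiori dans la condition diophantienne simultanée) garantit que les diviseurs $\alpha^a\beta^b-\alpha$ et $\alpha^a\beta^b-\beta$ ne s'annulent jamais, d'où une détermination récursive unique de $h.$ Ensuite j'établis la convergence au voisinage de l'origine par la méthode de majorant de \textsc{Siegel}: l'hypothèse diophantienne simultanée minore $\vert\alpha^a\beta^b-\alpha\vert$ et $\vert\alpha^a\beta^b-\beta\vert$ par $c/(a+b)^M,$ ce qui permet de construire une série entière convergente $\widehat{h}$ dominant $h$ coefficient par coefficient en module, obtenue en résolvant une équation fonctionnelle explicite sur un majorant de $f.$

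Pour la seconde assertion, en posant $\alpha=\exp(2\mathrm{i}\pi\theta)$ et $\beta=\exp(2\mathrm{i}\pi\varphi),$ la proximité de $\alpha^a\beta^b$ à $\alpha$ (resp.\ à $\beta$) se traduit en celle de $(a-1)\theta+b\varphi$ (resp.\ $a\theta+(b-1)\varphi$) à un entier $k.$ Puisque $\alpha$ et $\beta$ sont algébriques et multiplicativement indépendants, $\log\alpha,$ $\log\beta$ et $2\mathrm{i}\pi$ sont linéairement indépendants sur $\mathbb{Q};$ le théorème de \textsc{Baker} fournit alors une minoration polynomiale de toute forme entière non nulle $a_0\log\alpha+b_0\log\beta+2\mathrm{i}\pi k_0$ par $C/\max(\vert a_0\vert,\vert b_0\vert,\vert k_0\vert)^K.$ En combinant cette minoration avec l'inégalité élémentaire $\vert\mathrm{e}^z-1\vert\geq\vert z\vert/2$ pour $\vert z\vert$ suffisamment petit, on en déduit la condition diophantienne simultanée annoncée.

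L'obstacle principal réside dans l'étape de convergence de la première assertion: la présence des petits diviseurs exige une estimation soigneuse, par récurrence sur le degré total $a+b,$ des normes des composantes homogènes de $h,$ et le contrôle doit absorber un facteur perte de type $(a+b)^M$ à chaque cran sans faire exploser le rayon de convergence. C'est là que se concentre le contenu technique; les autres étapes (résolution formelle, réduction au théorème de \textsc{Baker}) sont essentiellement des traductions une fois les bons outils identifiés.
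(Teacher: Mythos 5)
Le texte ne d\'emontre pas cet \'enonc\'e: il est rappel\'e comme un r\'esultat classique et renvoy\'e \`a \cite{H}, il n'y a donc pas de d\'emonstration interne \`a laquelle comparer la v\^otre; votre sch\'ema (th\'eor\`eme de \textsc{Siegel} en deux variables pour la premi\`ere assertion, formes lin\'eaires en logarithmes pour la seconde) est bien la voie standard de la litt\'erature. Pour la seconde assertion, votre r\'eduction est correcte et essentiellement compl\`ete: la condition $a+b\geq 2$ exclut $(a-1,b)=(0,0)$ et $(a,b-1)=(0,0),$ l'ind\'ependance multiplicative garantit que la forme $(a-1)\log\alpha+b\log\beta-2\mathrm{i}\pi k$ ne s'annule pas (on peut \'ecrire $2\mathrm{i}\pi=2\log(-1)$ pour rester dans le cadre des logarithmes de nombres alg\'ebriques), et comme $\vert k\vert=O(a+b)$ la minoration de \textsc{Baker} en $B^{-C},$ avec $B=\max(\vert a\vert,\vert b\vert,\vert k\vert),$ donne bien la borne polynomiale voulue en $(a+b).$

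En revanche, l'\'etape de convergence de la premi\`ere assertion, telle que vous la d\'ecrivez, \'echoue: remplacer chaque petit diviseur par sa pire minoration $c^{-1}(a+b)^{M}$ et r\'esoudre une \'equation fonctionnelle majorante produit une accumulation de type factoriel. La r\'ecurrence mod\`ele $H_n=n^{M}\sum_{n_1+n_2=n}H_{n_1}H_{n_2}$ donne d\'ej\`a $H_n\geq 2^{n-1}(n!)^{\min(M,1)},$ donc un rayon de convergence nul: c'est pr\'ecis\'ement le probl\`eme des petits diviseurs, et la majoration terme \`a terme ne suffit pas. La d\'emonstration de \textsc{Siegel} exige un ingr\'edient arithm\'etique suppl\'ementaire que votre plan ne mentionne pas: si deux diviseurs d'ordres $p>q$ sont tous deux tr\`es petits, alors $p-q$ est grand, ce qui montre que les diviseurs r\'eellement petits sont rares et borne le produit accumul\'e le long de toute cha\^ine de la r\'ecurrence par $C^{a+b}$ au lieu de $((a+b)!)^{M};$ on peut aussi contourner la difficult\'e par un sch\'ema quadratiquement convergent de type Newton/KAM, qui s'adapte sans peine au cas de deux variables avec la condition diophantienne simultan\'ee. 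Vous identifiez correctement que toute la difficult\'e se concentre l\`a, mais l'id\'ee qui la r\'esout manque; sans elle, le plan tel quel ne d\'emontre pas la premi\`ere assertion.
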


\noindent On renvoie \`a \cite{H} pour de plus amples d\'etails sur les questions de lin\'earisation.

\section{O\`u chercher des automorphismes d'entropie positive ?}

\subsection{Entropie topologique}

\noindent Soient $X$ un espace m\'etrique compact et $f\colon X\to X$ une transformation continue. Soit $\varepsilon$ un nombre r\'eel strictement positif. Pour tout entier positif $n,$ on d\'esigne par $N(n,\varepsilon)$ le cardinal minimum d'une partie $X_n$ de $X$ telle que pour tout point $y$ de~$X$ il existe $x$ dans $X$ tel que 
\begin{align*}
&\mathrm{dist}\,(f^j(x),f^j(y))\leq \varepsilon, &&\forall\,\,\,\,\, 0\leq j\leq n.
\end{align*}

\noindent On introduit la quantit\'e $h_{\text{top}}(f,\varepsilon)$ d\'efinie par $$h_{\text{top}}(f,\varepsilon)=\limsup_{n\to +\infty}\left(\frac{1}{n}\log(N(n,\varepsilon)) \right).$$ L'{\it entropie topologique}\label{ind33b} de $f$ est donn\'ee par $$h_{\text{top}}(f)=\lim_{\varepsilon\to 0}h_{\text{top}}(f,\varepsilon);$$ cette d\'efinition ne d\'epend pas du choix de la distance.

\noindent Soit $f$ une transformation $\mathcal{C}^\infty$ sur une vari\'et\'e compacte $\mathcal{M},$ on a l'in\'egalit\'e $$h_{\text{top}}(f)\geq\log r(f^*),$$ {\it i.e.} l'entropie topologique est minor\'ee par le logarithme du rayon spectral de l'application lin\'eaire induite par $f$ sur $\mathrm{H}^*(\mathcal{M},\mathbb{R}),$ somme directe des groupes de cohomologie de $\mathcal{M}.$ Remarquons que la majoration $h_{\text{top}}(f)\geq\log r(f^*)$ est encore valable dans le cas m\'eromorphe (\cite{DS}). Lorsque $\mathcal{M}$ est k\"{a}hl\'erienne on a un r\'esultat plus pr\'ecis; avant de l'\'enonc\'e introduisons la notation suivante: pour tout entier $p$ compris entre $0$ et $\dim_\mathbb{C}\mathcal{M}$ on d\'esigne par~$\lambda_p(f)$ le rayon spectral de l'application $f^*$ agissant sur le groupe de cohomologie de Dol\-beault~$\mathrm{H}^{p,p}(\mathcal{M},\mathbb{R}).$ 

\begin{thm}[\cite{Gr1, Gr2, Yo}]
{\sl Soit $f$ une transformation holomorphe sur une vari\'et\'e complexe compacte k\"{a}hl\'erienne $\mathcal{M}.$ On a l'\'egalit\'e $$h_{\text{top}}(f)=\max_{0\leq p \leq \dim_\mathbb{C}\mathcal{M}}\log\lambda_p(f).$$}
\end{thm}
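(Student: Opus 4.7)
\medskip

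\noindent\emph{Plan de d\'emonstration.} Je traiterais s\'epar\'ement les deux in\'egalit\'es.

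Pour la minoration $h_{\text{top}}(f)\geq\max_p\log\lambda_p(f),$ on part de l'in\'egalit\'e $h_{\text{top}}(f)\geq\log r(f^*)$ rappel\'ee juste avant l'\'enonc\'e, o\`u $r(f^*)$ d\'esigne le rayon spectral de $f^*$ sur $\mathrm{H}^*(\mathcal{M},\mathbb{R}).$ Il suffit alors de montrer que $r(f^*)=\max_p\lambda_p(f).$ L'holomorphie de $f$ assure que $f^*$ pr\'eserve la d\'ecomposition de \textsc{Hodge} $\mathrm{H}^k(\mathcal{M},\mathbb{C})=\bigoplus_{p+q=k}\mathrm{H}^{p,q}(\mathcal{M})$ et commute \`a la conjugaison complexe (donc a m\^eme norme sur $\mathrm{H}^{p,q}$ et $\mathrm{H}^{q,p}$). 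En testant l'action de $(f^n)^*$ sur un \'el\'ement de $\mathrm{H}^{p,q}$ contre les puissances appropri\'ees d'une classe de K\"{a}hler et en invoquant les in\'egalit\'es de \textsc{Khovanskii}-\textsc{Teissier}, on majore le rayon spectral sur chaque $\mathrm{H}^{p,q}$ par ceux sur les composantes diagonales $\mathrm{H}^{r,r},$ d'o\`u l'identit\'e $r(f^*)=\max_p\lambda_p(f).$

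Pour la majoration $h_{\text{top}}(f)\leq\max_p\log\lambda_p(f),$ je suivrais la m\'ethode de \textsc{Gromov} via la croissance du volume du graphe it\'er\'e. Fixons une forme de K\"{a}hler $\omega$ sur $\mathcal{M}$ et posons $d=\dim_\mathbb{C}\mathcal{M}.$ Pour tout $n\geq 0$ on consid\`ere
\begin{align*}
\Gamma_n=\{(x,f(x),\ldots,f^n(x))\,\vert\,x\in\mathcal{M}\}\subset\mathcal{M}^{n+1},
\end{align*}
sous-vari\'et\'e analytique complexe de dimension $d,$ et la forme k\"{a}hl\'erienne produit $\omega^{(n)}=\sum_{i=0}^n\pi_i^*\omega.$ On calcule alors
\begin{align*}
\mathrm{Vol}(\Gamma_n)=\frac{1}{d!}\int_{\Gamma_n}(\omega^{(n)})^d=\frac{1}{d!}\sum_{i_0+\ldots+i_n=d}\binom{d}{i_0,\ldots,i_n}\int_\mathcal{M}\omega^{i_0}\wedge(f^*\omega)^{i_1}\wedge\ldots\wedge((f^n)^*\omega)^{i_n}.
\end{align*}
Chaque int\'egrale ne d\'epend que des classes positives $(f^k)^*\{\omega^{i_k}\}\in\mathrm{H}^{i_k,i_k}(\mathcal{M})$ dont la norme cro\^{\i}t au plus comme $\lambda_{i_k}(f)^k,$ et une combinaison soign\'ee des in\'egalit\'es de \textsc{Khovanskii}-\textsc{Teissier} fournit $\limsup_n\frac{1}{n}\log\mathrm{Vol}(\Gamma_n)\leq\log\max_p\lambda_p(f).$ On conclut par l'in\'egalit\'e de \textsc{Lelong}-\textsc{Bishop} appliqu\'ee \`a la distance dynamique $\mathrm{dist}_n(x,y)=\max_{0\leq j\leq n}\mathrm{dist}(f^j(x),f^j(y)),$ laquelle fournit $N(n,\varepsilon)\leq C\varepsilon^{-2d}\mathrm{Vol}(\Gamma_n);$ en passant successivement aux limites en $n$ puis en $\varepsilon,$ on obtient la majoration recherch\'ee.

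Le principal obstacle se situe dans l'estim\'ee cohomologique pr\'ec\'edente: une borne na\"{\i}ve qui contr\^olerait chaque facteur uniquement \`a l'aide de $\lambda_1(f)^{k\cdot i_k}$ conduirait \`a $(\lambda_1(f)^d)^n$ au lieu de $\max_p\lambda_p(f)^n,$ borne en g\'en\'eral strictement plus grande (on a seulement $\lambda_p\leq\lambda_1^p$ via la log-concavit\'e des degr\'es dynamiques). Pour obtenir la borne optimale il faut exploiter finement la positivit\'e des classes $(f^k)^*\{\omega^{i_k}\}$ et la sym\'etrie $\mathrm{H}^{p,q}\simeq\overline{\mathrm{H}^{q,p}};$ ces outils ne sont disponibles qu'en pr\'esence de la structure k\"{a}hl\'erienne, ce qui explique pourquoi cette hypoth\`ese intervient de mani\`ere essentielle dans l'\'enonc\'e.
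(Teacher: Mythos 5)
Le texte de l'ouvrage ne contient aucune d\'emonstration de cet \'enonc\'e: c'est le th\'eor\`eme de \textsc{Gromov}--\textsc{Yomdin}, cit\'e sans preuve. Votre plan reprend exactement la strat\'egie des r\'ef\'erences cit\'ees: minoration via l'in\'egalit\'e de \textsc{Yomdin} $h_{\text{top}}(f)\geq\log r(f^*)$ jointe \`a l'identit\'e $r(f^*)=\max_p\lambda_p(f),$ majoration via l'argument de \textsc{Gromov} sur le volume des graphes it\'er\'es, et la r\'eduction $N(n,\varepsilon)\leq C\varepsilon^{-2d}\mathrm{Vol}(\Gamma_n)$ par l'in\'egalit\'e de \textsc{Lelong}. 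L'architecture est la bonne et le calcul $\mathrm{Vol}(\Gamma_n)=\frac{1}{d!}\int_M\bigl(\sum_{j=0}^n(f^j)^*\omega\bigr)^d$ est correct.

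En revanche, les deux \'etapes qui portent tout le contenu math\'ematique restent \`a l'\'etat d'affirmation, et c'est l\`a un vrai trou. (i) Pour la minoration, l'\'egalit\'e $r(f^*)=\max_p\lambda_p(f)$ demande de montrer que le rayon spectral de $f^*$ sur chaque $\mathrm{H}^{p,q}$ est domin\'e par $\sqrt{\lambda_p(f)\lambda_q(f)}$ (r\'esultat de type \textsc{Dinh}--\textsc{Sibony}); la phrase sur le \og test contre des puissances d'une classe de K\"ahler\fg\, n'en tient pas lieu. (ii) Pour la majoration, vous signalez vous-m\^eme l'obstacle mais ne le levez pas: il faut prouver que chaque terme mixte $\int_M\omega^{i_0}\wedge(f^*\omega)^{i_1}\wedge\ldots\wedge((f^n)^*\omega)^{i_n}$ est au plus de l'ordre de $\max_p\lambda_p(f)^n$ \`a facteur sous-exponentiel pr\`es. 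Or la borne qu'on \'ecrirait naturellement, \`a savoir le produit des normes d'op\'erateurs $\prod_j\Vert(f^*)^j\Vert_{\mathrm{H}^{i_j,i_j}}\lesssim\prod_j\lambda_{i_j}(f)^{j},$ est fausse pour notre objectif: pour $d=2,$ $i_{n-1}=i_n=1$ et $f$ un automorphisme de surface d'entropie positive ($\lambda_2=1<\lambda_1$), elle donne $\lambda_1^{2n-1}\gg\lambda_1^n=\max_p\lambda_p^n,$ alors que la vraie valeur $\int_M(f^{n-1})^*(\omega\wedge f^*\omega)=\int_M\omega\wedge f^*\omega$ est born\'ee. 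Le m\'ecanisme qui sauve l'argument est pr\'ecis\'ement cette fonctorialit\'e $(f^j)^*\omega\wedge(f^k)^*\omega=(f^j)^*(\omega\wedge(f^{k-j})^*\omega)$ combin\'ee \`a une r\'ecurrence (Gromov), ou le lemme sur les degr\'es mixtes de \textsc{Dinh}--\textsc{Sibony}; invoquer \textsc{Khovanskii}--\textsc{Teissier} sans expliciter cette combinaison ne constitue pas encore une preuve. En l'\'etat, votre texte est un sch\'ema fid\`ele de la d\'emonstration attendue, mais pas une d\'emonstration.
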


\begin{rem}
Le rayon spectral de $f^*$ est strictement plus grand que $1$ si et seulement si l'un des $\lambda_p(f)$ l'est et, en fait, si et seulement si $\lambda(f)=\lambda_1(f)$ l'est. En d'autres termes pour savoir si $f$ est d'entropie positive il suffit de calculer la croissance de $(f^n)^*\{\alpha\}$ o\`u $\{\alpha\}$ est une forme de \textsc{K\"{a}hler}.
\end{rem}

\begin{egs}
\begin{itemize}
\item Si $\mathcal{M}$ est une vari\'et\'e compacte k\"{a}hl\'erienne, tout \'el\'ement de la composante connexe de l'identit\'e de $\mathrm{Aut}\,\mathcal{M}$ est d'entropie nulle.

\item L'entropie topologique d'un endomorphisme holomorphe $f$ de l'espace projectif est \'egale au logarithme du degr\'e topologique de $f.$

\item Contrairement \`a un automorphisme \'el\'ementaire, un automorphisme de \textsc{H\'enon} est d'entropie positive.
\end{itemize}
\end{egs}

\subsection{Un th\'eor\`eme de \textsc{Cantat}}

\noindent L'\'enonc\'e suivant r\'epond \`a la question: o\`u chercher des automorphismes de surfaces complexes compactes d'entropie positive ?

\begin{thm}[\cite{Can1}]
{\sl Soit $\mathcal{Z}$ une surface complexe compacte. Supposons que $\mathcal{Z}$ poss\`ede un automorphisme $f$ dont l'entropie topologique est positive. Alors
\begin{itemize}
\item ou bien $\mathrm{kod}\,\mathcal{Z}=0$ et $f$ est conjugu\'e \`a un automorphisme de l'unique mod\`ele minimal de $\mathcal{Z}$ qui est un tore, une surface K$3$ ou une surface de \textsc{Enriques}; 

\item ou bien $\mathcal{Z}$ est rationnelle auquel cas $\mathcal{Z}$ s'obtient en \'eclatant $\mathbb{P}^2(\mathbb{C})$ en au moins $10$ points (\'eventuellement proches) et $f$ est birationnellement conjugu\'e \`a un \'el\'ement de $\mathrm{Bir}(\mathbb{P}^2).$
\end{itemize}

\noindent En particulier $\mathcal{Z}$ est une surface k\"{a}hl\'erienne.}
\end{thm}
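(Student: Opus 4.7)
Mon plan consiste \`a passer en revue la classification de \textsc{Kodaira} des surfaces complexes compactes en confrontant chaque classe \`a la contrainte $h_{\text{top}}(f)>0.$ L'in\'egalit\'e $h_{\text{top}}(f)\geq \log r(f^*),$ valable sur toute vari\'et\'e compacte et d\'ej\`a \'evoqu\'ee plus haut, force $f^*$ \`a poss\'eder sur $\mathrm{H}^*(\mathcal{Z},\mathbb{R})$ une valeur propre de module strictement sup\'erieur \`a $1;$ en particulier $f^*$ est d'ordre infini et, dans le cas k\"{a}hl\'erien, l'on a $\lambda(f)>1.$ Il s'agit ensuite d'exploiter la rigidit\'e des structures canoniques de chaque classe pour en d\'eduire la dichotomie annonc\'ee.

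Je commencerais par \'ecarter les classes o\`u l'action sur la cohomologie ne peut \^etre hyperbolique. Pour $\mathrm{kod}\,\mathcal{Z}=2$ (type g\'en\'eral), $\mathrm{Aut}(\mathcal{Z})$ est fini via le plongement pluricanonique, ce qui contredit $r(f^*)>1.$ Pour $\mathrm{kod}\,\mathcal{Z}=1,$ la fibration elliptique canonique $\pi\colon\mathcal{Z}\to C$ est $f$-invariante, et l'action induite sur $\mathrm{H}^2$ est \`a croissance polynomiale, donc $\lambda(f)=1.$ Les surfaces r\'egl\'ees non rationnelles (cas non rationnel de $\mathrm{kod}\,\mathcal{Z}=-\infty$) poss\`edent un unique r\'eglage pr\'eserv\'e par $f,$ et sont exclues de la m\^eme fa\c{c}on.

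Dans le cas $\mathrm{kod}\,\mathcal{Z}=0,$ le mod\`ele minimal $\mathcal{Z}'$ est unique, et j'utiliserais ce fait pour descendre $f$ en un automorphisme de $\mathcal{Z}'$ via les contractions successives des $(-1)$-courbes, de mani\`ere canoniquement commutante. Les surfaces minimales de dimension de \textsc{Kodaira} nulle sont les tores, les surfaces K$3$ et les surfaces d'\textsc{Enriques} (k\"{a}hl\'eriennes), ainsi que les surfaces hyperelliptiques et les surfaces de \textsc{Kodaira} primaires et secondaires (ces derni\`eres non k\"{a}hl\'eriennes). Les hyperelliptiques et les \textsc{Kodaira} ont un rang de \textsc{Picard} au plus $2$ et pr\'eservent une fibration elliptique naturelle, ce qui emp\^eche $\lambda(f)>1;$ il ne reste donc que les tores, K$3$ et \textsc{Enriques}, toutes k\"{a}hl\'eriennes.

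Dans le cas rationnel il s'agit d'\'etablir la borne des $10$ \'eclatements. Gr\^ace au th\'eor\`eme de \textsc{Nagata} rappel\'e dans l'introduction, $\mathcal{Z}$ est l'\'eclat\'e de $\mathbb{P}^2(\mathbb{C})$ en $n$ points (\'eventuellement infiniment proches), et $\mathrm{Pic}(\mathcal{Z})$ est un r\'eseau de rang $n+1$ de signature $(1,n).$ L'action $f^*$ pr\'eserve la forme d'intersection et la classe canonique $K_\mathcal{Z},$ donc induit une isom\'etrie du r\'eseau $K_\mathcal{Z}^\perp,$ qui s'identifie pour $n\geq 3$ au r\'eseau de racines $E_n;$ celui-ci est fini pour $n\leq 8,$ affine pour $n=9$ et hyperbolique pour $n\geq 10.$ Mon argument consiste \`a observer que $f^*$ agit via le groupe de \textsc{Weyl} associ\'e (car elle permute les classes de $(-1)$-courbes effectives) et qu'aucun \'el\'ement d'un groupe de \textsc{Weyl} fini ou affine ne peut avoir un rayon spectral strictement sup\'erieur \`a $1;$ d'o\`u la n\'ecessit\'e $n\geq 10.$ La k\"{a}hl\'erianit\'e finale est imm\'ediate, tores, K$3,$ \textsc{Enriques} et \'eclat\'es de $\mathbb{P}^2(\mathbb{C})$ \'etant tous k\"{a}hl\'eriens. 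Le point d\'elicat serait pr\'ecis\'ement cette derni\`ere \'etape : contr\^oler rigoureusement l'action de $f^*$ sur $K_\mathcal{Z}^\perp\simeq E_n$ et invoquer la classification des syst\`emes de racines pour atteindre la borne optimale $n\geq 10,$ ainsi que l'exclusion propre des surfaces non k\"{a}hl\'eriennes de classe VII, qui repose sur l'\'etude fine de leur second nombre de \textsc{Betti} et de leur groupe d'automorphismes.
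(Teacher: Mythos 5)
Votre plan --- balayer la classification d'Enriques--Kodaira --- est bien l'approche qui sous-tend l'\'enonc\'e; notez que le texte ne reproduit pas la d\'emonstration de \cite{Can1} (il la cite seulement) et ne d\'emontre que la proposition voisine sur les \'eclat\'es de $\mathbb{P}^2(\mathbb{C})$ en $n\leq 9$ points. Votre r\'edaction comporte cependant deux vrais trous. Premi\`erement, l'in\'egalit\'e que vous invoquez va dans le mauvais sens: de $h_{\text{top}}(f)\geq\log r(f^*)$ et $h_{\text{top}}(f)>0$ on ne d\'eduit rien sur $r(f^*);$ c'est la majoration de \textsc{Gromov} $h_{\text{top}}(f)\leq\log r(f^*)$ --- ou l'\'egalit\'e de \textsc{Gromov}--\textsc{Yomdin}, \'enonc\'ee dans le texte pour les vari\'et\'es k\"{a}hl\'eriennes --- qui donne $r(f^*)>1.$ Comme le but est pr\'ecis\'ement d'exclure les surfaces non k\"{a}hl\'eriennes, vous ne pouvez pas invoquer telle quelle cette \'egalit\'e k\"{a}hl\'erienne sans rendre l'argument circulaire: il faut une majoration de type \textsc{Gromov} valable pour toute surface complexe compacte (par exemple via une m\'etrique de Gauduchon) ou un argument de remplacement. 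Deuxi\`emement, l'exclusion des surfaces non k\"{a}hl\'eriennes (classe VII avec $b_2>0,$ surfaces de \textsc{Kodaira}, surfaces elliptiques non k\"{a}hl\'eriennes) est le c\oe ur de la conclusion \og en particulier $\mathcal{Z}$ est k\"{a}hl\'erienne\fg; vous la signalez comme point d\'elicat sans l'esquisser: en l'\'etat, cette partie de l'\'enonc\'e n'est pas d\'emontr\'ee.

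Pour la borne $n\geq 10$ dans le cas rationnel, votre d\'etour par $\mathrm{K}_\mathcal{Z}^\perp\simeq E_n$ et le groupe de \textsc{Weyl} est viable (il repose sur le r\'esultat de \textsc{Nagata} rappel\'e au chapitre \ref{mcmullen}, et sur le fait qu'aucun \'el\'ement de $\mathrm{W}_n$ n'a de rayon spectral $>1$ pour $n\leq 9$), mais la proposition sur les \'eclat\'es en $n\leq 9$ points, d\'emontr\'ee dans ce m\^eme chapitre, fournit un chemin plus court que vous pourriez reprendre: l'entropie positive donne $\theta\in\mathrm{H}^2(\mathcal{Z},\mathbb{R})$ avec $\theta\cdot\theta=0$ et $f^*\theta=\lambda(f)\theta;$ de $f^*\mathrm{K}_\mathcal{Z}=\mathrm{K}_\mathcal{Z}$ on tire $\langle\theta,\mathrm{K}_\mathcal{Z}\rangle=0,$ et pour $n\leq 9$ on a $\mathrm{K}_\mathcal{Z}^2\geq 0,$ de sorte que la signature $(1,n)$ de la forme d'intersection force $\theta=c\,\mathrm{K}_\mathcal{Z},$ en contradiction avec $f^*\theta=\lambda(f)\theta$ et $\lambda(f)>1.$ Pensez enfin \`a justifier que $f_*$ est d'ordre infini avant d'appliquer le th\'eor\`eme \ref{nagata} pour \'ecrire $\mathcal{Z}$ comme \'eclat\'e de $\mathbb{P}^2(\mathbb{C}),$ ce qui d\'ecoule justement de $r(f^*)>1$ une fois le premier point corrig\'e.
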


\begin{egs}
\begin{itemize}
\item Commen\c{c}ons par un exemple d\^u \`a \textsc{Coble}. Consid\'erons une courbe sextique $\mathcal{C}$ dans $\mathbb{P}^2(\mathbb{C})$ avec $10$ points doubles. Supposons que $\mathcal{C}$ soit choisie g\'en\'erique. Soit $\mathcal{Z}$ la surface obtenue en \'eclatant ces $10$ points; on l'appelle {\it surface de \textsc{Coble}}\label{ind100}. La surface $\mathcal{Z}$ est le quotient d'une surface K$3$ par une involution ayant des points fixes et les automorphismes de $\mathcal{Z}$ proviennent de ceux de la surface K$3$ (\emph{voir} \cite{Co}). 

\noindent Le groupe $\mathrm{Aut}(\mathcal{Z})$ s'identifie \`a un sous-groupe discret et de covolume fini de $\mathrm{SO}(1,9).$ Par suite c'est un groupe infini d\'enombrable contenant un groupe libre non ab\'elien. On peut construire des automorphismes sur $\mathcal{Z}$ de la fa\c{c}on suivante. Soit $p$ un point double de $\mathcal{C}.$ L'ensemble des courbes de degr\'e $6$ passant par les $9$ autres points doubles constitue un pinceau de courbes de genre $1.$ Si on \'eclate ces $9$ points on obtient une surface~$\mathcal{Z}_p$ sur laquelle les transform\'ees strictes de ces courbes forment les fibres d'une fibration elliptique singuli\`ere $\pi_p\colon\mathcal{Z}_p\to\mathbb{P}^1 (\mathbb{C}).$ Remarquons que chacun des diviseurs exceptionnels coupe la fibre g\'en\'erique de $\pi_p$ en deux points. Consid\'erons l'un de ces diviseurs que nous noterons $\mathrm{E}_0.$ Soient $\mathrm{E}$ un autre diviseur et $f$ l'automorphisme de $\mathcal{Z}_p$ d\'efini par
\begin{itemize}
\item si $m$ est un point d'une fibre lisse $\mathrm{F}$ de $\pi_p,$ l'image $f(m)$ de $m$ est l'unique point de $\mathrm{F}$ tel que $f(m)+\mathrm{E}\cap\mathrm{F}=m+\mathrm{E}_0\cap\mathrm{F};$

\item cette transformation s'\'etend aux fibres singuli\`eres.
\end{itemize}

\noindent On remarque que $\pi_p\circ f=\pi_p.$ En faisant varier le choix de $\mathrm{E}_0$ on obtient huit automorphismes. Pour un choix g\'en\'erique de $\mathcal{C}$ ces automorphismes engendrent un groupe isomorphe \`a $\mathbb{Z}^8.$ Chacun de ces automorphismes fixe la fibre singuli\`ere obtenue par transform\'ee stricte de $\mathcal{C}$ et fixe le point singulier $p$ de cette fibre. Quitte \`a \'eclater ce point les automorphismes construits se rel\`event \`a la surface de \textsc{Coble} $\mathcal{Z}.$ En faisant varier $p$ parmi les singularit\'es de $\mathcal{C}$ on obtient $10$ copies de $\mathbb{Z}^8$ dans $\mathrm{Aut}(\mathcal{Z});$ pour un choix g\'en\'erique de $\mathcal{C}$ il n'y a pas de relation non triviale, autrement dit le produit libre de $10$ copies de $\mathbb{Z}^8$ se plonge dans $\mathrm{Aut}(\mathcal{Z}).$  

\smallskip
\item On a l'\'enonc\'e suivant, dit th\'eor\`eme de \textsc{Torelli}.
\begin{thm}[\cite{BHPV}]
{\sl Soit $\mathcal{Z}$ une surface K$3.$ Le morphisme 
\begin{align*}
&\mathrm{Aut}(\mathcal{Z})\to\mathrm{GL}(\mathrm{H}^2(\mathcal{Z},\mathbb{Z})), && f\mapsto f^*
\end{align*}

\noindent est injectif. 

\noindent R\'eciproquement supposons que $F$ soit un \'el\'ement de $\mathrm{GL}(\mathrm{H}^2(\mathcal{Z}, \mathbb{Z}))$ qui pr\'eserve la forme d'intersection sur $\mathrm{H}^2(\mathcal{Z},\mathbb{Z})$ ainsi que la d\'ecomposition de \textsc{Hodge} de $\mathrm{H}^2(\mathcal{Z},\mathbb{C})$ et le c\^one de \textsc{K\"{a}hler} de $\mathrm{H}^2(\mathcal{Z},\mathbb{R});$ alors il existe un automorphisme $f$ sur $\mathcal{Z}$ tel que~$f^*=~F.$}
\end{thm}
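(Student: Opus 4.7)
The plan is to split the theorem into the injectivity of $f \mapsto f^*$ and its surjectivity onto Hodge isometries preserving the K\"{a}hler cone; these two implications require quite different techniques. For \emph{injectivity}, suppose $f^* = \mathrm{Id}$ on $\mathrm{H}^2(\mathcal{Z},\mathbb{Z})$. Because $f^*$ fixes the class of an ample line bundle $L$, the induced action on $\mathrm{H}^0(\mathcal{Z}, L^{\otimes m})$ realizes $f$ as a projective linear map on the corresponding embedding, and the triviality of $f^*$ on $\mathrm{Pic}(\mathcal{Z})$ forces $f$ to be of finite order. Since $f^*$ is also trivial on $\mathrm{H}^{2,0}(\mathcal{Z}) = \mathbb{C}\omega$, the holomorphic $2$-form $\omega$ satisfies $f^*\omega = \omega$, so $f$ is symplectic and the holomorphic Lefschetz formula gives
\begin{equation*}
\sum_{p \in \mathrm{Fix}(f)} \frac{1}{\det(\mathrm{Id} - \mathrm{d}f_p)} \;=\; 1 + \mathrm{tr}\bigl(f^* \vert \mathrm{H}^{0,2}(\mathcal{Z})\bigr) \;=\; 2,
\end{equation*}
while the topological Lefschetz formula forces $\chi(\mathrm{Fix}(f)) = 24$. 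A local analysis of symplectic finite-order actions (equivalent to \textsc{Nikulin}'s classification) rules out such a large fixed-point set unless $f$ is the identity.

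\smallskip

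The \emph{surjectivity} direction is the deep part, originally due to \textsc{Piatetski}-\textsc{Shapiro}--\textsc{Shafarevich} in the algebraic case and extended by \textsc{Burns}--\textsc{Rapoport} and \textsc{Looijenga}--\textsc{Peters} to the K\"{a}hler setting. Given a Hodge isometry $F$ preserving the K\"{a}hler cone, I would proceed in three steps. First, reduce to the algebraic case by observing that $\mathcal{Z}$ fits into a twistor family of K$3$ surfaces along which algebraic members are dense in the period domain, and that the pair $(\mathcal{Z}, F)$ deforms along this family while preserving the K\"{a}hler cone hypothesis. Second, verify the statement for \textsc{Kummer} surfaces: a \textsc{Kummer} K$3$ is the resolution of $T/\{\pm 1\}$ for a complex torus $T$, and a Hodge isometry of its $\mathrm{H}^2$ lifts canonically to one of $\mathrm{H}^2(T,\mathbb{Z}) = \Lambda^2 \mathrm{H}^1(T,\mathbb{Z})$, which in turn is realized by an affine biholomorphism of $T$ commuting with $-\mathrm{Id}$; this descends to the desired automorphism of $\mathcal{Z}$.

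\smallskip

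The main obstacle is the third step, namely bootstrapping from \textsc{Kummer} surfaces to arbitrary algebraic K$3$s. The strategy is to approximate $(\mathcal{Z}, F)$ by a sequence $(\mathcal{Z}_n, F_n)$ with $\mathcal{Z}_n$ \textsc{Kummer}, use the previous step to realize each $F_n$ by an automorphism $f_n$, and extract a limit of the graphs $\Gamma(f_n) \subset \mathcal{Z}_n \times \mathcal{Z}_n$ inside a relative \textsc{Douady} space over the deformation. The crucial subtlety is to ensure that the limiting analytic cycle remains the graph of a biholomorphism rather than acquiring vertical or horizontal components: this is precisely what preservation of the K\"{a}hler cone by $F$ prevents, since reflections in $(-2)$-classes, which would otherwise produce spurious birational correspondences, do not preserve the K\"{a}hler chamber. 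Controlling this limit, together with the appearance of $(-2)$-curves in central fibres of the approximating family, is the technical heart of the \textsc{Torelli} theorem.
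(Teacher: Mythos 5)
The paper does not prove this statement: it is recalled, with the reference [BHPV], as the classical \textsc{Torelli} theorem for K$3$ surfaces, so there is no internal proof to compare with. Your sketch reproduces the standard \textsc{Burns}--\textsc{Rapoport} / \textsc{Looijenga}--\textsc{Peters} strategy found in that reference: injectivity via finite order plus the two \textsc{Lefschetz} formulas for a symplectic automorphism, and surjectivity via density of \textsc{Kummer} periods, the \textsc{Torelli} statement for \textsc{Kummer} surfaces through the associated torus, and a limit-of-graphs argument in the \textsc{Douady} space in which the K\"{a}hler-cone hypothesis eliminates the corrections by reflections in $(-2)$-classes. As an outline it is faithful; as a proof it leaves the pivotal steps (density of \textsc{Kummer} periods, compactness of the family of graphs --- which requires the uniform volume bounds coming precisely from $F$ sending K\"{a}hler classes to K\"{a}hler classes --- and the identification of the action of the limit cycle on $\mathrm{H}^2$) at the level of citations, which is unavoidable for a theorem of this depth.

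One step does need repair: your finite-order argument for injectivity presupposes that $\mathcal{Z}$ is projective, since it starts from an ample line bundle $L$, whereas the theorem concerns arbitrary (in general non-algebraic, merely K\"{a}hler) K$3$ surfaces. The standard substitute: if $f^*=\mathrm{Id}$ then $f$ fixes a K\"{a}hler class, the subgroup of $\mathrm{Aut}(\mathcal{Z})$ fixing a K\"{a}hler class has compact closure, and $\mathrm{Aut}(\mathcal{Z})$ is discrete because $\mathrm{H}^0(\mathcal{Z},\mathrm{T}\mathcal{Z})=0$; hence $f$ has finite order, and your \textsc{Lefschetz} comparison ($\leq 8$ isolated fixed points versus $24$) then applies verbatim.
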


\smallskip
\item Lorsque $\mathcal{Z}$ est un tore $\mathbb{C}^2/\Gamma$ l'\'enonc\'e est le m\^eme modulo le fait suivant: le noyau de 
\begin{align*}
&\mathrm{Aut}(\mathcal{Z})\to\mathrm{GL}(\mathrm{H}^2(\mathcal{Z},\mathbb{Z})), && f\mapsto f^*
\end{align*}

\noindent est engendr\'e par les translations et l'involution $(x,y)\mapsto(-x,-y).$
\end{itemize}
\end{egs}

\noindent Le cas des surfaces K$3$ a \'et\'e \'etudi\'e par 
\textsc{Cantat} et \textsc{McMullen} (\cite{Can2, Mc2, Si, Wa}) ; celui des surfaces rationnelles est celui
qui produit le plus de tels automorphismes (\cite{Mc, BK1, BK2}) 
d'o\`u l'int\'er\^et qu'il suscite. N\'eanmoins on ne sait pas
d\'eterminer l'ensemble des surfaces rationnelles admettant
des automorphismes non triviaux et \'etant donn\'ee une 
surface on est incapable de d\'ecrire ses automorphismes.

\subsection{Cas des surfaces rationelles non minimales}

\noindent Jusqu'\`a aujourd'hui on s'est essentiellement int\'eress\'es
aux surfaces obtenues en \'eclatant $\mathbb{P}^2(\mathbb{C})$
un nombre fini de points; ceci est justifi\'e par le th\'eor\`eme
suivant d\^u \`a \textsc{Nagata}.

\begin{thm}[\cite{Na}]\label{nagata}
{\sl Soient $\mathcal{Z}$ une surface rationnelle et $f$ un automorphisme sur $\mathcal{Z}$ tel que $f_*$ soit d'ordre infini. Il existe une suite d'applications d'\'eclatement $\pi_{j+1}\colon\mathcal{Z}_{j+1}\to\mathcal{Z}_j$ d'un point $p_j$ de $\mathcal{Z}_j$ telles que $\mathcal{Z}_1=\mathbb{P}^2(\mathbb{C}),$ $\mathcal{Z}_{N+1}=\mathcal{Z}.$}
\end{thm}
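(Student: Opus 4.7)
Le but est de montrer que $\mathcal{Z}$ admet un morphisme birationnel vers $\mathbb{P}^2(\mathbb{C})$: d'après le théorème de factorisation de \textsc{Zariski} (Théorème~\ref{Zariski}) ce morphisme se décompose en contractions successives de diviseurs exceptionnels, et en lisant la décomposition à l'envers on obtient la suite d'éclatements souhaitée. Je procéderais ainsi: on contracte d'abord autant de courbes d'auto-intersection $-1$ que possible pour obtenir un morphisme birationnel $\pi\colon\mathcal{Z}\to\mathcal{Z}_{\min}$ vers un modèle minimal; par la classification des surfaces rationnelles minimales (rappelée à la fin du chapitre précédent), $\mathcal{Z}_{\min}$ vaut $\mathbb{P}^2(\mathbb{C})$, $\mathrm{F}_0$, ou $\mathrm{F}_n$ pour un certain $n\geq 2$. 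Si $\mathcal{Z}_{\min}=\mathbb{P}^2(\mathbb{C})$ on conclut aussitôt.

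Notons par ailleurs que $\mathcal{Z}$ n'est pas elle-même minimale: pour chacune des surfaces rationnelles minimales, $\mathrm{Pic}$ est de rang au plus $2$ et l'action de $\mathrm{Aut}$ sur $\mathrm{Pic}$ est d'image finie (triviale sur $\mathbb{P}^2(\mathbb{C})$; préservant la classe de la section $s_n$ et celle d'une fibre pour $\mathrm{F}_n$ avec $n\geq 1$; permutant au plus les deux projections pour $\mathrm{F}_0=\mathbb{P}^1(\mathbb{C})\times\mathbb{P}^1(\mathbb{C})$), ce qui contredirait l'hypothèse que $f_*$ est d'ordre infini. Il reste donc à traiter le cas $\mathcal{Z}_{\min}=\mathrm{F}_n$ avec $n=0$ ou $n\geq 2$, et $\mathcal{Z}$ non minimale.

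Dans cette situation, l'idée est de composer $\pi$ avec des transformations élémentaires (rappelées à la fin du chapitre~$1$, reliant $\mathrm{F}_n$ à $\mathrm{F}_{n\pm 1}$) pour diminuer $n$ jusqu'à atteindre $\mathrm{F}_1=\mathrm{Bl}_q\mathbb{P}^2(\mathbb{C})$, ce qui fournit alors un morphisme birationnel $\mathcal{Z}\to\mathbb{P}^2(\mathbb{C})$. Concrètement, si l'un des centres d'éclatement de $\pi$ correspond à un point $p\in\mathrm{F}_n$ situé en dehors de la section $s_n$, la transformation élémentaire en $p$ conjugue $\pi$ à un morphisme birationnel $\mathcal{Z}\to\mathrm{F}_{n-1}$; on itère ensuite le procédé.

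Le point technique essentiel consiste à exclure le cas dégénéré où tous les centres d'éclatement de $\pi$ seraient concentrés au-dessus de la section $s_n$: sinon $\widetilde{s_n}$ serait une courbe rigide d'auto-intersection $\leq -n$, la fibration rationnelle $\mathcal{Z}\to\mathbb{P}^1(\mathbb{C})$ induite par $\pi$ serait l'unique fibration rationnelle de $\mathcal{Z}$, et tout automorphisme $f$ devrait préserver à la fois la classe $[\widetilde{s_n}]$ et la classe des fibres. L'action de $f_*$ sur $\mathrm{Pic}(\mathcal{Z})$ se réduirait alors à un automorphisme d'un réseau fixant deux classes linéairement indépendantes et ne permutant qu'un nombre fini de chaînes de diviseurs exceptionnels, donc à un élément d'ordre fini, en contradiction avec l'hypothèse. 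C'est ce contrôle combinatoire des orbites exceptionnelles et des fibrations invariantes qui me paraît être l'obstacle principal de la démonstration.
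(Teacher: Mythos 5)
Votre squelette est le bon et plusieurs \'etapes sont correctes: la non-minimalit\'e de $\mathcal{Z}$ (l'action de $\mathrm{Aut}$ sur $\mathrm{Pic}$ d'une surface rationnelle minimale est d'image finie), et la r\'eduction par transformations \'el\'ementaires au cas d'un morphisme birationnel $\pi\colon\mathcal{Z}\to\mathrm{F}_n,$ $n\geq 2,$ dont tous les centres propres sont sur $s_n.$ Le trou est exactement l\`a o\`u vous le pressentez: l'affirmation que, dans ce cas, la fibration induite par $\pi$ est \emph{l'unique} fibration rationnelle de $\mathcal{Z}$ n'est pas justifi\'ee, et elle est fausse en g\'en\'eral. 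Prenez $C_0\in|s_2+3f|$ lisse sur $\mathrm{F}_2,$ transverse \`a $s_2$ en un point $q,$ et \'eclatez le groupe de quatre points form\'e de $q$ et des trois points infiniment proches successifs port\'es par les transform\'ees strictes de $C_0:$ tous les centres sont au-dessus de $s_2$ et $\widetilde{s_2}^2=-3,$ mais $\widetilde{C_0}^2=0,$ $\widetilde{C_0}\cdot\mathrm{K}=-2,$ et le th\'eor\`eme de Riemann-Roch montre que $\widetilde{C_0}$ bouge dans un pinceau sans point base dont le membre g\'en\'eral est une courbe rationnelle irr\'eductible: c'est une seconde fibration rationnelle, de classe de fibre distincte de $f.$ Rien n'emp\^eche donc a priori $f$ de permuter des fibrations, et la conclusion \og $f_*$ fixe deux classes ind\'ependantes, donc est d'ordre fini\fg\, n'est pas acquise; m\^eme en admettant que $f$ pr\'eserve la fibration, il resterait \`a contr\^oler l'image de la classe de la section (vous l'\'evoquez seulement), sans quoi fixer la classe des fibres et permuter les composantes des fibres singuli\`eres ne suffit pas.

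C'est pr\'ecis\'ement ce point que la d\'emonstration du texte r\`egle \`a l'aide du programme de Sarkisov (Th\'eor\`eme \ref{sarkisov}): le Lemme \ref{tec} affirme que si $\mathcal{Z}$ \emph{ne domine pas} $\mathbb{P}^2(\mathbb{C}),$ deux morphismes quelconques de $\mathcal{Z}$ vers des surfaces de \textsc{Hirzebruch} ont la m\^eme transform\'ee de la section exceptionnelle; appliqu\'e \`a $\pi$ et \`a $\pi$ compos\'e avec $f,$ cela montre que $f$ fixe $\widetilde{s_n},$ donc pr\'eserve la fibration, et la contradiction s'obtient ensuite avec une normalisation pr\'ealable (on contracte de fa\c{c}on $f$-\'equivariante les orbites finies de $(-1)$-courbes disjointes de leurs images) en exhibant une $(-1)$-courbe d'orbite finie dans les fibres singuli\`eres. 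Notez que l'hypoth\`ese du lemme, \og $\mathcal{Z}$ ne domine pas $\mathbb{P}^2(\mathbb{C})$\fg\, avec $n$ minimal, est strictement plus forte que \og tous les centres sont au-dessus de $s_n$\fg\, (mon exemple ci-dessus domine $\mathbb{P}^2(\mathbb{C})$): c'est ce caract\`ere canonique de la section exceptionnelle, fourni par Sarkisov, qui manque \`a votre argument et qu'il vous faudrait \'etablir d'une mani\`ere ou d'une autre.
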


\noindent N\'eanmoins remarquons qu'une surface obtenue en \'eclatant $\mathbb{P}^2(\mathbb{C})$ en des points g\'en\'eriques n'a pas d'automorphisme non trivial (\cite{Hi, Ko}). De plus on a l'\'enonc\'e suivant que l'on peut par exemple trouver dans \cite{Di2}.

\begin{pro}
{\sl Soient $\mathcal{Z}$ une surface obtenue en \'eclatant $\mathbb{P}^2(\mathbb{C})$ en $n\leq 9$ points et~$f$ un automorphisme sur $\mathcal{Z}.$ L'entropie topologique $\log\lambda(f)$ de $f$ est nulle. De plus, si $n$ est inf\'erieur ou \'egal \`a $8,$ un it\'er\'e de~$f$ est birationnellement conjugu\'e \`a un automorphisme du plan projectif complexe.}
\end{pro}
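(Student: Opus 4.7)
J'\'etudierais l'action de $f$ sur $\mathrm{Pic}(\mathcal{Z})$. Conform\'ement \`a la Section \ref{matcar}, ce groupe est libre de rang $n+1$, muni d'une forme d'intersection de signature $(1,n)$~; l'automorphisme $f$ y induit une isom\'etrie enti\`ere $f^*$ qui stabilise la classe canonique $K_{\mathcal{Z}}=-3e_0+e_1+\ldots+e_n$. Un calcul direct donne $K_{\mathcal{Z}}^2=9-n$, strictement positif pour $n\leq 8$ et nul pour $n=9$. L'orthogonal $K_{\mathcal{Z}}^\perp\subset\mathrm{Pic}(\mathcal{Z})\otimes\mathbb{R}$ est donc d\'efini n\'egatif lorsque $n\leq 8$ et n\'egatif semi-d\'efini de radical $\mathbb{R}K_{\mathcal{Z}}$ lorsque $n=9$.

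Je montrerais d'abord que $\lambda(f)=1$ dans les deux cas $n\leq 9$ par un argument de signature lorentzienne. Supposons par l'absurde $\lambda=\lambda(f)>1$~; comme $f^*$ est une isom\'etrie, les valeurs propres se groupent par paires $(\mu,\mu^{-1})$ et le vecteur propre $v$ associ\'e \`a $\lambda$ est isotrope, $v^2=0$. La relation $K_{\mathcal{Z}}\cdot v=(f^*K_{\mathcal{Z}})\cdot(f^*v)=\lambda(K_{\mathcal{Z}}\cdot v)$ impose $K_{\mathcal{Z}}\cdot v=0$, d'o\`u $v\in K_{\mathcal{Z}}^\perp$. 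Or $K_{\mathcal{Z}}^\perp$ ne contient aucun vecteur isotrope non nul hors de son radical, lequel est trivial si $n\leq 8$ et r\'eduit \`a $\mathbb{R}K_{\mathcal{Z}}$ si $n=9$~; les deux \'eventualit\'es contredisent l'hypoth\`ese $\lambda>1$. Ainsi $\lambda(f)=1$, et le th\'eor\`eme de \textsc{Gromov}-\textsc{Yomdin} rappel\'e pr\'ec\'edemment donne $h_{\mathrm{top}}(f)=\log\lambda(f)=0$.

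Supposons \`a pr\'esent $n\leq 8$. Le $\mathbb{Z}$-r\'eseau $K_{\mathcal{Z}}^\perp\cap\mathrm{Pic}(\mathcal{Z})$, d\'efini n\'egatif de rang $n$, a un groupe d'isom\'etries fini puisque discret dans le groupe orthogonal compact $\mathrm{O}(n)$. Il existe donc un entier $N\geq 1$ pour lequel $f^{N*}$ se restreint en l'identit\'e sur $K_{\mathcal{Z}}^\perp$. La d\'ecomposition orthogonale rationnelle $\mathrm{Pic}(\mathcal{Z})\otimes\mathbb{Q}=\mathbb{Q}K_{\mathcal{Z}}\oplus(K_{\mathcal{Z}}^\perp\otimes\mathbb{Q})$, l\'egitime parce que $K_{\mathcal{Z}}^2\neq 0$, combin\'ee \`a $f^{N*}K_{\mathcal{Z}}=K_{\mathcal{Z}}$, entra\^{\i}nerait alors que $f^{N*}$ agit trivialement sur $\mathrm{Pic}(\mathcal{Z})$.

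Le dernier pas serait de descendre $f^N$ en un automorphisme de $\mathbb{P}^2(\mathbb{C})$. Puisque $f^{N*}$ fixe toute classe, en particulier celle de chaque $(-1)$-courbe, et qu'une telle classe est repr\'esent\'ee par une unique courbe irr\'eductible, $f^N$ stabilise individuellement chacune des $(-1)$-courbes de $\mathcal{Z}$. En contractant successivement les transform\'ees strictes des diviseurs exceptionnels $\mathrm{E}_i$, on obtient, par descente via $\pi$, un automorphisme de $\mathbb{P}^2(\mathbb{C})$ birationnellement conjugu\'e \`a $f^N$. L'\'etape la plus d\'elicate me para\^{\i}t \^etre la gestion des points infiniment proches, pour laquelle il faut fixer un ordre de contraction compatible avec la configuration d'\'eclatements d\'efinissant $\pi$~; la trivialit\'e de $f^{N*}$ sur $\mathrm{Pic}(\mathcal{Z})$ garantit toutefois que cet ordre peut \^etre choisi \'equivariant sous $f^N$, ce qui conclut.
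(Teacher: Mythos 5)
Votre preuve est correcte et suit essentiellement la m\^eme d\'emarche que celle du texte : on montre $\lambda(f)=1$ en combinant l'invariance de $\mathrm{K}_\mathcal{Z},$ la signature $(1,n)$ de la forme d'intersection et $\mathrm{K}_\mathcal{Z}^2\geq 0$ pour $n\leq 9,$ puis, pour $n\leq 8,$ la n\'egativit\'e d\'efinie de $\mathrm{K}_\mathcal{Z}^\perp$ donne un it\'er\'e agissant trivialement sur $\mathrm{Pic}(\mathcal{Z}),$ qui fixe les courbes exceptionnelles et descend en un automorphisme de $\mathbb{P}^2(\mathbb{C}).$ Seule nuance mineure : vous d\'eduisez l'isotropie du vecteur propre du fait que $f^*$ est une isom\'etrie au lieu d'invoquer \cite{Can1}, ce qui ne change pas la structure de l'argument.
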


\begin{proof}[{\sl D\'emonstration}]
Raisonnons par l'absurde: supposons que $f$ soit d'entropie positive. D'apr\`es \cite{Can1} il existe~$\theta$ dans $\mathrm{H}^2(\mathcal{Z},\mathbb{R})$ d'auto-intersection nulle et tel que $f^*\theta =\lambda(f)\theta.$ Si $\mathrm{K}_\mathcal{Z}$ d\'esigne la classe du diviseur canonique sur $\mathcal{Z},$ on a $f_*\mathrm{K}_\mathcal{Z}=f^*\mathrm{K}_\mathcal{Z}=\mathrm{K}_\mathcal{Z}.$ On constate que 
$$\lambda(f)^{-1}\langle\theta,\mathrm{K}_\mathcal{Z}\rangle=\langle f^*\theta,\mathrm{K}_\mathcal{Z}\rangle=\langle \theta,f_*\mathrm{K}_\mathcal{Z}\rangle=\langle\theta,\mathrm{K}_\mathcal{Z}\rangle$$ d'o\`u $\langle\theta, \mathrm{K}_\mathcal{Z}\rangle=0.$ Comme la forme d'intersection sur $\mathcal{Z}$ a une unique valeur propre positive et comme $\mathrm{K}_\mathcal{Z}^2\geq 0$ pour $n\leq 9,$ la classe $\theta$ s'\'ecrit $c\mathrm{K}_\mathcal{Z}$ pour un certain $c<0.$ Mais alors~$f^*\theta=\theta\not=\lambda(f)\theta:$ contradiction.

\noindent Supposons que $n$ soit inf\'erieur ou \'egal \`a $8$ alors $\mathrm{K}_\mathcal{Z}^2>0.$ Il s'en suit que la forme d'intersection est strictement n\'egative sur le compl\'ement orthogonal $\mathrm{H}\subset\mathrm{H}^2(\mathcal{Z},\mathbb{R})$ de $\mathrm{K}_\mathcal{Z}.$ Comme $\mathrm{H}$ est de dimension finie et invariant par $f^*$ et comme $f^*$ pr\'eserve $\mathrm{H}^2(\mathcal{Z}, \mathbb{Z}),$ la restriction de $f^*$ \`a~$\mathrm{H}$ est d'ordre fini. Il en r\'esulte l'existence d'un entier $k$ tel que $f^{k*}$ soit trivial. En particulier $f^k$ pr\'eserve les diviseurs exceptionnels obtenus en \'eclatant les $n$ points de $\mathbb{P}^2(\mathbb{C});$ par suite $f^k$ appartient, \`a conjugaison pr\`es, \`a $\mathrm{Aut}(\mathbb{P}^2(\mathbb{C})).$
\end{proof}

\section{Propri\'et\'es d'un automorphisme d'entropie positive}

\noindent Soit $f$ un automorphisme sur une surface k\"{a}hl\'erienne d'entropie positive. Comme l'indique le r\'esultat qui suit les valeurs propres de $f^*$ v\'erifient des propri\'et\'es particuli\`eres.

\begin{thm}\label{lambda}
{\sl Soit $f$ un automorphisme sur une surface k\"{a}hl\'erienne tel que $\lambda(f)>1.$ Le premier degr\'e dynamique $\lambda(f)$ est une valeur propre de $f^*$ avec multiplicit\'e $1$ et c'est l'unique valeur propre de module strictement sup\'erieur \`a~$1.$

\noindent Si $\eta$ est une valeur propre de $f^*,$ alors soit $\eta$ appartient \`a $\{\lambda(f), 1/\lambda(f)\},$ soit $\vert\eta\vert=1.$}
\end{thm}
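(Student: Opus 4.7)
L'id\'ee est d'exploiter le fait que $f^*$ agit comme une isom\'etrie enti\`ere du r\'eseau $(\mathrm{H}^{1,1}(\mathcal{Z},\mathbb{Z}),\,\langle\cdot,\cdot\rangle)$ muni d'une forme d'intersection de signature lorentzienne. Premi\`erement, puisque $f$ est un automorphisme (en particulier holomorphe), $f^*$ pr\'eserve la d\'ecomposition de \textsc{Hodge}; il suffit donc essentiellement d'analyser l'action de $f^*$ sur $\mathrm{H}^{1,1}(\mathcal{Z},\mathbb{R}),$ puisque les valeurs propres de $f^*$ sur $\mathrm{H}^{2,0}\oplus\mathrm{H}^{0,2}$ sont de module $1$ (la restriction de $f^*$ y pr\'eserve une forme hermitienne d\'efinie positive). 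L'\'egalit\'e $\lambda(f)=\lambda_1(f)$ rappel\'ee plus haut assure que le rayon spectral cherch\'e est atteint sur $\mathrm{H}^{1,1}.$

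La deuxi\`eme \'etape consiste \`a invoquer le th\'eor\`eme de l'indice de \textsc{Hodge}: la forme d'intersection sur $\mathrm{H}^{1,1}(\mathcal{Z},\mathbb{R})$ est de signature $(1,h^{1,1}-1).$ Comme $f^*$ pr\'eserve le cup-produit, c'est une isom\'etrie de cet espace de Minkowski. De plus, $f$ \'etant un biholomorphisme, $f^*$ envoie classe de \textsc{K\"{a}hler} sur classe de \textsc{K\"{a}hler}, donc stabilise le c\^one de \textsc{K\"{a}hler}, lui-m\^eme contenu dans l'une des deux nappes du c\^one positif $\{\alpha\,\vert\,\langle\alpha,\alpha\rangle>0\}.$ Ainsi $f^*$ appartient au sous-groupe $\mathrm{O}^+(1,h^{1,1}-1)$ qui agit par isom\'etries sur l'espace hyperbolique
\[
\mathbb{H}=\{\alpha\in\mathrm{H}^{1,1}(\mathcal{Z},\mathbb{R})\,\vert\,\langle\alpha,\alpha\rangle=1,\,\alpha\in\text{nappe positive}\}.
\]

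Troisi\`emement, j'applique la classification usuelle (elliptique / parabolique / loxodromique) des isom\'etries de $\mathbb{H}.$ Dans les cas elliptique et parabolique, toutes les valeurs propres de l'isom\'etrie lin\'eaire associ\'ee sont de module $1,$ ce qui forcerait $\lambda(f)=1,$ en contradiction avec l'hypoth\`ese. Donc $f^*$ est loxodromique: il poss\`ede deux points fixes distincts $[\theta^+],$ $[\theta^-]$ sur le bord $\partial\mathbb{H},$ c'est-\`a-dire deux vecteurs isotropes $\theta^+,$ $\theta^-$ avec $f^*\theta^+=\mu^+\theta^+,$ $f^*\theta^-=\mu^-\theta^-,$ $\mu^+>1,$ $\mu^-<1;$ et sur le compl\'ement orthogonal $\mathrm{H}=(\mathbb{R}\theta^+\oplus\mathbb{R}\theta^-)^{\perp},$ qui est d\'efini n\'egatif par \textsc{Hodge}, $f^*$ agit comme une transformation orthogonale, donc \`a valeurs propres de module~$1.$ Comme $\langle\theta^+,\theta^-\rangle\neq 0$ et que $f^*$ pr\'eserve la forme, on a $\mu^+\mu^-=1,$ d'o\`u $\mu^-=1/\mu^+.$ La valeur propre $\mu^+$ co\"{i}ncide avec $\lambda(f)$ (puisque c'est la seule de module strictement sup\'erieur \`a $1$), elle est simple (son espace propre est la droite $\mathbb{R}\theta^+,$ car un second vecteur propre associ\'e \`a $\mu^+$ devrait \^etre isotrope et colin\'eaire \`a $\theta^+$ dans $\partial\mathbb{H}$), et de m\^eme pour $1/\lambda(f).$

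La principale difficult\'e technique est la classification pr\'ecise des isom\'etries lorentziennes: il faut v\'erifier qu'en l'absence d'un couple $(\theta^+,\theta^-)$ de vecteurs propres isotropes ind\'ependants, tout le spectre est contenu dans le cercle unit\'e, et qu'un tel couple entra\^{\i}ne la d\'ecomposition orthogonale d\'ecrite ci-dessus. Ceci se montre proprement en diagonalisant la restriction de $f^*$ au plan hyperbolique $\mathbb{R}\theta^+\oplus\mathbb{R}\theta^-,$ puis en constatant que l'orthogonal est stable et d\'efini n\'egatif. Le reste est formel: la pr\'eservation du r\'eseau entier $\mathrm{H}^{1,1}(\mathcal{Z},\mathbb{Z})$ n'est m\^eme pas indispensable pour l'\'enonc\'e, seule intervient la structure lorentzienne r\'eelle et la pr\'eservation d'une nappe du c\^one positif.
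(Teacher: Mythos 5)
Votre d\'emonstration est correcte et suit l'argument standard que le texte, de nature de survol, \'enonce sans d\'emontrer (et qui est celui de la litt\'erature cit\'ee, Diller--Favre, Cantat, McMullen) : th\'eor\`eme de l'indice de \textsc{Hodge}, action de $f^*$ comme isom\'etrie de la forme lorentzienne sur $\mathrm{H}^{1,1}(\mathcal{Z},\mathbb{R})$ pr\'eservant une nappe du c\^one positif, puis classification elliptique/parabolique/loxodromique, les valeurs propres sur $\mathrm{H}^{2,0}\oplus\mathrm{H}^{0,2}$ \'etant de module $1.$ Le seul point \`a r\'ediger avec soin est celui que vous identifiez vous-m\^eme : pour un rayon spectral $>1,$ un vecteur propre associ\'e \`a une valeur propre de module maximal a des parties r\'eelle et imaginaire engendrant un sous-espace totalement isotrope, donc de dimension $1$ en signature $(1,h^{1,1}-1),$ ce qui force la valeur propre \`a \^etre r\'eelle (positive par pr\'eservation du c\^one de \textsc{K\"ahler}), et la d\'ecomposition $\mathbb{R}\theta^+\oplus\mathbb{R}\theta^-\oplus(\mathbb{R}\theta^+\oplus\mathbb{R}\theta^-)^{\perp}$ que vous d\'ecrivez donne bien la multiplicit\'e alg\'ebrique $1$ (et non seulement g\'eom\'etrique) ainsi que le module $1$ des autres valeurs propres.
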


\noindent Soit $f$ un automorphisme sur une surface k\"{a}hl\'erienne tel que $\lambda(f)>1.$ Notons $\chi_f$ le polyn\^ome caract\'eristique de $f^*.$ C'est un polyn\^ome monique dont le terme constant, qui est le d\'eterminant de $f^*,$ vaut $\pm 1.$ D\'esignons par $\psi_f$ le polyn\^ome minimal de $\lambda(f).$ D'apr\`es le Th\'eor\`eme \ref{lambda} les racines de $\chi_f,$ et celles de $\psi_f,$ except\'ees $\lambda(f)$ et $1/\lambda(f),$ sont sur le cercle unit\'e. Un tel $\psi_f$ est un {\it polyn\^ome de \textsc{Salem}}\label{ind39} et $\lambda(f)$ est un {\it nombre de \textsc{Salem}}\label{ind25}. Le plus petit nombre de \textsc{Salem} connu est la racine $\lambda_{\text{Lehmer}}$ du polyn\^ome de \textsc{Lehmer} $$L(t)=t^{10}+t^9-t_7-t^6-t^5-t^4-t^3+t+1$$ dont nous reparlerons au Chapitre \ref{mcmullen}.

\section{Trois involutions}

\noindent Le groupe $\mathrm{PGL}_3(\mathbb{C})\times\mathrm{PGL}_3(\mathbb{C})$ agit sur $\mathrm{Bir}(\mathbb{P}^2)$ de la fa\c{c}on suivante
\begin{align*}
& \mathrm{PGL}_3(\mathbb{C})\times\mathrm{Bir}(\mathbb{P}^2) \times\mathrm{PGL}_3(\mathbb{C})\to \mathrm{Bir}(\mathbb{P}^2)
&&(A,f,B)\mapsto AfB^{-1}.
\end{align*}

\noindent Il est naturel de s'int\'eresser aux transformations birationnelles quadratiques (\cite{CeDe}); en effet toute transformation birationnelle du plan projectif complexe s'\'ecrit comme un produit de transformations quadratiques (Th\'eor\`eme \ref{nono}). Introduisons les trois involutions birationnelles quadratiques suivantes qui jouent un r\^ole particulier
\begin{align*}
& \sigma=(yz:xz:xy), &&\rho=(xy:z^2:yz),&&\tau=(x^2:xy:y^2-xz). 
\end{align*}

\noindent Soit $f$ une transformation birationnelle de degr\'e $2;$ alors $f$ est de l'une des formes suivantes
\begin{align*}
& A\sigma B, &&A\rho B, && A\tau B, && A,\, B\in\mathrm{PGL}_3(\mathbb{C}).
\end{align*}

\noindent Les transformations $A\sigma B,$ avec $A,$ $B$ automorphismes de $\mathbb{P}^2(\mathbb{C}),$ sont les transformations birationnelles quadratiques g\'en\'eriques; en effet  l'adh\'erence de l'ensemble des transformations birationnelles quadratiques est une vari\'et\'e irr\'eductible et si $\mathcal{O}(f)$ d\'esigne l'orbite de $f$ sous l'action de~$\mathrm{PGL}_3(\mathbb{C})\times\mathrm{PGL}_3(\mathbb{C})$ on peut v\'erifier que (\cite{CeDe}) 
\begin{align*}
&\dim\mathcal{O}(\sigma)=14, && \dim\mathcal{O}(\rho)=13, &&\dim\mathcal{O}(\tau) =12.
\end{align*}

\subsection{\'Etude de $\sigma$}\label{sigma}

Les points d'ind\'etermination de $\sigma$ sont $P=(1:0:0),$ $Q=(0:1:0)$ et~$R=(0:0:1);$ quant au lieu exceptionnel, il est form\'e des droites $\Delta=\{x=0\},$ $\Delta'=\{y=0\}$ et $\Delta''=\{z=0\}.$ 

\begin{figure}[H]
\begin{center}
\input{sigma1.pstex_t}
\end{center}
\end{figure}

\noindent Commen\c{c}ons par \'eclater le point $P;$ notons $\mathrm{E}$ le diviseur exceptionnel et $\mathcal{D}_1$ la transform\'ee stricte de $\mathcal{D}:$

\begin{align*}
&\left\{\begin{array}{ll} y=u_1\\ z=u_1v_1\end{array}\right. && \begin{array}{ll} \mathrm{E}=\{u_1 =0\}\\ \Delta''_1=\{v_1=0\}\end{array} && \hspace{2cm} &&\left\{\begin{array}{ll} y=r_1s_1\\ z=s_1\end{array}\right. && \begin{array}{ll} \mathrm{E}=\{s_1 =0\}\\ \Delta'_1=\{r_1=0\}\end{array}
\end{align*}

\begin{figure}[H]
\begin{center}
\input{sigma2.pstex_t}
\end{center}
\end{figure}

\noindent D'une part $$(u_1,v_1)\to(u_1,u_1v_1)_{(y,z)}\to(u_1v_1:v_1:1)=\left(\frac{1}{u_1}, \frac{1}{u_1v_1}\right)_{(y,z)}\to \left(\frac{1}{u_1},\frac{1}{v_1}\right)_{(u_1,v_1)};$$ d'autre part $$(r_1,s_1)\to(r_1s_1,s_1)_{(y,z)}\to(r_1s_1:1:r_1)=\left(\frac{1}{r_1s_1}, \frac{1}{s_1} \right)_{(y,z)} \to\left(\frac{1}{r_1},\frac{1}{s_1}\right)_{(r_1,s_1)}.$$ Par suite $\mathrm{E}$ est envoy\'e sur $\Delta_1;$ comme $\sigma$ est une involution $\Delta_1$ est envoy\'e sur $\mathrm{E}.$

\noindent \'Eclatons le point $Q_1;$ cette fois nous d\'esignerons par $\mathrm{F}$ le diviseur exceptionnel et par $\mathcal{D}_2$ la transform\'ee stricte de $\mathcal{D}_1:$

\begin{align*}
&\left\{\begin{array}{ll} x=u_2\\ z=u_2v_2\end{array}\right. && \begin{array}{ll} \mathrm{F}=\{u_2 =0\}\\ \Delta''_2=\{v_2=0\}\end{array} && \hspace{2cm} &&\left\{\begin{array}{ll} x=r_2s_2\\ z=s_2\end{array}\right. && \begin{array}{ll} \mathrm{E}=\{s_2 =0\}\\ \Delta_2=\{r_2=0\}\end{array}
\end{align*}

\begin{figure}[H]
\begin{center}
\input{sigma3.pstex_t}
\end{center}
\end{figure}

\noindent On a $$(u_2,v_2)\to(u_2,u_2v_2)_{(x,z)}\to(v_2:u_2v_2:1)=\left(\frac{1}{u_2},\frac{1}{u_2v_2}\right)_{(x,z)}\to\left(\frac{1}{u_2},\frac{1}{v_2}\right)_{(u_2,v_2)}$$ et $$(r_2,s_2)\to(r_2s_2,s_2)_{( x,z)}\to(1:r_2s_2:r_2)=\left(\frac{1}{r_2s_2},\frac{1}{s_2}\right)_{(x,z)}\to\left(\frac{1}{r_2},\frac{1}{s_2}\right)_{(r_2,s_2)}.$$

\noindent Il s'en suit que $\mathrm{F}\to\Delta'_2$ d'o\`u $\Delta'_2\to\mathrm{F}.$

\noindent Enfin \'eclatons le point $R_2;$ si $\mathrm{G}$ est le diviseur exceptionnel on a 
\begin{align*}
&\left\{\begin{array}{ll} x=u_3\\ y=u_3v_3\end{array}\right. && \begin{array}{ll} \mathrm{G}=\{u_3 =0\}\\ \Delta''_3=\{v_3=0\}\end{array} && \hspace{2cm} &&\left\{\begin{array}{ll} x=r_3s_3\\ z=s_3\end{array}\right. && \begin{array}{ll} \mathrm{E}=\{s_3 =0\}\\ \Delta_2=\{r_3=0\}\end{array}
\end{align*}

\begin{figure}[H]
\begin{center}
\input{sigma4.pstex_t}
\end{center}
\end{figure}

\noindent On constate que $$(u_3,v_3)\to(u_3,u_3v_3)_{(x,y)}\to(v_3:1:u_3v_3)=\left(\frac{1}{u_3}, \frac{1}{u_3v_3}\right)_{(x,y)} \to\left(\frac{1}{u_3},\frac{1}{v_3}\right)_{(u_3,v_3)}$$ et $$(r_3 ,s_3)\to(r_3s_3,s_3)_{(x,y)}\to(1:r_3:r_3s_3)=\left(\frac{1}{r_3s_3},\frac{1}{s_3}\right)_{(x,y)}\to \left(\frac{1}{r_3},\frac{1}{s_3}\right)_{(r_3,s_3)}.$$ Il en r\'esulte que $\mathrm{G}\to \Delta'_3$ et $\Delta'_3\to\mathrm{G}.$ Il n'y a plus de point d'ind\'etermination, plus de courbe contract\'ee; autrement dit $\sigma$ est conjugu\'e \`a un automorphisme de $\mathbb{P}^2( \mathbb{C})$ \'eclat\'e en $P,$ $Q_1$ et $R_2.$

\bigskip

\noindent Pla\c{c}ons-nous dans la base $\{\mathrm{H},\,\mathrm{E},\,\mathrm{F},\,\mathrm{G}\}.$ Au premier \'eclatement $\Delta$ et $\mathrm{E}$ sont \'echang\'es; le point \'eclat\'e \'etant l'intersection de $\Delta'$ et $\Delta''$ on a $\Delta\to\Delta+\mathrm{F}+\mathrm{G}.$ Par suite $\sigma^*\mathrm{E}=\mathrm{H}-\mathrm{F}- \mathrm{G}.$ De m\^eme $\sigma^*\mathrm{F}=\mathrm{H}-\mathrm{E}-\mathrm{G}$ et $\sigma^*\mathrm{G}=\mathrm{H}- \mathrm{E}-\mathrm{F}.$ Reste \`a d\'eterminer $\sigma^*\mathrm{}.$ L'image d'une droite g\'en\'erique par $\sigma$ est une conique d'o\`u $\sigma^*\mathrm{H}=2\mathrm{H}+m_1\mathrm{E}+m_2\mathrm{F}+m_3\mathrm{G}.$ Soit $\mathrm{L}$ une droite g\'en\'erique d'\'equation $\ell=0$ o\`u $\ell=a_0x+a_1y+a_2z.$ Un calcul montre que $$(u_1,v_1)\to(u_1,u_1v_1)_{(y,z)}\to(u_1^2v_1:u_1v_1:u_1)\to u_1(a_0v_2+a_1u_2v_2+a_2)$$ s'annule \`a l'ordre $1$ sur $\mathrm{E}=\{u_1 =0\}$ d'o\`u $m_1=0.$ De m\^eme on constate que $$(u_2,v_2)\to(u_2,u_2v_2)_{(x,z)}\to(u_2v_2:u_2^2v_2:u_2)\to u_2(a_0v_2+a_1u_2v_2+a_2),$$ resp. $$(u_3,v_3)\to(u_3,u_3v_3)_{(x,y)}\to(u_3v_3:u_3:u_3^2v_3)\to u_3(a_0v_3+a_1+a_2u_3v_3)$$ s'annule \`a l'ordre $1$ sur $\mathrm{F}=\{u_2=0\},$ resp. $\mathrm{G}=\{u_3=0\}$ d'o\`u $m_2=-1,$ resp. $m_3=-1.$ Par suite $\sigma^*\mathrm{H}=2\mathrm{H}-\mathrm{E}-\mathrm{F}-\mathrm{G}$ et la matrice caract\'eristique de $\sigma$ dans la base $\{\mathrm{H},\,\mathrm{E},\,\mathrm{F},\,\mathrm{G}\}$ est $$M_\sigma=\left[\begin{array}{cccc} 2 & 1 &1 & 1\\ -1 & 0 & -1 & -1 \\ -1 & -1 & 0 & -1 \\ -1 & -1 & -1 & 0 \end{array}\right].$$ 

\subsection{\'Etude de $\rho$}\label{rho}

\`A partir des plongements naturels 
\begin{align*}
&\mathbb{C}^2\subset\mathbb{P}^2(\mathbb{C}),\,(x:y:1)\mapsto(x:y:z),&&
\mathbb{C}\times\mathbb{C}\subset\mathbb{P}^1(\mathbb{C})\times\mathbb{P}^1(\mathbb{C}),\, (x,y)\mapsto((x:1),(y:1))
\end{align*}

\noindent on obtient l'identification suivante
$$\mathbb{P}^2(\mathbb{C})\longleftrightarrow\mathbb{P}^1(\mathbb{C})\times\mathbb{P}^1(\mathbb{C}),\,(x:y:z)\longleftrightarrow((x:z),(y:z)).$$

\noindent On a donc
$$\xymatrix {
(x:y:z)\ar@{<->}[r]\ar[d]_{\rho} & ((x:z),(y:z))\ar[d]^{\rho}\\
 (xy:z^2:yz)\ar@{<->}[r] & ((xy:yz),(z^2:yz))=((x:z),(z:y))}$$
 puisque $((x:z),(z:y))$ est holomorphe sur $\mathbb{P}^1(\mathbb{C})\times\mathbb{P}^1( \mathbb{C}),$ la transformation $\rho$ est un automorphisme sur~$\mathbb{P}^2( \mathbb{C})$ \'eclat\'e en un nombre fini de points. D\'etaillons ceci.

\smallskip

\noindent On remarque que 
\begin{align*}
&\mathrm{Ind}\,\rho=\{P,\,Q\}, &&
\mathrm{Exc}\,\rho=\Delta\cup\Delta'
\end{align*}

\noindent o\`u

\begin{align*}
&P=(0:1:0),\, Q=(1:0:0), && \Delta=\{y=0\},\,
\Delta'=\{z=0\}. 
\end{align*}

\begin{figure}[H]
\begin{center}
\input{rho1.pstex_t}
\end{center}
\end{figure}

\noindent Posons:

\begin{align*}
&\left\{\begin{array}{ll} x=r_1s_1\\ z=s_1\end{array}\right. && \begin{array}{ll} \mathrm{E}=\{s_1 =0\}\\ \end{array} && \hspace{2cm} &&\left\{\begin{array}{ll} x=u_1\\ z=u_1v_1\end{array}\right. && \begin{array}{ll} \mathrm{E}=\{u_1=0\}\\ \Delta'=\{v_1=0\}\end{array}
\end{align*}

\begin{figure}[H]
\begin{center}
\input{rho2.pstex_t}
\end{center}
\end{figure}

\noindent On a $(r_1,s_1)\to(r_1s_1,s_1)_{(x,z)}\to(r_1:s_1:1);$ par suite $\mathrm{E}$ est envoy\'e sur $\Delta.$ Puisque $\rho$ est une involution $\Delta\to\mathrm{E}.$

\noindent Par ailleurs $(u_1,v_1)\to(u_1,u_1v_1)_{(x,z)}\to(1:u_1v_1^2:v_1).$ 

\noindent \'Eclatons $Q_1,$ posons
\begin{align*}
&\left\{\begin{array}{ll} y=r_2s_2\\ z=s_2\end{array}\right. && \begin{array}{ll} \mathrm{F}=\{s_2 =0\}\\ \Delta_2=\{r_2=0\}\end{array} && \hspace{2cm} &&\left\{\begin{array}{ll} y=u_2\\ z=u_2v_2\end{array}\right. && \begin{array}{ll} \mathrm{F}=\{u_2 =0\}\\ \Delta'_2=\{v_2=0\}\end{array}
\end{align*}

\begin{figure}[H]
\begin{center}
\input{rho3.pstex_t}
\end{center}
\end{figure}

\noindent On constate que $(u_2,v_2)\to(u_2,u_2v_2)_{(y,z)}\to(u_2v_2^2,u_2v_2)_{(y,z)}\to (v_2, u_2v_2)_{(r_2,s_2)};$ ainsi $\mathrm{F}\to\mathrm{F}$ et $\Delta'_2\to Q_2.$

\noindent On a aussi: $(r_2,s_2)\to(r_2s_2,s_2)_{(y,z)}\to(r_2:s_2:r_2s_2).$ 

\noindent Pour finir \'eclatons $Q_2,$ posons
\begin{align*}
&\left\{\begin{array}{ll} r_2=r_3s_3\\ s_2=s_3\end{array}\right. && \begin{array}{ll} \mathrm{G}=\{s_3 =0\}\\ \Delta_3=\{r_3=0\}\end{array} && \hspace{2cm} &&\left\{\begin{array}{ll} r_2=u_3\\ s_2=u_3v_3\end{array}\right. && \begin{array}{ll} \mathrm{G}=\{u_3=0\}\\ \mathrm{F}_1=\{v_3=0\}\end{array}
\end{align*}

\begin{figure}[H]
\begin{center}
\input{rho4.pstex_t}
\end{center}
\end{figure}

\noindent On a: $(u_3,v_3)\to(u_3,u_3v_3)_{(r_2,s_2)}\to(u_3^2v_3,u_3v_3)_{(y,z)}\to(1:v_3:u_3v_3)$ et $$(r_3,s_3)\to(r_3s_3,s_3)_{(r_2,s_2)}\to(r_3s_3^2,s_3)_{(y,z)}\to(r_3:1:r_3s_3).$$ Par suite il n'y a plus de point d'ind\'etermination, $\mathrm{G}$ est envoy\'e sur $\Delta'_3$ et puisque $\rho$ est une involu\-tion,~$\Delta'_3$ est envoy\'e sur $\mathrm{G}.$ 

\noindent Par cons\'equent $\rho$ est conjugu\'e \`a un automorphisme de $\mathrm{Bl}_{P,Q_1,Q_2}\mathbb{P}^2.$ 

\bigskip

\noindent On travaille dans la base $\{\mathrm{H},\,\mathrm{E},\,\mathrm{F},\,\mathrm{G}\}.$ Au premier \'eclatement $\mathrm{E}$ est envoy\'e sur $\Delta;$ au second $\mathrm{F}$ est fix\'e et $\Delta$ est contract\'e sur~$\mathrm{F}\cap\Delta.$ Enfin au dernier $\mathrm{G}$ est envoy\'e sur $\Delta'.$ Le second \'eclatement \'etant l'\'eclatement d'un point sur $\Delta$ on a $\Delta\to\Delta + \mathrm{F};$ puisque le dernier consiste \`a \'ecla\-ter~$\mathrm{F}\cap\Delta$ on a $\Delta \to\Delta + \mathrm{F}\to\Delta+\mathrm{F}+2\mathrm{G}.$ Puisque les deux premiers points \'eclat\'es sont sur $\Delta'$ on a~$\Delta'\to\Delta'+\mathrm{E}+\mathrm{F};$ pour finir on \'eclate un point de $\mathrm{F}\cap\Delta$ d'o\`u $\Delta'\to\Delta' +\mathrm{E}+\mathrm{F}\to\Delta'+\mathrm{E}+\mathrm{F}+\mathrm{G}.$

\noindent Par suite
\begin{align*}
&\rho^*\mathrm{E}=\mathrm{H}-\mathrm{F}-2\mathrm{G},&&\rho^*\mathrm{F}=\mathrm{F}, &&
\rho^*\mathrm{G}=\mathrm{H}-\mathrm{E}-\mathrm{F}-\mathrm{G}.
\end{align*}

\noindent Reste \`a d\'eterminer $\rho^*\mathrm{H}.$ L'image d'une droite g\'en\'erique par $\rho$ \'etant une conique $\rho^*\mathrm{H}$ s'\'ecrit aussi $2\mathrm{H}+m_0\mathrm{E}+m_1\mathrm{F}+m_2 \mathrm{G}.$ D\'eterminons les $m_i.$ Soit $\mathrm{L}$ une droite g\'en\'erique; elle a pour \'equation $\ell=0$ avec $\ell=a_0x+a_1y +a_2z.$ Un calcul montre que
$$(r_1,s_1)\to(r_1s_1,s_1)_{(x,z)}\to(r_1s_1:s_1^2:s_1)\to s_1(a_0r_1+a_1s_1+a_2)$$ s'annule \`a l'ordre $1$ sur $\mathrm{E}=\{s_1=0\}$ d'o\`u $m_0=1.$ On constate que 
$$(r_2,s_2)\to(r_2s_2,s_2)_{(y,z)}\to(r_2s_2:s_2^2:r_2s_2^2)\to s_2(a_0r_2+a_1s_2+a_2r_2s_2)$$
s'annule \`a l'ordre $1$ sur $\mathrm{F}=\{s_2=0\}$ d'o\`u $m_1=1.$ On remarque que
$$(r_3,s_3)\to(r_3s_3^2,s_3)_{(y,z)}\to(r_3s_3^2:s_3^2:r_3s_3^3)\to s_3^2(a_0r_3+a_1+a_2r_3)$$
s'annule \`a l'ordre $2$ sur $\mathrm{G}=\{s_3=0\},$ par suite $m_2=2.$ On en d\'eduit que la matrice caract\'eristique de $\rho$ dans la base $\{\mathrm{H},\,\mathrm{E},\,\mathrm{F},\,\mathrm{G}\}$ $$M_\rho=\left[\begin{array}{cccc}1 & 1 & 0 & 1\\ -1 & 0 & 0 & -1\\ -1 & -1 & 1 & -1 \\ -2 & -2 & 0 & -1\end{array}\right].$$

\subsection{\'Etude de $\tau$}\label{tau}

\noindent Rappelons que $\tau$ ne contracte qu'une droite $\Delta=\{x=0\}$ et n'\'eclate qu'un point $P=(0:0:1).$

\begin{figure}[H]
\begin{center}
\input{tau4.pstex_t}
\end{center}
\end{figure}

\noindent \'Eclatons le point $P;$ posons $x=r_1s_1$ et $y=s_1.$

\begin{figure}[H]
\begin{center}
\input{tau3.pstex_t}
\end{center}
\end{figure}

\noindent On constate que $(r_1,s_1)$ sont des coordonn\'ees au voisinage de $P_1=(0,0)_{(r_1,s_1)}$ dans lesquelles le diviseur exceptionnel $\mathrm{E}$ est donn\'e par $\{s_1=0\}$ et $\Delta_1$ par $\{r_1=0\}.$ On a 
\begin{align*}
&(r_1,s_1)\to(r_1s_1,r_1)_{(x,y)}\to(r_1^2s_1:r_1s_1:s_1-r_1)\\
&\hspace{1cm}= \left(\frac{r_1^2s_1}{s_1-r_1},\frac{r_1s_1}{s_1-r_1}\right)_{(x,y)}\to \left(r_1,\frac{r_1 s_1}{s_1-r_1}\right)_{(r_1,s_1)}.
\end{align*}

\noindent On en d\'eduit que $P_1=(0,0)_{(r_1,s_1)}$ est ind\'etermin\'e, $\Delta_1$ est contract\'e sur $P_1$ et $\mathrm{E}$ est fix\'e.

\noindent \'Eclatons le point $P_1.$ Posons $r_1=r_2s_2$ et $s_1=s_2.$

\begin{figure}[H]
\begin{center}
\input{tau2.pstex_t}
\end{center}
\end{figure}

\noindent On remarque que $(r_2,s_2)$ sont des coordonn\'ees au voisinage de $(0,0)_{(r_2,s_2)},$ coordonn\'ees dans lesquelles le diviseur exceptionnel $\mathrm{F}$ a pour \'equation $\{s_2=0\}$ et $\Delta_2=\{r_2=0\}.$ On a 
\begin{align*}
&(r_2,s_2) \to(r_2s_2,s_2)_{(r_1,s_1)}\to(r_2^2s_2^2:r_2s_2:1-r_2)\to\left( \frac{r_2^2s_2^2}{1-r_2},\frac{r_2s_2}{1-r_2}\right)_{(x,y)}\\
&\hspace{1cm}\to\left(r_2s_2,\frac{r_2s_2}{1-r_2}\right)_{(r_1,s_1)}\to\left(1-r_2, \frac{r_2s_2}{1-r_2}\right)_{(r_2,s_2)}.
\end{align*}

\noindent On constate que $P_2=(1,0)_{(r_2,s_2)}$ est d'ind\'etermination; par ailleurs $\mathrm{F}$ est fix\'e et $\Delta_2$ contract\'e sur $P_2.$

\noindent \'Eclatons le point $P_2.$ Posons $r_2=u_3+1$ et $s_2=u_3v_3$

\begin{figure}[H]
\begin{center}
\input{tau1.pstex_t}
\end{center}
\end{figure}

\noindent Dans ce cas on note $\mathrm{G}=\{u_3 =0\}$ le diviseur exceptionnel. Dans les coordonn\'ees $(u_3,v_3)$ on~a 
\begin{align*}
&(u_3,v_3)\to(u_3+1,u_3v_3)_{(r_2,s_2)}\to((u_3+1)^2u_3v_3^2:v_3(u_3+1):-1)\\
&\hspace{1cm}=\left( -(u_3+1)^2u_3v_3^2,-v_3(u_3+1)\right)_{(x,y)} \to\left((u_3+1)u_3v_3,-v_3(u_3+1)\right)_{(r_1,s_1)}\\
&\hspace{1cm}\to\left(-u_3,-v_3(u_3+1)\right)_{(r_2,s_2)}\to\left(-u_3-1,v_3\right)_{(u_3,v_3)};
\end{align*}

\noindent on constate que $\mathrm{G}\to\{u_3=-1\}.$ Par ailleurs il n'y a plus de point d'ind\'etermination.

\noindent Ainsi $\tau$ est conjugu\'e \`a un automorphisme de $\mathbb{P}^2(\mathbb{C})$ \'eclat\'e en $P,$ $P_1$ et $P_2.$

\bigskip

\noindent On se place dans la base $\{\mathrm{H},\, \mathrm{E},\,\mathrm{F},\,\mathrm{G}\}.$ Apr\`es avoir \'eclat\'e $P,$ on constate que le diviseur exceptionnel $\mathrm{E}$ est pr\'eserv\'e et $\Delta$ est envoy\'e sur $\mathrm{E}\cap\Delta.$ L'\'eclatement de $P_1=\mathrm{E}\cap\Delta_1$ envoie $\mathrm{F}$ sur~$\mathrm{F}$ et $\Delta$ sur  $P_2=\mathrm{F}\cap\Delta_2.$ Enfin l'\'eclatement de $P_2$ envoie $\mathrm{G}$ sur $\Delta.$ Puisque $P$ est sur $\Delta$ on a~$\Delta\to\Delta + \mathrm{E}.$ Ensuite on \'eclate le point $P_1=\Delta_1\cap\mathrm{E}$ d'o\`u $$\Delta\to\Delta + \mathrm{E}\to\Delta +\mathrm{F}+\mathrm{E}+\mathrm{F}=\Delta+\mathrm{E}+2 \mathrm{F}.$$ Pour finir on \'eclate $P_2$ qui est sur $\mathrm{F}$ d'o\`u $\Delta\to\Delta + \mathrm{E}\to\Delta+\mathrm{E}+2 \mathrm{F}\to\Delta+\mathrm{E}+2 \mathrm{F}+2 \mathrm{G}.$ Il en r\'esulte que
\begin{align*}
&\tau^*\mathrm{E}=\mathrm{E},&&\tau^*\mathrm{F}=\mathrm{F}, &&
\tau^*\mathrm{G}=\mathrm{H}-\mathrm{E}-2\mathrm{F}-2\mathrm{G}.
\end{align*}

\noindent Reste \`a d\'eterminer $\tau^*\mathrm{H}.$ Puisque l'image d'une droite g\'en\'erique par $\tau$ est une conique on a~$\tau^*\mathrm{H}=2\mathrm{H}+m_0\mathrm{E}+m_1\mathrm{F}+m_2 \mathrm{G}.$ Calculons les $m_j.$ Soit $\mathrm{L}$ une droite g\'en\'erique dans $\mathbb{P}^2(\mathbb{C});$ elle a pour \'equation $\ell=0$ o\`u $\ell=a_0x+a_1y+a_2z.$ On constate que $$(r_1,s_1)\to(r_1s_1,s_1)_{(x,y)}\to(r_1^2s_1^2:r_1s_1^2:s_1^2-r_1s_1)\to s_1(a_0r_1^2s_1+a_1r_1 s_1+a_2(s_1-r_1))$$ s'annule \`a l'ordre $1$ sur $\mathrm{E}=\{s_1=0\}$ donc $m_0=1.$ De m\^eme un calcul montre que 
$$(r_2,s_2)\to(r_2s_2^2,s_2)_{(x,y)}\to(r_2^2s_2^4:r_2s_2^3:s_2^2-r_2s_2^2)\to s_2^2(a_0r_2^2s_2^2+a_1r_2s_2+a_2(1-r_2))$$

\noindent %s'annule \`a l'ordre $2$ sur $\mathrm{F}=\{s_2=0\},$ 
et $\mathrm{F}=\{s_2=0\}$ d'o\`u $m_1=2.$ Enfin on obtient que 
\begin{align*}
&(u_3,v_3)\to(u_3+1,u_3v_3)_{(r_2,s_2)}\to((u_3+1)u_3^2v_3^2,u_3v_3)_{(x,y)}\\
&\hspace{1cm}\to u_3^3(a_0(u_3+1)^2u_3 v_3^4+a_1(u_3+1)v_3^3-a_2v_3^2)
\end{align*}

\noindent %s'annule \`a l'ordre $3$ sur $\mathrm{G}= \{u_3=0\}$ 
et $\mathrm{G}= \{u_3=0\}$ ainsi $m_2=3.$ Par suite la matrice caract\'eristique de $\tau$ dans la base $\{\mathrm{H},\,\mathrm{E},\,\mathrm{F},\,\mathrm{G}\}$ $$M_\tau=\left[\begin{array}{cccc}1 & 0 & 0 & 1 \\ -1 & 1 & 0 & -1 \\ -2 & 0 & 1 & -2 \\ -3 & 0 & 0 & -2 \end{array}\right].$$

\section{Ensembles de \textsc{Fatou}}

\subsection{D\'efinitions et propri\'et\'es}

\noindent Soit $f$ un automorphisme sur une vari\'et\'e $\mathcal{M}$ complexe, compacte. Rappelons que l'{\it ensemble de \textsc{Fatou}}\label{ind50} $\mathcal{F}(f)$ de $f$ est l'ensemble des points qui poss\`edent un voisinage $\mathcal{V}$ tel que~$\{f^n_{\vert\mathcal{V}},\, n\geq~0\}$ soit une famille normale. Consid\'erons l'ensemble $$\mathcal{G}=\mathcal{G}(\mathcal{U})=\{g\colon \mathcal{U}\to\overline{\mathcal{U}}\,\vert\, g=\lim_{n_j\to +\infty} f^{n_j}\}.$$ On dit que $\mathcal{U}$ est un {\it domaine de rotation}\label{ind51} si $\mathcal{G}$ est un sous-groupe de $\mathrm{Aut}(\mathcal{U})$ autrement dit si tout \'el\'ement de~$\mathcal{G}$ d\'efinit un automorphisme de $\mathcal{U}.$ Une autre d\'efinition possible est la suivante: si $\mathcal{U}$ est une composante de~$\mathcal{F}(f)$ invariante par $f,$ on dit que $\mathcal{U}$ est un domaine de rotation si $f_{\vert\mathcal{U}}$ est conjugu\'e \`a une rotation lin\'eaire; en dimension $1$ cela correspond \`a poss\'eder un disque de \textsc{Siegel}. On a la liste de propri\'et\'es suivantes (\cite{BK4}).
\begin{itemize}
\item Si $f$ pr\'eserve une forme volume lisse, alors toute composante de \textsc{Fatou} est un domaine de rotation.

\item Si $\mathcal{U}$ est un domaine de rotation, $\mathcal{G}$ est un sous-groupe de $\mathrm{Aut}(\mathcal{M}).$

\item Une composante de \textsc{Fatou} $\mathcal{U}$ est un domaine de rotation si et seulement s'il existe une sous-suite $(n_j)$ tendant vers $+\infty$ telle que $(f^{n_j})$ converge uniform\'ement vers l'identit\'e sur des sous-ensembles compacts de $\mathcal{U}.$

\item Si $\mathcal{U}$ est un domaine de rotation, $\mathcal{G}$ est un groupe de \textsc{Lie} ab\'elien, compact et l'action de~$\mathcal{G}$ sur $\mathcal{U}$ est analytique r\'eelle.
\end{itemize}

\noindent Soit $\mathcal{G}_0$ une composante connexe de l'identit\'e de $\mathcal{G}.$ Puisque $\mathcal{G}$ est un groupe de \textsc{Lie} ab\'elien, infini et compact,~$\mathcal{G}_0$ est un tore de dimension $d\geq 0;$ notons que $d\leq \dim_\mathbb{C}\mathcal{M}.$ On dit que $d$ est le {\it rang du domaine de rotation}\label{ind53}. Le rang est \'egal \`a la dimension de l'adh\'erence de l'orbite g\'en\'erique d'un point de $\mathcal{U}.$

\noindent On a des renseignements de nature g\'eom\'etrique sur les domaines de rotation: si $\mathcal{U}$ est un domaine de rotation, il est pseudo-convexe (\cite{BK4}). 

\noindent Donnons quelques pr\'ecisions lorsque $\mathcal{M}$ est une surface k\"{a}hl\'erienne poss\'edant un automorphisme d'entropie positive.

\begin{thm}[\cite{BK4}]
{\sl Soient $\mathcal{Z}$ une surface k\"{a}hl\'erienne compacte et $f$ un automorphisme sur $\mathcal{Z}$ d'entropie positive. Soit $\mathcal{U}$ un domaine de rotation de rang $d.$ Alors $d\leq 2.$

\noindent Si $d=2$ la $\mathcal{G}_0$-orbite d'un point g\'en\'erique de $\mathcal{U}$ est un $2$-tore r\'eel.

\noindent Si $d$ vaut $1,$ il existe un champ de vecteurs holomorphes induisant sur $\mathcal{Z}$ un feuilletage en surfaces de Riemann dont chaque feuille est invariante par $\mathcal{G}_0.$}
\end{thm}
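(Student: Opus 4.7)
L'in\'egalit\'e $d\leq 2$ est imm\'ediate: elle r\'esulte de l'encadrement $d\leq\dim_\mathbb{C}\mathcal{M}$ d\'ej\`a rappel\'e pour tout domaine de rotation dans une vari\'et\'e complexe~$\mathcal{M}$, combin\'e au fait que $\dim_\mathbb{C}\mathcal{Z}=2$. Toute la substance du th\'eor\`eme r\'eside donc dans la description g\'eom\'etrique des orbites dans les deux cas non triviaux $d=2$ et $d=1$.

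\smallskip

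Pour $d=2$, je commencerais par remarquer que, par d\'efinition du rang, la dimension de l'adh\'erence de l'orbite g\'en\'erique vaut $2$. Comme $\mathcal{G}_0\simeq\mathbb{T}^2$ est compact, ses orbites sont automatiquement ferm\'ees dans $\mathcal{U}$ et co\"{i}ncident avec leurs adh\'erences. Pour un point $x$ g\'en\'erique, $\mathcal{G}_0\cdot x$ est diff\'eomorphe \`a $\mathcal{G}_0/\mathrm{Stab}(x)$ et de dimension $2$, ce qui force $\mathrm{Stab}(x)$ \`a \^etre de dimension nulle, donc fini dans $\mathcal{G}_0$. Le quotient est alors un groupe de Lie ab\'elien compact connexe de dimension deux, c'est-\`a-dire un $2$-tore r\'eel.

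\smallskip

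Pour $d=1$, l'id\'ee directrice est de produire un champ de vecteurs holomorphe global sur $\mathcal{Z}$ \`a partir du groupe $\mathcal{G}_0\simeq\mathbb{S}^1$. Je combinerais deux ingr\'edients: d'une part l'inclusion $\mathcal{G}\subset\mathrm{Aut}(\mathcal{Z})$ rappel\'ee parmi les propri\'et\'es des domaines de rotation, d'autre part le fait que $\mathrm{Aut}(\mathcal{Z})$ est un groupe de Lie complexe de dimension finie (th\'eor\`eme de \textsc{Bochner}--\textsc{Montgomery}) dont l'alg\`ebre de Lie s'identifie \`a $\mathrm{H}^0(\mathcal{Z},\mathrm{T}\mathcal{Z})$. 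Tout g\'en\'erateur de l'alg\`ebre de Lie de $\mathcal{G}_0$ fournit alors un champ de vecteurs holomorphe global non nul $X$ sur $\mathcal{Z}$, dont le flot r\'eel p\'eriodique est exactement $\mathcal{G}_0$. Les courbes int\'egrales complexes de $X$ d\'efinissent un feuilletage (r\'egulier en dehors du lieu des z\'eros de $X$) par surfaces de Riemann; puisque la $\mathcal{G}_0$-orbite d'un point est contenue dans la feuille complexe issue de ce point, chaque feuille est invariante par $\mathcal{G}_0$.

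\smallskip

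\emph{Principale difficult\'e.} Le point r\'eellement technique n'est ni le d\'enombrement du stabilisateur en rang $2$, ni le passage alg\`ebre de Lie/champ en rang $1$, mais la justification que les \'el\'ements de $\mathcal{G}_0$ sont bien des automorphismes globaux de $\mathcal{Z}$, alors qu'ils sont d\'efinis a priori comme limites $\lim f^{n_j}$ d'it\'er\'es sur le seul ouvert $\mathcal{U}$. C'est pr\'ecis\'ement la propri\'et\'e $\mathcal{G}\subset\mathrm{Aut}(\mathcal{Z})$ d\'ej\`a list\'ee, dont la d\'emonstration exploite la finitude de $\dim\mathrm{Aut}(\mathcal{Z})$ pour prolonger ces limites; sans cette \'etape on n'obtiendrait qu'un champ de vecteurs sur $\mathcal{U}$, insuffisant pour produire le feuilletage global annonc\'e.
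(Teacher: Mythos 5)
Le texte ne contient pas de d\'emonstration de cet \'enonc\'e, qui est cit\'e de \cite{BK4}; votre proposition doit donc \^etre jug\'ee pour elle-m\^eme, et elle pr\'esente une lacune s\'erieuse, exactement au point que vous d\'esignez vous-m\^eme comme \og la principale difficult\'e\fg. Tout votre cas $d=1$ repose sur la propri\'et\'e list\'ee \og $\mathcal{G}\subset\mathrm{Aut}(\mathcal{Z})$\fg, c'est-\`a-dire sur le prolongement \`a $\mathcal{Z}$ tout entier des limites $\lim f^{n_j}$ qui ne sont d\'efinies que sur $\mathcal{U}.$ Un tel prolongement est en g\'en\'eral impossible: pour les automorphismes de surfaces K$3$ de \textsc{McMullen} poss\'edant un disque de \textsc{Siegel} (\cite{Mc2}), $\mathcal{G}_0$ est un $2$-tore alors que $\mathrm{Aut}(\mathcal{Z})$ est discret puisque $\mathrm{H}^0(\mathcal{Z},\mathrm{T}\mathcal{Z})=0;$ et dans le cadre rationnel, les surfaces de \cite{Mc, BK2} portant des domaines de rotation de rang $1$ ou $2$ sont des \'eclat\'es de $\mathbb{P}^2(\mathbb{C})$ en au moins $10$ points distincts situ\'es sur une cubique (cuspidale dans les cas typiques): un tel \'eclat\'e n'admet aucun champ de vecteurs holomorphe global non nul, car un tel champ descendrait en un champ de $\mathbb{P}^2(\mathbb{C})$ s'annulant en tous les points \'eclat\'es, alors que le lieu des z\'eros d'un champ non nul de $\mathbb{P}^2(\mathbb{C})$ est au plus une droite plus un sch\'ema de longueur $3,$ et qu'au plus $3$ de ces points sont align\'es. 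Votre \'etape \textsc{Bochner}--\textsc{Montgomery} produirait pr\'ecis\'ement un tel champ global: elle ne peut donc pas \^etre correcte, et le m\'ecanisme que vous invoquez (\og la finitude de $\dim\mathrm{Aut}(\mathcal{Z})$ permet de prolonger ces limites\fg) n'existe pas, $\mathrm{Aut}(\mathcal{Z})$ pouvant \^etre de dimension nulle. Le champ de vecteurs de l'\'enonc\'e se construit sur $\mathcal{U}:$ le g\'en\'erateur infinit\'esimal de l'action de $\mathcal{G}_0\simeq\mathbb{S}^1$ est la partie r\'eelle d'un champ holomorphe sur $\mathcal{U}$ parce que le flot y est form\'e de biholomorphismes; toute la partie non triviale du th\'eor\`eme -- ce que l'on peut affirmer au niveau de $\mathcal{Z},$ et o\`u l'entropie intervient -- est pr\'ecis\'ement ce que votre r\'edaction contourne en citant la liste de propri\'et\'es.

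Par ailleurs votre d\'emonstration n'utilise jamais l'hypoth\`ese d'entropie positive, alors que l'\'enonc\'e est faux sans elle: pour une translation g\'en\'erique d'un tore $\mathbb{C}^2/\Lambda,$ le tore entier est un domaine de rotation dont le rang peut atteindre $4.$ L'in\'egalit\'e $d\leq\dim_\mathbb{C}\mathcal{M},$ que vous qualifiez d'imm\'ediate, n'est donc pas un fait g\'en\'eral sur les domaines de rotation; c'est une partie de ce que \cite{BK4} d\'emontre \`a l'aide de l'entropie, et la tenir pour acquise masque ce contenu. Seul votre traitement du cas $d=2$ (compacit\'e de $\mathcal{G}_0,$ orbites ferm\'ees, stabilisateur g\'en\'erique fini, quotient d'un $2$-tore par un sous-groupe fini) est correct, une fois admis $d=2$ et le fait que le rang est la dimension de l'orbite g\'en\'erique.
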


\noindent On peut utiliser un argument de lin\'earisation locale pour montrer que certains points fixes appartiennent \`a l'ensemble de \textsc{Fatou}. R\'eciproquement lin\'eariser un point fixe de l'ensemble de \textsc{Fatou} est toujours possible.

\subsection{Ensemble de \textsc{Fatou} des automorphismes de \textsc{H\'enon}}

\noindent Soit $f$ un automorphisme de \textsc{H\'enon}. On d\'esigne par $\mathcal{K}^\pm$ le sous-ensemble de $\mathbb{C}^2$ dont l'orbite positive/n\'egative est born\'ee $$\mathcal{K}^\pm=\{(x,y)\in\mathbb{C}^2\,\vert \,\{f^{\pm n}(x,y)\,\vert\, n\geq 0\} \text{ est born\'e}\}.$$ Posons 
\begin{align*}
& \mathcal{K}=\mathcal{K}^+\cap \mathcal{K}^-, && \mathcal{J}^\pm=\partial \mathcal{K}^\pm, && \mathcal{J}=\mathcal{J}^+\cap \mathcal{J}^-, && \mathcal{U}^+=\mathbb{C}^2\setminus \mathcal{K}^+.
\end{align*}

\noindent \'Enon\c{c}ons quelques propri\'et\'es.
\begin{itemize}
\item Les it\'er\'es $f^n,$ $n\geq 0,$ forment une famille normale dans l'int\'erieur de $\mathcal{K}^+.$

\item Si $(x,y)$ appartient \`a $\mathcal{J}^+$ il n'existe pas de voisinage $U$ de $(x,y)$ sur lequel la famille $\{f_{\vert U}^n\,\vert\, n\geq 0\}$ soit normale.
\end{itemize}

\noindent On a l'\'enonc\'e suivant.

\begin{pro}
{\sl L'ensemble de \textsc{Fatou} d'une application de \textsc{H\'enon} est $\mathbb{C}^2\setminus \mathcal{J}^+.$}
\end{pro}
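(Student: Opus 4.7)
Le plan repose sur la d\'ecomposition $\mathbb{C}^2=\mathrm{int}(\mathcal{K}^+)\sqcup\mathcal{J}^+\sqcup\mathcal{U}^+:$ c'est une partition puisque $\mathcal{K}^+$ est ferm\'e (ce qui se v\'erifiera au passage). Les deux propri\'et\'es \'enonc\'ees juste avant la proposition donnent imm\'ediatement $\mathrm{int}(\mathcal{K}^+)\subset\mathcal{F}(f)$ et $\mathcal{J}^+\cap\mathcal{F}(f)=\emptyset.$ Il ne reste donc qu'\`a prouver l'inclusion $\mathcal{U}^+\subset\mathcal{F}(f),$ autrement dit qu'au voisinage de tout point dont l'orbite positive fuit \`a l'infini, les it\'er\'es $(f^n)_{n\ge 0}$ forment une famille normale.

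L'id\'ee sera de se placer dans le compactifi\'e $\mathbb{P}^2(\mathbb{C})$ et de montrer que sur $\mathcal{U}^+,$ les $f^n$ vus comme applications \`a valeurs dans $\mathbb{P}^2(\mathbb{C})$ convergent localement uniform\'ement vers un point \`a l'infini; cela fournira la normalit\'e voulue. L'outil cl\'e sera la filtration classique due \`a \textsc{Hubbard} et \textsc{Oberste-Vorth} (\emph{cf.} \cite{FM}). Apr\`es avoir \'ecrit $f=f_1\circ\cdots\circ f_k$ comme compos\'ee d'applications de \textsc{H\'enon} g\'en\'eralis\'ees, de degr\'e total $d=\prod_i\deg f_i\ge 2,$ on fixera $R>0$ assez grand et l'on d\'ecoupera $\mathbb{C}^2$ en trois r\'egions
\begin{align*}
&V=\{\max(|x|,|y|)\le R\}, && V^+=\{|y|\ge R,\,|y|\ge|x|\}, && V^-=\{|x|\ge R,\,|x|\ge|y|\}.
\end{align*}
Les propri\'et\'es \`a v\'erifier sont: $f(V^+)\subset V^+$ et $f^{-1}(V^-)\subset V^-;$ pour $(x,y)\in V^+$ on a une estimation du type $|\pi_2(f(x,y))|\ge c|y|^d$ avec $c>0.$ Il en r\'esultera d'une part que $\mathcal{K}^+\subset V\cup V^-$ est born\'e et co\"{\i}ncide avec $\bigcap_{n\ge 0} f^{-n}(V\cup V^-),$ donc ferm\'e, et d'autre part l'identit\'e $\mathcal{U}^+=\bigcup_{n\ge 0}f^{-n}(V^+),$ traduisant le fait que toute orbite non born\'ee finit par entrer dans $V^+.$

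Soit alors $p_0\in\mathcal{U}^+;$ il existe $N\ge 0$ tel que $f^N(p_0)\in V^+,$ puis par continuit\'e un voisinage ouvert $\Omega$ de $p_0$ v\'erifiant $f^N(\Omega)\subset V^+.$ L'it\'eration de l'estimation $|\pi_2(f(q))|\ge c|y(q)|^d$ dans $V^+$ fournit une minoration de $\|f^{N+n}(p)\|$ croissant essentiellement comme $d^n,$ uniform\'ement sur $\Omega;$ par cons\'equent $f^{N+n}|_\Omega$ converge localement uniform\'ement vers le point \`a l'infini $(1:0:0)\in\mathbb{P}^2(\mathbb{C}).$ En particulier $\{f^n|_\Omega\}$ est une famille normale et $p_0\in\mathcal{F}(f).$ L'obstacle principal est purement technique et r\'eside dans la v\'erification soigneuse de la filtration --- choix de $R$ en fonction des degr\'es et coefficients des $f_i,$ estimations de croissance pour la compos\'ee $f_1\circ\cdots\circ f_k;$ une fois ces bornes acquises, la conclusion est formelle.
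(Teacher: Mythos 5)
Your overall strategy is sound, and in fact the paper gives no proof of this proposition at all: it is quoted from the literature (\textsc{Bedford}--\textsc{Smillie}, \textsc{Hubbard}--\textsc{Oberste-Vorth}), the only ingredients supplied being the two properties listed just before the statement. Your argument is precisely the standard one: the partition $\mathbb{C}^2=\mathrm{int}(\mathcal{K}^+)\sqcup\mathcal{J}^+\sqcup\mathcal{U}^+$ (valid once $\mathcal{K}^+$ is known to be closed), the two listed properties disposing of $\mathrm{int}(\mathcal{K}^+)$ and $\mathcal{J}^+,$ and the filtration $V,$ $V^+,$ $V^-$ giving $\mathcal{U}^+=\bigcup_{n\geq 0}f^{-n}(V^+)$ and the locally uniform escape to infinity on $\mathcal{U}^+,$ hence normality there. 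Note that you are implicitly (and correctly) reading \og famille normale\fg\, as normality of the $f^n$ viewed as maps into $\mathbb{P}^2(\mathbb{C})$ (equivalently, allowing divergence locale uniforme vers l'infini); with the stricter convention of convergence in $\mathbb{C}^2$ the proposition would be false, so making this explicit, as you do, is the right reading.

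Two small slips, neither of which damages the proof. First, with the normal form $f_i(x,y)=(y,P(y)-\delta x)$ and $V^+=\{|y|\geq R,\ |y|\geq |x|\},$ it is the \emph{second} coordinate that dominates after one iteration, so the iterates converge on $\mathcal{U}^+$ to the point $(0:1:0),$ not $(1:0:0);$ this is consistent with the paper's own remark that the projective extension of a generalized \textsc{H\'enon} map contracts the line $z=0$ onto a single point, which in these coordinates is $(0:1:0).$ Second, $\mathcal{K}^+$ is closed but certainly not \emph{born\'e}: only $\mathcal{K}=\mathcal{K}^+\cap\mathcal{K}^-$ is bounded (contained in $V$), while $\mathcal{K}^+$ is unbounded inside $V\cup V^-$ (it contains, for instance, stable manifolds going to infinity in $V^-$). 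Since your argument only uses the closedness of $\mathcal{K}^+$ (via $\mathcal{K}^+=\bigcap_{n\geq 0}f^{-n}(V\cup V^-)$), simply delete the word \og born\'e\fg.
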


\begin{defis}
Soit $\Omega$ une composante de \textsc{Fatou}; $\Omega$ est dite {\it r\'ecurrente}\label{ind54} s'il existe un compact $C$ de $\Omega$ et un point $m$ de $C$ tels que $f^{n_j}(m)$ appartienne \`a $C$ pour une infinit\'e de $n_j\to +\infty.$ Une composante r\'ecurrente de \textsc{Fatou} est n\'ecessairement p\'eriodique.

\noindent Un point fixe $m$ de $f$ est un {\it foyer}\label{ind55} si $m$ appartient \`a l'int\'erieur de la vari\'et\'e stable $$\mathcal{W}^s(m)=\{p\,\vert\,\lim_{n\to +\infty} \text{dist}(f^n(m),f^n(p))=0 \}.$$ On dit que $\mathcal{W}^s(m)$ est le {\it bassin}\label{ind56} de $m.$ Si $m$ est un foyer, les valeurs propres de $Df_{(m)}$ sont de module inf\'erieur \`a $1.$ 

\noindent Un {\it disque de \textsc{Siegel}}\label{ind57} (resp. {\it anneau de \textsc{Herman}}\label{ind58}) est l'image d'un disque (resp. d'un anneau)~$\Delta$ par une application holomorphe injective $\varphi$ avec la propri\'et\'e suivante: pour tout $z$ dans $\Delta$ on~a 
\begin{align*}
& f\varphi(z)=\varphi(\alpha z), && \alpha=\mathrm{e}^{2\mathrm{i}\pi\theta},\,\theta\in\mathbb{R} \setminus\mathbb{Q}.
\end{align*} 
\end{defis}

\noindent On peut d\'ecrire les composantes de \textsc{Fatou} r\'ecurrentes d'un automorphisme de \textsc{H\'enon}.

\begin{thm}[\cite{BS}]
{\sl Soient $f$ un automorphisme de \textsc{H\'enon} de d\'eterminant jacobien strictement inf\'erieur \`a $1$ et $\Omega$ une composante de \textsc{Fatou} r\'ecurrente. Alors $\Omega$ est 
\begin{itemize}
\item ou bien le bassin d'un foyer;

\item ou bien un disque de \textsc{Siegel};

\item ou bien un anneau de \textsc{Herman}.
\end{itemize}}
\end{thm}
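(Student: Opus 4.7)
L'id\'ee g\'en\'erale consiste \`a exploiter la contraction volumique uniforme ($|\det Df^n| = |\delta|^n \to 0$ o\`u $\delta$ d\'esigne le d\'eterminant jacobien constant de $f$) pour classer les applications limites des suites $(f^{n_j})$ sur $\Omega$, puis \`a distinguer les cas selon le rang de ces limites.

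Quitte \`a remplacer $f$ par un it\'er\'e ad\'equat (ce qui est licite par la p\'eriodicit\'e des composantes r\'ecurrentes rappel\'ee pr\'ec\'edemment), on peut supposer $f(\Omega)=\Omega$. La famille $\{f^n_{|\Omega}\}$ \'etant normale par d\'efinition de l'ensemble de \textsc{Fatou}, toute suite d'it\'er\'es admet une sous-suite $(f^{n_j})$ convergeant localement uniform\'ement sur $\Omega$ vers une application holomorphe $g\colon\Omega\to\overline{\Omega}$. Grace \`a la r\'ecurrence, on peut choisir $g$ et $(n_j)$ de sorte que $g(m)$ appartienne au compact $C\subset\Omega$, donc \`a l'int\'erieur de $\Omega$. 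Comme le d\'eterminant jacobien de $f^n$ vaut $\delta^n$ avec $|\delta|<1$, on a $\det Dg\equiv 0$ sur $\Omega$, donc $g$ est de rang au plus $1$ en tout point. On s\'epare alors en deux cas.

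\textbf{Cas 1 : il existe une limite $g$ constante.} Notons $g\equiv c$. Par r\'ecurrence $c\in\Omega$, et en passant \`a la limite dans $f(f^{n_j})=f^{n_j}\circ f$ on obtient $f(c)=c$, de sorte que $c$ est un point fixe. Puisque $Df^{n_j}(c)\to 0$, les valeurs propres de $Df(c)$ sont de module strictement inf\'erieur \`a $1$ : $c$ est un foyer attractif. Un argument de lin\'earisation locale (th\'eor\`eme de \textsc{Poincar\'e}) montre alors que $\Omega$ est contenu dans le bassin $\mathcal{W}^s(c)$ ; r\'eciproquement, la composante connexe du bassin contenant $c$ \'etant ouverte, connexe et incluse dans $\mathcal{F}(f)$, elle co\"{i}ncide avec $\Omega$ par maximalit\'e.

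\textbf{Cas 2 : toute limite $g$ est de rang exactement $1$.} L'image $\Sigma=g(\Omega)$ est alors une surface de Riemann immerg\'ee dans $\Omega$, et la direction transverse est uniform\'ement contract\'ee par $f$ (car $|\delta|<1$ et $Dg$ a un noyau de dimension $1$ qui absorbe toute l'expansion possible). En it\'erant la construction et en extrayant \`a nouveau, on produit un feuilletage holomorphe $f$-invariant de $\Omega$ en surfaces de Riemann, transverse \`a la direction fortement contract\'ee. Toute la dynamique r\'ecurrente se concentre asymptotiquement sur une feuille limite $\Sigma_0$, sur laquelle $f$ se restreint en un automorphisme holomorphe d'une surface de Riemann dont l'action est r\'ecurrente et sans point fixe attractif (le cas attractif \'etant celui du Cas 1). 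La classification de \textsc{Sullivan} des composantes de \textsc{Fatou} r\'ecurrentes en dimension $1$ ne laisse alors subsister que deux possibilit\'es : $\Sigma_0$ est conjugu\'ee \`a un disque avec rotation irrationnelle, ou \`a un anneau avec rotation irrationnelle. La contractivit\'e transverse permet enfin de recoller et d'\'etendre la conjugaison au voisinage complet, donnant \`a $\Omega$ la structure d'un disque de \textsc{Siegel} ou d'un anneau de \textsc{Herman} au sens de la d\'efinition.

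Le point technique le plus d\'elicat me para\^{i}t \^etre le Cas 2 : construire rigoureusement le feuilletage holomorphe invariant, contr\^oler uniform\'ement la convergence transverse des it\'er\'es, et surtout exclure les composantes de type \og domaine de Baker\fg\, ou errantes qui pourraient \emph{a priori} appara\^{i}tre comme feuilles limites ; c'est pr\'ecis\'ement \`a ce stade que l'hypoth\`ese de r\'ecurrence et la dissipation $|\delta|<1$ doivent \^etre conjugu\'ees de mani\`ere essentielle pour se ramener effectivement \`a la classification unidimensionnelle.
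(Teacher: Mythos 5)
Le texte ne d\'emontre pas cet \'enonc\'e: il le cite directement de \cite{BS}. La comparaison pertinente est donc avec l'argument original de \textsc{Bedford} et \textsc{Smillie}, dont votre esquisse reprend bien le squelette: la dissipation $\vert\delta\vert<1$ force toute limite $g$ d'une sous-suite $(f^{n_j})$ \`a \^etre de rang au plus $1,$ puis on s\'epare selon l'existence d'une limite constante. Votre Cas 1 est pour l'essentiel correct (et peut m\^eme \^etre all\'eg\'e: de $f^{n_j}\to c$ localement uniform\'ement et du caract\`ere attractif de $c$ on d\'eduit directement $f^n\to c$ sur $\Omega,$ sans invoquer le th\'eor\`eme de \textsc{Poincar\'e}, qui exigerait de toute fa\c{c}on une hypoth\`ese de non-r\'esonance).

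Les lacunes r\'eelles sont dans le Cas 2, qui est le c\oe ur du th\'eor\`eme. D'abord, rien ne garantit telle quelle que $\Sigma=g(\Omega)$ soit une surface de Riemann immerg\'ee, ferm\'ee dans $\Omega$ et invariante: le rang de $g$ peut a priori chuter, et l'\'etape cl\'e chez \textsc{Bedford}--\textsc{Smillie} consiste \`a extraire une limite suppl\'ementaire pour obtenir une \emph{r\'etraction} holomorphe $\rho$ de $\Omega$ sur $\Sigma$ (avec $\rho_{\vert\Sigma}=\mathrm{id}$), ce qui fait de $\Sigma$ une sous-vari\'et\'e de dimension $1$ localement ferm\'ee sur laquelle $f$ agit; votre \og feuilletage holomorphe invariant\fg\, n'est ni construit ni n\'ecessaire. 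Ensuite, la conclusion est mal attribu\'ee: le th\'eor\`eme de \textsc{Sullivan} concerne les domaines errants des fractions rationnelles de $\mathbb{P}^1(\mathbb{C})$ et ne s'applique pas ici. Ce qui sert, c'est que $f_{\vert\Sigma}$ est un \emph{automorphisme} d'une surface de Riemann hyperbolique (hyperbolique car les orbites d'une composante r\'ecurrente sont born\'ees, donc $\Sigma$ est une courbe born\'ee de $\mathbb{C}^2$) poss\'edant une orbite r\'ecurrente non p\'eriodique; la classification des automorphismes conformes des surfaces hyperboliques impose alors que $\Sigma$ soit un disque, un disque \'epoint\'e ou un anneau et que $f_{\vert\Sigma}$ soit une rotation irrationnelle. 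Il reste \`a \'ecarter (ou absorber) le cas du disque \'epoint\'e et \`a montrer que toute la composante $\Omega$ est attir\'ee sur $\Sigma$ via la r\'etraction, ce qui donne pr\'ecis\'ement la structure de disque de \textsc{Siegel} ou d'anneau de \textsc{Herman} au sens de la d\'efinition; en revanche, l'exclusion de domaines de Baker ou errants n'est pas le point d\'elicat, puisque $f_{\vert\Sigma}$ est inversible.
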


\noindent Sous certaines conditions les composantes de \textsc{Fatou} d'un automorphisme de \textsc{H\'enon} sont r\'ecurrentes.

\begin{pro}
{\sl Les composantes de \textsc{Fatou} d'une application de \textsc{H\'enon} pr\'eservant le volume sont p\'eriodiques et r\'ecurrentes.}
\end{pro}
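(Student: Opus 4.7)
Mon plan consiste \`a se ramener au th\'eor\`eme de r\'ecurrence de \textsc{Poincar\'e} appliqu\'e sur $K^+,$ puis \`a invoquer une propri\'et\'e d\'ej\`a \'etablie dans le texte. La pr\'eservation du volume signifie $|\det\mathrm{jac}\,f|=1,$ donc la mesure de Lebesgue $m$ sur $\mathbb{C}^2$ est $f$-invariante. Par ailleurs l'argument classique de filtration pour une application de \textsc{H\'enon} montre que $K^+$ est born\'e; ainsi $m(K^+)<+\infty$ et $(K^+,m_{\vert K^+},f)$ est un syst\`eme dynamique pr\'eservant une mesure finie.

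Le th\'eor\`eme de r\'ecurrence de \textsc{Poincar\'e} fournit alors que $m$-presque tout point $x$ de $K^+$ est r\'ecurrent, au sens o\`u il existe une suite $n_j\to+\infty$ avec $f^{n_j}(x)\to x.$ Soit $\Omega$ une composante de \textsc{Fatou} contenue dans $\mathrm{int}(K^+)$: \'etant ouverte et de mesure strictement positive, elle contient un tel point r\'ecurrent $x,$ et pour toute boule ferm\'ee $C\subset\Omega$ centr\'ee en $x$ on a $f^{n_j}(x)\in C$ pour $j$ assez grand, ce qui est exactement la d\'efinition d'une composante r\'ecurrente. La p\'eriodicit\'e en d\'ecoule imm\'ediatement par la propri\'et\'e d\'ej\`a rappel\'ee selon laquelle une composante de \textsc{Fatou} r\'ecurrente est n\'ecessairement p\'eriodique.

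L'\'etape d\'elicate est la finitude de $m(K^+),$ qui repose sur l'analyse asymptotique d'une compos\'ee d'applications de \textsc{H\'enon} g\'en\'eralis\'ees: une filtration force l'\'echappement des orbites positives et confine $K^+$ dans un polydisque explicite. Tout le reste est un encha\^{\i}nement direct du th\'eor\`eme de \textsc{Poincar\'e} avec l'implication d\'ej\`a cit\'ee \og r\'ecurrente $\Longrightarrow$ p\'eriodique\fg.
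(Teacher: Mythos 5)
Le texte \'enonce cette proposition sans en donner de d\'emonstration (c'est un r\'esultat classique de \textsc{Bedford}--\textsc{Smillie}), et votre strat\'egie --- invariance de la mesure de Lebesgue $m,$ r\'ecurrence de \textsc{Poincar\'e}, puis l'implication \og r\'ecurrente $\Rightarrow$ p\'eriodique\fg\, rappel\'ee dans le texte --- est bien celle de la preuve standard. Mais l'\'etape que vous qualifiez vous-m\^eme de d\'elicate est fausse telle quelle~: $\mathcal{K}^+$ n'est jamais born\'e. La filtration classique (polydisque $V=\{\vert x\vert\leq R,\,\vert y\vert\leq R\}$ et secteurs $V^\pm$) donne seulement $\mathcal{K}^+\cap V^+=\emptyset,$ donc $\mathcal{K}^+\subset V\cup V^-$~; c'est $\mathcal{K}=\mathcal{K}^+\cap\mathcal{K}^-$ qui est contenu dans $V,$ tandis que $\mathcal{K}^+$ contient par exemple la vari\'et\'e stable de tout point p\'eriodique selle, image non born\'ee d'une application enti\`ere non constante de $\mathbb{C}$ dans $\mathbb{C}^2.$ La finitude de $m(\mathcal{K}^+),$ dont vous avez besoin pour appliquer \textsc{Poincar\'e}, est en fait vraie dans le cas conservatif, mais elle ne d\'ecoule pas d'un caract\`ere born\'e de $\mathcal{K}^+$~: elle demande un argument de volume suppl\'ementaire, du m\^eme type que celui que vous voulez mener.

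Ce que la filtration donne r\'eellement, c'est que l'orbite positive de tout point de $\mathcal{K}^+$ finit par entrer dans $V$ et y rester. Deux r\'eparations possibles~: (i) travailler sur $X=\bigcap_{n\geq 0}f^{-n}(V),$ positivement invariant et de volume fini~; comme $f$ est injective et pr\'eserve $m,$ l'argument de \textsc{Poincar\'e} s'applique tel quel (pour $A\subset X,$ les images $f^n(N)$ de l'ensemble $N$ des points de $A$ qui ne reviennent jamais dans $A$ sont deux \`a deux disjointes, de m\^eme volume et contenues dans $V$), donc $m$-presque tout point de $X$ est r\'ecurrent~; or toute composante $\Omega\subset\mathrm{int}\,\mathcal{K}^+$ s'\'ecrit $\bigcup_N\big(\Omega\cap f^{-N}(X)\big)$ et la r\'ecurrence de $f^N(x)$ se transporte \`a $x$ par continuit\'e de $f^{-N},$ d'o\`u un point r\'ecurrent dans $\Omega.$ (ii) Ou bien \'etablir d'abord $m(\mathcal{K}^+\setminus\mathcal{K}^-)=0$~: pour $A=\{p\in\mathcal{K}^+\setminus\mathcal{K}^-\,\vert\, f^n(p)\in V\,\,\forall\, n\geq 0\},$ les $f^n(A)$ d\'ecroissent et sont de m\^eme volume, donc si $m(A)>0$ leur intersection contient un point d'orbite n\'egative born\'ee, contradiction avec $A\cap\mathcal{K}^-=\emptyset$~; comme $\mathcal{K}^+\setminus\mathcal{K}^-\subset\bigcup_N f^{-N}(A),$ on obtient $m(\mathcal{K}^+)=m(\mathcal{K})\leq m(V)<+\infty$ et votre r\'edaction fonctionne alors mot pour mot. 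Une fois ce point acquis, la fin est correcte~: le point r\'ecurrent et une boule ferm\'ee $C\subset\Omega$ donnent exactement la d\'efinition de composante r\'ecurrente du texte, la p\'eriodicit\'e suit de l'implication cit\'ee, et votre restriction aux composantes contenues dans $\mathrm{int}\,\mathcal{K}^+$ (excluant $\mathcal{U}^+$) correspond bien \`a l'\'enonc\'e vis\'e.
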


\subsection{Ensemble de \textsc{Fatou} des automorphismes d'entropie positive sur les tores, (quotients de) K$3,$ surfaces rationnelles}

\noindent Si $\mathcal{Z}$ est un tore complexe, un automorphisme d'entropie positive est essentiellement un \'el\'ement de $\mathrm{GL}_2(\mathbb{Z});$ le fait que l'entropie soit positive implique que les valeurs propres satisfont: $\vert\lambda_1\vert<~1<~\vert\lambda_2\vert$ et l'ensemble de \textsc{Fatou} est vide. 

\noindent Supposons que $\mathcal{Z}$ soit une surface K$3$ ou un quotient d'une surface K$3.$ Puisqu'il existe une forme volume invariante les seules composantes de \textsc{Fatou} possibles sont les domaines de rotation. \textsc{McMullen} a montr\'e l'existence de surfaces K$3$ non alg\'ebriques avec des domaines de rotation de rang $2$ (\emph{voir} \cite{Mc2}); une autre r\'ef\'erence est la suivante \cite{Og}.  

\noindent Les autres surfaces compactes poss\'edant des automorphismes d'entropie positive sont des surfaces rationnelles; dans ce cas il peut y avoir des domaines de rotation de rang $1$ et $2$ (\emph{voir} \cite{BK2, Mc}). D'autres ph\'enom\`enes peuvent arriver tels des bassins attractifs et/ou r\'epulsifs (\cite{BK2, Mc}).

\chapter{Automorphismes d'entropie positive et groupes de \textsc{Weyl}}\label{mcmullen}

\noindent Le lien entre les groupes de \textsc{Weyl} et la g\'eom\'etrie birationnelle sur le plan projectif complexe a \'et\'e abord\'e d\`es $1895$ dans \cite{Ka} puis dans \cite{DV, Na, Na2, Co, Gi, Lo, Ha2, Manin, Ha, Ni, Ha3, DoOr, Hi, Zh, DZ}.

\section{Groupes de \textsc{Weyl}}\label{wweyl}

\subsection{Groupes de \textsc{Weyl}, I}

\noindent Soit $\mathcal{Z}$ une surface rationnelle. Consid\'erons la repr\'esentation 
\begin{align*}
& \mathrm{cr}\colon\mathrm{Aut}(\mathcal{Z})\to\mathrm{GL}(\mathrm{H}^2(\mathcal{Z},\mathbb{Z})),  && f\mapsto f_*;
\end{align*}

\noindent son image est un groupe d'isom\'etries pr\'eservant en particulier le produit d'intersection. Soit $\{e_0,\ldots,e_n\}$ une base de $\mathrm{H}^2(\mathcal{Z},\mathbb{Z});$ si 
\begin{align*}
&e_0\cdot e_0=1, && e_j\cdot e_j=-1,\,\,\forall\,\, 1\leq j\leq k, && e_i\cdot e_j=0,\,\,\forall\,\,0\leq i\not=j \leq n
\end{align*}  

\noindent on dit que $\{e_0,\ldots,e_n\}$ est une {\it base g\'eom\'etrique}\label{ind40}.

\begin{eg}
Les bases $\{\mathrm{H},\,\mathrm{E},\,\mathrm{F},\,\mathrm{G}\}$ introduites au \S\ref{sigma}, \S\ref{rho} et \S\ref{tau} sont des bases g\'eom\'etriques.

\noindent Plus g\'en\'eralement supposons que $\pi\colon\mathcal{Z}\to\mathbb{P}^2(\mathbb{C})$ soit obtenue en \'eclatant $n$ points distincts de~$\mathbb{P}^2(\mathbb{C}).$ Si~$\mathrm{E}_j=\pi^{-1}(p_j)$ d\'esigne la fibre exceptionnelle obtenue en \'eclatant $p_j$ et $\pi^*\mathrm{H}$ la classe d'une droite, alors $\{\pi^*\mathrm{H},\,\mathrm{E}_1,\,\ldots,\,\mathrm{E}_n\}$ est une base g\'eom\'etrique.
\end{eg}

\noindent Repla\c{c}ons-nous dans les conditions du Th\'eor\`eme \ref{nagata}. Soient $\mathcal{Z}$ une surface rationnelle et $f$ un automorphisme sur $\mathcal{Z}$ tel que $f_*$ soit d'ordre infini. Il existe une suite d'applications holomorphes $\pi_j\colon\mathcal{Z}_{j+1}\to \mathcal{Z}_j$ telles que $\mathcal{Z}_1=\mathbb{P}^2(\mathbb{C})$ et $\mathcal{Z}_{n+1}= \mathcal{Z}.$ Notons $\mathrm{E}_j=\pi_{j+1}^{-1}(p_j)\subset\mathcal{Z}_{j+1}$ le diviseur exceptionnel obtenu en \'eclatant $p_j$ et $e_j=\pi_{n+1}^*\ldots\pi_{j+1}^*(\mathrm{E}_j).$ Une {\it configuration exceptionnelle}\label{ind41} $\mathcal{E}$ est la donn\'ee des $e_j.$ Soit $\alpha$ un \'el\'ement de $\mathrm{H}^2(\mathcal{Z},\mathbb{Z})$ tel que $\alpha\cdot\alpha=-2,$ alors~$R_\alpha(x)=x+(x\cdot\alpha)\alpha$ envoie $\alpha$ sur~$-\alpha$ et $R_\alpha$ fixe tout \'el\'ement de $\alpha^\perp;$ 	autrement dit $R_\alpha$ est une r\'eflexion dans la direction $\alpha.$ 

\noindent Consid\'erons les vecteurs d\'efinis par 
\begin{align*}
&\alpha_0=e_0-e_1-e_2-e_3, && \alpha_j=e_{j+1}-e_j,\, 1\leq j\leq n-1.
\end{align*}

\noindent Pour tout $j$ dans $\{0,\ldots,n-1\}$ on a l'\'egalit\'e $\alpha_j\cdot\alpha_j=- 2.$ Pour $j$ non nul la r\'eflexion~$R_{\alpha_j}$ induit une permutation sur $\{e_j,\, e_{j+1}\}.$ Le sous-groupe engendr\'e par les $R_{\alpha_j},$ avec $1\leq j\leq~n-~1,$ co\"incide avec l'ensemble des permutations de $\{e_1,\,\ldots,\,e_n\}.$ Le groupe $\langle R_{\alpha_j}\,\vert\, 0\leq j\leq n-1\rangle$ est appel\'e {\it groupe de \textsc{Weyl}}\label{ind42} et not\'e $\mathrm{W}_n\subset\mathrm{O}(\mathbb{Z}^{1,n}).$

\noindent Les groupes de \textsc{Weyl} sont, pour $3\leq n\leq 8,$ isomorphes au groupes de \textsc{Coxeter} finis 
\begin{align*}
&A_1\times A_2, && A_4, && D_5, && E_6, && E_7, &&E_8
\end{align*}

\noindent et sont associ\'es aux surfaces de Del Pezzo\footnote{Une surface de Del Pezzo est isomorphe \`a $\mathbb{P}^2(\mathbb{C})$ ou \`a $\mathbb{P}^1(\mathbb{C})\times\mathbb{P}^1(\mathbb{C})$ ou encore \`a $\mathbb{P}^2(\mathbb{C})$ \'eclat\'e en $1\leq r\leq 8$ points en \og position g\'en\'erale\fg.}. Pour $k\geq 9$ les groupes de \textsc{Weyl} sont infinis et pour $k\geq 10$ ils contiennent des \'el\'ements dont le rayon spectral est strictement sup\'erieur \`a~$1.$

\noindent Donnons quelques propri\'et\'es de ce groupe. Si $f$ est un automorphisme de $\mathcal{Z},$ un r\'esultat de \textsc{Nagata} assure l'existence d'un unique \'el\'ement~$w$ de $\mathrm{W}_n$ tel que le diagramme $$\xymatrix{\mathbb{Z}^{1,n}\ar[d]_{\varphi}\ar[r]^{w} &\mathbb{Z}^{1,n}\ar[d]^{\varphi} \\
\mathrm{H}^2(\mathcal{Z},\mathbb{Z})\ar[r]^{f_*} &\mathrm{H}^2(\mathcal{Z},
\mathbb{Z})}$$ commute; on dit que $w$ est {\it r\'ealis\'e par} l'automorphisme $f.$ On a aussi l'\'enonc\'e suivant.

\begin{thm}[\cite{Dol}]
{\sl Soit $\mathcal{Z}$ une surface rationnelle qui domine $\mathbb{P}^2(\mathbb{C}).$ 
\begin{itemize}
\item Le groupe de \textsc{Weyl} $\mathrm{W}_k\subset\mathrm{GL}(\mathrm{Pic}(\mathcal{Z}))$ ne d\'epend pas de la configuration exceptionnelle choisie.

\item Si $\mathcal{E}$ et $\mathcal{E}'$ sont deux configurations exceptionnelles distinctes, il existe $w$ dans $\mathrm{W}_k$ tel que $w(\mathcal{E})=\mathcal{E}'.$

\item Si $\mathcal{Z}$ est obtenue en \'eclatant $k$ points g\'en\'eriques et si $\mathcal{E}$ est une configuration exceptionnelle, alors, pour tout~$w$ dans le groupe de \textsc{Weyl}, $w(\mathcal{E})$ est une configuration exceptionnelle.
\end{itemize}
}
\end{thm}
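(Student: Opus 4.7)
\noindent The plan is to view $\mathrm{W}_k$ intrinsically, as the Weyl group of a root system attached to the pair $(\mathrm{Pic}(\mathcal{Z}), \mathrm{K}_\mathcal{Z})$. For the first assertion, I would observe that in any exceptional configuration $\mathcal{E} = \{e_0, e_1, \ldots, e_k\}$ the canonical class is given by $\mathrm{K}_\mathcal{Z} = -3e_0 + e_1 + \cdots + e_k$, and that the simple roots $\alpha_0 = e_0 - e_1 - e_2 - e_3$ and $\alpha_j = e_{j+1} - e_j$ all lie in $\mathrm{K}_\mathcal{Z}^\perp$ with self-intersection $-2$. The set $\Phi$ of classes in $\mathrm{K}_\mathcal{Z}^\perp$ with square $-2$ is a root system depending only on the lattice, and the $\alpha_j$'s form a base of $\Phi$; since the reflections in any base generate the full Weyl group of $\Phi$, the subgroup $\mathrm{W}_k \subset \mathrm{GL}(\mathrm{Pic}(\mathcal{Z}))$ is independent of the chosen configuration.

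\medskip

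\noindent For the second assertion, I would exploit the classical fact that the Weyl group of any root system acts simply transitively on the set of its bases (equivalently, on the set of Weyl chambers). An exceptional configuration is determined by its system of simple roots $\{\alpha_j\}_{j=0}^{k-1}$ together with the class $e_0$, characterised by $e_0^2 = 1$ and its intersection pattern with the $\alpha_j$'s; the remaining $e_j$'s can then be reconstructed inductively. Since all bases of $\Phi$ form a single $\mathrm{W}_k$-orbit, the existence (and uniqueness) of $w \in \mathrm{W}_k$ sending $\mathcal{E}$ to $\mathcal{E}'$ follows.

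\medskip

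\noindent The third assertion is the subtle one, because it requires that the combinatorial object $w(\mathcal{E})$ actually be realised by a blow-up sequence of $\mathbb{P}^2(\mathbb{C})$. The strategy is to reduce to the generators $R_{\alpha_j}$ of $\mathrm{W}_k$. For $j \geq 1$ the reflection $R_{\alpha_j}$ simply swaps $e_j$ and $e_{j+1}$, which geometrically corresponds to interchanging the order of two successive blow-ups and is unconditionally realisable. For $R_{\alpha_0}$, direct computation gives $R_{\alpha_0}(e_0) = 2e_0 - e_1 - e_2 - e_3$ and $R_{\alpha_0}(e_i) = e_0 - e_j - e_\ell$ for $\{i,j,\ell\} = \{1,2,3\}$, which matches column-by-column the matrix $M_\sigma$ from \S\ref{sigma}; hence $R_{\alpha_0}$ corresponds precisely to the change of basis induced by viewing $\mathcal{Z}$ as the blow-up of the target $\mathbb{P}^2(\mathbb{C})$ of the Cremona involution $\sigma$ centred at $p_1, p_2, p_3$, at the new points $\sigma(p_4), \ldots, \sigma(p_k)$.

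\medskip

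\noindent The main obstacle will be guaranteeing that this geometric procedure does not degenerate under arbitrary compositions of Weyl generators: a Cremona transformation may send two of the remaining points onto a contracted line or to an indeterminacy point, in which case $w(\mathcal{E})$ ceases to be an exceptional configuration. This is precisely where the genericity hypothesis enters: since $\mathrm{W}_k$ is finitely generated, hence countable, only countably many proper Zariski-closed conditions on $(p_1, \ldots, p_k)$ can produce such a degeneration across all $w \in \mathrm{W}_k$. A sufficiently generic $k$-tuple of points avoids all of them simultaneously, and the conclusion follows.
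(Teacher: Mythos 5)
You should first be aware that the paper does not prove this statement at all: it is quoted from \cite{Dol}, so your proposal has to be measured against the standard arguments (Nagata, Dolgachev--Ortland, and the McMullen material the paper reproduces later as Th\'eor\`emes \ref{eclmar}, \ref{racnod} et \ref{egal}). Against that benchmark, the first two assertions are where your argument has a genuine gap. You identify $\mathrm{W}_k$ intrinsically with ``the Weyl group of the root system $\Phi$ of $(-2)$-classes in $\mathrm{K}_\mathcal{Z}^\perp$'' and then invoke simple transitivity of a Weyl group on the bases of its root system. That classical fact is about finite root systems (equivalently, chambers); in the only case relevant here, $k\geq 9$, the lattice $\mathrm{K}_\mathcal{Z}^\perp$ is hyperbolic and $\mathrm{W}_k$ is infinite, and neither identification you need is automatic: (i) it is not clear that every $(-2)$-vector of $\mathrm{K}_\mathcal{Z}^\perp$ lies in the $\mathrm{W}_k$-orbit of the simple roots --- the paper defines the roots $\Theta_n$ precisely as that orbit, not as all $(-2)$-vectors --- so ``the Weyl group of $\Phi$'' could a priori be strictly larger than $\mathrm{W}_k$; and (ii) the simple roots attached to a second exceptional configuration give a lattice basis, but a basis-preserving isometry fixing $\mathrm{K}_\mathcal{Z}$ need not lie in $\mathrm{W}_k$, since $\mathrm{O}(\mathrm{K}_\mathcal{Z}^\perp)$ is in general bigger than $\mathrm{W}_k$; nothing in pure lattice theory forces the second configuration into the same $\mathrm{W}_k$-orbit. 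The missing input is exactly the geometric theorem of Nagata quoted in the paper just before Th\'eor\`eme \ref{eclmar}: one compares the two birational morphisms $\pi,\pi'\colon\mathcal{Z}\to\mathbb{P}^2(\mathbb{C})$, writes $e_0'=\nu e_0-\sum_i m_ie_i$, uses effectivity and nefness to get $m_i\geq 0$ together with the Noether inequality $m_i+m_j+m_l>\nu$ for some triple when $\nu>1$, and applies $\kappa_{ijl}$ to decrease $\nu$, concluding by induction; assertion 1 is then deduced from assertion 2 by conjugating the simple reflections, rather than proved first as you attempt.

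For the third assertion your reduction to generators is sound: the transpositions are harmless for distinct proper points, and your identification of $R_{\alpha_0}$ with $\kappa_{123}$, matching $M_\sigma$ of \S\ref{sigma}, is exactly Th\'eor\`eme \ref{eclmar}. The countability argument (``very general'' points avoid countably many bad loci) is a legitimate alternative, but as stated it is incomplete: you must check that for each word $w$ the degeneration locus is a \emph{proper} Zariski-closed subset of $(\mathbb{P}^2(\mathbb{C}))^k$, e.g.\ by induction on the word, using that each realizable generator induces a dominant (indeed birational) self-map of the configuration space, so that preimages of the generator-level bad loci remain proper. The route the paper itself records, following McMullen, is cleaner because it is invariant rather than word-by-word: for very general points the marked blow-up has no geometric nodal root (cf.\ Th\'eor\`eme \ref{racnod}), absence of nodal roots is preserved by every element one applies, and Th\'eor\`eme \ref{egal} then yields $W(\mathcal{Z},\Phi)=\mathrm{W}_n$, which is precisely assertion 3.
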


\noindent Un produit de g\'en\'erateurs $R_{\alpha_j}$ est un {\it \'el\'ement de \textsc{Coxeter}}\label{ind43} $w$ dans $\mathrm{W}_n.$ Notons que tous les \'el\'ements de \textsc{Coxeter} sont conjugu\'es donc le rayon spectral de $w$ est bien d\'efini.

\noindent L'involution de \textsc{Cremona} est repr\'esent\'ee par la r\'eflexion $\kappa_{ijk}=R_{\alpha_{ijk}}$ o\`u $\alpha_{ijk}=e_0-e_i-e_j-e_k$ pour $i,\,j,\,k\geq 1$ distincts; elle agit comme suit
\begin{align*}
& e_0\to 2 e_0-e_i-e_j-e_k, && e_i\to e_0-e_j-e_k, && e_j\to e_0-e_i-e_k
\end{align*}
\begin{align*}
& e_k\to e_0-e_i-e_j, && e_\ell\to e_\ell \text{ si }\ell\not\in\{0,\,i,\,j,\,k\} 
\end{align*}

\noindent Lorsque $n=3,$ on dira que $\kappa_{123}$ est l'{\it\'el\'ement standard}\label{ind44} de $\mathrm{W}_3.$ 

\noindent Soit $\pi_n=\kappa_{123}R_{\alpha_1}\ldots R_{\alpha_{n-1}}\in\Sigma_n\subset\mathrm{W}_n$ la permutation cyclique $(123\ldots n).$ Pour $n\geq 4,$ on appelle \'el\'ement standard du groupe de \textsc{Weyl} l'\'el\'ement $w$ de $\mathrm{W}_n$ d\'efini par $w=\pi_n\kappa_{123}.$ Il v\'erifie
\begin{align*}
& w(e_0)=2e_0-e_2-e_3-e_4, && w(e_1)=e_0-e_3-e_4, &&w(e_2)=e_0-e_2-e_4,
\end{align*}
\begin{align*}
& w(e_3)=e_0-e_2-e_3,&&w(e_j)=e_{j+1}, \,\, 4\leq j \leq n-2, && w(e_{n-1})=e_1.
\end{align*}

\noindent Comme nous le verrons au \S\ref{resmc} il existe une condition suffisante permettant de r\'ealiser des \'el\'ements du groupe de \textsc{Weyl} par des automorphismes des surfaces rationnelles obtenues en \'eclatant $k$ points distincts de $\mathbb{P}^2(\mathbb{C})$ le long d'une cubique. 

\subsection{Groupes de \textsc{Weyl}, II} Une autre fa\c{c}on de voir le groupe de \textsc{Weyl} est la suivante. Si $n\geq 3$ on consid\`ere le graphe $\Gamma_n$ suivant 

\begin{figure}[H]
\begin{center}
\input{coxeter.pstex_t}
\end{center}
\end{figure}

\noindent dont l'ensemble des sommets est $S_n=\{s_0,\ldots,s_{n-1}\}.$ Lorsque $n=3,$ $\Gamma_3$ poss\`ede un seul c\^ot\'e, joignant $s_1$ \`a $s_2.$ Soit $M=(m_{ij})$ la matrice de $\mathcal{M}_n(\mathbb{C})$ d\'efinie par 
$$m_{ij}=\left\{\begin{array}{lll} 1\text{ si }i=j\\ 3 \text{ si $s_i$ et $s_j$ sont \og voisins\fg\, dans $\Gamma_n$}\\ 2 \text{ sinon}\end{array}\right.$$ Le groupe de \textsc{Weyl} est donn\'e par $$\mathrm{W}_n=\langle s_0,\,\ldots,\,s_{n-1}\,\vert\, (s_is_j)^{m_{ij}}=1\rangle.$$ Consid\'erons $V_n=\mathbb{R}^{S_n}$ muni de la base $\{\alpha_0,\,\ldots,\,\alpha_{n-1}\},$ base duale de $\{s_0,\,\ldots,\,s_{n-1}\},$ et du produit int\'erieur $$B_n(\alpha_j,\alpha_j)=-2\cos(\pi/ m_{ij}).$$ Tout \'el\'ement $\alpha$ de $V_n$ tel que $B(\alpha,\alpha)=\pm 2$ d\'etermine une r\'eflexion 
\begin{equation}\label{refl}
R_\alpha(x)=x-\frac{2B(x,\alpha)}{B(\alpha,\alpha)}\alpha
\end{equation}

\noindent dans le groupe orthogonal $(V_n,B_n).$ L'homomorphisme 
\begin{align*}
& \mathrm{W}_n\to(V_n,B_n), &&s_i\mapsto R_{\alpha_i}
\end{align*}

\noindent d\'efinit l'action g\'eom\'etrique de $\mathrm{W}_n$ sur $V_n.$ 

\noindent Les $\alpha_i$ sont les {\it racines simples}\label{ind700} de $\mathrm{W}_n.$ Les  {\it racines}\label{ind7000} de $\mathrm{W}_n$ sont les orbites des racines simples $$\Theta_n=\bigcup\mathrm{W}_n\cdot\alpha_i.$$ Notons $L_n$ le r\'eseau $\oplus\mathbb{Z}\alpha_i.$ L'\'egalit\'e (\ref{refl}) assure que $L_n$ est invariant sous l'action de $\mathrm{W}_n.$  Un vecteur $v=\sum c_i\alpha_i$ de $V_n$ est {\it positif}\label{ind701} si $c_i\geq 0$ pour tout~$i.$ On d\'esigne par $\Theta_n^+$ l'ensemble des racines positives. Toute racine de $\mathrm{W}_n$ est positive ou n\'egative, {\it i.e.} $\Theta_n=\Theta_n^+\cup(-\Theta_n^+)$ (\emph{voir} \cite{Bo, Hum}). Un produit du type $$s_{\varsigma(0)} s_{\varsigma(1)}\ldots s_{\varsigma(n-1)},$$ o\`u $\varsigma$ d\'esigne une permutation de $\{0,\,\ldots,\, n-1\},$ est un \'el\'ement de \textsc{Coxeter}. Puisque tous les \'el\'ements de $\mathrm{W}_n$ sont conjugu\'es, ils ont tous le m\^eme ordre $h_n.$ On a $h_n=6,\,5,\,8,\,12,\,18,\,30$ pour $n=3,\,4,\,5,\,6,\,7,\,8$ et $h_n=\infty$ pour $n\geq 9.$

\medskip

\noindent Consid\'erons $A(\Gamma_n)=2\mathrm{Id}-B_n$ la matrice adjointe de $\Gamma_n$ qui peut \^etre vu comme un op\'erateur sur $V_n.$ On a $$A(\Gamma_n)_{_{ij}}=\left\{\begin{array}{ll} 1\text{ si $s_i$ et $s_j$ sont \og voisins\fg\, dans $\Gamma_n$}\\ 0\text{ sinon}\end{array}\right.$$

\begin{pro}[\cite{Mc}]\label{rayonspectral}
{\sl Le rayon spectral $\lambda_n=\lambda(A(\Gamma_n))$ croit strictement avec $n$ et~$\lambda_9=2.$}
\end{pro}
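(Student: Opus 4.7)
Il y a deux assertions \`a \'etablir: la croissance stricte $\lambda_n < \lambda_{n+1}$ pour tout $n\geq 3$, puis l'\'egalit\'e $\lambda_9 = 2$. L'approche consiste \`a traiter la monotonicit\'e par un argument de Perron-Frobenius (sous-matrice principale propre d'une matrice irr\'eductible positive) puis, pour $n=9$, \`a exhiber un vecteur propre positif explicite de valeur propre $2$.

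\textbf{Monotonicit\'e.} On observe qu'en supprimant le sommet $s_{n-1}$ de $\Gamma_n$ on retombe exactement sur $\Gamma_{n-1}$ (la seule ar\^ete perdue est $s_{n-2}s_{n-1}$; dans le cas particulier $n=4$, on perd en plus $s_0s_3$, ce qui conduit \`a un sommet isol\'e dans $\Gamma_3$). Autrement dit, pour tout $n\geq 4$, $A(\Gamma_{n-1})$ est une sous-matrice principale propre de $A(\Gamma_n)$. Comme $\Gamma_n$ est un arbre connexe pour $n\geq 4$, la matrice $A(\Gamma_n)$ est \`a coefficients positifs ou nuls et irr\'eductible; la stricte monotonicit\'e du rayon spectral sous passage \`a une sous-matrice principale propre (cons\'equence standard du th\'eor\`eme de Perron-Frobenius, qui vaut m\^eme lorsque la sous-matrice extraite n'est pas irr\'eductible) donne $\lambda_{n-1}<\lambda_n$.

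\textbf{Identification de $\lambda_9$.} On identifie $\Gamma_9$ au diagramme de Dynkin affine $\widetilde{E}_8$, ou graphe en T de type $T_{2,3,6}$: trois bras, de longueurs $1$, $2$ et $5$, issus du sommet central $s_3$. Le plan est d'exhiber un vecteur propre de Perron. On pose, sur les sommets $(s_0,s_1,s_2,s_3,s_4,s_5,s_6,s_7,s_8)$,
\[
v = (3,\,2,\,4,\,6,\,5,\,4,\,3,\,2,\,1),
\]
qui n'est autre que la famille des marques de Kac de $\widetilde{E}_8$. Une v\'erification directe sommet par sommet montre que $A(\Gamma_9)\,v = 2v$: au sommet central $s_3$, la somme des valeurs voisines $3+4+5=12$ vaut bien $2\cdot 6$; aux sommets $s_2$, $s_4$, $s_5$, $s_6$, $s_7$ de degr\'e $2$, la relation r\'esulte des identit\'es $2+6=2\cdot 4$, $6+4=2\cdot 5$, $5+3=2\cdot 4$, $4+2=2\cdot 3$, $3+1=2\cdot 2$; aux feuilles $s_0$, $s_1$, $s_8$, on a respectivement $6=2\cdot 3$, $4=2\cdot 2$, $2=2\cdot 1$. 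Comme $v$ est strictement positif et $A(\Gamma_9)$ irr\'eductible, le th\'eor\`eme de Perron-Frobenius identifie $2$ au rayon spectral, soit $\lambda_9=2$.

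\textbf{Obstacle principal.} La partie la moins routini\`ere est la d\'ecouverte explicite du vecteur propre pour $\widetilde{E}_8$. \`A d\'efaut des marques de Kac, on peut invoquer le fait g\'en\'eral qu'un diagramme de Coxeter est de type affine si et seulement si sa matrice $2\mathrm{Id}-A$ est semi-d\'efinie positive de corang~$1$, son noyau \'etant alors engendr\'e par un vecteur \`a coordonn\'ees strictement positives; ceci donne directement $\lambda_9=2$ via Perron-Frobenius, sans avoir besoin d'expliciter les marques.
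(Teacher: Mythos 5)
Votre démonstration est correcte : la monotonie stricte suit bien de Perron--Frobenius appliqué aux matrices emboîtées ($A(\Gamma_{n-1})$ est une sous-matrice principale propre de la matrice irréductible $A(\Gamma_n)$ pour $n\geq 4$, le cas du sommet isolé de $\Gamma_3$ ne posant pas de problème), et j'ai vérifié sommet par sommet que votre vecteur $(3,2,4,6,5,4,3,2,1)$ est bien un vecteur propre strictement positif de $A(\Gamma_9)$ pour la valeur propre $2$, ce qui donne $\lambda_9=2$. Notez que le texte ne démontre pas la Proposition~\ref{rayonspectral} : elle y est simplement citée d'après \cite{Mc}; votre argument (identification de $\Gamma_9$ avec le diagramme affine $\widetilde{E}_8$ et utilisation de ses marques, ou de façon équivalente du corang~$1$ de $B_9$) est précisément la voie standard suivie dans la source.
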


\noindent Le graphe du groupe de \textsc{Weyl} satisfait la propri\'et\'e suivante: toute ar\^ete joint un sommet impair \`a un sommet pair d'o\`u la d\'ecomposition 
\begin{align*}
& V_n=V_n^0\oplus V_n^1, && V_n^0=\langle\alpha_i\,\vert\, i\text{ pair}\rangle, && V_n^1=\langle\alpha_i\,\vert\, i\text{ impair}\rangle
\end{align*}

\noindent et $A(\Gamma_n)=\left[\begin{array}{cc} 0 & \transp C_n\\ C_n & 0\end{array}\right].$ Consid\'erons l'\'el\'ement de \textsc{Coxeter} $w_n$ d\'efini par $$w_n=(s_0s_2s_4\ldots )\cdot(s_1s_3s_5 \ldots)=w_n^0\cdot w_n^1.$$ Notons que, puisque $s_is_{i+2}=s_{i+2}s_i,$ l'ordre des \'el\'ements apparaissant dans l'\'ecriture de~$w_n^0$ (resp. $w_n^1$) n'a pas d'importance. \`A partir de $B_n= 2\mathrm{Id}-A(\Gamma_n)$ on obtient 
\begin{align*}
&w_n^0=\left[\begin{array}{cc} -\mathrm{Id}&\transp C_n\\0&\mathrm{Id}\end{array}\right], && w_n^1=\left[\begin{array}{cc}\mathrm{Id}&0\\C_n& -\mathrm{Id}\end{array}\right]
\end{align*}

\noindent puis $$w_n=\left[\begin{array}{cc}\transp C_n C_n -\mathrm{Id} & -\transp C_n\\ C_n& -\mathrm{Id}\end{array}\right].$$ Le th\'eor\`eme de Perron-Frobenius assure l'existence d'un vecteur positif $v_n$ dans $V_n,$ unique modulo multiplication par un scalaire, tel que $A(\Gamma_n) \cdot v_n=\lambda_nv_n.$ Consid\'erons $G_n\subset V_n$ le sous-espace vectoriel de dimension $2$ engendr\'e par les parties paire et impaire de $v_n=v_n^0+~v_n^1.$ L'\'egalit\'e $A(\Gamma_n) \cdot v_n=\lambda_n v_n$ implique que $(C_n\cdot v_n^0,\transp C_n\cdot v_n^1)=\lambda_n(v_n^1,v_n^0).$ Par suite $G_n$ est invariant par $w_n;$ plus pr\'ecis\'ement $$w_{n_{\vert G_n}}=\left[\begin{array}{cc} \lambda_n^2-1 & -\lambda_n\\ \lambda_n & -1\end{array}\right].$$ Cette \'egalit\'e associ\'ee \`a la Proposition \ref{rayonspectral} permet d'\'enoncer le r\'esultat suivant.

\begin{thm}[\cite{Mc}]
{\sl L'application lin\'eaire $w_{n_{\vert G_n}}$ est 
\begin{itemize}
\item elliptique, d'ordre $h_n,$ pour $n\leq 8,$

\item parabolique, d'ordre infini, pour $n=9,$

\item hyperbolique, d'ordre infini, pour $n\geq 10.$
\end{itemize}}
\end{thm}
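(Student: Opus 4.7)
Le plan est d'exploiter directement la forme explicite
$$w_{n|G_n} = \begin{pmatrix} \lambda_n^2 - 1 & -\lambda_n \\ \lambda_n & -1 \end{pmatrix}$$
obtenue juste avant l'\'enonc\'e. Un calcul imm\'ediat montre que cette matrice a pour d\'eterminant $1$ et pour trace $\lambda_n^2 - 2;$ elle appartient donc \`a $\mathrm{SL}_2(\mathbb{R})$ et sa nature dynamique (elliptique, parabolique ou hyperbolique) se lit directement sur le module de sa trace compar\'e \`a $2,$ ou de mani\`ere \'equivalente sur la position de $\lambda_n$ par rapport \`a $2.$

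Ce premier calcul combin\'e \`a la Proposition \ref{rayonspectral} donnerait imm\'ediatement la trichotomie annonc\'ee. En effet, la suite $(\lambda_n)$ est strictement croissante et v\'erifie $\lambda_9 = 2,$ donc $\lambda_n < 2$ pour $n \leq 8,$ $\lambda_n = 2$ pour $n = 9,$ et $\lambda_n > 2$ pour $n \geq 10.$ Ainsi, pour $n \leq 8,$ la trace est dans l'intervalle $]-2,2[$ (le minorant d\'ecoulant de $\lambda_n>0$) et $w_{n|G_n}$ est elliptique; pour $n=9$ la trace vaut $2$ et la matrice explicite n'est visiblement pas l'identit\'e, donc $w_{n|G_n}$ est parabolique non triviale, d'ordre infini; pour $n\geq 10$ la trace est strictement sup\'erieure \`a $2$ et $w_{n|G_n}$ est hyperbolique, donc encore d'ordre infini. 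Les deux derniers points r\'esultent du fait qu'aucun \'el\'ement parabolique non trivial ni aucun \'el\'ement hyperbolique de $\mathrm{SL}_2(\mathbb{R})$ n'est de torsion.

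Le point d\'elicat est la d\'etermination \emph{pr\'ecise} de l'ordre $h_n$ dans le r\'egime elliptique. Pour cela je m'appuierais sur la th\'eorie classique des \'el\'ements de \textsc{Coxeter} dans les groupes de r\'eflexions finis (\cite{Bo, Hum}): pour $n \leq 8$ le groupe $\mathrm{W}_n$ \'etant fini, l'\'el\'ement de \textsc{Coxeter} $w_n$ admet dans son action g\'eom\'etrique un plan invariant distingu\'e, dit \emph{plan de \textsc{Coxeter}}, sur lequel il agit comme une rotation d'angle $2\pi/h_n.$ Il s'agirait alors d'identifier $G_n$ avec ce plan de \textsc{Coxeter}. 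Cette identification est naturelle dans notre cadre: le plan de \textsc{Coxeter} est caract\'eris\'e comme le sous-espace $w_n$-invariant sur lequel l'op\'erateur poss\`ede deux valeurs propres complexes conjugu\'ees primitives d'ordre $h_n,$ et par construction le plan $G_n$ est engendr\'e par les parties paire et impaire du vecteur de Perron-Frobenius $v_n,$ lui-m\^eme li\'e de fa\c{c}on canonique au plan de \textsc{Coxeter} par l'unicit\'e du vecteur strictement positif. La comparaison avec les valeurs $h_n=6,\,5,\,8,\,12,\,18,\,30$ pour $n=3,\,\ldots,\,8$ conclurait alors.
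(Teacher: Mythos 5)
Votre démonstration est correcte et suit essentiellement la même voie que le texte: la trichotomie se lit sur la trace $\lambda_n^2-2$ et le déterminant $1$ de la matrice explicite de $w_{n\vert G_n},$ combinés à la Proposition \ref{rayonspectral} ($\lambda_n$ strictement croissant, $\lambda_9=2$), exactement comme l'indique le paragraphe précédant l'énoncé. Pour l'ordre $h_n$ dans le cas elliptique, votre identification de $G_n$ au plan de \textsc{Coxeter} via le vecteur de Perron-Frobenius est précisément l'argument auquel renvoie la remarque qui suit le théorème (rotation d'angle $2\pi/h_n,$ \cite{Hum}, \S 3.7); votre rédaction ne fait qu'expliciter ce renvoi.
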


\begin{rem}
Dans le cas elliptique $w_{n_{\vert G_n}}$ est une rotation d'angle $2\pi/h_n$ (\emph{voir} \cite{Hum}, \S 3.7).
\end{rem}

\begin{thm}[\cite{Mc}]\label{proj}
{\sl Pour $n\not=9,$ toute racine $\alpha$ de $\Theta_n$ a une projection orthogonale non triviale sur $G_n\subset V_n.$}
\end{thm}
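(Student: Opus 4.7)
Le plan est de produire un vecteur explicite de $G_n$ dont l'appariement pour $B_n$ avec toute racine soit non nul; une telle condition forcera imm\'ediatement la projection orthogonale de cette racine sur $G_n$ \`a \^etre non triviale. Le candidat naturel est le vecteur de Perron-Frobenius $v_n$ lui-m\^eme: puisque $v_n=v_n^0+v_n^1,$ il appartient \`a $G_n$ par d\'efinition, et, comme $\Gamma_n$ est connexe pour $n\geq 4,$ ses coordonn\'ees $c_i$ dans la base $\{\alpha_0,\ldots,\alpha_{n-1}\}$ sont toutes strictement positives.

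La premi\`ere \'etape sera de calculer $B_n(\alpha_i,v_n).$ La forme explicite $B_n(\alpha_i,\alpha_i)=2,$ $B_n(\alpha_i,\alpha_j)=-1$ si $s_i$ est voisin de $s_j$ dans $\Gamma_n,$ et $0$ sinon, combin\'ee \`a la relation $A(\Gamma_n)v_n=\lambda_n v_n,$ qui s'\'ecrit $\sum_{j\sim i}c_j=\lambda_nc_i$ en coordonn\'ees, donnera imm\'ediatement
$$B_n(\alpha_i,v_n)=2c_i-\sum_{j\sim i}c_j=(2-\lambda_n)c_i.$$
La Proposition \ref{rayonspectral} garantissant que $\lambda_n$ est strictement croissante et que $\lambda_9=2,$ on aura $(2-\lambda_n)\neq 0$ d\`es que $n\neq 9,$ avec un signe fix\'e (positif pour $n\leq 8,$ n\'egatif pour $n\geq 10$). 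Toutes les quantit\'es $B_n(\alpha_i,v_n)$ seront donc non nulles et partageront ce m\^eme signe.

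Il ne restera plus qu'\`a invoquer le fait d\'ej\`a rappel\'e plus haut: toute racine $\alpha$ de $\Theta_n$ se d\'eveloppe en $\alpha=\sum_i n_i\alpha_i$ avec les $n_i$ tous de m\^eme signe et non tous nuls. La lin\'earit\'e donnera alors
$$B_n(\alpha,v_n)=(2-\lambda_n)\sum_i n_ic_i\neq 0,$$
la somme $\sum_i n_ic_i$ \'etant strictement positive ou strictement n\'egative selon le signe des $n_i,$ puisque les $c_i$ sont tous positifs. Comme $v_n$ appartient \`a $G_n,$ cette non-nullit\'e suffira \`a conclure que la projection orthogonale de $\alpha$ sur $G_n$ n'est pas triviale.

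L'obstacle principal, et seule v\'eritable subtilit\'e, est pr\'eliminaire: il faut s'assurer que la \og projection orthogonale sur $G_n$\fg\ est bien d\'efinie, autrement dit que la restriction de $B_n$ \`a $G_n$ est non d\'eg\'en\'er\'ee. Pour $n\leq 8,$ $B_n$ est d\'efinie positive et rien n'est \`a v\'erifier. Pour $n\geq 10,$ la matrice de $w_{n_{\vert G_n}}$ calcul\'ee juste avant l'\'enonc\'e a pour valeurs propres $\lambda_n$ et $\lambda_n^{-1};$ comme $w_n$ est une $B_n$-isom\'etrie, la restriction de $B_n$ \`a $G_n$ sera lorentzienne de signature $(1,1),$ donc non d\'eg\'en\'er\'ee. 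Le cas exclu $n=9$ correspond exactement \`a la situation parabolique $\lambda_9=2$: le vecteur $v_9$ appartient alors au radical de $B_9,$ le plan $G_9$ d\'eg\'en\`ere, et le candidat $v_n$ s'\'ecroule --- c'est pr\'ecis\'ement l'obstruction que le calcul $B_n(\alpha_i,v_n)=(2-\lambda_n)c_i$ fait appara\^{\i}tre de fa\c{c}on manifeste.
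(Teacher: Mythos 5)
Votre d\'emonstration est correcte et suit essentiellement la m\^eme voie que celle du texte~: on apparie la racine avec le vecteur de Perron--Frobenius $v_n\in G_n$ et l'on obtient $B(v_n,\alpha)=(2-\lambda_n)\sum x_iy_i\neq 0$ pour $n\neq 9$, ce qui force la projection de $\alpha$ sur $G_n$ \`a \^etre non nulle. Votre v\'erification pr\'ealable de la non-d\'eg\'en\'erescence de la restriction de $B_n$ \`a $G_n$ (qui l\'egitime la notion de projection orthogonale) est un compl\'ement utile que le texte passe sous silence.
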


\begin{proof}[{\sl D\'emonstration}]
Supposons que $\alpha$ appartienne \`a $\Theta_n^+.$ Soit $\beta$ une projection de $\alpha$ sur $G_n.$ Puisque~$\lambda_n\not=~2$ pour $n\not=9$ et puisque $\alpha=\sum x_i\alpha_i,$ $v_n=\sum y_i\alpha_i$ sont des vecteurs positifs on~a: $$B(v_n,\beta)=B(v_n,\alpha)=(2-\lambda_n)\sum x_iy_i \not=0;$$ par suite $\beta\not=0.$
\end{proof}

\begin{cor}[\cite{Mc}]
{\sl Soit $w$ un \'el\'ement de \textsc{Coxeter} de $\mathrm{W}_n.$

\noindent Si $n<9,$ toute orbite de $w_{\vert\Theta_n}$ est form\'ee de $h_n$ \'el\'ements.

\noindent Si $n>9,$ toute orbite de $w_{\vert\Theta_n}$ est infinie, {\it i.e.} $w$ n'a pas de racine p\'eriodique.}
\end{cor}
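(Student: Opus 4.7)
Le plan est d'exploiter la d\'ecomposition orthogonale $V_n=G_n\oplus G_n^\perp$ fournie par le produit int\'erieur $B_n.$ Puisque $w$ appartient au groupe orthogonal $(V_n,B_n)$ et qu'on a vu que $G_n$ est $w$-invariant, $G_n^\perp$ l'est aussi; par cons\'equent pour toute racine $\alpha\in\Theta_n$ on peut \'ecrire $\alpha=\beta+\gamma$ avec $\beta\in G_n,$ $\gamma\in G_n^\perp,$ et pour tout $k\in\mathbb{Z}$ on a $w^k(\alpha)=w^k(\beta)+w^k(\gamma),$ cette \'ecriture restant la d\'ecomposition orthogonale de $w^k(\alpha).$ L'ingr\'edient cl\'e est le Th\'eor\`eme \ref{proj}: pour $n\neq 9$ la composante $\beta$ est toujours non nulle.

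Pla\c{c}ons-nous d'abord dans le cas $n<9.$ D'apr\`es \S\ref{wweyl} tous les \'el\'ements de \textsc{Coxeter} de $\mathrm{W}_n$ ont m\^eme ordre $h_n,$ donc $w^{h_n}=\mathrm{Id}$ et chaque orbite de $w$ sur $\Theta_n$ a un cardinal divisant $h_n.$ Soit $k$ le cardinal de l'orbite de $\alpha;$ alors $w^k(\beta)=\beta.$ D'apr\`es la Remarque suivant le th\'eor\`eme pr\'ec\'edent, $w_{\vert G_n}$ est une rotation d'angle $2\pi/h_n,$ donc $w^k_{\vert G_n}$ n'admet $\beta\neq 0$ comme vecteur fixe que si $h_n\vert k.$ En combinant avec $k\vert h_n$ on obtient $k=h_n,$ ce qui est la premi\`ere assertion.

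Passons au cas $n>9.$ L'application $w_{\vert G_n}$ est hyperbolique, de valeurs propres $\lambda, \lambda^{-1}$ avec $|\lambda|>1$ (c'est un calcul direct sur la matrice $2\times 2$ rappel\'ee au \S\ref{wweyl}, dont le polyn\^ome caract\'eristique est $t^2-(\lambda_n^2-2)t+1$ avec $\lambda_n>2$). Si l'orbite de $\alpha$ \'etait finie, disons de cardinal $k\geq 1,$ alors on aurait $w^k(\beta)=\beta;$ or $w^k_{\vert G_n}$ est encore hyperbolique (de valeurs propres $\lambda^k,\lambda^{-k}$), donc son seul vecteur fixe est $0,$ d'o\`u $\beta=0,$ contradiction avec le Th\'eor\`eme \ref{proj}. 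Ainsi l'orbite de toute racine est infinie.

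L'\'etape qui demande le plus de soin est la v\'erification pr\'ealable que $w$ respecte bien la d\'ecomposition $G_n\oplus G_n^\perp$ (ce qui est automatique du fait que $w\in\mathrm{O}(V_n,B_n)$ pr\'eserve $G_n$) et l'utilisation correcte de la nature de l'action de $w_{\vert G_n}$ (rotation d'ordre $h_n$ dans le cas elliptique, absence de vecteur fixe non nul dans le cas hyperbolique). Le point subtil, mais d\'ej\`a acquis via le Th\'eor\`eme \ref{proj}, est de savoir que la projection $\beta$ sur $G_n$ d'une racine est non nulle: sans cela, tout l'argument s'effondre, et c'est bien cette \'etape qui force l'exclusion du cas $n=9.$
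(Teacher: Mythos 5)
Votre preuve est correcte et reprend essentiellement l'argument que le texte laisse implicite pour ce corollaire : \'ecrire chaque racine suivant $V_n=G_n\oplus G_n^{\perp},$ invoquer le Th\'eor\`eme \ref{proj} pour garantir que la composante sur $G_n$ est non nulle, puis conclure selon que $w_{\vert G_n}$ est une rotation d'ordre $h_n$ ($n<9$) ou une application hyperbolique sans vecteur fixe non nul ($n>9$). Signalons seulement que la d\'ecomposition $V_n=G_n\oplus G_n^{\perp}$ repose sur la non-d\'eg\'en\'erescence de la restriction de $B_n$ \`a $G_n$ (dont le d\'eterminant est proportionnel \`a $4-\lambda_n^2$), vraie pour $n\not=9$ et d\'ej\`a sous-jacente \`a la notion de projection orthogonale utilis\'ee dans le Th\'eor\`eme \ref{proj}.
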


\subsection{\'El\'ements de \textsc{Coxeter} et nombres de \textsc{Salem}}

\noindent Il existe un lien entre les nombres de \textsc{Salem} et les \'el\'ements de \textsc{Coxeter}: les premiers apparaissent comme valeurs propres des seconds. Le polyn\^ome caract\'eristique d'un \'el\'ement de \textsc{Coxeter} $w$ de $\mathrm{W}_n$ est donn\'e par $$P_n(t)=\det(t\mathrm{Id}-w) =\frac{t^{n-2}(t^3-t-1)+(t^3+t^2-1)}{t-1}.$$

\noindent Pour $n$ distinct de $9,$ le polyn\^ome $P_n$ a des racines simples et pour $n\geq 10$ le polyn\^ome $P_n$ s'\'ecrit comme le produit de deux polyn\^omes $Q_n$ et $R_n$ o\`u $Q_n$ d\'esigne un polyn\^ome de \textsc{Salem} et $R_n$ un produit de polyn\^omes cyclotomiques. Les racines $t=\lambda_n^{\pm 1}$ de $Q_n$ sont des valeurs propres de $w_{n\vert G_n}.$

\begin{rem}
Le polyn\^ome $Q_{10}$ co\"incide avec le polyn\^ome de \textsc{Lehmer} d'o\`u $\lambda_{10}= \lambda_{\text{Lehmer}}.$
\end{rem}

\noindent \'Etendons la factorisation de $P_n$ de la fa\c{c}on suivante:
\begin{itemize}
\item on d\'efinit $Q_8$ comme \'etant le polyn\^ome cyclotomique des racines $h_8$i\`eme de l'unit\'e;

\item on pose $Q_9(t)=t-1.$
\end{itemize}

\noindent On dit que $\lambda$ est une {\it valeur propre dominante}\label{ind702} de $w$ si $Q_n(\lambda)=0;$ autrement dit les valeurs propres dominantes sont les valeurs propres de $w_{n\vert G_n}$ et leur conjugu\'es de \textsc{Galois}. Les vecteurs propres associ\'es \`a ces valeurs propres sont les {\it vecteurs propres dominants}\label{ind703}; ils v\'erifient la propri\'et\'e suivante.

\begin{thm}[\cite{Mc}]\label{racine}
{\sl Soient $w$ un \'el\'ement de \textsc{Coxeter} de $\mathrm{W}_n$ et $v\in L_n\otimes\mathbb{C}$ un vecteur propre dominant. Si $n$ est distinct de $9,$ on a $v\cdot\alpha\not=0$ pour toute racine $\alpha$ de~$\Theta_n.$}
\end{thm}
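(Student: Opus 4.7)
Le plan est de raisonner par l'absurde en exploitant la conjugaison galoisienne, puis de ramener la contradiction au cas d\'ej\`a trait\'e du vecteur de Perron $v_n$ dans la d\'emonstration du Th\'eor\`eme \ref{proj}. Supposons donc qu'il existe $\alpha\in\Theta_n$ tel que $B_n(v,\alpha)=0;$ quitte \`a changer $\alpha$ en $-\alpha,$ on peut supposer $\alpha\in\Theta_n^+$. Puisque $n\not=9,$ les racines de $Q_n$ sont simples dans le polyn\^ome caract\'eristique $P_n$ de $w,$ donc la droite propre $\ker(w-\lambda)$ (o\`u $\lambda$ d\'esigne la valeur propre associ\'ee \`a $v$) est d\'efinie sur $\mathbb{Q}(\lambda)$. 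On peut donc remplacer $v$ par un vecteur colin\'eaire \`a coordonn\'ees dans $\mathbb{Q}(\lambda)\subset\overline{\mathbb{Q}},$ ce qui pr\'eserve l'hypoth\`ese $B_n(v,\alpha)=0.$

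Soit $K$ un corps de d\'ecomposition de $Q_n$ sur $\mathbb{Q}.$ Pour tout $\sigma$ dans $\mathrm{Gal}(K/\mathbb{Q}),$ $\sigma(v)$ est un vecteur propre de $w$ de valeur propre $\sigma(\lambda),$ et les $\sigma(\lambda)$ d\'ecrivent toutes les racines de $Q_n$ lorsque $\sigma$ varie. Comme $\alpha$ est \`a coefficients entiers et que $B_n$ est d\'efinie sur $\mathbb{Q},$
\[B_n(\sigma(v),\alpha)=\sigma(B_n(v,\alpha))=0\]
pour tout $\sigma.$ Les vecteurs $\sigma(v),$ lin\'eairement ind\'ependants sur $K$ car associ\'es \`a des valeurs propres distinctes, engendrent $U\otimes K$ o\`u $U=\ker Q_n(w)\subset V_n$ est le sous-espace $\mathbb{Q}$-rationnel de dimension $\deg Q_n.$ Par lin\'earit\'e, $B_n(u,\alpha)=0$ pour tout $u\in U.$

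Reste \`a produire la contradiction via $v_n.$ Le sous-espace $G_n$ est $w$-invariant, de dimension deux, et les valeurs propres de $w|_{G_n}$ sont des racines de $Q_n$ (d'apr\`es la description explicite rappel\'ee avant le Th\'eor\`eme \ref{proj}); de plus $w|_{G_n}$ est diagonalisable puisque $P_n$ a ses racines simples. Il en d\'ecoule que $Q_n(w)$ s'annule sur $G_n,$ donc $G_n\subset\ker Q_n(w)=U.$ En particulier $v_n\in G_n\subset U,$ et donc $B_n(v_n,\alpha)=0.$ Or la d\'emonstration du Th\'eor\`eme \ref{proj} fournit, en \'ecrivant $\alpha=\sum x_i\alpha_i\in\Theta_n^+$ et $v_n=\sum y_i\alpha_i$ (vecteur de Perron, strictement positif),
\[B_n(v_n,\alpha)=(2-\lambda_n)\sum_i x_iy_i\not=0,\]
puisque $\lambda_n\not=2$ pour $n\not=9$ et $\sum_i x_iy_i>0.$ D'o\`u la contradiction cherch\'ee.

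Le point d\'elicat du plan est l'inclusion $G_n\subset U,$ qui s'appuie crucialement sur deux faits: d'une part les valeurs propres de $w|_{G_n}$ sont bien des racines de $Q_n,$ d'autre part ces valeurs propres sont distinctes (diagonalisabilit\'e). Ces deux propri\'et\'es \'echouent simultan\'ement pour $n=9,$ o\`u $w|_{G_n}$ est parabolique non diagonalisable avec valeur propre double $1$ (tandis que $Q_9(t)=t-1$), si bien que $G_9\not\subset U$; ceci explique, comme pour le Th\'eor\`eme \ref{proj}, l'exclusion de ce cas dans l'\'enonc\'e.
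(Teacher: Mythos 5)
Your proposal is correct and takes essentially the same route as the paper: both arguments transfer the hypothesis $v\cdot\alpha=0$ to the Galois conjugates of $v$ and then derive a contradiction from the Perron-positivity of $v_n$ that underlies the Th\'eor\`eme \ref{proj}. Your only deviations are in packaging — you span $U=\ker Q_n(w)\supseteq G_n$ by the full Galois orbit and recompute $B_n(v_n,\alpha)=(2-\lambda_n)\sum_i x_iy_i\not=0$ explicitly, whereas the paper first reduces to the case $w(v)=\lambda_n^{\pm 1}v$ (so $v\in G_n\otimes\mathbb{C}$) and then cites the Th\'eor\`eme \ref{proj} directly.
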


\begin{proof}[{\sl D\'emonstration}]
Il suffit de prouver l'\'enonc\'e lorsque $w(v)=\lambda_n^{\pm 1}v$ auquel cas $v$ appartient \`a $G_n\otimes\mathbb{C}.$ Puisque $n$ est distinct de $9,$ le sous-espace vectoriel $G_n$ est engendr\'e par $v$ et ses conjugu\'es de \textsc{Galois}. Si~$v\cdot\alpha=0,$ alors, pour tout conjugu\'e de \textsc{Galois} $v'$ de $v,$ on a l'\'egali\-t\'e~$v'\cdot\alpha=0$ et la projection de $\alpha$ sur $G_n$ est nulle: contradiction avec le Th\'eor\`eme \ref{proj}.
\end{proof}

\section{Cubiques marqu\'ees, \'eclatements marqu\'es, paires marqu\'ees}

\subsection{Cubiques marqu\'ees}

\noindent Une {\it cubique}\label{ind800} $\mathcal{C}\subset\mathbb{P}^2(\mathbb{C})$ est une courbe r\'eduite de degr\'e $3.$ Elle peut \^etre singuli\`ere ou r\'eductible; on d\'esigne par $\mathcal{C}^*$ la partie lisse. Commen\c{c}ons par quelques rappels sur le groupe de \textsc{Picard} d'une telle courbe; pour plus de d\'etails on renvoie \`a \cite{HM}. On a la suite exacte suivante $$0\longrightarrow\mathrm{Pic}_0(\mathcal{C})\longrightarrow \mathrm{Pic}(\mathcal{C})\longrightarrow\mathrm{H}^2(\mathcal{C},\mathbb{Z}) \longrightarrow 0$$ o\`u $\mathrm{Pic}_0(\mathcal{C})$ est isomorphe
\begin{itemize}
\item soit \`a un tore $\mathbb{C}/\Lambda$ (quand $\mathcal{C}$ est lisse);

\item soit au groupe multiplicatif $\mathbb{C}^*$ (ceci correspond au cas o\`u $\mathcal{C}$ est une cubique nodale ou l'union d'une cubique et d'une droite transverse \`a celle-ci ou l'union de trois droites en position g\'en\'erale); 

\item soit au groupe additif $\mathbb{C}$ (lorsque $\mathcal{C}$ est une cubique cuspidale ou l'union d'une conique et d'une droite tangente \`a celle-ci ou l'union de trois droites concourantes). 
\end{itemize}

\noindent Une {\it cubique marqu\'ee}\label{ind801} est un couple $(\mathcal{C},\eta)$ form\'e d'une
courbe abstraite $\mathcal{C}$ et d'un homomorphisme $\eta\colon\mathbb{Z}^{1,n}\to\mathrm{Pic} (\mathcal{C})$ tel que 
\begin{itemize}
\item les sections du fibr\'e en droites $\eta(e_0)$ proviennent du plongement de $\mathcal{C}$ dans $\mathbb{P}^2(\mathbb{C});$ 

\item il existe des points bases distincts $p_i$ sur $\mathcal{C}^*$ pour lesquels $\eta(e_i)=[p_i]$ pour $i=2,$ $\ldots,$ $n.$
\end{itemize}

\noindent Les points bases $p_i$ sont uniquement d\'etermin\'es par $\eta$ puisque $\mathbb{C}^*$ se plonge dans $\mathrm{Pic}(\mathcal{C}).$ R\'eciproquement une cubique $\mathcal{C}$ qui se plonge dans $\mathbb{P}^2(\mathbb{C})$ et une collection de points distincts sur $\mathcal{C}^*$ d\'eterminent un marquage de $\mathcal{C}.$

\begin{rem}
Diff\'erents marquages de $\mathcal{C}$ peuvent conduire \`a diff\'erents plongements de $\mathcal{C}$ dans $\mathbb{P}^2(\mathbb{C})$ mais tous ces plongements sont \'equivalents via l'action de $\mathrm{Aut}(\mathcal{C}).$ 
\end{rem}

\noindent Soient $(\mathcal{C},\eta)$ et $(\mathcal{C}',\eta')$ deux cubiques marqu\'ees; un {\it isomorphisme}\label{ind802} entre $(\mathcal{C},\eta)$ et $(\mathcal{C}',\eta')$ est une application biholomorphe $f\colon\mathcal{C}\to\mathcal{C}'$ telle que $\eta'=f_*\circ\eta.$

\noindent \'Etant donn\'ee une cubique marqu\'ee $(\mathcal{C},\eta)$ on pose $$W(\mathcal{C},\eta)=\{w\in\mathrm{W}_n\,\vert\,(\mathcal{C},\eta w)\text{ est une cubique marqu\'ee}\},$$ $$\mathrm{Aut}(\mathcal{C},\eta)=\{w\in W(\mathcal{C},\eta)\,\vert \,(\mathcal{C},\eta) \text{ et } (\mathcal{C}',\eta') \text{ sont isomorphes }\}.$$ 

\noindent On peut d\'ecomposer le marquage $\eta$ de $\mathcal{C}$ en deux parties 
\begin{align*}
& \eta_0\colon\ker(\deg\circ\eta)\to\mathrm{Pic}_0(\mathcal{C}), && \deg\circ\eta\colon \mathbb{Z}^{1,n}\to\mathrm{H}^2(\mathcal{C},\mathbb{Z}).
\end{align*}

\noindent On a la propri\'et\'e suivante.

\begin{thm}[\cite{Mc}]
{\sl Soit $(\mathcal{C},\eta)$ une cubique marqu\'ee. Les applications $\eta_0$ et $\deg\circ\eta$ d\'eterminent $(\mathcal{C},\eta)$ \`a isomorphisme pr\`es.}
\end{thm}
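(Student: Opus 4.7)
The proof unfolds in three stages. First, I would interpret the pair $(\eta_0, \deg\circ\eta)$ as a combinatorial skeleton of the marking: the map $\deg\circ\eta$ records the distribution of the points $p_i$ among the irreducible components of $\mathcal{C}$, while $\eta_0$ records the relative positions within $\mathrm{Pic}_0(\mathcal{C})$ via the values of $\eta$ on combinations in $\ker(\deg\circ\eta)$ (differences $e_i-e_j$ when $p_i,p_j$ lie on the same component, or combinations $e_0 - e_{i_1}-\ldots-e_{i_k}$ whose degrees balance). From the exact sequence
$$0 \to \mathrm{Pic}_0(\mathcal{C}) \to \mathrm{Pic}(\mathcal{C}) \xrightarrow{\deg} \mathrm{H}^2(\mathcal{C}, \mathbb{Z}) \to 0,$$
the marking $\eta$ is determined by this skeleton up to an ambiguity living in the space of splittings.

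Suppose we have two markings $\eta$ and $\eta'$ of $\mathcal{C}$ sharing the same skeleton. The difference $\delta(x) := \eta(x) - \eta'(x)$ is a well-defined homomorphism $\mathbb{Z}^{1,n} \to \mathrm{Pic}(\mathcal{C})$ landing in $\mathrm{Pic}_0(\mathcal{C})$ because $\deg\circ\eta = \deg\circ\eta'$. The hypothesis $\eta_0 = \eta_0'$ forces $\delta$ to vanish on $\ker(\deg\circ\eta)$, so $\delta$ factors through a homomorphism $\tilde\delta\colon \mathrm{Im}(\deg\circ\eta) \to \mathrm{Pic}_0(\mathcal{C})$. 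Under the mild hypothesis that every component of $\mathcal{C}$ carries at least one marked point, this image is all of $\mathrm{H}^2(\mathcal{C}, \mathbb{Z})$, so $\tilde\delta$ is defined globally.

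Next, I would produce an isomorphism of marked cubics from $(\mathcal{C},\eta)$ to $(\mathcal{C},\eta')$ by constructing an automorphism $f$ of $\mathcal{C}$ whose pushforward on $\mathrm{Pic}(\mathcal{C})$ sends $\mathcal{L}$ to $\mathcal{L} - \tilde\delta(\deg\mathcal{L})$. The natural candidates are the translations induced by the action of the generalized Jacobian $\mathrm{Pic}_0(\mathcal{C})$ on the smooth locus $\mathcal{C}^*$ via Abel--Jacobi. In the irreducible case (smooth, nodal, or cuspidal) $\mathrm{Pic}_0(\mathcal{C})$ acts transitively on $\mathcal{C}^*$ by automorphisms of the abstract curve $\mathcal{C}$, and the induced action on a divisor class of degree $d$ is the shift by $d\cdot a$, so the equation $d\cdot a = -\tilde\delta(d)$ is solved by taking $a = -\tilde\delta(1) \in \mathrm{Pic}_0(\mathcal{C})$.

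The main obstacle will be the reducible case, where $\mathrm{Pic}_0(\mathcal{C})$ arises in part from the identifications at the nodes and does not manifestly come from independent translations on each component. I would handle this via a case-by-case analysis of the degenerate cubics listed at the start of \S \ref{mcmullen} --- conic plus transverse line, conic plus tangent line, three lines in general position, three concurrent lines --- using the explicit descriptions of $\mathrm{Pic}_0(\mathcal{C})$ as $\mathbb{C}^*$ or $\mathbb{C}$ recalled there. In each case the group of automorphisms of $\mathcal{C}$ preserving the combinatorial incidence data of the components acts on $\mathrm{Pic}_0(\mathcal{C})$ by translations, and every element of $\mathrm{Pic}_0(\mathcal{C})$ is realized in this way; assembling the translation parameters prescribed by $\tilde\delta$ yields the required automorphism $f$ with $\eta' = f_*\eta$, completing the proof.
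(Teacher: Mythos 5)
The survey states this theorem without proof (it is quoted from \cite{Mc}), so the comparison is with McMullen's argument; your overall strategy is the right one and, in substance, the same: two markings with the same $\deg\circ\eta$ and $\eta_0$ differ by a homomorphism $\tilde\delta$ from the image of $\deg\circ\eta$ to $\mathrm{Pic}_0(\mathcal{C})$, and one must realize $\mathcal{L}\mapsto\mathcal{L}-\tilde\delta(\deg\mathcal{L})$ by an automorphism of the abstract curve; your treatment of the irreducible cases via the translation action of $\mathrm{Pic}_0(\mathcal{C})$ on $\mathcal{C}^*$ is correct. Two small repairs: you do not need the extra hypothesis that every component carries a base point, since $\mathrm{Pic}_0(\mathcal{C})$ ($\mathbb{C}$, $\mathbb{C}^*$ or a torus) is divisible, so $\tilde\delta$ extends from the image of $\deg\circ\eta$ to all of $\mathrm{H}^2(\mathcal{C},\mathbb{Z})$ (or one just matches $f_*\eta$ with $\eta'$ on the image of $\eta$); and you tacitly assume both markings live on the same curve, whereas one should first note that $\deg\circ\eta$ (number of components and multidegree of $\eta(e_0)$) together with the isomorphism type of $\mathrm{Pic}_0(\mathcal{C})$ already pins down the abstract curve $\mathcal{C}$.

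The genuine gap is the reducible case, which is the heart of the theorem and which you only assert, in a form that is misstated at exactly the delicate point. What must be produced is an automorphism $f$ preserving each component, acting \emph{trivially} on $\mathrm{Pic}_0(\mathcal{C})$ (otherwise $f_*\circ\eta$ has a different $\eta_0$ and the identity $\eta'=f_*\circ\eta$ already fails on $\ker(\deg\circ\eta)$), and inducing on each torsor $\mathrm{Pic}^{d}(\mathcal{C})$ the translation by $\tilde\delta(d)$; i.e. one must realize an arbitrary homomorphism $\mathrm{H}^2(\mathcal{C},\mathbb{Z})\to\mathrm{Pic}_0(\mathcal{C})$, a tuple of independent translation parameters, one per component --- not merely ``every element of $\mathrm{Pic}_0(\mathcal{C})$''. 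Your sentence that the component-preserving automorphisms ``act on $\mathrm{Pic}_0(\mathcal{C})$ by translations'' conflates two different actions: on $\mathrm{Pic}_0$ automorphisms act by group automorphisms (scalings of $\mathbb{C}$, inversion of $\mathbb{C}^*$), and translations only occur on the nonzero multidegree parts. In the multiplicative cases (conic plus transverse line, triangle of lines) the torus of automorphisms fixing the nodes and the components does the job; but in the additive cases (cuspidal cubic, conic plus tangent line, three concurrent lines) only the automorphisms with derivative $1$ at the singular point act trivially on $\mathrm{Pic}_0\cong\mathbb{C}$, so one must check that these parabolic automorphisms, e.g. $z\mapsto z/(1+cz)$ on one component and the identity on the others, still shift the class of a point of that component by $\pm c$. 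Moreover, for three concurrent lines the planar triple point is not seminormal: a triple of M\"obius maps fixing the point defines an automorphism of $\mathcal{C}$ only if the three derivatives there coincide, so independent scalings of the components are simply not available and the parabolic parameters are what saves the statement. These case-by-case verifications are short, but they are precisely the content of the theorem; as written, your proposal stops where the proof begins.
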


\noindent Une cons\'equence de cet \'enonc\'e est la suivante.

\begin{cor}[\cite{Mc}]\label{cubcub}
{\sl Une cubique irr\'eductible marqu\'ee $(\mathcal{C},\eta)$ est d\'etermin\'ee, \`a isomorphisme pr\`es, par $\eta_0\colon L_n\to\mathrm{Pic}_0(\mathcal{C}).$}
\end{cor}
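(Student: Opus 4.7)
\noindent The plan is to reduce the corollary to the preceding theorem, which states that the pair $(\eta_0,\,\deg\circ\eta)$ determines a marked cubic $(\mathcal{C},\eta)$ up to isomorphism. It will therefore suffice to establish two points: that when $\mathcal{C}$ is irr\'eductible the homomorphism $\deg\circ\eta$ is already forced by the structure of marked cubic (so carries no additional information beyond $n$), and that its kernel coincides with the root lattice $L_n$. Once both are in place, the theorem applied to the pair $(\eta_0|_{L_n},\,\deg\circ\eta)$ yields the conclusion.

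\noindent For the first step I would argue as follows. Since $\mathcal{C}$ is irr\'eductible, $\mathrm{H}^2(\mathcal{C},\mathbb{Z})\simeq\mathbb{Z}$, generated by the class of a point, so that $\deg\colon\mathrm{Pic}(\mathcal{C})\to\mathbb{Z}$ is a well-defined integer. By the very definition of a marking, $\eta(e_0)$ is the restriction to $\mathcal{C}$ of $\mathcal{O}_{\mathbb{P}^2}(1)$, and since $\mathcal{C}$ has degree~$3$ this gives $(\deg\circ\eta)(e_0)=3$; each $\eta(e_i)$ with $i\geq 1$ is the class of a smooth point, giving $(\deg\circ\eta)(e_i)=1$. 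Hence $\deg\circ\eta$ is forced to be the map $(a_0,\ldots,a_n)\mapsto 3a_0+a_1+\ldots+a_n$, independently of $\eta$.

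\noindent For the second step I would identify $\ker(\deg\circ\eta)$ with $L_n$. The inclusion $L_n\subseteq\ker(\deg\circ\eta)$ is immediate from $\deg(\alpha_0)=3-1-1-1=0$ and $\deg(\alpha_j)=1-1=0$ for $1\leq j\leq n-1$. Both sublattices of $\mathbb{Z}^{1,n}$ are free abelian of rank $n$ (the kernel because $\deg\circ\eta$ is surjective), so only primitivity remains. This is a direct calculation: given $(a_0,\ldots,a_n)$ with $3a_0+a_1+\ldots+a_n=0$, the linear system $\sum c_i\alpha_i=\sum a_je_j$ can be solved recursively by $c_0=a_0$, $c_1=-a_0-a_1$, $c_2=-2a_0-a_1-a_2$, $c_j=-3a_0-(a_1+\ldots+a_j)$ for $j\geq 3$, the integrality being automatic from the tridiagonal structure of $\alpha_1,\ldots,\alpha_{n-1}$, and the consistency $c_{n-1}=a_n$ being precisely the relation defining the kernel.

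\noindent The main (and essentially only) obstacle is this explicit lattice identification $\ker(\deg\circ\eta)=L_n$; the reduction to the preceding theorem and the computation of $\deg\circ\eta$ are direct unpackings of the definitions. Note that the hypothesis of irr\'eductibilit\'e is essential: without it, $\mathrm{H}^2(\mathcal{C},\mathbb{Z})$ has larger rank and $\deg\circ\eta$ genuinely depends on which irreducible component each $\eta(e_i)$ lands on, so more data than $\eta_0$ is needed.
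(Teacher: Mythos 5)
Your proof is correct and follows exactly the route the paper intends: the corollary is stated there as an immediate consequence of the preceding theorem, and your two verifications (that irreducibility forces $\deg\circ\eta$ to be the canonical map $a_0e_0+\sum a_ie_i\mapsto 3a_0+\sum a_i$, and that $L_n=\ker(\deg\circ\eta)$ via the explicit integral recursion for the $c_i$) are precisely the details the paper leaves implicit. Your closing remark on why irreducibility is needed (larger $\mathrm{H}^2$ for reducible cubics) is also accurate.
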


\subsection{\'Eclatements marqu\'es}

\noindent Un {\it \'eclatement marqu\'e}\label{ind666} $(\mathcal{Z},\Phi)$ est la donn\'ee d'une surface projective lisse $\mathcal{Z}$ et d'un isomorphisme $\Phi\colon\mathbb{Z}^{1,n}\to \mathrm{H}^2(\mathcal{Z},\mathbb{Z})$ tels que 
\begin{itemize}
\item $\Phi$ envoie le produit de \textsc{Minkowski} $(x\cdot x)=x^2=x_0^2-x_1^2-\ldots-x_n^2$ sur le produit d'intersection sur $\mathrm{H}^2(\mathcal{Z},\mathbb{Z});$

\item il existe un morphisme birationnel $\pi\colon\mathcal{Z}\to\mathbb{P}^2(\mathbb{C})$ pr\'esentant $\mathcal{Z}$ comme l'\'eclat\'e de~$\mathbb{P}^2(\mathbb{C})$ en $n$ points distincts $p_1,$ $\ldots,$ $p_n;$

\item $\Phi(e_0)=[\mathrm{H}]$ et $\Phi(e_i)=[\mathrm{E}_i]$ pour $i=1,\ldots,n$ o\`u $\mathrm{H}$ est la pr\'eimage d'une droite g\'en\'erique de $\mathbb{P}^2(\mathbb{C})$ et $\mathrm{E}_i$ le diviseur obtenu en \'eclatant $p_i.$
\end{itemize}

\noindent Le marquage d\'etermine le morphisme $\pi\colon\mathcal{Z}\to\mathbb{P}^2(\mathbb{C})$ modulo l'action d'un automorphisme de $\mathbb{P}^2(\mathbb{C}).$

\noindent Soient $(\mathcal{Z},\Phi)$ et $(\mathcal{Z}',\Phi)$ deux \'eclatements marqu\'es; un {\it isomorphisme}\label{ind666a} entre $(\mathcal{Z},\Phi)$ et $(\mathcal{Z}',\Phi')$ est la donn\'ee d'une application biholomorphe $F\colon\mathcal{Z}\to\mathcal{Z}'$ tel que le diagramme suivant $$\xymatrix{& \mathbb{Z}^{1,n}\ar[dl]_{\Phi}\ar[dr]^{\Phi'} &\\
\mathrm{H}^2(\mathcal{Z},\mathbb{Z})\ar[rr]_{F_*} & &\mathrm{H}^2(\mathcal{Z}',\mathbb{Z})}$$ commute. Si $(\mathcal{Z},\Phi)$ et $(\mathcal{Z}',\Phi')$ sont isomorphes, il existe un automorphisme $\varphi$ de $\mathbb{P}^2(\mathbb{C})$ tel que $p'_i=~\varphi(p_i).$

%\noindent L'espace des \'eclatements marqu\'es est donc donn\'e par %$$\mathcal{P}_n=\Big((\mathbb{P}^2(\mathbb{C}))^n\setminus\Delta\Big)/\mathrm{PGL}_3(\mathbb{C})$$ %o\`u $\Delta=\{(p_i)_{1\leq 1\leq n}\,\vert\, p_i=p_k\text{ pour un certain }i\not=k\}.$

\noindent Supposons qu'il existe deux morphismes birationnels $\pi,$ $\pi'\colon\mathcal{Z}\to \mathbb{P}^2(\mathbb{C})$ tels que $\mathcal{Z}$ soit la surface obtenue en \'eclatant $\mathbb{P}^2(\mathbb{C})$ en $p_1,$ $\ldots,$ $p_n$ (resp. $p'_1,$ $\ldots,$ $p'_n$) via $\pi$ (resp. $\pi'$). Il existe une transformation birationnelle $f\colon\mathbb{P}^2(\mathbb{C}) \dashrightarrow\mathbb{P}^2(\mathbb{C})$ telle que le diagramme $$\xymatrix{& \mathcal{Z}\ar[dl]_{\pi}\ar[dr]^{\pi'} &\\
\mathbb{P}^2(\mathbb{C})\ar@{-->}[rr]_f & &\mathbb{P}^2(\mathbb{C})}$$ commute; de plus il existe un unique \'el\'ement $w$ dans $\mathbb{Z}^{1,n}$ tel que $\Phi'=\Phi w.$

\noindent Le groupe de \textsc{Weyl} satisfait la propri\'et\'e suivante d\^ue \`a \textsc{Nagata}: soient $(\mathcal{Z},\Phi)$ un \'eclatement marqu\'e et $w$ un \'el\'ement de $\mathbb{Z}^{1,n}.$ Si $(\mathcal{Z},\Phi w)$ est encore un \'eclatement marqu\'e, alors~$w$ appartient au groupe de \textsc{Weyl} $\mathrm{W}_n.$ \'Etant donn\'e un \'eclatement marqu\'e $(\mathcal{Z},\Phi)$ on note~$W(\mathcal{Z},\Phi)$ l'ensemble des \'el\'ements $w$ de $\mathrm{W}_n$ tels que $(\mathcal{Z},\Phi w)$ soit un \'eclatement marqu\'e: $$W(\mathcal{Z},\Phi)=\{w\in\mathrm{W}_n\,\vert\,(\mathcal{Z}, \Phi w)\text{ est un \'eclatement marqu\'e}\}.$$ L'action \`a droite du groupe sym\'etrique consiste \`a r\'eordonner les points base de la suite d'\'eclatements donc le groupe des permutations est contenu dans $W(\mathcal{Z},\Phi).$ L'\'enonc\'e qui suit donne d'autres exemples d'\'el\'ements de $W(\mathcal{Z},\Phi).$ 

\begin{thm}[\cite{Mc}]\label{eclmar}
{\sl Soient $(\mathcal{Z},\Phi)$ un \'eclatement marqu\'e et $\sigma$ l'involution de \textsc{Cremona}. D\'esignons par $p_1,$ $\ldots,$ $p_n$ les points base de $(\mathcal{Z},\Phi).$ Si pour $4\leq k\leq n,$ le point~$p_k$ n'appartient pas \`a la droite passant par $p_i$ et $p_j,$ o\`u $1\leq i,j\leq 3,$ $i\not=j,$ alors $(\mathcal{Z},\Phi\kappa_{123})$ est un \'eclatement marqu\'e.}
\end{thm}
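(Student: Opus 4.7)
The plan is to produce explicitly a new birational morphism $\pi':\mathcal{Z}\to\mathbb{P}^2(\mathbb{C})$ realizing $(\mathcal{Z},\Phi\kappa_{123})$ as a marked blow-up, by running the classical resolution of the Cremona involution $\sigma$ (based at $p_1,p_2,p_3$) directly on $\mathcal{Z}$. Recall that $\sigma$ is resolved on $\mathrm{Bl}_{p_1,p_2,p_3}\mathbb{P}^2(\mathbb{C})$ by contracting the three disjoint $(-1)$-curves $\widetilde{L}_{12}$, $\widetilde{L}_{13}$, $\widetilde{L}_{23}$ obtained as strict transforms of the lines $L_{ij}=(p_ip_j)$.

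First I would factor the morphism $\pi:\mathcal{Z}\to\mathbb{P}^2(\mathbb{C})$ attached to $\Phi$ as $\pi=\pi_1\circ\pi_2$, where $\pi_1:\mathcal{Y}\to\mathbb{P}^2(\mathbb{C})$ is the blow-up at $p_1,p_2,p_3$ and $\pi_2:\mathcal{Z}\to\mathcal{Y}$ is the blow-up at the proper lifts $q_4,\ldots,q_n$ of $p_4,\ldots,p_n$. The hypothesis $p_k\notin L_{ij}$, for $k\geq 4$ and $\{i,j\}\subset\{1,2,3\}$, is equivalent to $q_k\notin\widetilde{L}_{ij}$ on $\mathcal{Y}$. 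It follows that $\widetilde{L}_{12}$, $\widetilde{L}_{13}$, $\widetilde{L}_{23}$ lift via $\pi_2$ to three pairwise disjoint curves on $\mathcal{Z}$ of self-intersection $-1$, since $\pi_2$ blows up no point on any of them (and on $\mathcal{Y}$ they are already pairwise disjoint, the lines $L_{ij}$ and $L_{ik}$ having distinct tangent directions at $p_i$).

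By Castelnuovo's contractibility criterion these three curves can be simultaneously contracted, yielding a smooth projective surface $\mathcal{Z}'$ together with a morphism $\pi':\mathcal{Z}\to\mathcal{Z}'$. Since the blow-up at $q_4,\ldots,q_n$ and the contraction of $\widetilde{L}_{12},\widetilde{L}_{13},\widetilde{L}_{23}$ are supported on disjoint loci in $\mathcal{Y}$, the two operations commute: $\mathcal{Z}'$ identifies with the blow-up at $\sigma(p_4),\ldots,\sigma(p_n)$ of the surface obtained by contracting the three $\widetilde{L}_{ij}$'s on $\mathcal{Y}$, which is $\mathbb{P}^2(\mathbb{C})$ by the classical resolution of $\sigma$. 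Writing $p'_i$ for the image of $\widetilde{L}_{jk}$ under this contraction (where $\{i,j,k\}=\{1,2,3\}$) and $p'_k=\sigma(p_k)$ for $k\geq 4$, the $n$ points $p'_1,\ldots,p'_n$ are pairwise distinct: the $p'_i$'s among themselves because the curves $\widetilde{L}_{jk}$ are disjoint; the $\sigma(p_k)$'s among themselves because $\sigma$ is a bijection on the complement of $L_{12}\cup L_{13}\cup L_{23}$; and $\sigma(p_k)\neq p'_i$ because $\sigma^{-1}(p'_i)=L_{jk}$, which by hypothesis contains no $p_k$ with $k\geq 4$.

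It remains to check that the marking induced by $\pi'$ is $\Phi\kappa_{123}$, which is a direct computation using the explicit action of $\kappa_{123}$ recorded above: for $i\in\{1,2,3\}$ the exceptional divisor of $\pi'$ over $p'_i$ is $\widetilde{L}_{jk}$, of class $\Phi(e_0-e_j-e_k)=\Phi\kappa_{123}(e_i)$; for $k\geq 4$ the exceptional divisor over $p'_k$ is the one originally over $p_k$, of class $\Phi(e_k)=\Phi\kappa_{123}(e_k)$; and the pullback under $\pi'$ of a generic line of the new $\mathbb{P}^2(\mathbb{C})$ is the strict transform of a generic conic through $p_1,p_2,p_3$, of class $\Phi(2e_0-e_1-e_2-e_3)=\Phi\kappa_{123}(e_0)$. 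The crux of the argument is the commutation step in the previous paragraph: the hypothesis is precisely what makes the three lifted curves disjoint $(-1)$-curves on $\mathcal{Z}$, so that Castelnuovo applies and the contracted surface is $\mathbb{P}^2(\mathbb{C})$; without it some $q_k$ would lie on an $\widetilde{L}_{ij}$, $\sigma$ would acquire a new indeterminacy at $p_k$, and the construction would fail.
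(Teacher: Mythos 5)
Your proof is correct and takes essentially the same route as the paper's: the paper chooses coordinates in which $p_1,p_2,p_3$ are the indeterminacy points of $\sigma$ and takes $\pi'=\sigma\pi$ as the new birational morphism, with base points $p_1,p_2,p_3,\sigma(p_4),\ldots,\sigma(p_n)$ distinct precisely because of the hypothesis --- which is exactly your contraction of the three lifted $(-1)$-curves $\widetilde{L}_{12},\widetilde{L}_{13},\widetilde{L}_{23}$. Your write-up merely makes explicit what the paper leaves implicit (why $\sigma\pi$ is a morphism, the pairwise distinctness of the new base points, and the check that the induced marking is $\Phi\kappa_{123}$).
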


\begin{proof}[{\sl D\'emonstration}]
Soient $\pi\colon\mathcal{Z}\to\mathbb{P}^2(\mathbb{C})$ le morphisme birationnel associ\'e \`a l'\'eclatement marqu\'e $(\mathcal{Z},\Phi).$ D\'esignons par $q_1,$ $q_2$ et $q_3$ les points d'ind\'etermination de $\sigma.$ Choisissons des coordonn\'ees dans lesquelles $p_i=q_i$ pour $i=1,$ $2,$ $3;$ alors $\pi'=\sigma\pi\colon \mathcal{Z}\to\mathbb{P}^2(\mathbb{C})$ est un morphisme birationnel permettant de voir $(\mathcal{Z},\Phi\kappa_{123})$ comme un \'eclatement marqu\'e avec points base $p_1,$ $p_2,$ $p_3$ et $\sigma(p_i)$ pour $i\geq 4.$ Ces points sont distincts puisque, par hypoth\`e\-se,~$p_4,$ $\ldots,$ $p_n$ n'appartiennent pas aux droites contract\'ees par $\sigma.$ 
\end{proof}

\noindent Une racine $\alpha$ de $\Theta_n$ est une {\it racine nodale}\label{ind805} pour $(\mathcal{Z},\Phi)$ si $\Phi(\alpha)$ est repr\'esent\'e par un diviseur effectif $D.$ Dans ce cas $D$ se projette sur une courbe de degr\'e $d>0$ sur $\mathbb{P}^2(\mathbb{C});$ par sui\-te~$\alpha=de_0-\sum_{i\geq 1}m_ie_i$ est une racine positive. Une racine nodale est dite {\it g\'eom\'etrique}\label{ind806} si on peut \'ecrire $D$ comme une somme de courbes rationnelles lisses.

\begin{thm}[\cite{Mc}]\label{racnod}
{\sl Soit $(\mathcal{Z},\Phi)$ un \'eclatement marqu\'e. Si trois des points bases sont colin\'eaires, $(\mathcal{Z},\Phi)$ poss\`ede une racine nodale g\'eom\'etrique.}
\end{thm}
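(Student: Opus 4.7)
The plan is to exhibit an explicit geometric nodal root coming from the line through the three colinear base points. Let $p_i,p_j,p_k$ be three colinear base points lying on a line $L\subset\mathbb{P}^2(\mathbb{C})$ (with $i,j,k\in\{1,\ldots,n\}$ distinct). The candidate root will be $\alpha_{ijk}=e_0-e_i-e_j-e_k$, exactly the root whose associated reflection $\kappa_{ijk}$ represents the Cremona involution.

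First I would check that $\alpha_{ijk}\in\Theta_n$. The vector $\alpha_0=e_0-e_1-e_2-e_3$ is a simple root, hence lies in $\Theta_n$. The symmetric group $\Sigma_n$ generated by $R_{\alpha_1},\ldots,R_{\alpha_{n-1}}$ permutes $\{e_1,\ldots,e_n\}$, and conjugating $\alpha_0$ by a suitable permutation sends it to $\alpha_{ijk}$, putting $\alpha_{ijk}$ in the $\mathrm{W}_n$-orbit of $\alpha_0\subset\Theta_n$.

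Next I would produce an effective representative of $\Phi(\alpha_{ijk})$. Set $J=\{\ell\,:\,p_\ell\in L\}$, which contains $\{i,j,k\}$ by hypothesis. The strict transform $\widetilde{L}$ of $L$ in $\mathcal{Z}$ satisfies $[\widetilde{L}]=\Phi(e_0-\sum_{\ell\in J}e_\ell)$, since $\pi^*L=\widetilde{L}+\sum_{\ell\in J}\mathrm{E}_\ell$ in $\mathrm{Pic}(\mathcal{Z})$. Consequently the divisor
$$D=\widetilde{L}+\sum_{\ell\in J\setminus\{i,j,k\}}\mathrm{E}_\ell$$
is effective, with class $\Phi(\alpha_{ijk})$. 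This already shows $\alpha_{ijk}$ is a nodal root for $(\mathcal{Z},\Phi)$.

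Finally I would observe that $\widetilde{L}$ is smooth rational — the blowup of $\mathbb{P}^1$ at distinct smooth points is still isomorphic to $\mathbb{P}^1$ — and each $\mathrm{E}_\ell$ is a smooth rational curve by construction. Thus $D$ is a sum of smooth rational curves, so $\alpha_{ijk}$ is geometric. I do not expect any serious obstacle here: the only step requiring a touch of care is the case $|J|>3$, where the extra $\mathrm{E}_\ell$ must be added to $D$ so that its class agrees with $\Phi(\alpha_{ijk})$, and this is handled by the formula above.
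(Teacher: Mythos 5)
Your proof is correct and follows essentially the same route as the paper: take the root $\alpha_{ijk}$ (the paper reduces to $\alpha_{123}$ by reordering, which is legitimate since permutations lie in $W(\mathcal{Z},\Phi)$) and represent $\Phi(\alpha_{ijk})$ by the strict transform $\widetilde{L}$ plus the exceptional curves of the remaining base points on $L$. In fact your formula $D=\widetilde{L}+\sum_{\ell\in J\setminus\{i,j,k\}}\mathrm{E}_\ell$ is the careful version of the paper's displayed identity, whose summation range contains a small index slip.
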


\begin{proof}[{\sl D\'emonstration}]
Quitte \`a r\'eordonner les points bases $p_1,$ $\ldots,$ $p_n,$ on peut supposer que $p_1,$ $p_2$ et $p_3$ sont colin\'eaires; notons $L$ la droite passant par ces trois points. \`A r\'eindexation pr\`es on se ram\`ene \`a: les points bases appartenant \`a $L$ sont $p_1,$ $\ldots,$ $p_k.$ La transform\'ee stricte~$\widetilde{L}$ de~$L$ induit une courbe rationnelle lisse sur $\mathcal{Z}$ avec $[\widetilde{L}]=[\mathrm{H}-\sum_{i=1}^k\mathrm{E}_i]$ d'o\`u $$\Phi(\alpha_{123})=[ \widetilde{L}+\sum_{i=1}^k\mathrm{E}_i].$$
\end{proof}

\begin{thm}[\cite{Mc}]\label{egal}
{\sl Soit $(\mathcal{Z},\Phi)$ un \'eclatement marqu\'e. Si $(\mathcal{Z},\Phi)$ n'a pas de racine g\'eom\'etrique nodale, alors $$W(\mathcal{Z}, \Phi)= \mathrm{W}_n.$$}
\end{thm}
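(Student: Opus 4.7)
Le plan consiste \`a raisonner par r\'ecurrence sur la longueur d'une \'ecriture de $w\in\mathrm{W}_n$ comme produit de r\'eflexions simples $R_{\alpha_0}, R_{\alpha_1},\ldots,R_{\alpha_{n-1}}$. Les g\'en\'erateurs $R_{\alpha_j}$ pour $1\leq j\leq n-1$ sont des transpositions des $e_j$, $e_{j+1}$ et correspondent \`a un r\'eordonnancement des points base; ils pr\'eservent donc trivialement la structure d'\'eclatement marqu\'e. Le seul g\'en\'erateur d\'elicat est $R_{\alpha_0}=\kappa_{123}$, pour lequel on invoquera le Th\'eor\`eme \ref{eclmar}.

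L'observation clef est que l'hypoth\`ese \og $(\mathcal{Z},\Phi)$ n'a pas de racine nodale g\'eom\'etrique\fg\, est stable lorsqu'on remplace $\Phi$ par $\Phi v$ pour tout $v\in\mathrm{W}_n$ tel que $(\mathcal{Z},\Phi v)$ soit encore un \'eclatement marqu\'e. En effet, si $\alpha\in\Theta_n$, on a $(\Phi v)(\alpha)=\Phi(v\alpha)$: ainsi $\alpha$ est nodale (resp. g\'eom\'etriquement nodale) pour $(\mathcal{Z},\Phi v)$ si et seulement si $v\alpha$ l'est pour $(\mathcal{Z},\Phi)$, le diviseur effectif (resp. la somme de courbes rationnelles lisses) repr\'esentant la classe \'etant le m\^eme dans les deux cas. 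Comme $v$ permute $\Theta_n$, les ensembles de racines nodales g\'eom\'etriques sont en bijection et l'absence d'une telle racine est pr\'eserv\'ee.

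La r\'ecurrence s'organise alors ainsi. \'Etant donn\'ee une \'ecriture $w=s_{i_k}\cdots s_{i_1}$, on pose $\Phi_j=\Phi s_{i_1}\cdots s_{i_j}$ et on montre que chaque $(\mathcal{Z},\Phi_j)$ est un \'eclatement marqu\'e (sans racine nodale g\'eom\'etrique). L'initialisation est donn\'ee par hypoth\`ese. Pour l'h\'er\'edit\'e, si $s_{i_j}$ est une transposition, la conclusion est imm\'ediate. Dans le cas o\`u $s_{i_j}=\kappa_{123}$, il faut v\'erifier les hypoth\`eses du Th\'eor\`eme \ref{eclmar} pour $(\mathcal{Z},\Phi_{j-1})$: aucun point base $p^{(j-1)}_k$ pour $k\geq 4$ ne doit \^etre align\'e avec deux des trois premiers $p^{(j-1)}_1, p^{(j-1)}_2, p^{(j-1)}_3$. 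Or l'existence d'une telle colin\'earit\'e impliquerait, par le Th\'eor\`eme \ref{racnod} appliqu\'e \`a $(\mathcal{Z},\Phi_{j-1})$, la pr\'esence d'une racine nodale g\'eom\'etrique; l'observation clef garantit que ceci est impossible.

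L'obstacle principal se situe dans l'articulation pr\'ecise de cette observation clef: il s'agit de s'assurer que le caract\`ere g\'eom\'etrique d'une racine nodale (d\'ecomposition en somme de courbes rationnelles lisses) ne d\'epend que de la surface $\mathcal{Z}$ et du diviseur effectif associ\'e, et non du choix du marquage. Une fois cette fonctorialit\'e acquise, la r\'ecurrence se d\'eroule m\'ecaniquement et fournit la conclusion $W(\mathcal{Z},\Phi)=\mathrm{W}_n$.
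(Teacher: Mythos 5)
Votre d\'emonstration est correcte et suit pour l'essentiel la m\^eme d\'emarche que celle du texte : r\'eduction aux g\'en\'erateurs de $\mathrm{W}_n$ gr\^ace \`a la stabilit\'e de l'absence de racine nodale g\'eom\'etrique quand on remplace $\Phi$ par $\Phi v$, les transpositions \'etant trait\'ees par la permutation des points base et $\kappa_{123}$ par les Th\'eor\`emes \ref{eclmar} et \ref{racnod}. Vous ne faites qu'expliciter la r\'ecurrence sur la longueur du mot et la justification de l'observation clef (identit\'e $(\Phi v)(\alpha)=\Phi(v\alpha)$, invariance de $\Theta_n$ et ind\'ependance du caract\`ere g\'eom\'etrique vis-\`a-vis du marquage), que le texte se contente d'affirmer.
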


\begin{proof}[{\sl D\'emonstration}]
Commen\c{c}ons par remarquer que si $(\mathcal{Z},\Phi)$ n'a pas de racine nodale g\'eom\'etrique et si $w$ appartient \`a $W(\mathcal{Z},\Phi),$ alors $(\mathcal{Z},\Phi w)$ ne poss\`ede pas de racine nodale g\'eom\'etrique. Il suffit donc de d\'emontrer que les g\'en\'erateurs de $\mathrm{W}_n$ appartiennent \`a $W(\mathcal{Z},\Phi).$ Pour les transpositions cela d\'ecoule de l'inclusion du groupe des permutations dans $W(\mathcal{Z},\Phi);$ pour~$\kappa_{123}$ c'est une cons\'equence des Th\'eor\`emes \ref{eclmar} et \ref{racnod}.
\end{proof}

\begin{cor}[\cite{Mc}]
{\sl Une surface marqu\'ee poss\`ede une racine nodale si et seulement si elle poss\`ede une racine nodale g\'eom\'etrique.}
\end{cor}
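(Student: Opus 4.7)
L'implication \og racine nodale g\'eom\'etrique $\Rightarrow$ racine nodale\fg\, est imm\'ediate par d\'efinition: une racine nodale g\'eom\'etrique est en particulier une racine nodale. L'essentiel du travail consiste donc \`a \'etablir la r\'eciproque, et pour cela nous raisonnons par contrapos\'ee: nous supposerons que $(\mathcal{Z},\Phi)$ n'admet aucune racine nodale g\'eom\'etrique, et nous montrerons qu'alors $(\mathcal{Z},\Phi)$ n'admet aucune racine nodale.

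Sous cette hypoth\`ese, le Th\'eor\`eme \ref{egal} assure que $W(\mathcal{Z},\Phi)=\mathrm{W}_n;$ autrement dit tout \'el\'ement $w$ du groupe de \textsc{Weyl} agit sur le marquage, et $(\mathcal{Z},\Phi w)$ reste un \'eclatement marqu\'e. Nous utiliserons aussi le fait classique (pour un syst\`eme de racines simplement lac\'e du type $E_n$ correspondant \`a $\mathrm{W}_n$) que $\mathrm{W}_n$ agit transitivement sur l'ensemble $\Theta_n$ des racines. Supposons par l'absurde que $\alpha\in\Theta_n$ soit une racine nodale de $(\mathcal{Z},\Phi),$ {\it i.e.} $\Phi(\alpha)=[D]$ pour un diviseur effectif $D.$ Par transitivit\'e, il existe $w\in\mathrm{W}_n$ tel que $w(\alpha_{123})=\alpha,$ o\`u $\alpha_{123}=e_0-e_1-e_2-e_3.$

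Consid\'erons alors l'\'eclatement marqu\'e $(\mathcal{Z},\Phi w)$ (bien d\'efini car $w\in W(\mathcal{Z},\Phi)$): on~a $\Phi w(\alpha_{123})=\Phi(\alpha)=[D],$ de sorte que $\alpha_{123}$ est une racine nodale pour $(\mathcal{Z},\Phi w).$ Concr\`etement, la classe $[\mathrm{H}-\mathrm{E}_1-\mathrm{E}_2-\mathrm{E}_3]$ est effective dans le marquage tordu, ce qui force trois des points bases associ\'es \`a $(\mathcal{Z},\Phi w)$ \`a \^etre colin\'eaires dans $\mathbb{P}^2(\mathbb{C}).$ Le Th\'eor\`eme \ref{racnod} fournit donc une racine nodale g\'eom\'etrique $\beta$ pour $(\mathcal{Z},\Phi w),$ explicitement de la forme $[\widetilde{L}+\sum_i\mathrm{E}_i],$ somme de courbes rationnelles lisses.

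Il reste \`a transporter ce r\'esultat \`a $(\mathcal{Z},\Phi).$ Posons $\beta'=w(\beta)\in\Theta_n;$ alors $\Phi(\beta')=\Phi w(\beta)$ est repr\'esent\'e par le m\^eme diviseur effectif somme de courbes rationnelles lisses, donc $\beta'$ est une racine nodale g\'eom\'etrique de $(\mathcal{Z},\Phi),$ ce qui contredit notre hypoth\`ese de d\'epart. L'obstacle principal est le recours \`a la transitivit\'e de $\mathrm{W}_n$ sur $\Theta_n$: c'est un fait standard pour les syst\`emes de racines simplement lac\'es rencontr\'es ici (cf. \cite{Bo, Hum}), mais il m\'erite d'\^etre explicit\'e; une variante \'evitant ce point consisterait \`a ramener $\alpha$ \`a une racine simple par une suite finie de r\'eflexions \'el\'ementaires, en choisissant \`a chaque \'etape une r\'eflexion qui d\'ecro\^it la hauteur de la racine positive consid\'er\'ee.
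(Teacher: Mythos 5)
Votre d\'emonstration est correcte et suit essentiellement la voie que le texte pr\'epare (et qui est celle de \cite{Mc}): une direction est triviale, et pour l'autre vous combinez le Th\'eor\`eme \ref{egal} (pas de racine nodale g\'eom\'etrique $\Rightarrow$ $W(\mathcal{Z},\Phi)=\mathrm{W}_n$), la transitivit\'e de $\mathrm{W}_n$ sur $\Theta_n$ pour ramener la racine nodale \`a $\alpha_{123},$ puis le Th\'eor\`eme \ref{racnod} appliqu\'e au marquage tordu $(\mathcal{Z},\Phi w),$ avant de rapatrier la racine nodale g\'eom\'etrique via $w.$ Les deux points laiss\'es implicites --- la transitivit\'e de $\mathrm{W}_n$ sur les racines (diagramme connexe simplement lac\'e) et le fait qu'un diviseur effectif de classe $\mathrm{H}-\mathrm{E}_1-\mathrm{E}_2-\mathrm{E}_3$ contient n\'ecessairement la transform\'ee stricte d'une droite passant par les trois points bases correspondants (pousser en avant: degr\'e $1$) --- se v\'erifient en une ligne et ne constituent pas de lacune.
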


\subsection{Paires marqu\'ees}

\subsubsection{Premi\`eres d\'efinitions}

\noindent Soit $(\mathcal{Z},\Phi)$ un \'eclatement marqu\'e. Rappelons qu'une {\it courbe anticanonique} est une courbe r\'eduite $Y\subset\mathcal{Z}$ telle que la classe de $Y$ dans $\mathrm{H}^2(\mathcal{Z},\mathbb{Z})$ v\'erifie
\begin{equation}\label{antican}
[Y]=[3\mathrm{H}-\sum_i\mathrm{E}_i]=-\mathrm{K}_\mathcal{Z}.
\end{equation}

\noindent Une {\it paire marqu\'ee}\label{ind820} $(\mathcal{Z},\Phi,Y)$ est la donn\'ee d'un \'eclatement marqu\'e $(\mathcal{Z},\Phi)$ et d'une courbe anticanonique $Y.$ Un {\it isomorphisme}\label{ind821} entre les paires marqu\'ees $(\mathcal{Z},\Phi,Y)$ et $(\mathcal{Z}',\Phi',Y')$ est un biholomorphisme $f$ de $\mathcal{Z}$ dans $\mathcal{Z}',$ compatible avec les marquages et qui envoie $Y$ sur $Y'.$ Si~$n\geq 10,$ alors $\mathcal{Z}$ contient au plus une courbe anticanonique irr\'eductible; en effet, si une telle courbe $Y$ existe, alors $Y^2=9-n<0.$

\subsubsection{Des surfaces aux cubiques}\label{surfacescubiques}

\noindent Consid\'erons une paire marqu\'ee $(\mathcal{Z},\Phi,Y).$ Soit $\pi$ une projection de $\mathcal{Z}$ sur $\mathbb{P}^2(\mathbb{C})$ compatible avec $\Phi.$ L'\'egalit\'e (\ref{antican}) implique que $\mathcal{C}=\pi(Y)$ est une cubique passant par chaque point base $p_i$ avec multiplicit\'e $1.$ De plus, le fait que~$\mathrm{E}_i\cdot Y=1$ assure que $\pi\colon Y\to\mathcal{C}$ est un isomorphisme. L'identification de $\mathrm{H}^2(\mathcal{Z}, \mathbb{Z})$ et $\mathrm{Pic}(\mathcal{Z})$ permet d'obtenir le marquage naturel $$\eta\colon \mathbb{Z}^{1,n}\stackrel{\Phi}{\longrightarrow}\mathrm{H}^2(\mathcal{Z}, \mathbb{Z})=\mathrm{Pic} (\mathcal{Z})\stackrel{r}{\longrightarrow}\mathrm{Pic}(Y)\stackrel{\pi_*}{\longrightarrow}\mathrm{Pic}(\mathcal{C})$$ o\`u $r$ est l'application de restriction $\mathrm{Pic}(\mathcal{Z}) \stackrel{r}{\longrightarrow}\mathrm{Pic}(Y).$ Ainsi une paire marqu\'ee $(\mathcal{Z},Y,\Phi)$ d\'etermine canoniquement une cubique marqu\'ee $(\mathcal{C},\eta).$

\subsubsection{Des cubiques aux surfaces}\label{cubiquessurfaces}

\noindent R\'eciproquement consid\'erons une cubique mar\-qu\'ee~$(\mathcal{C},\eta).$ On dispose de points bases $p_i\in\mathcal{C}$ d\'etermin\'es par $(\eta(e_i))_{1\leq i\leq n}$ et un plongement de $\mathcal{C}$ dans~$\mathbb{P}^2(\mathbb{C})$ d\'etermin\'e par $\eta(e_0).$ Soient $(\mathcal{Z},\Phi)$ l'\'eclatement marqu\'e avec points bases~$p_i$ et $Y\subset\mathcal{Z}$ la transform\'ee stricte de $\mathcal{C}.$ On obtient ainsi une paire marqu\'ee $(\mathcal{Z},\Phi,Y)$ qu'on appelle \'eclatement de $(\mathcal{C},\eta)$ et que l'on note $\mathrm{Bl}(\mathcal{C},\eta).$ 

\smallskip

\noindent Les constructions pr\'esent\'ees au \S\ref{surfacescubiques} et \S\ref{cubiquessurfaces} sont inverses l'une de l'autre. En d'autres termes on a l'\'enonc\'e suivant.

\begin{thm}[\cite{Mc}]\label{foncteur}
{\sl Une paire marqu\'ee d\'etermine canoniquement une cubique marqu\'ee et r\'eciproquement.}
\end{thm}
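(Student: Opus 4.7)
The plan is to verify that the two constructions in \S\ref{surfacescubiques} and \S\ref{cubiquessurfaces} are mutually inverse, and that each produces a well-defined object in the target category. Since both operations are of essentially formal nature, the proof is a bookkeeping exercise tracking definitions of markings, restriction maps, and strict transforms.

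First I would check that to a marked pair $(\mathcal{Z},\Phi,Y)$ the recipe in \S\ref{surfacescubiques} really associates a marked cubic. Let $\pi\colon\mathcal{Z}\to\mathbb{P}^2(\mathbb{C})$ be compatible with $\Phi$. The relation $[Y]=3\mathrm{H}-\sum\mathrm{E}_i$ and the intersection numbers $\mathrm{E}_i\cdot Y=1$ imply that $\pi_{|Y}\colon Y\to\mathcal{C}:=\pi(Y)$ is an isomorphism (no component of $Y$ is contracted), and that $\mathcal{C}$ is a plane cubic meeting each $p_i$ transversally. The marking $\eta=\pi_*\circ r\circ\Phi$ then satisfies the two axioms of a cubic marking: since $\Phi(e_0)=[\mathrm{H}]=[\pi^*\mathcal{O}_{\mathbb{P}^2}(1)]$, the class $\eta(e_0)$ equals $[\mathcal{O}_\mathcal{C}(1)]$ and its sections provide the given embedding $\mathcal{C}\hookrightarrow\mathbb{P}^2(\mathbb{C})$; and since $\mathrm{E}_i\cdot Y=1$, the restriction of $[\mathrm{E}_i]$ to $Y$ is the class of the unique intersection point, which maps under $\pi$ to $p_i\in\mathcal{C}^*$, giving $\eta(e_i)=[p_i]$.

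Second, for the reverse direction, given a cubic marking $(\mathcal{C},\eta)$, the classes $\eta(e_i)$ for $i\geq 1$ are by definition represented by pairwise distinct points $p_i\in\mathcal{C}^*\subset\mathbb{P}^2(\mathbb{C})$, so the blow-up $\pi\colon\mathcal{Z}\to\mathbb{P}^2(\mathbb{C})$ of these points is a well-defined marked blow-up, with marking $\Phi$ sending $e_0\mapsto[\mathrm{H}]$ and $e_i\mapsto[\mathrm{E}_i]$. The strict transform $Y\subset\mathcal{Z}$ of $\mathcal{C}$ has class $3\mathrm{H}-\sum\mathrm{E}_i=-\mathrm{K}_\mathcal{Z}$, and is therefore an anticanonical curve, so $(\mathcal{Z},\Phi,Y)=\mathrm{Bl}(\mathcal{C},\eta)$ is indeed a marked pair.

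Third, one verifies that the two procedures compose to the identity. Starting from $(\mathcal{C},\eta)$, blowing up and then projecting obviously returns $(\mathcal{C},\eta)$ since $\pi$ maps $Y$ isomorphically onto $\mathcal{C}$ and recovers the points $p_i$ as $\pi(Y\cap\mathrm{E}_i)$. Conversely, starting from $(\mathcal{Z},\Phi,Y)$, extracting $(\mathcal{C},\eta)$ and blowing up the corresponding base points $p_i$ produces a marked pair canonically isomorphic to the original, via the factorisation of $\pi$ through the universal property of the blow-up.

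The only delicate point, and the place where care is required, lies in the phrase \emph{\`a isomorphisme pr\`es}: a marked pair determines the projection $\pi$ only up to the action of $\mathrm{Aut}(\mathbb{P}^2(\mathbb{C}))$, and symmetrically, a marked cubic determines the plane embedding of $\mathcal{C}$ only up to $\mathrm{Aut}(\mathcal{C})$ composed with $\mathrm{Aut}(\mathbb{P}^2(\mathbb{C}))$. To conclude it suffices to note that both ambiguities get absorbed into the definition of isomorphism of marked pairs, so that the two assignments descend to genuinely inverse bijections on isomorphism classes, yielding the claimed canonical correspondence.
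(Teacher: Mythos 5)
Your proposal is correct and follows essentially the same route as the paper, which simply observes that the two constructions of \S\ref{surfacescubiques} and \S\ref{cubiquessurfaces} (the marking $\eta=\pi_*\circ r\circ\Phi$ in one direction, the blow-up of the base points determined by $\eta$ with $Y$ the strict transform of $\mathcal{C}$ in the other) are inverse to each other. Your additional verification of the marking axioms and the discussion of the \og \`a isomorphisme pr\`es\fg\, ambiguity only make explicit what the paper leaves implicit.
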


\section{R\'esultats et id\'ee de d\'emonstration}\label{resmc}

\subsubsection{\'Enonc\'es}

\noindent Dans \cite{Mc} \textsc{McMullen} produit des exemples d'automorphismes d'entropie positive \`a l'aide d'\'el\'ements du groupe de \textsc{Weyl}. 

\begin{thm}[\cite{Mc}]
{\sl Pour $n\geq 10,$ tout \'el\'ement de \textsc{Coxeter} de $\mathrm{W}_n$ peut \^etre r\'ealis\'e par un automorphisme $f_n$ sur une surface rationnelle d'entropie positive $\log(\lambda_n).$}
\end{thm}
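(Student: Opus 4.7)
Le plan consiste \`a utiliser la correspondance entre paires marqu\'ees et cubiques marqu\'ees (Th\'eor\`eme \ref{foncteur}) pour ramener la construction d'un automorphisme d'une surface rationnelle \`a un probl\`eme d'existence d'une cubique marqu\'ee $(\mathcal{C},\eta)$ sur laquelle $w$ agit comme un automorphisme, puis \`a relever cet automorphisme abstrait via l'\'eclatement $\mathrm{Bl}(\mathcal{C},\eta)$.

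La premi\`ere \'etape est de construire une telle cubique marqu\'ee. Pour que $w$ appartienne \`a $\mathrm{Aut}(\mathcal{C},\eta),$ il faut un automorphisme $\phi$ de $\mathcal{C}$ v\'erifiant $\phi_*\circ\eta=\eta\circ w.$ D'apr\`es le Corollaire \ref{cubcub}, la partie essentielle concerne la restriction $\eta_0\colon L_n\to\mathrm{Pic}_0(\mathcal{C}).$ Puisque le nombre de \textsc{Salem} $\lambda_n$ est irrationnel, je prendrais $\mathcal{C}$ une cubique cuspidale, pour laquelle $\mathrm{Pic}_0(\mathcal{C})$ est isomorphe au groupe additif $(\mathbb{C},+)$ et $\mathrm{Aut}(\mathcal{C})$ agit sur $\mathrm{Pic}_0(\mathcal{C})$ par multiplications par un scalaire arbitraire dans $\mathbb{C}^*;$ on peut ainsi r\'ealiser la multiplication par $\lambda_n$ comme un automorphisme $\phi$ de $\mathcal{C}.$ Je d\'efinirais alors $\eta_0$ comme une forme lin\'eaire propre de $w^*$ associ\'ee \`a la valeur propre dominante $\lambda_n,$ ce qui impose $\eta_0\circ w=\lambda_n\cdot\eta_0$ et r\'ealise l'\'equivariance voulue. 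La partie degr\'e $\deg\circ\eta$ est choisie de sorte que les conditions du Th\'eor\`eme \ref{foncteur} soient v\'erifi\'ees.

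Dans un deuxi\`eme temps, je consid\`ererais la paire marqu\'ee $(\mathcal{Z},\Phi,Y)=\mathrm{Bl}(\mathcal{C},\eta)$ obtenue par la construction inverse. Le Th\'eor\`eme \ref{racine} (dont l'hypoth\`ese $n\neq 9$ est ici cruciale) fournit $\eta_0(\alpha)\neq 0$ pour toute racine $\alpha\in\Theta_n;$ ceci garantit simultan\'ement que les points bases $p_1,\ldots,p_n$ sont distincts sur $\mathcal{C}^*$ (les diff\'erences $e_i-e_j$ s'\'ecrivant en termes de racines gr\^ace aux r\'eflexions $R_{\alpha_k}$) et que $(\mathcal{Z},\Phi)$ ne poss\`ede pas de racine nodale g\'eom\'etrique. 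Le Th\'eor\`eme \ref{egal} assure alors que $W(\mathcal{Z},\Phi)=\mathrm{W}_n,$ donc $(\mathcal{Z},\Phi w,Y)$ est une paire marqu\'ee valide; les cubiques marqu\'ees associ\'ees \`a $(\mathcal{Z},\Phi,Y)$ et $(\mathcal{Z},\Phi w,Y)$ sont isomorphes via $\phi$ par construction, et la fonctorialit\'e du Th\'eor\`eme \ref{foncteur} rel\`eve cet isomorphisme en une application biholomorphe $f_n\colon\mathcal{Z}\to\mathcal{Z}$ telle que $(f_n)_*=\Phi\circ w\circ\Phi^{-1}$ sur $\mathrm{H}^2(\mathcal{Z},\mathbb{Z}).$ Le th\'eor\`eme de \textsc{Gromov}-\textsc{Yomdin} combin\'e au calcul du rayon spectral de $w$ donne imm\'ediatement $h_{\mathrm{top}}(f_n)=\log\lambda_n>0.$

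Le point d\'elicat me semble \^etre la v\'erification que la cubique marqu\'ee $(\mathcal{C},\eta)$ construite poss\`ede effectivement $n$ points bases distincts sur la partie lisse $\mathcal{C}^*,$ autrement dit que $\eta_0$ \og s\'epare\fg\, les g\'en\'erateurs $e_1,\ldots,e_n.$ Cela impose un choix prudent du plongement initial (via la partie degr\'e de $\eta$) et repose de mani\`ere essentielle sur la non-annulation fournie par le Th\'eor\`eme \ref{racine} pour les diff\'erences $e_i-e_j,$ qui sont conjugu\'ees dans $\mathrm{W}_n$ aux racines simples $\alpha_k.$ L'autre subtilit\'e est de v\'erifier la compatibilit\'e entre la partie degr\'ee et la partie $\eta_0$ pour que $w$ d\'efinisse effectivement un isomorphisme de paires marqu\'ees, ce qui repose sur le fait que les autres valeurs propres de $w$ sont de module $1$ ou \'egales \`a $1/\lambda_n$ (Th\'eor\`eme \ref{lambda}).
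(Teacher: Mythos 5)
Your proposal is correct and follows essentially the same route as the paper's own sketch: produce a fixed point of $w$ on the moduli of markings and transport it through the marked-cubic/marked-pair correspondence (Th\'eor\`eme \ref{foncteur}) to obtain, via the second projection associated to $\Phi w$, an automorphisme $F$ with $F_*\Phi=\Phi w$ and entropy $\log\lambda_n$. The specifics you add --- a cuspidal cubic with $\mathrm{Pic}_0(\mathcal{C})\simeq\mathbb{C}$ so that multiplication by $\lambda_n$ is realized by $\mathrm{Aut}(\mathcal{C})$, $\eta_0$ given by a vecteur propre dominant, and Th\'eor\`eme \ref{racine} furnishing the non-vanishing $\eta_0(\alpha)\not=0$ that excludes nodal roots --- are precisely McMullen's implementation of the ``condition g\'en\'erique'' which the paper's id\'ee de d\'emonstration leaves implicit.
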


\noindent Plus pr\'ecis\'ement l'automorphisme $f_n\colon\mathcal{Z}_n\to\mathcal{Z}_n$ peut \^etre choisi de sorte 
\begin{itemize}
\item que $\mathcal{Z}_n$ soit le plan projectif complexe \'eclat\'e en $n$ points distincts $p_1,$ $\ldots,$ $p_n$ situ\'es sur une cubique cuspidale $\mathcal{C},$ 

\item qu'il existe une $2$-forme m\'eromorphe $\eta$ sur $\mathcal{Z}_n$ \`a p\^oles simples le long de la transform\'ee propre de $\mathcal{C}$ qui ne s'annule pas,

\item que $f_n^*(\eta)=\lambda_n\cdot\eta,$

\item que $(\langle f_n\rangle,\mathcal{Z}_n)$ soit minimal au sens de \textsc{Manin}\footnote{Soient $\mathcal{Z}$ une surface et $G$ un sous-groupe de $\mathrm{Aut}(\mathcal{Z}).$ Une application birationnelle $f\colon\mathcal{Z}\dashrightarrow\widetilde{\mathcal{Z}}$ est dite $G$-\'equivariante si $\widetilde{G}=f Gf^{-1}\subset \mathrm{Aut}(\widetilde{\mathcal{Z}}).$ Le couple $(G,\mathcal{Z})$ est minimal si tout morphisme birationnel $G$-\'equivariant est un isomorphisme.}.
\end{itemize}

\noindent Les trois premi\`eres propri\'et\'es d\'eterminent $f_n$ de mani\`ere unique. Les points $p_i$ admettent une description simple en termes de vecteurs propres ce qui permet de donner des formules pour~$f_n.$

\noindent Notons que le plus petit nombre de \textsc{Salem} connu est la racine $\lambda_{\text{Lehmer}}\sim 1.17628081$ du polyn\^ome de \textsc{Lehmer} $$L(t)=t^{10}+t^9-t^7-t^6-t^5-t^4-t^3+t+1.$$ 

\noindent \textsc{McMullen} montre que ce nombre est une borne inf\'erieure pour l'entropie.

\begin{thm}[\cite{Mc}]
{\sl Si $f$ est un automorphisme d'une surface complexe compacte $\mathcal{Z}$ d'entropie positive, l'entropie de $f$ est sup\'erieure ou \'egale \`a $\log\lambda_{\text{Lehmer}}.$}
\end{thm}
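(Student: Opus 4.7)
Le plan consiste d'abord \`a appliquer le th\'eor\`eme de \textsc{Cantat} pour ramener l'\'etude aux quatre cas possibles: $\mathcal{Z}$ est un tore, une surface K$3$, une surface d'\textsc{Enriques}, ou une surface rationnelle (n\'ecessairement \'eclat\'ee en au moins $10$ points). Dans chacun d'eux, $f^*$ agit comme une isom\'etrie enti\`ere du r\'eseau de cohomologie $\mathrm{H}^2(\mathcal{Z},\mathbb{Z})$ muni de la forme d'intersection; d'apr\`es le Th\'eor\`eme \ref{lambda}, $\lambda(f)$ est un nombre de \textsc{Salem}. Il suffit donc de minorer ce nombre par $\lambda_{\text{Lehmer}}$ dans chaque configuration.

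Pour le tore, $f$ provient d'un \'el\'ement hyperbolique de $\mathrm{SL}_2(\mathbb{Z})$, donc $\lambda(f)$ est racine d'un polyn\^ome $t^2-at+1$ avec $a\in\mathbb{Z}$, $|a|\geq 3$; un calcul direct donne $\lambda(f)\geq(3+\sqrt{5})/2$, nettement sup\'erieur \`a $\lambda_{\text{Lehmer}}$. Pour les cas K$3$ et \textsc{Enriques}, on exploite la signature du r\'eseau cohomologique et la pr\'eservation de la d\'ecomposition de \textsc{Hodge} par $f^*$; ces contraintes permettent de factoriser le polyn\^ome caract\'eristique en produit de facteurs cyclotomiques et d'un polyn\^ome de \textsc{Salem}, et d'exclure que ce dernier ait une racine dans $]1,\lambda_{\text{Lehmer}}[$.

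Le c\oe ur de la preuve est le cas rationnel. Gr\^ace au Th\'eor\`eme \ref{nagata}, on identifie $f^*$ \`a une isom\'etrie enti\`ere de $\mathbb{Z}^{1,n}$ (pour un certain $n\geq 10$) fixant la classe canonique $\mathrm{K}_\mathcal{Z}$; par l'analyse men\'ee au \S\ref{wweyl}, cette isom\'etrie se r\'ealise comme un \'el\'ement $w$ du groupe de \textsc{Weyl} $\mathrm{W}_n$. On se ram\`ene alors \`a l'\'enonc\'e lattice-th\'eorique suivant: parmi tous les \'el\'ements $w$ des groupes $\mathrm{W}_n$ ($n\geq 10$) de rayon spectral $>1$, le minimum est atteint pour l'\'el\'ement de \textsc{Coxeter} de $\mathrm{W}_{10}$ et vaut pr\'ecis\'ement $\lambda_{\text{Lehmer}}$, puisque le polyn\^ome $Q_{10}$ co\"incide avec le polyn\^ome de \textsc{Lehmer}.

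Le principal obstacle consistera \`a traiter les \'el\'ements de $\mathrm{W}_n$ qui ne sont pas de type \textsc{Coxeter}: leur polyn\^ome caract\'eristique se d\'ecompose en un produit d'un polyn\^ome de \textsc{Salem} $S(t)$ et de facteurs cyclotomiques, et il faut exclure que $S$ poss\`ede une racine dans $]1,\lambda_{\text{Lehmer}}[$. Ceci repose sur une classification soign\'ee des polyn\^omes de \textsc{Salem} r\'ealisables comme polyn\^omes minimaux d'isom\'etries enti\`eres d'un r\'eseau lorentzien de signature $(1,n-1)$ fixant un vecteur de carr\'e $\mathrm{K}_\mathcal{Z}^2=9-n$: c'est ce pas d\'elicat qui fait intervenir la minimalit\'e remarquable du polyn\^ome de \textsc{Lehmer} parmi les polyn\^omes de \textsc{Salem} ainsi r\'ealisables.
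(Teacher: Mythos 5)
Votre r\'eduction d'ensemble est correcte et c'est bien l'architecture de \cite{Mc}: le th\'eor\`eme de \textsc{Cantat} ram\`ene aux tores, surfaces K$3,$ surfaces d'\textsc{Enriques} et surfaces rationnelles, et dans le cas rationnel le Th\'eor\`eme \ref{nagata} et la discussion du \S\ref{wweyl} identifient $f^*$ \`a un \'el\'ement $w$ du groupe de \textsc{Weyl} $\mathrm{W}_n.$ Mais le c\oe ur de l'\'enonc\'e --- aucun \'el\'ement de $\mathrm{W}_n,$ $n\geq 10,$ n'a de rayon spectral strictement compris entre $1$ et $\lambda_{\text{Lehmer}}$ --- n'est pas d\'emontr\'e dans votre texte: l'\'egalit\'e $Q_{10}=L$ montre seulement que la borne est atteinte, donc que le minimum est inf\'erieur ou \'egal \`a $\lambda_{\text{Lehmer}},$ et votre dernier paragraphe se borne \`a nommer le pas d\'elicat sans le franchir. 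Or ce pas est pr\'ecis\'ement le contenu du th\'eor\`eme: ce n'est pas une classification de routine, mais le th\'eor\`eme principal d'un autre article de \textsc{McMullen} (\emph{Coxeter groups, Salem numbers and the Hilbert metric}), selon lequel tout \'el\'ement d'un groupe de \textsc{Coxeter}, dans sa repr\'esentation g\'eom\'etrique, a un rayon spectral \'egal \`a $1$ ou sup\'erieur ou \'egal \`a $\lambda_{\text{Lehmer}};$ la preuve passe par la m\'etrique de \textsc{Hilbert} sur le c\^one de \textsc{Tits} et exploite de fa\c{c}on essentielle la structure de groupe de r\'eflexions. La voie que vous sugg\'erez --- classer les polyn\^omes de \textsc{Salem} r\'ealisables comme polyn\^omes minimaux d'isom\'etries enti\`eres d'un r\'eseau lorentzien fixant un vecteur de carr\'e $9-n$ --- est soit sans espoir soit circulaire: si l'on ne retient que la condition d'\^etre une isom\'etrie enti\`ere d'un r\'eseau lorentzien, tout nombre de \textsc{Salem} se r\'ealise ainsi et l'\'enonc\'e vis\'e \'equivaut \`a une forme de la conjecture de \textsc{Lehmer}; si l'on impose de fixer la classe canonique dans $\mathbb{Z}^{1,n},$ on retrouve essentiellement $\mathrm{W}_n$ et la classification invoqu\'ee n'est autre que le th\'eor\`eme \`a d\'emontrer.

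Les cas de rang born\'e demandent eux aussi plus que ce que vous \'ecrivez. Pour K$3$ et \textsc{Enriques}, la signature et la d\'ecomposition de \textsc{Hodge} donnent la factorisation en un facteur de \textsc{Salem} et des facteurs cyclotomiques (c'est d\'ej\`a le Th\'eor\`eme \ref{lambda}), mais n'excluent nullement une racine dans $]1,\lambda_{\text{Lehmer}}[$: il faut utiliser que ce facteur est de degr\'e au plus $22$ (resp. $10$), puis la d\'etermination --- v\'erification finie mais non triviale --- des plus petits nombres de \textsc{Salem} en chaque degr\'e concern\'e. Pour le tore, $f$ ne provient pas en g\'en\'eral d'un \'el\'ement de $\mathrm{SL}_2(\mathbb{Z})$: la partie lin\'eaire est un \'el\'ement de $\mathrm{GL}_2(\mathbb{C})$ pr\'eservant un r\'eseau de rang $4,$ et c'est cette action enti\`ere de rang $4$ (avec $\lambda(f)=\vert\alpha\vert^2$ pour $\alpha$ la valeur propre dominante) qu'il faut minorer; la conclusion $\lambda(f)\geq\frac{3+\sqrt{5}}{2}$ est exacte, mais votre argument ne l'\'etablit pas. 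Notez enfin que le pr\'esent texte cite ce th\'eor\`eme de \cite{Mc} sans le d\'emontrer; votre sch\'ema en reprend la structure, mais le maillon central reste admis.
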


\noindent Constatant que $\lambda_{10}=\lambda_{\text{Lehmer}},$ il obtient que cette borne inf\'erieure est r\'ealis\'ee.

\begin{cor}[\cite{Mc}]
{\sl La transformation $f_{10}\colon\mathcal{Z}_{10}\to\mathcal{Z}_{10}$ est un automorphisme d'une surface rationnelle d'entropie minimum.}
\end{cor}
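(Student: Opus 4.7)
The plan is to assemble three ingredients that have all been stated just before the corollary, and to verify that $f_{10}$ actually saturates the general lower bound on entropy. First, from the previous theorem of McMullen I have at my disposal, for every $n\geq 10$, an automorphism $f_n\colon\mathcal{Z}_n\to\mathcal{Z}_n$ of a rational surface that realises a Coxeter element of $\mathrm{W}_n$ and whose topological entropy equals $\log\lambda_n$; specialising to $n=10$ gives an automorphism of a rational surface with entropy exactly $\log\lambda_{10}$. Second, from the arithmetic discussion of Salem polynomials, I know that $Q_{10}$ is precisely the Lehmer polynomial, so $\lambda_{10}=\lambda_{\text{Lehmer}}$. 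Third, the preceding theorem asserts that every automorphism of a compact complex surface with positive entropy has entropy at least $\log\lambda_{\text{Lehmer}}$.

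The proof will then be a single line of reasoning: combining the first two points yields
\begin{equation*}
h_{\text{top}}(f_{10})=\log\lambda_{10}=\log\lambda_{\text{Lehmer}},
\end{equation*}
and combining with the third point shows that $f_{10}$ attains the universal lower bound for a positive-entropy automorphism of any compact complex surface. Since $\mathcal{Z}_{10}$ is rational (being obtained by blowing up $\mathbb{P}^2(\mathbb{C})$ at ten points on a cuspidal cubic, as explicitly produced in the construction of $f_n$), the conclusion of the corollary follows.

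The one point that is worth being careful about is that the lower bound $\log\lambda_{\text{Lehmer}}$ must genuinely apply to the class in which $f_{10}$ lives, i.e.\ positive-entropy automorphisms, rather than to some restricted family; but this is precisely the content of the theorem quoted immediately above the corollary, so there is no subtlety to overcome. The only ``hard'' ingredient is the identification $\lambda_{10}=\lambda_{\text{Lehmer}}$, which I may quote from the Salem/Coxeter factorisation $P_n=Q_nR_n$ with $Q_{10}=L$. Hence the corollary follows immediately, without any additional construction, by comparing $h_{\text{top}}(f_{10})$ to the universal lower bound and observing equality.
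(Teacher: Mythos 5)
Votre argument est correct et suit exactement la d\'emarche du texte: l'entropie de $f_{10}$ vaut $\log\lambda_{10}$ par le th\'eor\`eme de r\'ealisation, l'identification $Q_{10}=L$ donne $\lambda_{10}=\lambda_{\text{Lehmer}}$, et le th\'eor\`eme pr\'ec\'edent fournit la borne inf\'erieure universelle $\log\lambda_{\text{Lehmer}}$ pour tout automorphisme d'entropie positive d'une surface complexe compacte, donc la borne est atteinte. C'est pr\'ecis\'ement le raisonnement (implicite) du texte, qui se r\'esume \`a \og constatant que $\lambda_{10}=\lambda_{\text{Lehmer}}$, cette borne inf\'erieure est r\'ealis\'ee\fg.
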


\noindent On peut voir qu'un automorphisme sur un tore complexe ne peut pas avoir de disque de \textsc{Siegel}; ce n'est pas le cas sur une surface K$3$ du moins si celle-ci est non projective (\cite{Mc2}). Dans le contexte des surfaces rationnelles \textsc{McMullen} \'etablit l'\'enonc\'e suivant.

\begin{thm}[\cite{Mc}]
{\sl Il y a une infinit\'e de $n$ pour lesquels l'\'el\'ement standard de~$\mathrm{W}_n$ peut \^etre r\'ealis\'e comme un automorphisme de $\mathbb{P}^2(\mathbb{C})$ \'eclat\'e en un nombre fini de points poss\'edant un disque de \textsc{Siegel}.}
\end{thm}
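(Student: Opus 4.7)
\medskip

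\noindent \emph{Esquisse de d\'emonstration propos\'ee.} Le plan s'appuie sur l'\'equivalence de cat\'egories entre paires marqu\'ees et cubiques marqu\'ees (Th\'eor\`eme \ref{foncteur}). Pour r\'ealiser l'\'el\'ement standard $w$ de $\mathrm{W}_n$ comme automorphisme d'un \'eclat\'e marqu\'e $(\mathcal{Z},\Phi)$ contenant une courbe anticanonique $Y,$ il suffit, d'apr\`es le Corollaire \ref{cubcub}, de produire une cubique marqu\'ee $(\mathcal{C},\eta)$ telle que les marquages $\eta_0$ et $\eta_0\circ w$ soient reli\'es par un automorphisme de la courbe, \emph{i.e.} $\phi_*\circ\eta_0=\eta_0\circ w$ pour un automorphisme convenable $\phi\in\mathrm{Aut}(\mathcal{C}).$ Je choisirais $\mathcal{C}$ \emph{lisse}, de sorte que $\mathrm{Pic}_0(\mathcal{C})=\mathbb{C}/\Lambda$ et que les automorphismes de $\mathcal{C}$ fixant un point agissent sur $\mathrm{Pic}_0$ par multiplication par un \'el\'ement $\mu$ de l'anneau des endomorphismes de la courbe elliptique. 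La condition $\phi_*\circ\eta_0=\eta_0\circ w$ force alors $\mu$ \`a \^etre une valeur propre de l'action de $w$ sur $L_n\otimes\mathbb{C}$ et $\eta_0$ \`a se factoriser par la projection sur le $\mu$-espace propre correspondant; on prend $\Lambda$ \'egal \`a l'image de $L_n$ dans ce facteur.

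\medskip

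\noindent Puisque $n\geq 10,$ le Th\'eor\`eme \ref{racine} permet de v\'erifier que $\eta_0$ est non nul sur toutes les racines; la paire marqu\'ee produite n'a donc pas de racine nodale et, par le Th\'eor\`eme \ref{egal}, l'\'el\'ement $w$ est effectivement r\'ealis\'e par un automorphisme $f$ de $\mathcal{Z}.$ Pour obtenir un disque de \textsc{Siegel}, je chercherais un point fixe $p$ de $f$ dont la diff\'erentielle poss\`ede deux valeurs propres~$(\alpha,\beta)$ de module $1$ simultan\'ement diophantiennes. Les points fixes de $f$ \emph{hors} de $Y$ sont \`a exclure: en leur voisinage la forme m\'eromorphe~$\eta$ (ou, plut\^ot, son analogue lorsqu'on passe au cas lisse via une limite d\'eg\'en\'er\'ee) donne une contrainte de d\'eterminant jacobien $\lambda_n>1.$ En revanche, les points fixes de $f$ situ\'es sur $Y$ sont les z\'eros de $(1-\mu)$ dans $\mathbb{C}/\Lambda;$ en un tel point la diff\'erentielle se diagonalise avec une valeur propre $\mu$ dans la direction tangente \`a $Y$ et une seconde valeur propre $\nu$ dans la direction normale, $\nu$ \'etant une autre valeur propre de $w$ non dominante (lue sur le fibr\'e normal de $Y$ via l'action de $w$ sur la classe associ\'ee).

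\medskip

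\noindent La t\^ache centrale — et principal obstacle — consiste alors \`a \'etablir que, pour une infinit\'e de valeurs de $n,$ on peut s\'electionner des valeurs propres $\mu,$ $\nu$ de $w$ sur le cercle unit\'e qui soient \emph{multiplicativement ind\'ependantes}. Les $\mu,\nu$ \'etant alors alg\'ebriques, le r\'esultat rappel\'e au \S Lin\'earisation (tout couple d'alg\'ebriques multiplicativement ind\'ependants est simultan\'ement diophantien) fournit la lin\'earisation locale et donc le disque de \textsc{Siegel}. Pour justifier l'infinit\'e, je proc\'ederais ainsi: le polyn\^ome caract\'eristique $P_n(t)$ de $w$ se factorise en $Q_n\cdot R_n$ avec $R_n$ produit de cyclotomiques et $Q_n$ de \textsc{Salem} de degr\'e croissant avec $n;$ le nombre de racines unit\'e-modulaires de $Q_n$ tend vers l'infini, et une relation de d\'ependance multiplicative non triviale $\mu^a\nu^b=1$ imposerait, via la th\'eorie de \textsc{Galois}, des co\"{\i}ncidences tr\`es fortes entre les plongements de $\mathbb{Q}(\lambda_n)$ — co\"{\i}ncidences que l'on peut \'eviter pour une infinit\'e de $n$ en s'appuyant sur la structure explicite de $P_n(t)=\frac{t^{n-2}(t^3-t-1)+(t^3+t^2-1)}{t-1}$ et un argument d'irr\'eductibilit\'e g\'en\'erique de $Q_n.$ Ceci fournit les $n$ cherch\'es.
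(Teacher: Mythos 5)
Votre strat\'egie g\'en\'erale (r\'ealisation via une cubique marqu\'ee, multiplicateurs de module $1,$ crit\`ere diophantien pour des nombres alg\'ebriques multiplicativement ind\'ependants) va dans la bonne direction, mais elle comporte des lacunes s\'erieuses. La premi\`ere est le choix d'une cubique \emph{lisse}. Un automorphisme d'une courbe elliptique agit sur $\mathrm{Pic}_0(\mathcal{C})=\mathbb{C}/\Lambda$ par multiplication par une \emph{unit\'e} de l'anneau des endomorphismes, c'est-\`a-dire par une racine de l'unit\'e d'ordre $1,$ $2,$ $3,$ $4$ ou $6$ (les translations agissant trivialement sur $\mathrm{Pic}_0$); la condition $\phi_*\circ\eta_0=\eta_0\circ w$ force donc $\mu$ \`a \^etre une racine de l'unit\'e, c'est-\`a-dire une valeur propre du facteur cyclotomique $R_n,$ et non une valeur propre dominante. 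Or le Th\'eor\`eme \ref{racine} ne concerne que les vecteurs propres dominants: vous perdez la garantie $\eta_0(\alpha)\not=0$ pour toute racine $\alpha,$ donc l'absence de racine nodale et la r\'ealisation elle-m\^eme. De plus, le multiplicateur tangentiel en un point fixe situ\'e sur $Y$ serait alors cette racine de l'unit\'e, ce qui d\'etruit l'ind\'ependance multiplicative dont vous avez besoin \`a la fin; enfin le sous-groupe $\Lambda,$ image de $L_n$ dans le $\mu$-espace propre, n'est en g\'en\'eral pas discret (il est dense dans $\mathbb{C}$ d\`es que $\mu$ n'engendre pas un corps quadratique imaginaire, ce qui est le cas d'un conjugu\'e de $\lambda_n$), donc $\mathbb{C}/\Lambda$ n'est pas une courbe elliptique. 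C'est pr\'ecis\'ement pour ces raisons que \textsc{McMullen} travaille avec une cubique \emph{cuspidale}: $\mathrm{Pic}_0(\mathcal{C})\simeq\mathbb{C}$ est le groupe additif, tout $\delta\in\mathbb{C}^*$ se r\'ealise comme multiplicateur, aucune condition de r\'eseau n'intervient et on peut prendre pour $\delta$ une valeur propre dominante de module $1.$

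La seconde lacune concerne les multiplicateurs aux points fixes. L'affirmation que la valeur propre normale $\nu$ en un point fixe de $Y$ est \og une autre valeur propre de $w$ lue sur le fibr\'e normal\fg\, n'est pas justifi\'ee et est fausse en g\'en\'eral: la $2$-forme anticanonique invariante donne seulement que le multiplicateur tangentiel vaut $\delta$ et que le jacobien en un point fixe hors de $Y$ vaut $\delta;$ le multiplicateur normal ne se lit pas sur l'action de $w$ en cohomologie et doit \^etre calcul\'e sur un mod\`ele explicite (c'est ce que fait \textsc{McMullen} avec la famille $f(x,y)=(a+y,\,b+y/x)$ du Th\'eor\`eme \ref{refMc}, via la trace et le d\'eterminant de $Df$ aux points fixes). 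Notez aussi que votre exclusion des points fixes hors de $Y$ repose sur un d\'eterminant \'egal \`a $\lambda_n,$ ce qui contredit votre propre choix d'un multiplicateur $\mu$ de module $1:$ si $\vert\delta\vert=1,$ les points fixes hors de $Y$ ont un jacobien de module $1$ et sont au contraire des candidats naturels. Enfin la \og t\^ache centrale\fg\, --- trouver, pour une infinit\'e de $n,$ deux multiplicateurs de module $1$ multiplicativement ind\'ependants --- est exactement le c\oe ur de la d\'emonstration de \cite{Mc}; votre esquisse (irr\'eductibilit\'e \og g\'en\'erique\fg\, de $Q_n,$ co\"{i}ncidences galoisiennes \og que l'on peut \'eviter\fg) n'en constitue pas une preuve, et il ne suffit de toute fa\c{c}on pas d'exhiber deux valeurs propres convenables de $w:$ il faut encore produire un point fixe de l'automorphisme dont les multiplicateurs soient pr\'ecis\'ement ces nombres, ce qui ne d\'ecoule pas de l'action cohomologique seule. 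En revanche l'\'etape finale (alg\'ebriques et multiplicativement ind\'ependants $\Rightarrow$ simultan\'ement diophantiens $\Rightarrow$ lin\'earisation locale, donc disque de \textsc{Siegel}) est correcte et correspond au crit\`ere rappel\'e au Chapitre \ref{chapdyn}.
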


\bigskip 

\subsubsection{Id\'ee de d\'emonstration}

\noindent Les automorphismes construits pour \'etablir les r\'esultats pr\'ec\'edents sont obtenus en \'eclatant des points bases situ\'es sur une cubique $\mathcal{C},$ les cubiques jouant un r\^ole particulier du fait que leurs transform\'ees propres $Y$ sont des courbes anticanoniques. 

\noindent Supposons que l'\'el\'ement $w$ de $\mathrm{W}_n$ soit r\'ealis\'e par un automorphisme $F$ de $\mathcal{Z}$ pr\'eservant $Y.$ Comme on l'a vu au Th\'eor\`eme \ref{foncteur}, \`a la paire marqu\'ee $(\mathcal{Z},\Phi,Y)$ est canoniquement associ\'ee une cubique marqu\'ee $(\mathcal{C},\eta).$ Alors il existe une transformation birationnelle $f\colon\mathbb{P}^2(\mathbb{C}) \dashrightarrow~\mathbb{P}^2(\mathbb{C})$ telle que: le relev\'e de $f$ \`a $\mathcal{Z}$ co\"incide avec $F,$ $f$ pr\'eserve $\mathcal{C}$ et $f$ induit un automorphisme $f_*$ de $\mathrm{Pic}_0(\mathcal{C})$ satisfaisant $\eta_0 w=f_*\eta_0.$ En d'autres termes $[\eta_0]$ est un point fixe pour l'action naturelle de $w$ sur l'espace des modules des marquages. 

\smallskip

\noindent R\'eciproquement pour r\'ealiser un \'element donn\'e $w$ du groupe $\mathrm{W}_n$ on commence par chercher un point fixe $\eta_0$ dans l'espace des modules des marquages. Le Corollaire \ref{cubcub} associe \`a~$\eta_0$ une cubique marqu\'ee $(\mathcal{C},\eta)$ \`a isomorphisme pr\`es. Notons $(\mathcal{Z},\Phi,Y)$ la paire marqu\'ee canoniquement d\'etermin\'ee par $(\mathcal{C},\eta).$ Supposons que pour tout $\alpha$ dans $\Theta_n,$ $\eta_0(\alpha)$ soit non nul (ce qui est une condition g\'en\'erique); les points bases $p_i$ ne satisfont pas de relation nodale (ils sont tous distincts, il n'y en a pas $3$ align\'es, pas $6$ sur une conique etc). Un th\'eor\`eme de \textsc{Nagata} assure alors l'existen\-ce d'une seconde projection $\pi'\colon\mathcal{Z}\to\mathbb{P}^2(\mathbb{C})$ correspondant au marquage $\Phi w.$ D\'esignons par~$\mathcal{C}'$ la cubique $\pi'(Y).$ Puisque $[\eta_0]$ est un point fixe de $w,$ les cubiques marqu\'ees~$(\mathcal{C}', \eta w)$ et~$(\mathcal{C},\eta)$ sont isomorphes. Mais un tel isomorphisme est un automorphisme $F$ de $\mathcal{Z}$ satisfai\-sant~$F_*\Phi=\Phi w.$

\smallskip

\noindent Notons que dans \cite{Hi, Ha2, PS, Di2} il y a aussi des constructions faisant intervenir \`a la fois les automorphismes de surfaces et les cubiques.

\section{Exemples}

\noindent Consid\'erons la famille de transformations birationnelles $f\colon\mathbb{P}^2(\mathbb{C}) \dashrightarrow\mathbb{P}^2(\mathbb{C})$ donn\'ee, dans la carte affine $z=1,$ par 
\begin{align*}
&f(x,y)=\left(a+y,b+\frac{y}{x}\right),&& a,\,b\in\mathbb{C}. 
\end{align*}

\noindent Notons que le cas $b=-a$ a \'et\'e \'etudi\'e dans \cite{PS} et \cite{BR}.

\noindent Les points d'ind\'etermination de $f$ sont $p_1=(0:0:1),$ $p_2=(0:1:0)$ et $p_3=(1:0:0).$ Posons $p_4=(a:b:1)$ et d\'esignons par $\Delta$ (resp. $\Delta'$) le triangle de sommets $p_1,$ $p_2,$ $p_3$ (resp. $p_2,$ $p_3,$ $p_4$). La transformation $f$ envoie $\Delta$ sur $\Delta':$ le point $p_1$ (resp. $p_2,$ resp. $p_3$) est \'eclat\'e sur la droite $(p_1p_4)$ (resp. $(p_2p_3),$ resp. $(p_3p_4)$) et les droites $(p_1p_2)$ (resp. $(p_1p_3),$ resp. $(p_2p_3)$) sont contract\'ees sur $p_2$ (resp. $p_4,$ resp. $p_3$).

\noindent Si $a$ et $b$ sont choisis de sorte que $p_1=p_4,$ alors $\Delta$ est invariant par $f$ et quitte \`a \'eclater~$\mathbb{P}^2(\mathbb{C})$ en $p_1,$ $p_2,$ $p_3$ on obtient une r\'ealisation de l'\'el\'ement standard de \textsc{Coxeter} dans $\mathrm{W}_3.$ En effet~$f$ envoie une droite g\'en\'erique sur une conique passant par les $p_i;$ ainsi $w(e_0)=2e_0-e_1-e_2-e_3.$ Le point $p_1$ (resp. $p_2,$ resp. $p_3$) est \'eclat\'e sur la droite passant par $p_2$ et $p_3$ (resp. $p_1$ et $p_3,$ resp. $p_1$ et $p_2$). Il s'en suit que 
\begin{align*}
& w(e_1)=e_0-e_2-e_3, && w(e_2)=e_0-e_1-e_3, && w(e_3)=e_0-e_1-e_2.
\end{align*}

\noindent Plus g\'en\'eralement on a l'\'enonc\'e suivant.

\begin{thm}[\cite{Mc}]\label{refMc}
{\sl D\'esignons par $p_{i+4}$ l'it\'er\'e $i$\`eme $f^i(p_4)$ de $p_4.$ 

\noindent La r\'ealisation de l'\'el\'ement standard de \textsc{Coxeter} de $\mathrm{W}_n$ correspond aux couples $(a,b)$ de~$\mathbb{C}^2$ tels que 
\begin{align*}
& p_i\not\in (p_1p_2)\cup(p_2p_3)\cup(p_3p_1), && p_{n+1}=p_1.
\end{align*}}
\end{thm}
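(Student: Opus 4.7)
Pour d\'emontrer ce r\'esultat j'adopterai une approche directe: \'etudier le comportement de $f$ sur l'orbite directe du point $p_4,$ identifier les \'eclatements n\'ecessaires pour relever $f$ en un automorphisme d'une surface rationnelle $\mathcal{Z},$ puis comparer l'action induite sur la cohomologie \`a celle de l'\'el\'ement standard $w=\pi_n\kappa_{123}$ d\'ecrit au \S\ref{wweyl}. Un premier calcul direct redonne $\mathrm{Ind}\,f=\{p_1,\,p_2,\,p_3\}$ et montre que $f$ contracte les droites $(p_1p_2),\,(p_1p_3),\,(p_2p_3)$ respectivement sur $p_2,\,p_4,\,p_3.$ On constate de plus que $f$ se factorise sous la forme $T\circ g$ o\`u $g(x,y)=(y,y/x)$ est essentiellement l'involution de \textsc{Cremona} $\sigma$ (\`a composition pr\`es par un automorphisme de $\mathbb{P}^2(\mathbb{C})$) et o\`u $T\colon(u,v)\mapsto(a+u,b+v)$ est une translation: cette d\'ecomposition \'eclaire la pr\'esence du facteur $\kappa_{123}$ dans $w.$

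Dans un second temps je v\'erifie que sous les hypoth\`eses $p_i\notin(p_1p_2)\cup(p_1p_3)\cup(p_2p_3)$ pour $4\leq i\leq n,$ l'orbite $p_4,\,p_5,\,\ldots,\,p_n$ est form\'ee de points distincts de $\mathbb{P}^2(\mathbb{C})$ et que $f$ est un diff\'eomorphisme local au voisinage de chaque $p_i.$ Je d\'efinis alors $\mathcal{Z}$ comme l'\'eclatement de $\mathbb{P}^2(\mathbb{C})$ aux points $p_1,\,p_2,\,\ldots,\,p_n$ et je montre que $f$ se rel\`eve en un automorphisme $\tilde f$ de $\mathcal{Z}.$ L'\'eclatement de $p_1,\,p_2,\,p_3$ r\'esout les indetermin\'ees; les droites contract\'ees $(p_1p_2),\,(p_1p_3),\,(p_2p_3),$ dont les images $p_2,\,p_4,\,p_3$ sont elles-m\^emes \'eclat\'ees, sont alors envoy\'ees sur les diviseurs exceptionnels au lieu d'\^etre contract\'ees. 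Pour $4\leq i\leq n-1,$ $\tilde f$ induit un isomorphisme $\mathrm{E}_i\to\mathrm{E}_{i+1}$ via la diff\'erentielle de $f$ en $p_i;$ enfin la condition $p_{n+1}=p_1$ garantit que $\mathrm{E}_n$ est envoy\'e, toujours par une diff\'erentielle inversible, sur $\mathrm{E}_1,$ refermant la construction.

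La derni\`ere \'etape consiste \`a calculer la matrice caract\'eristique de $\tilde f^*$ dans la base $\{\mathrm{H},\,\mathrm{E}_1,\,\ldots,\,\mathrm{E}_n\}$ et \`a v\'erifier qu'elle co\"incide avec celle de $w.$ L'\'egalit\'e $\tilde f^*\mathrm{H}=2\mathrm{H}-\mathrm{E}_1-\mathrm{E}_2-\mathrm{E}_3$ provient du fait que $f$ est de degr\'e $2$ avec points base simples $p_1,\,p_2,\,p_3;$ les relations $\tilde f^*\mathrm{E}_{i+1}=\mathrm{E}_i$ pour $4\leq i\leq n-1$ r\'esultent des isomorphismes locaux ci-dessus; restent $\tilde f^*\mathrm{E}_1,\,\tilde f^*\mathrm{E}_2,\,\tilde f^*\mathrm{E}_3$ \`a expliciter par un calcul en coordonn\'ees locales analogue \`a celui men\'e pour $\sigma$ au \S\ref{sigma}. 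La r\'eciproque s'obtient en renversant l'argument: une fois $f$ r\'ealisant $w,$ l'action pr\'escrite sur $\mathrm{Pic}(\mathcal{Z})$ force les $p_i$ \`a ne pas satisfaire de relation nodale (en particulier pas d'alignement avec deux sommets du triangle) et impose que l'orbite se referme en $p_1$ au bout de $n-3$ it\'erations. Le principal obstacle sera le raccord en $i=n$: v\'erifier en coordonn\'ees locales que $\tilde f$ envoie effectivement $\mathrm{E}_n$ sur $\mathrm{E}_1$ (et non sur la transform\'ee stricte d'une droite contenant $p_1$), traduction combinatoire du fait que le facteur $\kappa_{123}$ de $w$ s'applique au d\'ebut de l'orbite et non \`a un autre endroit.
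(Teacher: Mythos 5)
Votre sens direct est essentiellement celui du texte original : on \'eclate les points $p_1,\ldots,p_n,$ on constate que $f$ se rel\`eve en un automorphisme parce que chaque $p_i$ est d\'esormais l'image d'une courbe (les droites contract\'ees vont sur des diviseurs exceptionnels, et $f$ \'etant un biholomorphisme local en $p_i$ pour $i\geq 4,$ la diff\'erentielle identifie $\mathrm{E}_i$ \`a $\mathrm{E}_{i+1},$ puis $\mathrm{E}_n$ \`a $\mathrm{E}_1$ gr\^ace \`a $p_{n+1}=p_1$ --- votre inqui\'etude sur ce dernier raccord est d'ailleurs sans objet, pr\'ecis\'ement parce que $f$ est un biholomorphisme local en $p_n$). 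Une r\'eserve de convention toutefois : la r\'ealisation d'un \'el\'ement de $\mathrm{W}_n$ est d\'efinie au \S\ref{wweyl} via $f_*,$ et l'\'el\'ement standard v\'erifie $w(e_0)=2e_0-e_2-e_3-e_4$ (conique image passant par $p_2,$ $p_3,$ $p_4$). En calculant $\tilde f^*$ vous obtiendrez $\tilde f^*\mathrm{H}=2\mathrm{H}-\mathrm{E}_1-\mathrm{E}_2-\mathrm{E}_3,$ c'est-\`a-dire la matrice de $w^{-1}$ et non celle de $w$ : telle quelle, votre derni\`ere v\'erification \og co\"incide avec celle de $w$\fg\, \'echoue; il faut travailler avec $f_*$ (ou dire explicitement que r\'ealiser $w^{-1}$ par $f$ \'equivaut \`a r\'ealiser $w$ par $f^{-1}$).

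Le v\'eritable manque est la r\'eciproque. Vous la traitez en \og renversant l'argument\fg\, \`a l'int\'erieur de la famille : si $f=f_{a,b}$ r\'ealise $w,$ alors les conditions sur l'orbite sont satisfaites. Mais l'\'enonc\'e est une correspondance : il faut aussi montrer que \emph{toute} r\'ealisation de l'\'el\'ement standard par un automorphisme $F$ d'une surface rationnelle marqu\'ee provient d'un couple $(a,b).$ C'est ce que fait la d\'emonstration du texte : on normalise les trois premiers points bases en $\{(0:0:1),\,(0:1:0),\,(1:0:0)\},$ on observe que l'action prescrite sur les classes ($w(e_0)=2e_0-e_2-e_3-e_4,$ etc.) force la transformation birationnelle induite sur $\mathbb{P}^2(\mathbb{C})$ \`a \^etre quadratique de points bases $p_1,$ $p_2,$ $p_3,$ donc compos\'ee de l'involution de \textsc{Cremona} et d'un automorphisme envoyant $(p_1,p_2)$ sur $(p_2,p_3);$ un tel $f$ s'\'ecrit $f(x,y)=(a',b')+(Ay,By/x)$ dans la carte $z=1,$ et une conjugaison par $(Bx,By/A)$ le ram\`ene \`a la forme normale $(a,b)+(y,y/x).$ Sans cette \'etape de normalisation et de mise sous forme normale, votre preuve n'\'etablit qu'une inclusion (les bons $(a,b)$ donnent des r\'ealisations), pas la correspondance annonc\'ee.
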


\begin{proof}[{\sl D\'emonstration}]
Supposons qu'il existe un entier $i$ tel que $f^i(p_4)=p_{i+4}.$ Soit $(\mathcal{Z},\pi)$ l'\'eclatement marqu\'e de points bases $p_i.$ La transformation $f$ se rel\`eve en un morphisme $F_0$ de~$\mathcal{Z}$ dans~$\mathbb{P}^2(\mathbb{C}).$ Comme tout $p_i$ est d\'esormais l'image $F_0(\ell_i)$ d'une droite de $\mathcal{Z},$ le morphisme $F_0$ se rel\`eve en un automorphisme $F$ de $\mathcal{Z},$ $F$ satisfaisant la propri\'et\'e suivante: $f$ se rel\`eve en $F.$ D\'eterminons l'\'el\'ement $w$ r\'ealis\'e par $F.$ Remarquons que $f$ envoie une droite g\'en\'erique sur une conique passant par $p_2,$ $p_3$ et $p_4$ d'o\`u~$w(e_0)=2e_0-e_2-e_3-e_4.$ Le point~$p_1$ est \'eclat\'e sur la droite passant par $p_3$ et $p_4$ donc~$w(e_1)=e_0-e_3-e_4;$ de m\^eme on obtient 
\begin{small}
\begin{align*}
&w(e_2)=e_0-e_2-e_4,&& w(e_3)=e_0-e_2-e_3, && w(e_i)=e_{i+1}\text{ pour $4\leq i<n$ } && \text{ et } &&w(e_n)=e_1.
\end{align*}
\end{small}

\noindent R\'eciproquement si un automorphisme $F\colon\mathcal{Z}\to\mathcal{Z}$ r\'ealise la transformation standard de \textsc{Coxeter} $w=\pi_n\kappa_{123},$ on peut normaliser les points bases de sorte que $$\{p_1,\,p_2,\,p_3\}=\{(0:0:1),\, (0:1:0),\, (1:0:0)\};$$ la transformation birationnelle $f\colon\mathbb{P}^2(\mathbb{C})\dashrightarrow\mathbb{P}^2(\mathbb{C})$ dont le relev\'e co\"incide avec~$F$ est la compos\'ee de l'involution de \textsc{Cremona} et d'un automorphisme envoyant $(p_1,\, p_2)$ sur~$(p_2,\,p_3).$ Un tel $f$ est, dans la carte affine $z=1,$ de la forme $f(x,y)=(a',b')+(Ay,By/x)$ soit, \`a conjugaison pr\`es par $(Bx,By/A),$ du type $f(x,y)=(a,b)+(y,y/x).$  
\end{proof}

\chapter{Exemples d'automorphismes d'entropie positive}\label{chapbedkim1}

\noindent Dans ce paragraphe nous pr\'esentons un exemple d\^u \`a \textsc{Bedford} et \textsc{Kim} (\cite{BK1,BK2}).

\noindent Une possibilit\'e pour produire un automorphisme $f$ sur une surface rationnelle $\mathcal{Z}$ est la suivante: on part d'une transformation birationnelle $f$ de $\mathbb{P}^2( \mathbb{C}),$ on trouve une suite d'\'eclatements $\pi\colon\mathcal{Z}\to\mathbb{P}^2(\mathbb{C})$ telle que l'application induite $f_\mathcal{Z}=\pi f\pi^{-1}$ soit un automorphisme sur~$\mathcal{Z}.$ La difficult\'e est de trouver une telle suite $\pi$... Si $f$ n'est pas un automorphisme du plan projectif complexe, $f$ contracte une courbe $\mathcal{C}_1$ sur un point $p_1;$ la premi\`ere chose \`a faire pour construire un automorphisme \`a partir de $f$ est d'\'eclater le point $p_1$ via $\pi_1\colon\mathcal{Z}_1\to\mathbb{P}^2(\mathbb{C}).$ Dans le meilleur des cas $f_{\mathcal{Z}_1}=\pi_1f\pi_1^{-1}$ envoie la transform\'ee de $\mathcal{C}_1$ sur le diviseur exceptionnel $\mathrm{E}_1.$ Mais si~$p_1$ n'est pas un point d'ind\'etermination $f_{\mathcal{Z}_1}$ contracte $\mathrm{E}_1$ sur $p_2=f(p_1).$ Autrement dit ce proc\'ed\'e n'aboutit que si $f$ n'est pas alg\'ebriquement stable ce qui va \^etre le cas dans l'exemple qui suit.

\bigskip

\noindent Soit $f_{a,b}$ la transformation birationnelle de $\mathbb{P}^2(\mathbb{C})$ d\'efinie par $$f_{a,b}(x,y,z)=(x(bx+y):z(bx+y):x(ax+z)),$$ soit en carte affine $$f_{a,b}(y,z)=\left(z,\frac{a+z}{b+y} \right).$$ On constate que $\mathrm{Ind}\,f_{a,b}=\{p_1,\, p_2,\,p_*\}$ et que $\mathrm{Exc}\,f_{a,b}=\Sigma_0\cup \Sigma_\beta\cup\Sigma_\gamma$ avec 
\begin{align*}
& p_1=(0:1:0),&& p_2=(0:0:1), &&p_*=(1:-b:-a),\\
&\Sigma_0=\{x=0\}, && \Sigma_\beta=\{bx+y=0\}, &&\Sigma_\gamma=\{ax+z=0\}. 
\end{align*}

\noindent Plus pr\'ecis\'ement $\Sigma_0$ (resp. $\Sigma_\beta,$ resp. $\Sigma_\gamma$) est contract\'e sur $p_1$ (resp. $p_2,$ resp. $q=(1:-a:0)$) et~$p_2$ (resp. $p_1,$ resp. $q=(1:-a:0)$) est \'eclat\'e sur $\Sigma_0$ (resp. sur la droite $\Sigma_B=\{z=0\}$ passant par les points $p_1$ et $q,$ resp. sur la droite $\Sigma_C$ passant par les points $p_2$ et $q$).

\begin{figure}[H]
\begin{center}
\input{bedkim.pstex_t}
\end{center}
\end{figure}

\noindent Soient $\mathcal{Y}=\mathrm{Bl}_{p_1,p_2}\mathbb{P}^2,$ $\pi\colon\mathcal{Y}\to\mathbb{P}^2(\mathbb{C})$ et $f_{a,b,\,\mathcal{Y}}=\pi^{-1} f_{a,b}\pi.$ Montrons qu'apr\`es cet \'eclatement~$\Sigma_0$ n'appartient pas \`a $\mathrm{Exc} (f_{a,b,\,\mathcal{Y}}).$  

\noindent Posons $x=r_2$ et $y=r_2s_2;$ alors $(r_2,s_2)$ est un syst\`eme de coordonn\'ees locales dans lequel~$\Sigma_\beta=\{s_2+b=0\}$ et $\mathrm{E}_2=\{r_2=0\}.$ On constate que 
\begin{align*}
& (r_2,s_2)\to(r_2,r_2s_2)_{(x,y)}\to(r_2(b+s_2):b+s_2:ar_2+1)= \left(\frac{r_2(b+s_2)}{ar_2+1},\frac{b+s_2}{ar_2+1}\right)_{(x,y)}\\
&\hspace{1cm}\to \left(\frac{r_2(b+s_2)}{ar_2+1},\frac{1}{r_2}\right)_{(r_2,s_2)}.
\end{align*}

\noindent Par suite $\Sigma_\beta$ est envoy\'e sur $\mathrm{E}_2$ et $\mathrm{E}_2$ sur $\Sigma_0.$ 

\noindent Posons $x=u_2v_2$ et $y=v_2;$ le diviseur exceptionnel $\mathrm{E}_2$ est d\'ecrit par $v_2=0$ et $\Sigma_0$ par $u_2=0.$ On a 
\begin{align*}
&(u_2,v_2)\to(u_2v_2,v_2)_{(x,y)}\to(u_2v_2(bu_2+1):bu_2+1:u_2(au_2v_2+1))\\
&\hspace{1cm}=\left(\frac{v_2(bu_2+1)}{au_2v_2+1},\frac{bu_2+1}{u_2(au_2v_2+1)}\right)_{(x,y)}
\to\left(u_2v_2,\frac{bu_2+1}{u_2(au_2v_2+1)}\right)_{(u_2,v_2)};
\end{align*}

\noindent on retrouve que $\mathrm{E}_2$ est envoy\'e sur $\Sigma_0.$ 

\noindent Posons $x=r_1,$ $z=r_1s_1;$ dans ces coordonn\'ees $(r_1,s_1)$ on a $\mathrm{E}_1=\{r_1=0\}.$ De plus 
\begin{align*}
& (r_1,s_1)\to(r_1,r_1s_1)_{(x,z)}\to(br_1+1:b+s_1(br_1+1):r_1(a+s_1)).
\end{align*}

\noindent Il s'en suit que $\mathrm{E}_1$ est envoy\'e sur $\Sigma_B.$ 

\noindent Posons $x=u_1v_1$ et $z=v_1;$ dans ces coordonn\'ees $\Sigma_0=\{u_1=0\},$ $\mathrm{E}_1=\{v_1=0\}$ et 
\begin{align*}
&(u_1,v_1)\to(u_1v_1,v_1)_{(x,z)}\to(u_1(bu_1v_1+1):bu_1v_1+1:u_1v_1(au_1+1))\\
&\hspace{1cm}=\left(u_1, \frac{u_1v_1(au_1+1)}{bu_1v_1+1}\right)_{(x,z)}\to\left(u_1,\frac{v_1(au_1+1)}{bu_1v_1+1}\right)_{(r_1,s_1)}.
\end{align*} 

\noindent Il en r\'esulte que $\Sigma_0\to\mathrm{E}_1$ et que $\Sigma_\beta\to\mathrm{E}_2\to\Sigma_0\to\mathrm{E}_1\to \Sigma_B.$ En particulier $\mathrm{Ind}\,f_{a,b,\,\mathcal{Y}}=\{p_*\}$ et $\mathrm{Exc}\,f_{a,b,\,\mathcal{Y}}=\{\Sigma_\gamma\}.$

\noindent On remarque que $\{\mathrm{H},\,\mathrm{E}_1,\,\mathrm{E}_2\}$ est une base de $\mathrm{Pic}(\mathcal{Y}).$ Le diviseur exceptionnel $\mathrm{E}_1$ est envoy\'e sur $\Sigma_B;$ puisque~$p_1$ appartient \`a $\Sigma_B$ on a $\mathrm{E}_1\to\Sigma_B\to\Sigma_B+\mathrm{E}_1.$ Par ailleurs $\mathrm{E}_2$ est envoy\'e sur~$\Sigma_0;$ comme~$p_1$ et $p_2$ appartiennent \`a $\Sigma_0$ on a $\mathrm{E}_2\to\Sigma_0\to\Sigma_0+\mathrm{E}_1+\mathrm{E}_2.$ Soit $\mathrm{H}$ une droite g\'en\'erique de~$\mathbb{P}^2(\mathbb{C})$ d'\'equation $\ell=0$ avec $\ell=a_0x+a_1y+a_2z.$ Son image par $f_{a,b,\,\mathcal{Y}}$ est une conique d'o\`u $$f_{a,b,\,\mathcal{Y}}^*\mathrm{H}=2\mathrm{H}+\sum_{i=1}^2m_i \mathrm{E}_i.$$ D\'eterminons les $m_i.$ \'Etant donn\'e que 
\begin{align*}
&(r_2,s_2)\to(r_2,r_2s_2)_{(x,y)}\to 
(r_2(b+s_2):b+s_2:ar_2+1)\\
&\hspace{1cm}\to r_2\Big(a_0r_2(b+s_2) +a_1(b+s_2)+a_2(ar_2+1)\Big)
\end{align*}

\noindent %s'annule \`a l'ordre $1$ sur $\mathrm{E}_2=\{r_2=0\}$
et que $\mathrm{E}_2=\{r_2=0\}$ l'entier $m_2$ vaut $1.$ D'autre part 
\begin{align*}
&(r_1,s_1)\to(r_1,r_1s_1)_{(x,z)}\to(br_1+1:b+s_1(br_1+1):r_1(a+s_1))\\
&\hspace{1cm}\to s_1r_1\Big(a_0(bs_1r_1+1)+a_1s_1(bs_1r_1+1)+s_1r_1(a+s_1)\Big)
\end{align*}

\noindent %s'annule \`a l'ordre $1$ sur $\mathrm{E}_1=\{s_1=0\}$
et $\mathrm{E}_1=\{s_1=0\}$ d'o\`u $m_1=1.$ Il s'en suit que $$M_{f_{a,b,\,\mathcal{Y}}}=\left[\begin{array}{ccc} 2 & 1 & 1 \\ -1 & -1 & -1 \\ -1 & 0 & -1\end{array}\right].$$ Le polyn\^ome caract\'eristique de $M_{f_{a,b,\,\mathcal{Y}}}$ est $1+t-t^3.$ Traduisons les renseignements que contient $M_{f_{a,b,\,\mathcal{Y}}}.$ Soient $\mathrm{L}$ une droite et $\{\mathrm{L}\}$ sa classe dans $\mathrm{Pic}(\mathcal{Y}).$ Si $\mathrm{L}$ n'intersecte ni $\mathrm{E}_1,$ ni~$\mathrm{E}_2,$ alors $\{\mathrm{L}\}=\mathrm{H}.$ Puisque $f_{a,b,\,\mathcal{Y}}^*\mathrm{H}=2\mathrm{H}-\mathrm{E}_1-\mathrm{E}_2$ l'image de~$\mathrm{L}$ par $f_{a,b,\,\mathcal{Y}}$ est une conique intersectant $\mathrm{E}_1$ et $\mathrm{E}_2$ avec multiplicit\'e $1.$ Si $\mathrm{L}$ contient $p_*,$ alors $f_{a,b,\,\mathcal{Y}}(\mathrm{L})$ est l'union de~$\Sigma_C$ et d'une seconde droite. Supposons que $p_*$ n'appartienne pas \`a $\mathrm{L}\cup f_{a,b,\,\mathcal{Y}}(\mathrm{L}),$ alors $$\{f^2_{a,b,\,\mathcal{Y}}(\mathrm{L})\}=M_{f_{a,b}}^2\left[\begin{array}{ccc} 1 \\ 0 \\ 0 \end{array}\right]=2\mathrm{H}-\mathrm{E}_2;$$ autrement dit $f_{a,b,\,\mathcal{Y}}^2(\mathrm{L})$ est une conique intersectant $\mathrm{E}_2$ mais pas $\mathrm{E}_1.$ De m\^eme si $p_*$ n'appartient pas \`a $\mathrm{L}\cup f_{a,b,\,\mathcal{Y}}(\mathrm{L})\cup f_{a,b,\,\mathcal{Y}}^2(\mathrm{L}),$ alors $$\{f^3_{a,b,\,\mathcal{Y}}(\mathrm{L})\}=M_{f_{a,b}}^3\left[\begin{array}{ccc} 1 \\ 0 \\ 0 \end{array}\right]=3 \mathrm{H}-\mathrm{E}_1-\mathrm{E}_2,$$ {\it i.e.} $f_{a,b,\,\mathcal{Y}}^3(\mathrm{L})$ est une cubique intersectant $\mathrm{E}_1$ et $\mathrm{E}_2$ avec multiplicit\'e $1.$ Si $p_*$ n'appartient pas \`a $\mathrm{L}\cup f_{a,b,\,\mathcal{Y}}(\mathrm{L})\cup\ldots\cup f_{a,b,\,\mathcal{Y}}^{n-1}(\mathrm{L}),$ les it\'er\'es de $f_{a,b,\,\mathcal{Y}}$ sont holomorphes au voisinage de $\mathrm{L}$ et~$(f_{a,b,\,\mathcal{Y}}^*)^n (\mathrm{H})=\{f_{a,b,\,\mathcal{Y}}^n\mathrm{L}\}.$ Les param\`etres $a$ et~$b$ sont dits {\it g\'en\'eriques} si $p_*$ n'appartient pas \`a~$\bigcup_{j=0}^\infty f_{a,b,\,\mathcal{Y}}^j(\mathrm{L}).$

\begin{thm}[\cite{BK2}]\label{abgen}
{\sl Supposons que $a$ et $b$ soient g\'en\'eriques; alors $f_{a,b,\,\mathcal{Y}}$ est alg\'ebriquement stable et $\lambda(f_{a,b})\sim 1.324$ est la plus grande valeur propre du polyn\^ome caract\'eristique $t^3-t-1.$}
\end{thm}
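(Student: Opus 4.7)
The plan is in three stages, building entirely on the explicit analysis already carried out above. Since it has already been established that $\mathrm{Ind}\,f_{a,b,\mathcal{Y}}=\{p_*\}$ and $\mathrm{Exc}\,f_{a,b,\mathcal{Y}}=\{\Sigma_\gamma\}$ (with $\Sigma_\gamma$ contracted to the single point $q$), and since the matrix $M_{f_{a,b,\mathcal{Y}}}$ of the pullback action on $\mathrm{Pic}(\mathcal{Y})$ has been explicitly written down, only two tasks remain: first, deduce algebraic stability from the genericity of $(a,b)$; second, read off $\lambda(f_{a,b})$ from the matrix.

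For the first task I would invoke the Diller--Favre criterion recalled in Chapter~1: $f_{a,b,\mathcal{Y}}$ is algebraically stable if and only if no exceptional curve has an iterate lying in the indeterminacy locus. With only one contracted curve $\Sigma_\gamma$ (of image $q$) and only one indeterminacy point $p_*$, this reduces to the single orbit-avoidance statement $f^{k}_{a,b,\mathcal{Y}}(q)\neq p_*$ for every $k\geq 0$. I would then argue this is exactly what genericity provides. Indeed, for each fixed $k$ the equation $f^k_{a,b,\mathcal{Y}}(q)=p_*$ defines a proper algebraic subvariety of the parameter space $\mathbb{C}^2_{(a,b)}$: properness follows because one can exhibit at least one pair $(a,b)$ for which the equality fails. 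Taking the countable union over $k\geq 0$ still yields a meagre subset of $\mathbb{C}^2$, and the genericity hypothesis excludes precisely such a union. Equivalently, the stated genericity — that $p_*$ not belong to $\bigcup_{j\geq 0} f^j_{a,b,\mathcal{Y}}(\mathrm{L})$ for a generic line $\mathrm{L}$ — is exactly the condition under which the identity $(f^*_{a,b,\mathcal{Y}})^n\mathrm{H}=\{f^n_{a,b,\mathcal{Y}}(\mathrm{L})\}$ persists for all $n$, which is the quantitative form of algebraic stability.

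For the second task, once algebraic stability is established, one has $(f^n_{a,b,\mathcal{Y}})^*=(f^*_{a,b,\mathcal{Y}})^n$ on $\mathrm{Pic}(\mathcal{Y})$ for every $n\geq 0$. Since $\lambda$ is a birational invariant, $\lambda(f_{a,b})=\lambda(f_{a,b,\mathcal{Y}})$ coincides with the spectral radius of $M_{f_{a,b,\mathcal{Y}}}$. A direct computation gives $\det(tI-M_{f_{a,b,\mathcal{Y}}})=t^3-t-1$. This polynomial has exactly one real root larger than $1$ (the plastic number $\simeq 1.324$) together with a pair of complex-conjugate roots of modulus $<1$, so the spectral radius is this real root, yielding the announced value of $\lambda(f_{a,b})$.

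The main obstacle I expect is precisely the link, in the first task, between the line-orbit formulation of genericity and the point-orbit condition $f^k_{a,b,\mathcal{Y}}(q)\neq p_*$. Conceptually this is clear from the formula $(f^*_{a,b,\mathcal{Y}})^n\mathrm{H}=\{f^n_{a,b,\mathcal{Y}}(\mathrm{L})\}$, but making it rigorous requires verifying, by an inductive analysis of how $f^k_{a,b,\mathcal{Y}}(q)$ depends polynomially on $(a,b)$, that each bad locus $\{f^k_{a,b,\mathcal{Y}}(q)=p_*\}$ really is a proper algebraic subvariety and not the whole plane — the routine but essential bookkeeping that distinguishes ``generic'' from ``empty'' in this setting.
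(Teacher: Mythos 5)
Your proposal is correct and follows precisely the route the paper prepares: the text gives no proof of this statement (it is quoted from \cite{BK2}), and every ingredient you use --- $\mathrm{Ind}\,f_{a,b,\,\mathcal{Y}}=\{p_*\}$, $\mathrm{Exc}\,f_{a,b,\,\mathcal{Y}}=\{\Sigma_\gamma\}$ contract\'ee sur $q$, la matrice $M_{f_{a,b,\,\mathcal{Y}}}$ dont le polyn\^ome caract\'eristique est $t^3-t-1$ (le $1+t-t^3$ du texte, au signe pr\`es) --- is computed immediately before the theorem; the \textsc{Diller}--\textsc{Favre} criterion plus birational invariance of $\lambda$ then give the conclusion exactly as in your second stage, which is fine as written (unique root $\approx 1.324$ outside the unit circle, the two other roots of modulus $<1$).

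The only step to repair is the one you flag as the main obstacle, and it is both unnecessary in the form you give it and easier to close than you expect. In this text \og g\'en\'eriques\fg\ is a \emph{definition}: $p_*\notin\bigcup_{j\geq 0}f^j_{a,b,\,\mathcal{Y}}(\mathrm{L})$ for a generic line $\mathrm{L}$. So no argument about properness of the loci $\{(a,b)\,\vert\, f^k_{a,b,\,\mathcal{Y}}(q)=p_*\}$, nor about countable unions of subvarieties being meagre, is needed to prove the stated implication; that bookkeeping only matters if one wants to know that generic pairs exist, which is not part of the statement (and is what the sets $\mathcal{V}_n$ address later). The bridge from the line-orbit hypothesis to the point-orbit condition required by \textsc{Diller}--\textsc{Favre} is one line rather than an induction on parameters: a generic $\mathrm{L}$ meets $\Sigma_\gamma$ at a point which is not an indeterminacy point, hence $q\in f_{a,b,\,\mathcal{Y}}(\mathrm{L})$; since $p_*\notin f_{a,b,\,\mathcal{Y}}(\mathrm{L})$ one gets $q\neq p_*$, so $f_{a,b,\,\mathcal{Y}}(q)\in f^2_{a,b,\,\mathcal{Y}}(\mathrm{L})$, and inductively $f^k_{a,b,\,\mathcal{Y}}(q)\in f^{k+1}_{a,b,\,\mathcal{Y}}(\mathrm{L})$, so the hypothesis forces $f^k_{a,b,\,\mathcal{Y}}(q)\neq p_*$ for all $k\geq 0$. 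With that observation your proof is complete.
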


\section{Construction de surfaces et d'automorphismes}

\noindent Consid\'erons le sous-espace $\mathcal{V}_n$ de $\mathbb{C}^2$ d\'efini par $$\mathcal{V}_n=\{(a,b)\in \mathbb{C}^2\,\vert\, f_{a,b,\,\mathcal{Y}}^j(q)\not=p_*\,\,\,\forall\,0\leq j\leq n-1,\,f_{a,b,\,\mathcal{Y}}^n(q)=p_*\}.$$ L'\'enonc\'e suivant donne une condition n\'ecessaire et suffisante pour que $f_{a,b,\,\mathcal{Y}}$ soit un automorphisme sur une certaine surface rationnelle.

\begin{thm}[\cite{BK2}]
{\sl La transformation $f_{a,b,\,\mathcal{Y}}$ est conjugu\'ee \`a un automorphisme d'une certaine surface rationnelle si et seulement si $(a,b)$ appartient \`a $\mathcal{V}_n$ pour un certain~$n.$}
\end{thm}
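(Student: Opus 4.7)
The plan is to prove both implications by following the orbit of the exceptional curve $\Sigma_\gamma$ through the birational dynamics of $f_{a,b,\,\mathcal{Y}}$.

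For the sufficient direction, assume $(a,b)\in\mathcal{V}_n$ and set $q_i=f_{a,b,\,\mathcal{Y}}^i(q)$ for $0\le i\le n$, so $q_n=p_*$. Let $\pi\colon\mathcal{Z}\to\mathcal{Y}$ be the composition of the successive blow-ups of $q_0,\,q_1,\,\ldots,\,q_n$ and denote by $E_{q_i}$ the resulting exceptional divisors. I would check that the lift $f_\mathcal{Z}=\pi^{-1}f_{a,b,\,\mathcal{Y}}\pi$ is an automorphism via three local calculations, entirely parallel to those already performed on $\mathcal{Y}$ when tracing $\Sigma_\beta\to\mathrm{E}_2\to\Sigma_0\to\mathrm{E}_1\to\Sigma_B$. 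First, using the affine formula $f_{a,b}(y,z)=(z,(a+z)/(b+y))$, I would expand along $\Sigma_\gamma$ near $q$ to show that the image approaches $q_0$ from tangent directions that vary non-trivially with the point, so the strict transform of $\Sigma_\gamma$ lifts onto $E_{q_0}$. Second, for $0\le i<n$, $f_{a,b,\,\mathcal{Y}}$ is a local biholomorphism at $q_i$, and its derivative lifts $E_{q_i}$ isomorphically onto $E_{q_{i+1}}$. Third, a short computation in the two standard charts over $p_*=q_n$ resolves the remaining indeterminacy and sends $E_{p_*}$ onto the strict transform of $\Sigma_C$. Combining, $f_\mathcal{Z}$ is a regular birational self-map contracting no curve, hence an automorphism.

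For the necessary direction, suppose that $\pi\colon\mathcal{Z}\to\mathcal{Y}$ is a finite sequence of blow-ups, given by Zariski's factorisation theorem (Th\'eor\`eme \ref{Zariski}), for which $f_\mathcal{Z}=\pi^{-1}f_{a,b,\,\mathcal{Y}}\pi$ extends to an automorphism. Since $f_\mathcal{Z}$ is bijective and regular, the strict transform of $\Sigma_\gamma$ maps to a curve $C_0$, and because $f_{a,b,\,\mathcal{Y}}(\Sigma_\gamma)=\{q_0\}$ the functional equation $\pi\circ f_\mathcal{Z}=f_{a,b,\,\mathcal{Y}}\circ\pi$ forces $C_0\subset\pi^{-1}(q_0)$; hence $q_0$ is a centre of $\pi$. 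Iterating, as long as $q_i\ne p_*$ the map $f_{a,b,\,\mathcal{Y}}$ is regular at $q_i$ with value $q_{i+1}$, so the image curve $C_i=f_\mathcal{Z}(C_{i-1})$ lies over $q_{i+1}$ and $q_{i+1}$ must also be blown up. Finiteness of $\pi$ forbids this from continuing forever, so the forward orbit of $q$ must either meet $p_*$ at some index $n$, yielding $(a,b)\in\mathcal{V}_n$, or eventually repeat. The second option is ruled out by injectivity of $f_\mathcal{Z}$: if $q_j=q_i$ for some $0\le i<j$ with $p_*$ never visited, then both $E_{q_{j-1}}$ and either $\Sigma_\gamma$ (when $i=0$) or $E_{q_{i-1}}$ (when $i\ge 1$) would be sent by $f_\mathcal{Z}$ to the same curve over $q_i$, contradicting injectivity.

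The delicate step will be the first local calculation in the sufficient direction: verifying that the strict transform of $\Sigma_\gamma$ really lifts to a non-constant map into $E_{q_0}$, equivalently that $f_{a,b,\,\mathcal{Y}}$ approaches $q_0$ with varying tangent direction along $\Sigma_\gamma$. Once this is handled, the other lifting calculations and the bookkeeping with $\mathrm{Pic}(\mathcal{Z})$ follow the pattern already worked out earlier in the text.
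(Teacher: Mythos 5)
Your ``if'' direction is essentially the paper's: for $(a,b)\in\mathcal{V}_n$ one blows up the orbit $q,\,f_{a,b,\,\mathcal{Y}}(q),\,\ldots,\,f_{a,b,\,\mathcal{Y}}^n(q)=p_*$ and verifies by local computations (of the kind already carried out on $\mathcal{Y}$) that the induced map is an automorphism; the paper states this verification without detail and your plan for it is reasonable. The genuine gap is in the ``only if'' direction. The statement concerns birational conjugacy to an automorphism of \emph{some} rational surface, so the conjugating map is an arbitrary birational map $h\colon\mathcal{X}\dashrightarrow\mathcal{Y}$; you only treat conjugations of the special form $\pi^{-1}f_{a,b,\,\mathcal{Y}}\,\pi$ with $\pi\colon\mathcal{Z}\to\mathcal{Y}$ a finite composition of blow-ups, i.e.\ the case where the new model dominates $\mathcal{Y}$ by a morphism. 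Th\'eor\`eme \ref{Zariski} does not reduce you to this case: it factors $h$ as $\pi_2\pi_1^{-1}$ through a third surface dominating both ends, and the automorphism could live on a model which neither dominates nor is dominated by $\mathcal{Y}$ (the conjugation may contract curves of $\mathcal{Y}$, perform elementary transformations, etc.); your orbit-tracking argument says nothing about such conjugations. The paper's route is different and bypasses this issue entirely: if $(a,b)$ lies in no $\mathcal{V}_n$, Th\'eor\`eme \ref{abgen} gives algebraic stability on $\mathcal{Y}$, hence $\lambda(f_{a,b})$ equals the largest root of $t^3-t-1$, which is not a Salem number; but by Th\'eor\`eme \ref{lambda} (and the discussion following it) the dynamical degree of a positive-entropy automorphism of a K\"ahler surface is a Salem number, and $\lambda$ is invariant under birational conjugacy, so no birational model of $f_{a,b}$ can be an automorphism.

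A secondary point, inside your restricted setting: the injectivity step is too quick. If $q_j=q_i$ with $i<j$, the two curves you produce both lie in $\pi^{-1}(q_i)$, but this fibre may contain several exceptional curves (points infinitely near $q_i$ may also be blown up), so they need not be ``the same curve'' and injectivity of $f_\mathcal{Z}$ is not directly contradicted. The correct finish is by finiteness: if the orbit never meets $p_*$, all the curves $C_k=f_\mathcal{Z}^{k+1}(\widetilde{\Sigma}_\gamma)$, $k\geq 0$, are $\pi$-exceptional, so two of them coincide, and applying a negative iterate of $f_\mathcal{Z}$ shows that $\widetilde{\Sigma}_\gamma$ itself would be $\pi$-exceptional, which is absurd since it maps onto $\Sigma_\gamma$. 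This repairs your argument for conjugations by a morphism $\mathcal{Z}\to\mathcal{Y}$, but it does not close the main gap above, which is the restriction on the shape of the conjugacy.
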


\begin{proof}[{\sl D\'emonstration}]
Si $(a,b)$ n'appartient pas \`a $\mathcal{V}_n,$ le Th\'eor\`eme \ref{abgen} assure que $\lambda( f_{a,b})$ est la plus grande racine de $t^3-t-1;$ on constate que $\lambda(f_{a,b})$ n'est pas un nombre de \textsc{Salem} donc~$f_{a,b}$ n'est pas conjugu\'e \`a un automorphisme (Th\'eor\`eme \ref{lambda}).

\noindent R\'eciproquement supposons qu'il existe un entier $n$ tel que $(a,b)$ appartienne \`a $\mathcal{V}_n.$ Soit $\mathcal{Z}$ la surface obtenue \`a partir de $\mathcal{Y}$ en \'eclatant les points $q,$ $f_{a,b,\,\mathcal{Y}}(q),$ $\ldots,$ $f_{a,b,\,\mathcal{Y}}^n(q)=p_*$ de l'orbite de $q.$ On peut v\'erifier que la transformation induite $f_{a,b,\,\mathcal{Z}}$ est un automorphisme.
\end{proof} 

\noindent On s'int\'eresse maintenant \`a $f_{a,b,\,\mathcal{Z}}^*$ que nous noterons $f_{a,b}^*.$

\begin{thm}[\cite{BK2}]
{\sl Supposons que $(a,b)$ appartienne \`a $\mathcal{V}_n$ pour un certain entier~$n.$ Si $n\leq 5,$ l'application $f_{a,b}$ est p\'eriodique de p\'eriode inf\'erieure ou \'egale \`a $30.$ Si $n$ vaut~$6,$ la croissance des degr\'es de $f_{a,b}$ est quadratique. Enfin si $n\geq 7,$ alors $(\deg f_{a,b}^k)_k$ a une croissance exponentielle et $\lambda(f_{a,b})$ est la plus grande valeur propre du polyn\^ome caract\'eristique $$\chi_n(t)=t^{n+1}(t^3-t-1)+t^3+t^2-1.$$ De plus, lorsque $n$ tend vers l'infini $\lambda(f_{a,b})$ tend vers la plus grande valeur propre de $t^3-t-1.$}
\end{thm}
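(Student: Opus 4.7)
\noindent La strat\'egie est de calculer explicitement la matrice $M_n$ de $f_{a,b,\mathcal{Z}}^*$ agissant sur $\mathrm{Pic}(\mathcal{Z})$ et d'appliquer le Th\'eor\`eme \ref{DiFa}. Le groupe $\mathrm{Pic}(\mathcal{Z})$ est de rang $n+4,$ de base $\{\mathrm{H},\,\mathrm{E}_1,\,\mathrm{E}_2,\,\mathrm{F}_0,\,\ldots,\,\mathrm{F}_n\},$ o\`u $\mathrm{F}_j$ est le diviseur exceptionnel au-dessus de $f^j(q).$ Les donn\'ees d\'ej\`a \'etablies --- \`a savoir $f^*\mathrm{H}=2\mathrm{H}-\mathrm{E}_1-\mathrm{E}_2,$ la cha\^{\i}ne $\Sigma_\beta\to\mathrm{E}_2\to\Sigma_0\to\mathrm{E}_1\to\Sigma_B$ et l'orbite $\Sigma_\gamma\to\mathrm{F}_0\to\mathrm{F}_1\to\ldots\to\mathrm{F}_n$ --- permettent d'\'ecrire $M_n$: un bloc de type compagnon d\'ecalant les $\mathrm{F}_j$ entre eux, et un bloc d'angle de taille constante couplant $\mathrm{H},\,\mathrm{E}_1,\,\mathrm{E}_2$ aux classes de $\Sigma_B$ et de la courbe sur laquelle $p_*$ est \'eclat\'e. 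Un d\'eveloppement en cofacteurs suivant le bloc compagnon donne alors, apr\`es simplification,
$$\chi_n(t)=t^{n+1}(t^3-t-1)+t^3+t^2-1.$$

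\medskip

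\noindent L'analyse spectrale se fait ensuite cas par cas. Pour $n\leq 5,$ une factorisation directe donne $\chi_0=\Phi_1\Phi_2\Phi_3,$ $\chi_1=\Phi_1\Phi_5,$ $\chi_2=\Phi_1\Phi_2\Phi_8,$ $\chi_3=\Phi_1\Phi_3\Phi_{12},$ $\chi_4=\Phi_1\Phi_2\Phi_{18}$ et $\chi_5=\Phi_1\Phi_{30};$ toutes les racines sont des racines de l'unit\'e, et les ordres respectifs ($6,\,5,\,8,\,12,\,18,\,30$) sont tous major\'es par $30.$ Joint \`a la minimalit\'e du couple $(\langle f\rangle,\mathcal{Z})$ et au premier cas du Th\'eor\`eme \ref{DiFa}, ceci force $f$ elle-m\^eme \`a \^etre p\'eriodique de p\'eriode divisant l'un de ces ordres, donc au plus $30.$ Pour $n=6,$ on v\'erifie que $\chi_6=\Phi_1^3\Phi_2\Phi_3\Phi_5$: toutes les racines sont sur le cercle unit\'e, $1$ est racine triple, et un calcul direct montre que $f^*$ admet un bloc de \textsc{Jordan} de taille $3$ en cette valeur propre, d'o\`u $\deg f^k\sim k^2$ par le troisi\`eme cas du Th\'eor\`eme \ref{DiFa}. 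Pour $n\geq 7,$ on calcule $\chi_n'(1)=6-n<0$ alors que $\chi_n(t)\to +\infty$ quand $t\to +\infty;$ le th\'eor\`eme des valeurs interm\'ediaires fournit une racine r\'eelle $\lambda_n>1,$ et la stabilit\'e alg\'ebrique de $f$ sur $\mathcal{Z}$ jointe au Th\'eor\`eme \ref{lambda} donne $\lambda(f_{a,b})=\lambda_n$ (unicit\'e de la racine de module $>1$).

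\medskip

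\noindent L'asymptotique est alors imm\'ediate: l'\'egalit\'e $\chi_n(t)/t^{n+1}=(t^3-t-1)+(t^3+t^2-1)t^{-(n+1)}$ montre que le terme correctif tend uniform\'ement vers $0$ sur tout compact de $(1,+\infty),$ si bien que $\lambda_n$ converge vers l'unique racine r\'eelle de $t^3-t-1$ (le nombre plastique $\approx 1{,}3247$). Le point le plus d\'elicat est la premi\`ere \'etape: il faut fermer avec soin le diagramme des transform\'ees strictes, en particulier identifier la classe dans $\mathrm{Pic}(\mathcal{Z})$ de la courbe sur laquelle $p_*$ est \'eclat\'e (ce qui fait intervenir plusieurs $\mathrm{F}_j$), et v\'erifier que les corrections au bloc compagnon restent concentr\'ees sur un sous-espace de dimension born\'ee ind\'ependamment de $n$ --- faute de quoi la formule explicite pour $\chi_n$ ne se sp\'ecialise pas correctement. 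Une fois cette \'etape franchie, tout le reste rel\`eve de l'alg\`ebre \'el\'ementaire et de la dichotomie de \textsc{Diller}--\textsc{Favre}.
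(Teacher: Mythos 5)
Votre strat\'egie globale est exactement celle du texte (et de \textsc{Bedford}--\textsc{Kim}): \'ecrire la matrice de $f_{a,b}^*$ dans la base $\{\mathrm{H},\mathrm{E}_1,\mathrm{E}_2,\mathrm{Q},f(\mathrm{Q}),\ldots,f^n(\mathrm{Q})\}$ de $\mathrm{Pic}(\mathcal{Z})$ (de rang $n+4$), en d\'eduire $\chi_n,$ puis analyser les racines; vos factorisations cyclotomiques pour $n\leq 6$ sont exactes ($\chi_6=(t^2-1)(t^3-1)(t^5-1)$ notamment), et l'argument $\chi_n(1)=0,$ $\chi_n'(1)=6-n<0$ pour localiser la racine $\lambda_n>1$ quand $n\geq 7$ est correct, de m\^eme que le passage \`a la limite vers la racine r\'eelle de $t^3-t-1.$

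Il y a en revanche un vrai trou dans le cas $n\leq 5.$ Vous d\'eduisez la p\'eriodicit\'e de $f_{a,b}$ du fait que $f^*$ est d'ordre fini, \og joint \`a la minimalit\'e et au premier cas du Th\'eor\`eme \ref{DiFa}\fg. Or ce premier cas ne fournit qu'un it\'er\'e de $f$ dans $\mathrm{Aut}^0(\mathcal{Z}),$ et un automorphisme agissant trivialement sur $\mathrm{Pic}(\mathcal{Z})$ peut parfaitement \^etre d'ordre infini: pour $n\leq 5$ on n'\'eclate que $n+3\leq 8$ points, en position sp\'eciale de surcro\^it ($p_1,$ $p_2$ sur $\Sigma_0,$ $q$ sur $\Sigma_B$), et $\mathrm{Aut}^0(\mathcal{Z})$ contient typiquement un tore de dimension positive (pensez \`a $(x,y)\mapsto(\alpha x,\beta y)$ qui fixe les trois points de coordonn\'ees et se rel\`eve en agissant trivialement sur la cohomologie). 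L'ordre fini de $f^*$ borne donc la croissance des degr\'es mais ne force ni $f^N=\mathrm{id},$ ni la borne $30$ sur la p\'eriode; il faut v\'erifier la p\'eriodicit\'e sur les applications elles-m\^emes, ce que font \textsc{Bedford} et \textsc{Kim} en exploitant la description explicite des $\mathcal{V}_n$ pour $n\leq 5$ (par exemple $(a,b)=(0,0)$ donne $(y,z)\mapsto(z,z/y),$ de p\'eriode $6,$ et $(1,0)$ la r\'ecurrence de Lyness, de p\'eriode $5$). Dans le m\^eme esprit, pour $n=6$ l'existence d'un bloc de \textsc{Jordan} de taille $3$ en $1$ est pr\'ecis\'ement le point \`a \'etablir (la semi-simplicit\'e de $f^*$ n'est pas exclue par le seul polyn\^ome caract\'eristique); l'affirmer comme \og calcul direct\fg\, laisse la partie quadratique de l'\'enonc\'e sans justification.
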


\noindent L'action de $f_{a,b,\,\mathcal{Z}*}$ sur la cohomologie est donn\'ee par $$\mathrm{E}_2\to\Sigma_0=\mathrm{H}-\mathrm{E}_1-\mathrm{E}_2\to\mathrm{E}_1\to\Sigma_B=\mathrm{H}- \mathrm{E}_1-\mathrm{Q}$$ o\`u $\mathrm{Q}$ d\'esigne le diviseur obtenu en \'eclatant le point $q$ qui est sur $\Sigma_B.$ Par ailleurs $p_*$ \'etant \'eclat\'e par $f_{a,b}$ sur~$\Sigma_C,$ on a $$\mathrm{Q}\to f_{a,b}(\mathrm{Q})\to\ldots\to f_{a,b}^n(\mathrm{Q})\to\Sigma_C=\mathrm{H}-\mathrm{E}_2- \mathrm{Q}.$$ Finalement une droite g\'en\'erique $\mathrm{L}$ intersecte $\Sigma_0,$ $\Sigma_\beta$ et $\Sigma_\gamma$ avec multiplicit\'e $1;$ l'image de $\mathrm{L}$ est donc une conique passant par $q,$ $p_1$ et $p_2$ d'o\`u $\mathrm{H}\to 2\mathrm{H}-\mathrm{E}_1-\mathrm{E}_2 -\mathrm{Q}.$ Ainsi dans la ba\-se~$\{\mathrm{H},\,\mathrm{E}_1,\,\mathrm{E}_2,\,\mathrm{Q},\, f_{a,b}(\mathrm{Q}),\,\ldots,\,f_{a,b}^n(\mathrm{Q})\}$ on a $$M_{f_{a,b}}=\left[\begin{array}{ccccccccc} 
2& 1 &1 &0 &0 &\ldots &\ldots & 0 &1\\
-1 & -1 & -1 & 0 &0 &\ldots& \ldots& 0& 0\\
-1 & 0 & -1 & 0 & 0 &\ldots&\ldots& 0& -1\\
-1 & -1 & 0 & 0 & 0 &\ldots&\ldots& 0& -1\\
0 & 0 & 0 & 1 & 0 &\ldots & \ldots & 0 & 0\\
0 & 0 & 0 & 0 & 1 &0 & \ldots & 0 & \vdots\\
\vdots & \vdots & \vdots & \vdots & 0 &\ddots & \ddots & \vdots & \vdots\\
\vdots & \vdots & \vdots & \vdots & \vdots &\ddots & \ddots & 0 & 0\\
0 & 0 & 0 & 0 & 0 & \ldots &0 & 1 & 0\\
\end{array}\right].$$

\bigskip

\section{Courbes invariantes}

\noindent Dans l'esprit de \cite{DJS} \textsc{Bedford} et \textsc{Kim} \'etudient les courbes invariantes 
par~$f_{a,b}.$ Il existe des transformations rationnelles $\varphi_j\colon\mathbb{C}\to\mathbb{C}^2$ telles que si $(a,b)=~\varphi_j(t)$ pour un certain complexe $t,$ alors $f_{a,b}$ a une courbe invariante $\mathcal{C}$ ayant $j$ composantes irr\'eductibles. Posons
\begin{small} 
\begin{align*}
&\varphi_1(t)=\left(\frac{t-t^3-t^4}{1+2t+t^2},\frac{1-t^5}{t^2+t^3}\right), &&\varphi_2(t)=\left(\frac{t+t^2+t^3}{1+2t+t^2},\frac{t^3-1}{t+t^2}\right),&&\varphi_3(t)=\left(1+t,t-\frac{1}{t}\right).
\end{align*}
\end{small}

\begin{thm}[\cite{BK2}]
{\sl Soit $t$ dans $\mathbb{C}\setminus\{-1,\,1,\,0,\,\mathbf{j},\,\mathbf{j}^2\}.$ Il existe une cubique $\mathcal{C}$ invariante par $f_{a,b}$ si set seulement si $(a,b)=\varphi_j(t)$ pour un certain $1\leq j\leq 3$ auquel cas $\mathcal{C}$ est d\'ecrite par un polyn\^ome homog\`ene $P_{t,a,b}$ de degr\'e $3.$

\noindent De plus, si $P_{t,a,b}$ existe, il est donn\'e, \`a multiplication pr\`es par une constante, par
\begin{small}
\begin{align*}
&P_{t,a,b}(x,y,z)=ax^3(t-1)t^4+yz(t-1)t(z+ty)+x\Big(2byzt^3+y^2(t-1)t^3+z^2(t-1)(1+bt)\Big)\\ &\hspace{2cm}+x^2(t-1)t^3\Big(a(y+tz)+t(y+(t-2b)z)\Big).
\end{align*}
\end{small}}
\end{thm}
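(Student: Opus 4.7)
The plan is to characterize invariance of a cubic $\mathcal{C} = \{P = 0\}$ by $f_{a,b}$ as an algebraic identity on $P$, then solve the resulting system. Since $f_{a,b}$ has degree $2$ and contracts the three lines $\Sigma_0 = \{x=0\}$, $\Sigma_\beta = \{bx+y=0\}$, $\Sigma_\gamma = \{ax+z=0\}$, the pullback $f_{a,b}^*\mathcal{C}$ is a divisor of degree $6$; if $f_{a,b}(\mathcal{C}) \subset \mathcal{C}$, then $f_{a,b}^*\mathcal{C} = \mathcal{C} + \Sigma_0 + \Sigma_\beta + \Sigma_\gamma$. On the level of defining polynomials, this is the identity
\begin{equation*}
P\bigl(x(bx+y),\; z(bx+y),\; x(ax+z)\bigr) \;=\; \lambda \cdot P(x,y,z) \cdot x \cdot (bx+y) \cdot (ax+z)
\end{equation*}
for some $\lambda \in \mathbb{C}^*$, both sides being homogeneous of degree $6$.

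Next, writing $P = \sum_{i+j+k=3} c_{ijk} x^i y^j z^k$ as a general cubic with $10$ coefficients and matching monomials in the identity above produces a linear system in the $c_{ijk}$ whose matrix entries are polynomials in $(a,b,\lambda)$. Non-triviality of the system is an algebraic condition on $(a,b,\lambda)$; eliminating $\lambda$ (which will end up playing the role of $t$, up to a rational change of variable, as the multiplier of $f_{a,b}$ on a distinguished invariant curve) produces an algebraic subset of the $(a,b)$-plane whose three irreducible components are expected to be parameterized by $\varphi_1$, $\varphi_2$, $\varphi_3$.

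To make the elimination manageable, I would split the analysis according to the number of irreducible components of $\mathcal{C}$, which corresponds to the index $j$:
\begin{itemize}
\item If $\mathcal{C} = L_1 \cup L_2 \cup L_3$ is a union of three lines, $f_{a,b}$ permutes $\{L_1, L_2, L_3\}$. Since $f_{a,b}$ is quadratic, each $L_i$ must either be invariant or lie in a short cycle; explicit analysis of the three collinear points $p_1$, $p_2$, $p_*$ (and the structure of the exceptional chain $\Sigma_\beta \to \mathrm{E}_2 \to \Sigma_0 \to \mathrm{E}_1 \to \Sigma_B$) yields the family $\varphi_3$.
\item If $\mathcal{C} = \mathcal{Q} \cup L$ is a conic-plus-line, then $\mathcal{Q}$ and $L$ are both individually invariant (they cannot be swapped by degree considerations). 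Imposing invariance of the line first constrains $(a,b)$ to a one-parameter locus; enforcing the conic invariance on top of this yields $\varphi_2$.
\item If $\mathcal{C}$ is irreducible, the polynomial $P$ is irreducible of degree $3$, and none of the factors $x$, $bx+y$, $ax+z$ on the right-hand side of the invariance identity can divide $P$. Matching the leading $x^3$, $y^3$, $z^3$ coefficients already forces several $c_{ijk}$ to vanish (one recognizes this in $P_{t,a,b}$, where the coefficients of $y^3$ and $z^3$ are zero); the remaining constraints produce $\varphi_1$.
\end{itemize}
In each case, the explicit form of $P_{t,a,b}$ would be obtained by solving back in the linear system once $(a,b) = \varphi_j(t)$ is fixed; conversely, direct substitution of $(a,b) = \varphi_j(t)$ into $P_{t,a,b}$ verifies that the invariance identity holds with a specific $\lambda = \lambda(t)$.

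The main obstacle is the algebraic bookkeeping: the $28$-coefficient identity in $10$ unknowns with $3$ parameters is sizable, and proving completeness — that no other $(a,b)$ admits an invariant cubic — requires a careful elimination that rules out spurious components of the discriminant locus. The key structural simplification is the contraction behavior: since $\Sigma_\beta \to \mathrm{E}_2 \to \Sigma_0 \to \mathrm{E}_1$ is a chain of collapses followed by blowups, an invariant cubic must interact nontrivially with this chain, forcing $P$ to have a very restricted monomial support and collapsing the computation to a manageable $(a,b,t)$-system.
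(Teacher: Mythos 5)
Your overall plan (translate invariance of $\mathcal{C}=\{P=0\}$ into a polynomial identity, then eliminate) is a reasonable computational route, and the \emph{if} direction together with the explicit formula for $P_{t,a,b}$ can indeed be certified by direct substitution; note that the survey states this theorem without proof, quoting Bedford and Kim, so your sketch has to stand on its own. The genuine gap sits in your very first identity, and it hits exactly the delicate direction of the statement (no $(a,b)$ outside the three families carries an invariant cubic). For a reduced cubic $\mathcal{C}$ with $f_{a,b}(\mathcal{C})=\mathcal{C}$, what one can assert is
\[
P\bigl(x(bx+y),\,z(bx+y),\,x(ax+z)\bigr)\;=\;\lambda\,P(x,y,z)\,x^{m_1}(bx+y)^{m_2}(ax+z)^{m_3},
\]
where $m_1,$ $m_2,$ $m_3$ are the multiplicities of $\mathcal{C}$ at the points $p_1,$ $p_2,$ $q$ onto which $\Sigma_0,$ $\Sigma_\beta,$ $\Sigma_\gamma$ are contracted, and the degree count only gives $m_1+m_2+m_3=3.$ Your identity is the special case $(m_1,m_2,m_3)=(1,1,1),$ i.e.\ you are assuming that the invariant cubic passes through each of $p_1,$ $p_2,$ $q$ and is smooth there. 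Neither half of this is free: since $\Sigma_\gamma$ already contains the indeterminacy points $p_1$ and $p_*,$ the fact that $\mathcal{C}$ must meet $\Sigma_\gamma$ does not even force $q\in\mathcal{C}$ (all of $\mathcal{C}\cap\Sigma_\gamma$ could sit at $p_1$ and $p_*$), and a cubic with, say, a double point at $p_1$ and $m_3=0$ is compatible with the degree constraint. So even a flawlessly executed elimination under your hypothesis proves a weaker statement: it classifies the $(a,b)$ admitting an invariant cubic passing simply through $p_1,$ $p_2,$ $q.$ To obtain the theorem you must either rerun the elimination for every pattern $(m_1,m_2,m_3)$ with $m_i\geq 0$ and $m_1+m_2+m_3=3,$ or first prove a lemma pinning down how an invariant cubic meets the three contracted lines and their image points (tracking where $\mathcal{C}$ can intersect $\Sigma_0,$ $\Sigma_\beta,$ $\Sigma_\gamma$ away from $\mathrm{Ind}\,f_{a,b}$); that preliminary lemma is precisely what is missing before the functional equation can be written in your form.

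Two smaller points. The points $p_1,$ $p_2,$ $p_*$ are not collinear ($p_1$ and $p_2$ lie on $\{x=0\}$ while $p_*$ does not), so the heuristic you invoke in the three-lines case does not apply as stated; in that family the three lines are concurrent at a fixed point of $f_{a,b},$ which is the configuration your case analysis should be aiming for. In the conic-plus-line case you should also exclude the possibility that the line component is one of the contracted lines $\Sigma_0,$ $\Sigma_\beta,$ $\Sigma_\gamma$ (this is automatic if invariance is taken as equality of strict transforms, but not under the set-theoretic inclusion $f_{a,b}(\mathcal{C})\subset\mathcal{C}$ that you use).
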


\noindent Plus pr\'ecis\'ement on a la description suivante.

\begin{itemize}
\item Si $(a,b)=\varphi_1(t),$ alors $\Gamma_1=(P_{t,a,b}=0)$ est une cubique irr\'eductible cuspidale. La transformation $f_{a,b}$ a deux points fixes dont l'un est le point singulier de $\mathcal{C}.$ 

\item Si $(a,b)=\varphi_2(t),$ alors $\Gamma_2=(P_{t,a,b}=0)$ est l'union d'une conique et d'une droite tangente \`a celle-ci. La transformation $f_{a,b}$ a, l\`a aussi, deux points fixes.

\item Si $(a,b)=\varphi_3(t),$ alors $\Gamma_3=(P_{t,a,b}=0)$ est l'union de trois droites concourantes; $f_{a,b}$ a encore deux points fixes dont l'un est le point d'intersection des trois composantes de~$\mathcal{C}.$ 
\end{itemize}

\noindent Il existe un lien \'etroit entre les param\`etres $(a,b)$ pour lesquels il existe un complexe $t$ tel que~$\varphi_j(t)=(a,b)$ et les racines du polyn\^ome caract\'eristique $\chi_n.$

\begin{thm}[\cite{BK2}]
{\sl Soient $n$ un entier, $j$ un entier compris entre $1$ et $3$ et $t$ un complexe. Supposons que $(a,b):=\varphi_j(t)$ n'appartienne \`a aucun des $\mathcal{V}_k$ pour $k<n.$ Alors~$(a,b)$ appartient \`a $\mathcal{V}_n$ si et seulement si $j$ divise $n$ et $t$ est une racine de $\chi_n.$}
\end{thm}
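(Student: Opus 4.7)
Le plan repose sur l'observation cl\'e suivante: lorsque $f_{a,b}$ pr\'eserve la cubique $\Gamma_j$, le param\`etre $t$ joue le r\^ole d'un multiplicateur pour la dynamique induite sur la partie lisse de $\Gamma_j$, et ce multiplicateur devra co\"{\i}ncider avec une valeur propre de l'action cohomologique $f_{a,b,\mathcal{Z}}^*$. Je commencerai par v\'erifier, par substitution directe dans la formule explicite de $P_{t,a,b}$ \`a l'aide des identit\'es $(a,b)=\varphi_j(t)$, que les points $q=(1:-a:0)$ et $p_*=(1:-b:-a)$ appartiennent \`a $\Gamma_j$; l'invariance de $\Gamma_j$ par $f_{a,b}$ contraindra alors toute l'orbite $\{f_{a,b,\mathcal{Y}}^k(q)\}_{k\geq 0}$ \`a rester dans $\Gamma_j$, ce qui est l'ingr\'edient g\'eom\'etrique de base pour les deux directions de l'\'equivalence.

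Dans un second temps je param\'etrerai la partie lisse $\Gamma_j^*$: dans les trois cas consid\'er\'es (cuspidale, conique plus tangente, trois droites concourantes), $\mathrm{Pic}_0(\Gamma_j)$ est isomorphe au groupe additif $\mathbb{C}$, ce qui autorise un param\'etrage affine explicite de chaque composante. J'\'etablirai ensuite, par un calcul direct en coordonn\'ees, que $f_{a,b}$ permute cycliquement les $j$ composantes irr\'eductibles de $\Gamma_j$: permutation cyclique des trois droites si $j=3$, \'echange conique/droite tangente si $j=2$, action interne si $j=1$. Cette permutation cyclique est pr\'ecis\'ement ce qui force la n\'ecessit\'e de $j\mid n$: comme $q$ et $p_*$ appartiennent \`a des composantes d\'etermin\'ees par leurs coordonn\'ees, la co\"{\i}ncidence $f_{a,b,\mathcal{Y}}^n(q)=p_*$ exige que l'on soit revenu \`a la m\^eme composante apr\`es $n$ it\'erations.

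Une fois la condition $n=jm$ acquise, l'it\'er\'ee $f_{a,b}^j$ pr\'eserve chaque composante et l'action induite sur le param\'etrage d'une composante est une application affine de la forme $s\mapsto \alpha(t)s+c(t)$. Il faudra calculer explicitement $\alpha(t)$ et $c(t)$; la condition $f_{a,b,\mathcal{Y}}^n(q)=p_*$ se traduira alors par une \'equation rationnelle en $t$. Le c\oe{}ur technique, et l'obstacle principal, consistera \`a montrer que cette \'equation est \'equivalente \`a $\chi_n(t)=0$ o\`u $\chi_n(t)=t^{n+1}(t^3-t-1)+t^3+t^2-1$. L'approche la plus \'el\'egante me semble la suivante: relier directement le param\`etre $t$ aux valeurs propres de la matrice $M_{f_{a,b}}$ explicit\'ee juste avant l'\'enonc\'e. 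En effet, un calcul montre que le polyn\^ome caract\'eristique de $M_{f_{a,b}}$ vaut pr\'ecis\'ement $\chi_n$, et la pr\'eservation de $\Gamma_j$ impose que $t$ soit valeur propre de l'action de $f_{a,b}^*$ sur le sous-espace de $\mathrm{H}^2(\mathcal{Z},\mathbb{Z})$ engendr\'e par la classe de $\Gamma_j$ et par ses images it\'er\'ees par $f_{a,b}^*$; l'\'equation affine trouv\'ee pr\'ec\'edemment se ram\`enera alors \`a l'annulation d'un mineur, elle-m\^eme \'equivalente \`a $\chi_n(t)=0$. Les trois familles $\varphi_1,\varphi_2,\varphi_3$ devront \^etre trait\'ees s\'epar\'ement mais aboutiront toutes au m\^eme polyn\^ome, unification qui traduit le caract\`ere universel de la condition cohomologique par rapport \`a la d\'eg\'en\'erescence de $\Gamma_j$.
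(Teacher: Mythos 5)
Votre plan suit pour l'essentiel la strat\'egie de \textsc{Bedford} et \textsc{Kim} (le pr\'esent texte ne reproduit d'ailleurs pas leur d\'emonstration et renvoie \`a \cite{BK2}): v\'erifier que $q$ et $p_*$ sont sur la cubique invariante, observer que $f_{a,b}$ permute cycliquement les $j$ composantes de $\Gamma_j$ --- d'o\`u la n\'ecessit\'e de $j\mid n$ --- puis traduire la condition $f_{a,b,\,\mathcal{Y}}^n(q)=p_*$ en une \'equation explicite en $t$ via le param\'etrage affine de la partie lisse, $\mathrm{Pic}_0(\Gamma_j)$ \'etant bien isomorphe \`a $\mathbb{C}$ dans les trois cas. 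Mais le c\oe ur de l'\'enonc\'e, \`a savoir l'\'equivalence de cette \'equation d'orbite avec $\chi_n(t)=0$, est pr\'ecis\'ement ce que vous laissez en suspens (vous le qualifiez vous-m\^eme d'obstacle principal), et le raccourci cohomologique que vous proposez pour l'obtenir ne fonctionne pas.

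D'abord il y a une circularit\'e: la matrice $M_{f_{a,b}}$ \'ecrite dans la base $\{\mathrm{H},\,\mathrm{E}_1,\,\mathrm{E}_2,\,\mathrm{Q},\,\ldots,\,f_{a,b}^n(\mathrm{Q})\}$ n'existe que lorsque l'orbite de $q$ aboutit \`a $p_*$ au temps $n$, c'est-\`a-dire lorsque $(a,b)$ appartient d\'ej\`a \`a $\mathcal{V}_n$; on ne peut donc pas s'en servir pour \'etablir la r\'eciproque, qui demande justement de prouver que l'orbite se referme. Ensuite l'argument spectral annonc\'e est incorrect tel quel: la classe de $\Gamma_j$ est anticanonique, donc fix\'ee par $f_{a,b}^*$, et les classes de ses composantes sont seulement permut\'ees; le sous-espace de $\mathrm{H}^2(\mathcal{Z},\mathbb{Z})$ engendr\'e par $[\Gamma_j]$ et ses it\'er\'ees ne fournit donc que des valeurs propres racines de l'unit\'e (ou $1$), jamais $t$. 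Le lien correct entre $t$ et le spectre passe par l'action de $f_{a,b}$ sur $\mathrm{Pic}_0(\Gamma_j)\simeq\mathbb{C}$ --- multiplication par un facteur qu'il faut effectivement identifier \`a $t$ --- et par l'\'equivariance de la restriction de $\mathrm{K}_\mathcal{Z}^\perp$ vers $\mathrm{Pic}_0(\Gamma_j)$; cet argument, outre qu'il suppose la surface $\mathcal{Z}$ construite, ne donne de toute fa\c{c}on que le sens direct. Il reste donc indispensable de calculer explicitement le multiplicateur et la translation de $f_{a,b}^j$ sur chaque composante, de situer $q$ et $p_*$ dans le param\'etrage, et de v\'erifier que l'\'equation obtenue apr\`es sommation g\'eom\'etrique co\"incide avec $\chi_n(t)=0$ (en tenant compte de l'hypoth\`ese d'exclusion des $\mathcal{V}_k$ pour $k<n$): c'est l\`a tout le contenu du th\'eor\`eme, et il manque dans votre proposition.
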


\noindent On peut \'ecrire $\chi_n$ sous la forme $C_n\psi_n$ o\`u $C_n$ est le produit des facteurs cyclotomiques et $\psi_n$ le polyn\^ome minimal de $\lambda(f_{a,b}).$

\begin{thm}[\cite{BK2}]
{\sl Supposons que $n$ soit sup\'erieur ou \'egal \`a $7.$ Soit $t$ une racine de~$\chi_n$ distincte de $1.$ Alors ou bien $t$ est une racine de $\psi_n,$ ou bien $t$ est une racine de $\chi_j$ pour un certain $0\leq j\leq 5.$ }
\end{thm}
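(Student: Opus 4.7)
Le plan est d'\'etablir d'abord l'identit\'e cl\'e
\[
\chi_n(t)-\chi_m(t)=t^{m+1}(t^{n-m}-1)(t^3-t-1),
\]
qui d\'ecoule imm\'ediatement de la formule $\chi_n(t)=t^{n+1}(t^3-t-1)+(t^3+t^2-1).$ Or $t^3-t-1$ n'a aucune racine sur le cercle unit\'e (ses racines sont le nombre plastique $\approx 1{,}3247$ et deux complexes conjugu\'ees de module $\approx 0{,}869$); par cons\'equent, pour toute racine de l'unit\'e $\zeta$ d'ordre $d,$ on aura $\chi_n(\zeta)=\chi_m(\zeta)$ d\`es que $d$ divise $n-m.$ En particulier, si $\chi_n(\zeta)=0$ alors $\chi_{n_0}(\zeta)=0$ o\`u $n_0=n\bmod d,$ c'est-\`a-dire que $\Phi_d$ divise $\chi_{n_0}$ avec $0\leq n_0<d.$

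On factorisera ensuite explicitement les polyn\^omes $\chi_j$ pour $0\leq j\leq 5$ en produits de polyn\^omes cyclotomiques :
\begin{align*}
&\chi_0=\Phi_1\Phi_2\Phi_3,&&\chi_1=\Phi_1\Phi_5,&&\chi_2=\Phi_1\Phi_2\Phi_8,\\
&\chi_3=\Phi_1\Phi_3\Phi_{12},&&\chi_4=\Phi_1\Phi_2\Phi_{18},&&\chi_5=\Phi_1\Phi_{30}.
\end{align*}
Ces d\'ecompositions cyclotomiques traduisent le fait que $f_{a,b}$ est p\'eriodique pour $n\leq 5,$ avec p\'eriode divisant le nombre de \textsc{Coxeter} $h_{n+3}\in\{6,5,8,12,18,30\}$ du groupe fini $W_{n+3}.$ On en extrait l'ensemble $D=\{1,2,3,5,8,12,18,30\}$ des ordres apparaissant, avec, pour chaque $d\in D,$ un unique $n_0(d)\in\{0,1,2,3,4,5\}$ tel que $\Phi_d$ divise $\chi_{n_0(d)}$ (explicitement $n_0(1)=n_0(2)=n_0(3)=0,$ $n_0(5)=1,$ $n_0(8)=2,$ $n_0(12)=3,$ $n_0(18)=4,$ $n_0(30)=5$).

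L'\'etape cruciale, qui forme le c\oe ur de la d\'emonstration, consistera \`a montrer que pour $d\not\in D,$ $\Phi_d$ ne divise aucun $\chi_n.$ Par p\'eriodicit\'e, cela se r\'eduit \`a v\'erifier $\Phi_d\nmid\chi_{n_0}$ pour tout $n_0\in\{0,\ldots,d-1\}.$ Pour les petits ordres manquants $d\in\{4,6\},$ on calculera directement $\alpha(\zeta_d):=-(\zeta_d^3+\zeta_d^2-1)/(\zeta_d^3-\zeta_d-1)$ pour constater que, bien que de module $1,$ elle n'est pas une puissance de $\zeta_d,$ ce qui entra\^inera $\chi_n(\zeta_d)\not=0$ pour tout $n.$ Pour les grands ordres $d\geq 7$ n'appartenant pas \`a $\{8,12,18,30\},$ on exploitera l'identit\'e $\chi_n=(t-1)P_{n+3}$ o\`u $P_m$ est le polyn\^ome caract\'eristique d'un \'el\'ement de \textsc{Coxeter} du groupe de \textsc{Weyl} $W_m$ (\emph{voir} \S\ref{wweyl}), combin\'ee \`a la classification des facteurs cyclotomiques des polyn\^omes de \textsc{Coxeter} des graphes hyperboliques $T_{2,3,r}.$

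La conclusion suivra imm\'ediatement : si $t\not=1$ est racine de $\chi_n$ avec $n\geq 7$ et n'est pas racine de $\psi_n,$ alors $t=\zeta$ est une racine de l'unit\'e dont l'ordre $d$ appartient \`a $D,$ et d'apr\`es les factorisations explicites, $t$ est racine de $\chi_{n_0(d)}$ avec $n_0(d)\leq 5.$ Le v\'eritable obstacle technique sera la v\'erification de la compl\'etude de $D$ pour les grands $d,$ o\`u il faudra \'eviter une \'enum\'eration infinie en invoquant la structure des polyn\^omes de \textsc{Coxeter} des graphes hyperboliques.
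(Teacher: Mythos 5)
Le pr\'esent texte ne contient pas de d\'emonstration de cet \'enonc\'e (il renvoie \`a \cite{BK2}); votre proposition est donc jug\'ee pour elle-m\^eme. Son ossature est correcte et vos calculs sont exacts: l'identit\'e $\chi_n(t)-\chi_m(t)=t^{m+1}(t^{n-m}-1)(t^3-t-1)$ est juste, les factorisations de $\chi_0,\ldots,\chi_5$ en produits cyclotomiques sont bonnes, et, combin\'ees \`a l'\'ecriture $\chi_n=C_n\psi_n$ rappel\'ee juste avant l'\'enonc\'e, elles donnent la conclusion d\`es que l'on sait que l'ordre $d$ de $t$ appartient \`a $D=\{1,2,3,5,8,12,18,30\}$: $t$ annule alors $\Phi_d,$ qui divise l'un des $\chi_j,$ $j\leq 5.$ Deux remarques mineures: pour $d\leq 6$ la r\'eduction $n_0=n\bmod d\leq 5$ donne d\'ej\`a la conclusion, de sorte que le traitement s\'epar\'e de $d=4,6$ est superflu; et l'unicit\'e de $n_0(d)$ est fausse pour $d=1,2,3$ ($\Phi_1$ divise tous les $\chi_n,$ $\Phi_2$ divise $\chi_0,$ $\chi_2,$ $\chi_4,$ $\Phi_3$ divise $\chi_0$ et $\chi_3$), mais seule l'existence sert.

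En revanche, le point que vous qualifiez vous-m\^eme de \og v\'eritable obstacle\fg\, est pr\'ecis\'ement le contenu du th\'eor\`eme, et il n'est pas d\'emontr\'e: il faut prouver que pour tout $d\geq 7$ avec $d\notin\{8,12,18,30\},$ aucune racine primitive $d$-i\`eme de l'unit\'e n'annule aucun $\chi_n,$ c'est-\`a-dire $\Phi_d\nmid\chi_{n_0}$ pour tout $0\leq n_0<d.$ Votre identit\'e de p\'eriodicit\'e ram\`ene chaque $d$ fix\'e \`a une v\'erification finie, mais il reste une infinit\'e de valeurs de $d,$ et vous renvoyez pour cela \`a \og la classification des facteurs cyclotomiques des polyn\^omes de \textsc{Coxeter} des graphes $T_{2,3,r}$\fg; or, via $\chi_n=(t-1)P_{n+3},$ cette classification est exactement l'\'enonc\'e \`a \'etablir: l'invoquer sans l'\'enoncer pr\'ecis\'ement ni la prouver revient en pratique \`a supposer le r\'esultat. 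Aucun argument n'est esquiss\'e pour exclure les autres ordres, par exemple une borne sur $d,$ l'exploitation de l'action de \textsc{Galois} $\zeta\mapsto\zeta^k$ qui impose $\zeta^{k(n+1)}(\zeta^{3k}-\zeta^{k}-1)=-(\zeta^{3k}+\zeta^{2k}-1)$ pour tout $k$ premier \`a $d,$ ou des identit\'es du type $\chi_n(t)=(t^{n+1}-1)(t^3-t-1)+(t-1)(2t^2+3t+2),$ laquelle exclut d\'ej\`a le cas $d\mid n+1$ puisque $2t^2+3t+2,$ non unitaire et irr\'eductible sur $\mathbb{Q},$ n'a pas de racine de l'unit\'e parmi ses racines. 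En l'\'etat, votre preuve n'est \'etablie que modulo cette \'etape, qui en est le c\oe ur.
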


\noindent En d\'ecrivant de mani\`ere pr\'ecise ce qu'il se passe lorsque $t$ n'est pas une racine de $\psi_n,$ \textsc{Bedford} et \textsc{Kim} montrent que le nombre d'\'el\'ements de $\Gamma_j\cap\mathcal{V}_n$ est, pour $n\geq 7,$ d\'etermin\'e par le nombre de conjugu\'es de \textsc{Galois} de l'unique racine de $\psi_n$ strictement sup\'erieure \`a $1:$ si~$n\geq 7$ et $1\leq j\leq 3$ divise $n,$ alors $$\Gamma_j\cap\mathcal{V}_n=\{\varphi_j(t)\,\vert\, t\text{ racine de }\psi_n\};$$ en particulier $\Gamma_j\cap\mathcal{V}_n$ n'est pas vide.

\noindent Soient $\mathcal{X}$ une surface rationnelle et $g$ un automorphisme sur $\mathcal{X}.$ Le couple $(\mathcal{X},g)$ est dit {\it minimal}\label{ind59} si tout morphisme birationnel $\pi\colon \mathcal{X}\to\mathcal{X}'$ qui envoie $(\mathcal{X},g)$ sur $(\mathcal{X}',g'),$ o\`u $g'$ est un automorphisme sur $\mathcal{X}',$ est un isomorphisme. Rappelons une question pos\'ee dans \cite{Mc}. Soient $\mathcal{X}$ une surface rationnelle et $g$ un automorphisme sur~$\mathcal{X}.$ Supposons que $(\mathcal{X},g)$ soit minimal. Existe-t-il une puissance n\'egative de la classe du diviseur canonique $\mathrm{K}_\mathcal{X}$ admettant une section holomorphe ? On sait depuis \cite{Ha} que la r\'eponse est n\'egative si on enl\`eve l'hypoth\`ese \og $(\mathcal{X},g)$ minimal\fg. 

\begin{thm}[\cite{BK2}]\label{bedfordkim}
{\sl Il existe une surface $\mathcal{Z}$ et un automorphisme $f_{a,b}$ sur $\mathcal{Z}$ d'entropie positive tels que $(\mathcal{Z},f_{a,b})$ soit minimal et tels que $f_{a,b}$ ne poss\`ede pas de courbe invariante.}
\end{thm}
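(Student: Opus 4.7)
The plan is to exhibit a parameter $(a,b)\in\mathcal{V}_n$ for some $n\geq 8$ avoiding the three cubic-invariant loci $\Gamma_1,\Gamma_2,\Gamma_3$, and to verify using the cohomological structure of $f_{a,b,\mathcal{Z}}^{\ast}$ that the resulting automorphism has no invariant curve and that the pair $(\mathcal{Z},f_{a,b,\mathcal{Z}})$ is minimal.

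First I would set up a counting argument. The two conditions $f_{a,b,\mathcal{Y}}^{n}(q)=p_{*}$ and $f_{a,b,\mathcal{Y}}^{j}(q)\neq p_{*}$ for $j<n$ define $\mathcal{V}_n$ as a zero-dimensional subscheme of $\mathbb{C}^2$ whose cardinality grows with $n$ (controlled by $\deg\chi_n=n+4$), while the previous theorem identifies $\Gamma_{j}\cap\mathcal{V}_{n}$ with $\{\varphi_j(t):\psi_n(t)=0\}$, a set of cardinality at most $\deg\psi_n$. Hence for $n$ sufficiently large there exists $(a,b)\in\mathcal{V}_n\setminus(\Gamma_1\cup\Gamma_2\cup\Gamma_3)$; fix such a parameter. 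A direct computation gives $\chi_n(1)=(1-1-1)+1+1-1=0$ and $\chi_n'(1)=6-n\neq 0$ for $n\geq 7$, so $1$ is a simple root of $\chi_n$; the remaining roots are the Salem roots $\lambda^{\pm 1}$ and the roots of the cyclotomic factor $C_n$ (lying on the unit circle, none equal to $1$). Consequently the $1$-eigenspace of $f^{\ast}$ on $\mathrm{Pic}(\mathcal{Z})\otimes\mathbb{R}$ is one-dimensional, and since $f^{\ast}K_{\mathcal{Z}}=K_{\mathcal{Z}}$ it is spanned by $K_{\mathcal{Z}}$.

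Next I would rule out any invariant curve. An irreducible $f$-invariant curve $C$ satisfies $f^{\ast}[C]=[C]$, hence $[C]=-kK_{\mathcal{Z}}$ for some $k\geq 1$ by Step 2. Adjunction gives
\[
2p_a(C)-2 \;=\; C^{2}+C\cdot K_{\mathcal{Z}} \;=\; k(k-1)K_{\mathcal{Z}}^{2} \;=\; k(k-1)(6-n),
\]
which for $n\geq 8$ forces $k=1$; thus $C$ would be anticanonical and project to an invariant cubic of $f_{a,b}$ in $\mathbb{P}^2$, contradicting the choice of $(a,b)$. A reducible invariant configuration $C=C_1+\cdots+C_\ell$ with components permuted is ruled out by passing to an iterate $f^m$ fixing each component and applying the argument to $f^m$, whose action on $\mathrm{Pic}(\mathcal{Z})$ still has $1$ as a simple eigenvalue provided $m$ is coprime to the orders of the cyclotomic factors of $\chi_n$ (which may be arranged).

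Finally, minimality: any equivariant contraction corresponds to an $f$-invariant union $D$ of $(-1)$-curves with $[D]$ fixed by $f^{\ast}$, hence proportional to $K_{\mathcal{Z}}$ by Step 2. But $D\cdot K_{\mathcal{Z}}=-\#\{\text{components of }D\}<0$, whereas $-kK_{\mathcal{Z}}^2=-k(6-n)>0$ for $n\geq 7$, a contradiction. The main obstacle is the cohomological step: one must confirm the precise factorisation $\chi_n=\psi_n\cdot C_n$ and verify that $1$ is a simple root not shared with any cyclotomic or Salem factor, so that the $1$-eigenspace of $f^{\ast}$ (and of suitable iterates) reduces exactly to $\mathbb{R}K_{\mathcal{Z}}$; once this is in hand, the adjunction bound and the orbit counting mechanically yield both the absence of invariant curves and minimality.
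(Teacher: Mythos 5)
The survey does not actually prove this statement (it is quoted from \cite{BK2}), so your proposal has to stand on its own. Its cohomological core is fine and is the right kind of argument: $\chi_n(1)=0$, $\chi_n'(1)=6-n\neq 0$ for $n\geq 7$, so $1$ is a simple eigenvalue of $f^*$ and the fixed part of $\mathrm{Pic}(\mathcal{Z})\otimes\mathbb{R}$ is $\mathbb{R}K_{\mathcal{Z}}$; adjunction with $K_{\mathcal{Z}}^2=6-n$ then forces an \emph{irreducible} invariant curve to be anticanonical for $n\geq 8$, whose image is an invariant cubic, contradicting $(a,b)\notin\Gamma_1\cup\Gamma_2\cup\Gamma_3$. (Minor slips: $t-1$ does divide the cyclotomic factor $C_n$, which is harmless since you only use simplicity; and for minimality the relation $D\cdot K_{\mathcal{Z}}=-\#\{\text{composantes}\}$ is only valid after you replace the full exceptional divisor of an equivariant contraction---whose components generally have self-intersection $\leq -2$ and nonnegative $K$-degree---by the $f$-orbit of the first contracted $(-1)$-curve, which is finite because all its members are contracted; with that reduction the minimality step is correct.)

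There are, however, two genuine gaps. First, the existence of $(a,b)\in\mathcal{V}_n\setminus(\Gamma_1\cup\Gamma_2\cup\Gamma_3)$ --- the heart of the existence claim --- is not established: you bound $\#(\Gamma_j\cap\mathcal{V}_n)$ by $\deg\psi_n$, but $\deg\psi_n$ can be as large as $n+4$ up to a small cyclotomic part, i.e.\ of exactly the same linear order as your (unproved) estimate ``cardinality controlled by $\deg\chi_n=n+4$'' for $\mathcal{V}_n$, so ``for $n$ sufficiently large'' yields nothing; moreover zero-dimensionality of $\mathcal{V}_n$ is not automatic (the survey itself notes that $\mathcal{V}_6$ contains the whole line $b=0$). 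This nonemptiness is a real theorem in \cite{BK2} and needs an actual lower bound on $\mathcal{V}_n$ or a direct construction. Second, the reducible/disconnected case is not handled by your iterate trick: the integer $m$ is the order of the permutation of the components, you do not get to choose it coprime to the orders of the unit-root eigenvalues of $f^*$; if it resonates with the cyclotomic factor, the $1$-eigenspace of $(f^m)^*$ is strictly larger than $\mathbb{R}K_{\mathcal{Z}}$ and your argument gives nothing (and even when it applies it produces an $f^m$-invariant cubic, whereas the quoted classification concerns $f_{a,b}$-invariant cubics). Note that for $k=1$ no connectivity is needed (a reduced invariant divisor of class $-K_{\mathcal{Z}}$ pushes forward to an invariant, possibly reducible, cubic), and for a \emph{connected} reduced invariant curve your adjunction bound still applies; what remains unexcluded by your text is a disconnected invariant curve of class $-kK_{\mathcal{Z}}$ with $k\geq 2$, for which $p_a$ can be negative and a separate argument is required. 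Finally, the step you flag as the main obstacle---the factorisation $\chi_n=C_n\psi_n$ and the simplicity of the root $1$---is the easy part (it is stated in the survey and verified by your derivative computation); the difficulties are the two points above.
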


\noindent Si $g$ est un automorphisme sur une suface rationnelle $\mathcal{X}$ telle qu'une puissance n\'egative de~$\mathrm{K}_\mathcal{X}$ admette une section holomorphe, $g$ pr\'eserve une courbe; par cons\'equent le Th\'eor\`eme~\ref{bedfordkim} r\'epond \`a la question de \textsc{McMullen}.

\section{Domaines de rotation}

\noindent Supposons que $n$ soit sup\'erieur ou \'egal \`a $7$ auquel cas $f_{a,b}$ n'est pas p\'eriodique; s'il y a un domaine de rotation, son rang vaut $1$ ou $2.$ Nous allons voir que les deux arrivent; commen\c{c}ons par ceux de rang $1.$

\begin{thm}[\cite{BK2}]
{\sl Supposons que $n$ soit sup\'erieur ou \'egal \`a $7,$ que $j$ divise $n$ et que $(a,b)$ appartienne \`a~$\Gamma_j\cap\mathcal{V}_n.$ Il existe un complexe $t$ tel que $(a,b)= \varphi_j(t).$ Si $t$ est un conjugu\'e de \textsc{Galois} de $\lambda(f_{a,b})$ distinct de~$\lambda( f_{a,b})$ et $\lambda( f_{a,b})^{-1},$ alors $f_{a,b}$ a un domaine de rotation de rang $1$ centr\'e en 
\begin{align*}
& \left(\frac{t^3}{1+t},\frac{t^3}{1+t}\right)\text{ si } j=1,&& \left(-\frac{t^2}{1+t},-\frac{t^2}{1+t}\right)\text{ si } j=2,&& (-t,-t)\text{ si } j=3.
\end{align*}
}
\end{thm}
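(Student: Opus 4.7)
L'id\'ee centrale est de montrer que le point indiqu\'e, not\'e ici $m_j,$ est un point fixe de $f_{a,b}$ situ\'e sur la courbe invariante $\Gamma_j,$ et d'\'etablir que la diff\'erentielle $Df_{a,b}(m_j)$ est lin\'earisable avec deux valeurs propres de module $1$ mais multiplicativement d\'ependantes, ce qui donnera un domaine de rotation de rang exactement $1.$

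Le plan est le suivant. Tout d'abord je v\'erifierais par un calcul direct que $m_j$ est fix\'e par $f_{a,b}$ sous l'hypoth\`ese $(a,b)=\varphi_j(t):$ pour $j=3$ par exemple on a $a=1+t$ et $b=t-1/t,$ d'o\`u $\left(-t,\frac{a-t}{b-t}\right) = \left(-t,\frac{1}{-1/t}\right) = (-t,-t);$ les cas $j=1$ et $j=2$ se traitent de mani\`ere analogue apr\`es substitution des formules de $\varphi_j.$ Ensuite je calculerais la matrice jacobienne de $f_{a,b}(y,z)=\left(z,\frac{a+z}{b+y}\right)$ au point $m_j,$ qui s'\'ecrit
\begin{equation*}
Df_{a,b}(m_j)=\begin{bmatrix} 0 & 1 \\ -\frac{a+z_0}{(b+y_0)^2} & \frac{1}{b+y_0}\end{bmatrix}
\end{equation*}
o\`u $(y_0,z_0)=m_j.$ La trace et le d\'eterminant s'expriment alors comme des fractions rationnelles en $t,$ et on peut v\'erifier que les valeurs propres $\alpha(t),$ $\beta(t)$ sont des mon\^omes $\alpha=t^{p_j}$ et $\beta=t^{q_j}$ pour certains entiers $p_j,q_j$ (l'un \'etant la valeur propre tangente \`a $\Gamma_j,$ impos\'ee par l'action de $f_{a,b}$ sur la partie lisse $\Gamma_j^*,$ qui s'identifie \`a $\mathbb{C}^*$ (cas $j=1,2$) ou \`a $\mathbb{C}$ (cas $j=3$) via $\mathrm{Pic}_0(\Gamma_j)$).

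Les valeurs propres \'etant des puissances d'un m\^eme $t$ avec $|t|=1$ (cons\'equence du fait que $t$ est un conjugu\'e de \textsc{Galois} de $\lambda(f_{a,b})$ distinct de $\lambda(f_{a,b})^{\pm 1}$: comme $\lambda(f_{a,b})$ est un nombre de \textsc{Salem} d'apr\`es le Th\'eor\`eme \ref{lambda} et la discussion sur $\psi_n,$ tous ses autres conjugu\'es sont sur le cercle unit\'e), on obtient imm\'ediatement que $\alpha$ et $\beta$ sont de module $1$ et multiplicativement d\'ependants: ils engendrent un sous-groupe \`a un param\`etre du tore $(\mathbb{S}^1)^2.$ Il faut alors appliquer un \'enonc\'e de lin\'earisation pour conclure; les hypoth\`eses de g\'en\'ericit\'e sur $t$ (plus pr\'ecis\'ement le fait que $t$ est alg\'ebrique sur $\mathbb{Q}$ avec au moins un conjugu\'e hors du cercle unit\'e, \`a savoir $\lambda(f_{a,b})$ lui-m\^eme) entra\^inent que $t$ satisfait une condition diophantienne (th\'eor\`eme de \textsc{Gelfond} ou \textsc{Baker}), et un th\'eor\`eme classique de lin\'earisation \`a la \textsc{Siegel} fournit une conjugaison holomorphe locale de $f_{a,b}$ \`a sa partie lin\'eaire $(x,y)\mapsto(\alpha x,\beta y).$ La composante de \textsc{Fatou} contenant $m_j$ est alors un domaine de rotation; son rang est \'egal \`a la dimension du groupe de \textsc{Lie} compact ab\'elien obtenu comme adh\'erence de $\{(\alpha^k,\beta^k)\,\vert\,k\in\mathbb{Z}\}$ dans $(\mathbb{S}^1)^2,$ qui vaut $1$ puisque $\alpha$ et $\beta$ sont multiplicativement d\'ependants sur $\mathbb{Z}$ tout en \'etant irrationnels (aucun n'est racine de l'unit\'e car alors $t$ le serait, contredisant la minimalit\'e de $\psi_n$ qui est un polyn\^ome de \textsc{Salem}).

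La principale difficult\'e r\'eside dans l'identification pr\'ecise des exposants $p_j,q_j$ et la v\'erification que $\alpha$ et $\beta$ sont bien multiplicativement d\'ependants mais non racines de l'unit\'e; ceci n\'ecessite d'expliciter la structure de groupe sur $\Gamma_j^*$ et l'action de $f_{a,b}$ sur cette structure, ce qui revient exactement au cadre des \emph{cubiques marqu\'ees} d\'evelopp\'e par \textsc{McMullen} au chapitre pr\'ec\'edent. La v\'erification diophantienne exploitant le caract\`ere de \textsc{Salem} de $\lambda(f_{a,b})$ est elle aussi un point d\'elicat mais standard.
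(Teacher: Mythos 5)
La version de ce th\'eor\`eme donn\'ee dans le texte est un simple \'enonc\'e cit\'e de \cite{BK2}, sans d\'emonstration; votre reconstruction suit pour l'essentiel la strat\'egie effectivement employ\'ee par \textsc{Bedford} et \textsc{Kim}: point fixe sur la courbe invariante, valeurs propres explicites en $t,$ module $1$ par conjugaison de \textsc{Salem}, lin\'earisation diophantienne, puis rang $1$ par d\'ependance multiplicative. Vos v\'erifications sont exactes: le point indiqu\'e est bien fixe (pour $j=1$ on trouve $a+z_0=\frac{t}{(1+t)^2},$ $b+y_0=\frac{1}{t^2(1+t)},$ etc.), et le polyn\^ome caract\'eristique de $Df_{a,b}(m_j)$ donne comme valeurs propres $t^2,t^3$ si $j=1,$ $-t,-t^2$ si $j=2$ et $t\omega,t\bar\omega$ avec $\omega^3=1$ si $j=3.$ Deux points sont donc \`a pr\'eciser dans votre r\'edaction. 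D'abord, les valeurs propres ne sont des mon\^omes $t^{p_j}$ qu'\`a un facteur racine de l'unit\'e pr\`es pour $j=2,3;$ cela ne change rien \`a la d\'ependance multiplicative ni au fait qu'aucune n'est racine de l'unit\'e, mais la phrase telle quelle est inexacte. Ensuite, et c'est le point le plus s\'erieux, le crit\`ere de lin\'earisation rappel\'e au chapitre sur la dynamique (\og alg\'ebriques et multiplicativement ind\'ependants $\Rightarrow$ simultan\'ement diophantiens\fg) ne s'applique pas ici puisque les valeurs propres sont pr\'ecis\'ement multiplicativement \emph{d\'ependantes}: il faut, comme vous le sugg\'erez, \'etablir directement l'in\'egalit\'e $\min\bigl(\vert\alpha^a\beta^b-\alpha\vert,\vert\alpha^a\beta^b-\beta\vert\bigr)\geq c/(a+b)^M,$ qui se ram\`ene \`a une minoration de $\vert t^k-\zeta\vert$ ($\zeta$ racine de l'unit\'e) fournie par le th\'eor\`eme de \textsc{Baker} sur les formes lin\'eaires de logarithmes (la forme quantitative de \textsc{Gelfond} \`a deux logarithmes convient aussi), apr\`es avoir not\'e qu'il n'y a pas de r\'esonance exacte (pour $j=1$ par exemple $2a+3b\geq 2(a+b)\geq 4>3$). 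Avec ces pr\'ecisions, l'argument est complet et co\"incide avec celui de \cite{BK2}.
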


\noindent Passons \`a ceux de rang $2.$

\begin{thm}[\cite{BK2}]
{\sl Consid\'erons un entier $n$ sup\'erieur ou \'egal \`a $8,$ un entier $j$ \`a valeurs dans $\{2,\,3\}$ qui divise $n.$ Supposons que $(a,b)=\varphi_j(t)$ et que $\vert t\vert=1;$ supposons de plus que $t$ soit une racine de $\psi_n.$ Notons $\eta_1,$ $\eta_2$ les valeurs propres de~$Df_{a,b}$ au point
\begin{align*}
&m=\left(\frac{1+t+t^2}{t+t^2},\frac{1+t+t^2}{t+t^2}\right) \text{ si }j=2, && m=\left(1+\frac{1}{t},1+\frac{1}{t}\right) \text{ si }j=3.
\end{align*}

\noindent Si $\eta_1$ et $\eta_2$ sont de module $1,$ alors $f_{a,b}$ a un domaine de rotation de rang $2$ centr\'e en $m.$}
\end{thm}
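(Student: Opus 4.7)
L'id\'ee est d'appliquer le th\'eor\`eme de lin\'earisation en dimension $2$ rappel\'e au Chapitre \ref{chapdyn}: si $(\alpha,\beta)$ est un couple de valeurs propres alg\'ebriques et multiplicativement ind\'ependantes de module $1,$ alors $(\alpha,\beta)$ est simultan\'ement diophantien et tout germe d'automorphisme de la forme $(\alpha x,\beta y)+\textrm{o}(\vert(x,y)\vert)$ est localement lin\'earisable; dans le mod\`ele lin\'eaire $(x,y)\mapsto(\alpha x,\beta y),$ l'adh\'erence de l'orbite d'un point g\'en\'erique est un $2$-tore r\'eel, ce qui fournit bien un domaine de rotation de rang~$2$ centr\'e au point fixe.

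La premi\`ere \'etape est de v\'erifier que $m$ est un point fixe de $f_{a,b}.$ Dans la carte affine $f_{a,b}(y,z)=(z,(a+z)/(b+y));$ dans les deux cas $j=2,$ $3,$ on a $m=(y_0,y_0)$ avec $y_0=(1+t+t^2)/(t+t^2)$ ou $y_0=(t+1)/t.$ Un calcul direct (utilisant les expressions explicites des $\varphi_j$) donne $f_{a,b}(m)=m:$ par exemple pour $j=3,$ on obtient $b+y_0=t+1$ et $a+y_0=(t+1)^2/t,$ d'o\`u $(a+y_0)/(b+y_0)=(t+1)/t=y_0.$ La deuxi\`eme \'etape consiste \`a calculer $Df_{a,b}(m).$ La diff\'erentielle vaut
$$Df_{a,b}(y,z)=\begin{pmatrix} 0 & 1 \\ -\frac{a+z}{(b+y)^2} & \frac{1}{b+y}\end{pmatrix}$$
et en utilisant l'\'equation du point fixe $a+y_0=y_0(b+y_0)$ on obtient pour les valeurs propres $\eta_1,$ $\eta_2$ de $Df_{a,b}(m)$ la relation $(b+y_0)\eta^2-\eta+y_0=0;$ en particulier $\eta_1\eta_2=y_0/(b+y_0).$ L'hypoth\`ese $\vert t\vert=1$ entra\^ine que ce produit est de module~$1,$ ce qui rend compatible l'hypoth\`ese suppl\'ementaire $\vert\eta_1\vert=\vert\eta_2\vert=1.$

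La troisi\`eme \'etape, qui constitue le c\oe ur de l'argument, est de v\'erifier l'absence de r\'esonance, puis le caract\`ere diophantien simultan\'e de $(\eta_1,\eta_2).$ Comme $t$ est une racine du polyn\^ome de \textsc{Salem} $\psi_n,$ c'est un nombre alg\'ebrique et il en va donc de m\^eme des $\eta_i,$ qui sont racines d'un polyn\^ome \`a coefficients dans $\mathbb{Q}(t).$ Gr\^ace au th\'eor\`eme de lin\'earisation rappel\'e pr\'ec\'edemment, il suffira de v\'erifier la multiplicativit\'e ind\'ependance de $\eta_1$ et $\eta_2.$ C'est l\`a la difficult\'e principale et d'ordre arithm\'etique: il faut exclure les relations du type $\eta_1^a\eta_2^b=1$ avec $(a,b)\in\mathbb{Z}^2\setminus\{(0,0)\}.$ L'id\'ee est d'utiliser la structure galoisienne du polyn\^ome $\psi_n$ et l'hypoth\`ese $n\geq 8,$ qui garantit que $\lambda(f_{a,b})$ est un v\'eritable nombre de \textsc{Salem} (et non une racine de l'unit\'e ou un nombre de \textsc{Pisot}): une relation multiplicative non triviale entre $\eta_1$ et $\eta_2$ se transporterait, via l'action du groupe de \textsc{Galois} de $\psi_n,$ en une relation analogue entre les conjugu\'es et forcerait finalement $\lambda(f_{a,b})$ ou $t$ \`a satisfaire une \'equation alg\'ebrique incompatible avec le fait que~$\psi_n$ est le polyn\^ome minimal de ces quantit\'es.

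Une fois la multiplicativit\'e ind\'ependance \'etablie, l'application du th\'eor\`eme de \textsc{Siegel} en dimension~$2$ fournit des coordonn\'ees holomorphes locales $(x,y)$ au voisinage de $m$ dans lesquelles $f_{a,b}$ s'\'ecrit $(x,y)\mapsto(\eta_1 x,\eta_2 y).$ Dans ce mod\`ele lin\'eaire, le groupe ferm\'e engendr\'e par la rotation $(\eta_1,\eta_2)$ dans $(\mathbb{S}^1)^2$ est d'adh\'erence un $2$-tore (puisque $\eta_1,\eta_2$ sont multiplicativement ind\'ependants); l'adh\'erence de l'orbite g\'en\'erique est donc un $2$-tore r\'eel, et la composante de \textsc{Fatou} contenant ce voisinage est un domaine de rotation de rang $2$ au sens du Chapitre \ref{chapdyn}. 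Cela ach\`eve la d\'emonstration.
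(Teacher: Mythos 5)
Votre strat\'egie globale est bien celle attendue (et celle de \textsc{Bedford} et \textsc{Kim}, le texte ne reproduisant pas leur d\'emonstration): v\'erifier que $m$ est fixe, calculer les multiplicateurs, montrer qu'ils sont alg\'ebriques et multiplicativement ind\'ependants, invoquer le crit\`ere \og alg\'ebriques et multiplicativement ind\'ependants $\Rightarrow$ simultan\'ement diophantiens $\Rightarrow$ lin\'earisables\fg\, rappel\'e au Chapitre 3, puis conclure que l'adh\'erence des orbites du mod\`ele lin\'eaire est un $2$-tore. Vos calculs pr\'eliminaires sont corrects: pour $j=3$ on a bien $b+y_0=t+1,$ $a+y_0=(t+1)^2/t,$ et les multiplicateurs v\'erifient $(b+y_0)\eta^2-\eta+y_0=0,$ d'o\`u $\eta_1\eta_2=1/t$ et $\eta_1+\eta_2=1/(1+t)$ (le cas $j=2$ donne les m\^emes relations).

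Le point faible est pr\'ecis\'ement l'\'etape que vous qualifiez de \og difficult\'e principale\fg: l'ind\'ependance multiplicative de $\eta_1$ et $\eta_2$ n'est pas d\'emontr\'ee, et l'argument galoisien esquiss\'e ne fonctionne pas tel quel. Les $\eta_i$ vivent dans une extension quadratique de $\mathbb{Q}(t);$ si l'on transporte une relation $\eta_1^a\eta_2^b=1$ par un plongement envoyant $t$ sur le nombre de \textsc{Salem} $\lambda,$ on obtient seulement $(\eta_1')^a(\eta_2')^b=1$ avec $\eta_1'\eta_2'=1/\lambda,$ et en prenant les modules $a\log\vert\eta_1'\vert+b\log\vert\eta_2'\vert=0:$ ceci n'est nullement contradictoire en soi et ne \og force\fg\, aucune \'equation alg\'ebrique suppl\'ementaire sur $t$ ou $\lambda$ qui contredirait la minimalit\'e de $\psi_n.$ Pour exclure une telle relation il faut exploiter concr\`etement les fonctions sym\'etriques ci-dessus ($\eta_1\eta_2=1/t,$ $\eta_1+\eta_2=1/(b+y_0)$), le fait que les conjugu\'es de $t$ sont tous de module $1$ sauf $\lambda^{\pm 1},$ et contr\^oler les modules individuels des images $\eta_i'$ sous les divers plongements; c'est l\`a tout le contenu arithm\'etique non trivial du th\'eor\`eme de \textsc{Bedford} et \textsc{Kim}. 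Sans cette \'etape, la r\'eduction au th\'eor\`eme de \textsc{Siegel} et la conclusion (rang $2$) sont correctes mais la d\'emonstration reste incompl\`ete.
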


\noindent Il peut y avoir coexistence de domaines de rotation de rang $1$ et $2.$

\begin{thm}
{\sl Supposons que $n$ soit sup\'erieur ou \'egal \`a $8,$ que $j$ soit \'egal \`a $2$ ou $3$ et que $j$ divise $n.$ Il existe $(a,b)$ dans $\Gamma_j\cap\mathcal{V}_n$ tel que $f_{a,b}$ poss\`ede un domaine de rotation de rang $2$ centr\'e en 
\begin{align*}
&\left(\frac{1+t+t^2}{t+t^2},\frac{1+t+t^2}{t+t^2}\right) \text{ si }j=2, && \left(1+\frac{1}{t},1+\frac{1}{t}\right) \text{ si }j=3
\end{align*}

\noindent ainsi qu'un domaine de rotation de rang $1$ centr\'e en 
\begin{align*}
& \left(-\frac{t^2}{1+t},-\frac{t^2}{1+t}\right)\text{ si } j=2,&& (-t,-t)\text{ si } j=3.
\end{align*}}
\end{thm}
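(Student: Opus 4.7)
Le plan est de combiner les deux th\'eor\`emes pr\'ec\'edents en s\'electionnant un param\`etre $t$ qui r\'ealise simultan\'ement les hypoth\`eses de chacun. D'apr\`es les r\'esultats rappel\'es au \S pr\'ec\'edent, on a l'\'egalit\'e $\Gamma_j\cap\mathcal{V}_n=\{\varphi_j(t)\,\vert\,t\text{ racine de }\psi_n\};$ il suffit donc de produire une racine $t$ du polyn\^ome de \textsc{Salem} $\psi_n$ telle que d'une part $t$ soit un conjugu\'e de \textsc{Galois} de $\lambda(f_{a,b})$ distinct de $\lambda(f_{a,b})$ et $\lambda(f_{a,b})^{-1}$ (ce qui fournira le domaine de rotation de rang $1$), et d'autre part $\vert t\vert=1$ avec les valeurs propres $\eta_1,\eta_2$ de $Df_{a,b}$ au point $m$ de module $1$ (ce qui fournira le domaine de rotation de rang~$2$).

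Premi\`erement, puisque $\psi_n$ est un polyn\^ome de \textsc{Salem}, ses racines sont $\lambda(f_{a,b}),$ $\lambda(f_{a,b})^{-1}$ et un nombre pair de racines sur le cercle unit\'e, ce nombre tendant vers l'infini avec $n.$ Pour toute racine $t$ de $\psi_n$ situ\'ee sur le cercle unit\'e, $t$ est automatiquement distinct de $\lambda(f_{a,b})$ et $\lambda(f_{a,b})^{-1}$ et est un conjugu\'e de \textsc{Galois} de $\lambda(f_{a,b}).$ Le th\'eor\`eme pr\'ec\'edent sur les domaines de rotation de rang $1$ s'applique donc imm\'ediatement et fournit le domaine centr\'e au point prescrit pour $j=2$ ou $j=3.$

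Deuxi\`emement, il reste \`a s'assurer que pour un tel choix de $t$ sur le cercle unit\'e on a en outre $\vert\eta_1\vert=\vert\eta_2\vert=1$ au point $m.$ On calcule explicitement $Df_{a,b}(m)$ en rempla\c{c}ant $(a,b)=\varphi_j(t)$ et l'expression de $m$ donn\'ee dans l'\'enonc\'e; on obtient une matrice $2\times 2$ dont les coefficients sont des fractions rationnelles en $t.$ Puisque $f_{a,b}$ pr\'eserve, par la construction de type \textsc{McMullen} associ\'ee \`a la cubique invariante $\Gamma_j,$ une $2$-forme m\'eromorphe $\eta$ avec $f_{a,b}^*\eta=c\,\eta,$ le d\'eterminant $\eta_1\eta_2=\det Df_{a,b}(m)$ est une fonction explicite de $t$ dont on v\'erifie qu'elle est de module $1$ pour $\vert t\vert=1.$ Il reste \`a \'etudier la trace: les deux valeurs propres sont de module $1$ si et seulement si la trace est r\'eelle et born\'ee en module par $2\sqrt{\vert\det\vert}=2.$ Cette condition se traduit en une in\'egalit\'e explicite sur~$t$ qui est ouverte sur le cercle unit\'e.

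La derni\`ere \'etape, qui sera aussi le principal obstacle, consiste \`a montrer que cette in\'egalit\'e est effectivement satisfaite par au moins une racine de $\psi_n$ de module $1$ pour $n\geq 8$ avec $j\mid n.$ L'id\'ee est d'utiliser le fait que le polyn\^ome de \textsc{Salem} $\psi_n$ tend, apr\`es renormalisation, vers $t^3-t-1$ (qui est li\'e \`a la limite $\lambda(f_{a,b})\to$ plus grande racine de $t^3-t-1$) lorsque $n$ tend vers l'infini, et donc ses racines sur le cercle unit\'e s'\'equidistribuent asymptotiquement. Pour $n$ petit on v\'erifiera directement la condition au cas par cas, puis on invoquera un argument de densit\'e: parmi les racines unit\'e de $\psi_n$ (dont le nombre cro\^\i t avec $n$) il en existe n\'ecessairement une pour laquelle l'in\'egalit\'e ouverte d\'efinissant la condition \og $\eta_1,\eta_2$ de module $1$\fg\, est v\'erifi\'ee. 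Une fois ces deux conditions r\'eunies, les deux th\'eor\`emes pr\'ec\'edents s'appliquent simultan\'ement au m\^eme couple $(a,b)=\varphi_j(t)$ et fournissent la coexistence annonc\'ee des domaines de rotation de rangs $1$ et $2.$
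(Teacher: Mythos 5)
Votre sch\'ema est le bon et correspond \`a la fa\c{c}on dont les deux th\'eor\`emes pr\'ec\'edents doivent \^etre combin\'es (le texte ne donne d'ailleurs aucune d\'emonstration de cet \'enonc\'e, repris de \cite{BK2}): prendre $t$ racine unimodulaire de $\psi_n,$ ce qui fournit imm\'ediatement le domaine de rang $1,$ puis v\'erifier la condition sur les valeurs propres au point $m$ pour obtenir celui de rang $2.$ Deux points pr\'ecis ne tiennent cependant pas tels quels. D'abord votre crit\`ere \og trace r\'eelle et de module au plus $2$\fg\, est incorrect: au point fixe $m=(y_0,y_0)$ on calcule directement $\mathrm{tr}\,Df_{a,b}=1/(b+y_0)$ et $\det Df_{a,b}=y_0/(b+y_0)=1/t,$ qui n'est pas r\'eel; lorsque $\vert\det\vert=1$ le bon crit\`ere est $\mathrm{tr}^2/\det\in[0,4].$ Il se trouve que cette quantit\'e (\'egale \`a $t/(1+t)^2$ pour $j=3$ et \`a $t(1+t)^2/(1+t+t^2)^2$ pour $j=2$) est automatiquement r\'eelle sur $\vert t\vert=1,$ de sorte que la condition devient une in\'egalit\'e explicite sur $\arg t$ (par exemple $\cos(\arg t)>-7/8$ pour $j=3$); c'est r\'eparable, mais il faut le faire, car c'est cette in\'egalit\'e qui d\'efinit l'arc admissible.

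Le vrai trou est l'\'etape d'existence, qui est pr\'ecis\'ement le contenu du th\'eor\`eme: il faut, pour chaque $n\geq 8$ avec $j$ divisant $n,$ une racine de $\psi_n$ sur le cercle unit\'e situ\'ee dans cet arc admissible. Votre argument d'\'equidistribution asymptotique ne donne cela que pour $n$ assez grand, sans seuil explicite, et \og v\'erifier au cas par cas pour $n$ petit\fg\, n'a pas de sens tant que ce seuil n'est pas effectif; il faut de plus garantir que la racine trouv\'ee dans l'arc est bien racine de $\psi_n$ et non d'un facteur cyclotomique de $\chi_n$ (on peut ici invoquer l'\'enonc\'e rappel\'e plus haut: les racines de $\chi_n$ qui ne sont pas racines de $\psi_n$ sont racines des $\chi_j,$ $0\leq j\leq 5,$ donc en nombre born\'e ind\'ependant de $n$). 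Ce qui manque est une localisation effective des racines unimodulaires de $\chi_n(t)=t^{n+1}(t^3-t-1)+t^3+t^2-1:$ sur $\vert t\vert=1$ on a $\vert t^3+t^2-1\vert=\vert t^3-t-1\vert,$ l'\'equation s'\'ecrit $t^{n+1}=-(t^3+t^2-1)/(t^3-t-1)$ avec second membre de module $1,$ et un argument de variation de l'argument place une racine dans tout arc de longueur de l'ordre de $2\pi/n,$ donc dans l'arc admissible d\`es que $n\geq 8.$ C'est cet argument quantitatif, absent de votre r\'edaction, qui permet de traiter uniform\'ement tous les $n$ vis\'es; sans lui, la coexistence n'est \'etablie que pour $n$ grand non explicit\'e.
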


\section{Groupe de \textsc{Weyl}}

\noindent Rappelons que $\mathrm{E}_1$ et $\mathrm{E}_2$ sont les diviseurs obtenus en \'eclatant $p_1$ et~$p_2.$ Pour simplifier introduisons les notations suivantes: $\mathrm{E}_0=\mathrm{H}$ et $\mathrm{E}_3= \mathrm{Q},$ $\mathrm{E}_4=f(\mathrm{Q}),$ $\ldots,$ $\mathrm{E}_n=f^{n-3}(\mathrm{Q}),$ $\pi_i$ \'eclatement associ\'e au diviseur $\mathrm{E}_i.$ Posons 
\begin{align*}
& e_0=\mathrm{E}_0, && e_i=(\pi_{i+1}\ldots\pi_n)^*\mathrm{E}_i,&& 1\leq i\leq n;
\end{align*}
\noindent la base $\{e_0,\ldots,e_n\}$ de $\mathrm{Pic}(\mathcal{Z})$ est g\'eom\'etrique.

\noindent \textsc{Bedford} et \textsc{Kim} montrent qu'ils peuvent appliquer le Th\'eor\`eme \ref{refMc} et en d\'eduisent l'\'enonc\'e suivant.

\begin{thm}[\cite{BK2}]
{\sl Soient $\mathcal{X}$ une surface rationnelle obtenue en \'eclatant $\mathbb{P}^2(\mathbb{C})$ en un nombre fini de points $\pi\colon\mathcal{X}\to\mathbb{P}^2(\mathbb{C})$ et $F$ un automorphisme sur~$\mathcal{X}$ repr\'esentant l'\'el\'ement standard du groupe de \textsc{Weyl}~$\mathrm{W}_n$ avec $n\geq 5.$ Il existe un automorphisme $A$ de $\mathbb{P}^2(\mathbb{C})$ et des complexes $a$ et $b$ tels que $$f_{a,b} A\pi= A\pi F.$$}
\end{thm}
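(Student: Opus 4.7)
The plan is to use the converse direction of Theorem~\ref{refMc} to put $F$ into \textsc{McMullen}'s normal form, and then exhibit an explicit linear change of coordinates carrying that form into the \textsc{Bedford}--\textsc{Kim} family $(f_{a,b})$.

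First, I would apply the reverse direction already established inside the proof of Theorem~\ref{refMc}. Since $F$ realizes the standard \textsc{Coxeter} element $w=\pi_n\kappa_{123}$ with $n\geq 5$, that argument produces an automorphism $A_1\in\mathrm{Aut}(\mathbb{P}^2(\mathbb{C}))$ and scalars $(a',b')$ such that $g_{a',b'}:=A_1\pi F(A_1\pi)^{-1}$ takes the affine form $(x,y)\mapsto(a'+y,b'+y/x)$, or in homogeneous coordinates $(x:y:z)\mapsto(x(a'z+y):z(b'x+y):xz)$, with indeterminacy locus $\{(0:0:1),(0:1:0),(1:0:0)\}$.

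Second, I would search for a single linear automorphism $A_2$ conjugating $g_{a',b'}$ to some $f_{a,b}$. Since the indeterminacy of $f_{a,b}$ is $\{(0:1:0),(0:0:1),(1:-b:-a)\}$, the map $A_2$ must send one triple onto the other, so a natural cyclic candidate is
\begin{align*}
A_2(x:y:z)=(z:\,x-bz:\,y-az),
\end{align*}
which sends $(1:0:0)\mapsto(0:1:0)\mapsto(0:0:1)\mapsto(1:-b:-a)$. I would then compute the three homogeneous components of $A_2\,g_{a',b'}\,A_2^{-1}$ and compare with those of $f_{a,b}(x:y:z)=(x(bx+y):z(bx+y):x(ax+z))$. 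Dimension counting (both families are two-parameter) suggests the outcome will be the invertible linear dictionary $a=b'$ and $b=a'+b'$. Setting $A=A_2A_1$ then yields $f_{a,b}\,A\pi=A\pi F$.

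I expect the only delicate point to be the second step: since the two indeterminacy triples share the pair $\{(0:1:0),(0:0:1)\}$, several cyclic choices for $A_2$ exist, but only the permutation above can match all three homogeneous coordinates simultaneously (the other choices force the $y$- or $xy$-coefficient in the third component to vanish, which would degenerate $a$ or $b$). Should this first attempt fall short, I would post-compose $A_2$ with a diagonal automorphism fixing the target indeterminacy locus, which supplies the extra scaling parameters needed to absorb any residual discrepancy. The assumption $n\geq 5$ enters solely through Theorem~\ref{refMc}, where it ensures that the orbit of the fourth base point is long enough to genuinely realize the standard \textsc{Coxeter} configuration rather than a degenerate low-index case.
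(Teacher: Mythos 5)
Your proposal is correct and follows essentially the same route as the paper, which deduces this statement precisely by applying the Th\'eor\`eme \ref{refMc}: its converse direction puts $A_1\pi F(A_1\pi)^{-1}$ in McMullen's normal form $g_{a',b'}(x,y)=(a'+y,\,b'+y/x)$, and one then passes to the \textsc{Bedford}--\textsc{Kim} family by a linear conjugation. The computation you deferred does close exactly as you guessed: with $A_2(x:y:z)=(z:x-bz:y-az)$ one checks $A_2\,g_{a',b'}\,A_2^{-1}=f_{a,b}$ if and only if $a=b'$ and $b=a'+b'$, so $A=A_2A_1$ works and no further diagonal correction is needed.
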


\noindent De plus ils obtiennent qu'une repr\'esentation de l'\'el\'ement standard du groupe de \textsc{Weyl} peut \^etre obtenue \`a partir de $f_{a,b,\,\mathcal{Y}}.$

\begin{thm}[\cite{BK2}]
{\sl Soient $\mathcal{X}$ une surface rationnelle et $F$ un automorphisme sur~$\mathcal{X}$ repr\'esentant l'\'el\'ement standard du groupe de \textsc{Weyl} $\mathrm{W}_n.$ Il existe
\begin{itemize}
\item une surface $\widetilde{\mathcal{Y}}$ obtenue en \'eclatant $\mathcal{Y}$ en un nombre fini de points distincts $\pi\colon\widetilde{\mathcal{Y}}\to\mathcal{Y},$

\item un automorphisme $g$ sur $\widetilde{\mathcal{Y}},$

\item $(a,b)$ dans $\mathcal{V}_{n-3}$
\end{itemize}

\noindent tels que $(F,\mathcal{X})$ soit conjugu\'e \`a $(g,\widetilde{\mathcal{Y}})$ et $\pi g=f_{a,b,\,\mathcal{Y}}\pi.$}
\end{thm}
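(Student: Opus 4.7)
Le plan est d'invoquer le théorème précédent pour disposer d'une conjugaison entre $F$ et $f_{a,b}$ au niveau de $\mathbb{P}^2(\mathbb{C}),$ puis de factoriser la projection $\pi_\mathcal{X}\colon\mathcal{X}\to\mathbb{P}^2(\mathbb{C})$ à travers $\pi_\mathcal{Y}\colon\mathcal{Y}\to\mathbb{P}^2(\mathbb{C})$ et enfin d'identifier les éclatements restants à l'orbite positive de $q$ sous $f_{a,b,\mathcal{Y}}$.

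Pour commencer, j'appliquerais le théorème précédent afin d'obtenir un morphisme birationnel $\pi_\mathcal{X}\colon\mathcal{X}\to\mathbb{P}^2(\mathbb{C}),$ un automorphisme $A$ de $\mathbb{P}^2(\mathbb{C})$ et un couple $(a,b)\in\mathbb{C}^2$ tels que $f_{a,b}\circ A\circ\pi_\mathcal{X}=A\circ\pi_\mathcal{X}\circ F$. Puisque remplacer $\pi_\mathcal{X}$ par $A\circ\pi_\mathcal{X}$ ne modifie pas la surface $\mathcal{X},$ on peut supposer $A=\mathrm{Id},$ la relation devenant $f_{a,b}\circ\pi_\mathcal{X}=\pi_\mathcal{X}\circ F$.

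Ensuite, je montrerais que $\pi_\mathcal{X}$ éclate nécessairement les points $p_1$ et $p_2$: en effet, $F$ étant un automorphisme, $\pi_\mathcal{X}$ doit résoudre les indéterminations de $f_{a,b},$ qui sur $\mathbb{P}^2(\mathbb{C})$ se concentrent aux points $p_1,$ $p_2$ et $p_*.$ Ceci fournit une factorisation $\pi_\mathcal{X}=\pi_\mathcal{Y}\circ\pi$ avec $\pi\colon\mathcal{X}\to\mathcal{Y}$ birationnel. En posant $g=F$ (vu sur $\mathcal{X}=\widetilde{\mathcal{Y}}$), la relation $f_{a,b}\circ\pi_\mathcal{X}=\pi_\mathcal{X}\circ F$ se réécrit $\pi\circ g=f_{a,b,\mathcal{Y}}\circ\pi$. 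Comme $f_{a,b,\mathcal{Y}}$ a pour seule courbe contractée $\Sigma_\gamma,$ envoyée sur $q,$ et éclate $p_*$ sur $\Sigma_C,$ le fait que $g$ soit un automorphisme impose à $\pi$ d'éclater successivement $q,\,f_{a,b,\mathcal{Y}}(q),\,f_{a,b,\mathcal{Y}}^2(q),\ldots$ jusqu'à atteindre $p_*,$ disons $f_{a,b,\mathcal{Y}}^k(q)=p_*$; en particulier $(a,b)\in\mathcal{V}_k$.

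Enfin, je conclurais par un comptage: l'hypothèse que $F$ réalise l'élément standard de $\mathrm{W}_n$ assure que $\mathrm{rang}\,\mathrm{Pic}(\mathcal{X})=n+1,$ donc $\mathcal{X}$ s'obtient à partir de $\mathbb{P}^2(\mathbb{C})$ par $n$ éclatements; deux d'entre eux correspondent à $p_1$ et $p_2,$ et les $n-2$ restants à l'orbite $q,\,f_{a,b,\mathcal{Y}}(q),\ldots,\,f_{a,b,\mathcal{Y}}^{n-3}(q)=p_*,$ d'où $k=n-3$ et $(a,b)\in\mathcal{V}_{n-3}.$ Le principal obstacle sera la troisième étape: vérifier que les éclatements supplémentaires sont exactement (et uniquement) ceux de l'orbite positive de $q,$ sans éclatements superflus ni redondance, ce qui repose sur le fait que $F$ représente spécifiquement l'élément standard de $\mathrm{W}_n$ et non pas un élément arbitraire de ce groupe.
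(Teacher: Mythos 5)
Le texte ne démontre pas cet énoncé (il est cité de \cite{BK2} comme conséquence du théorème qui le précède), et votre plan est précisément cette déduction naturelle: appliquer le théorème précédent, absorber $A$ dans $\pi_{\mathcal{X}},$ remarquer que les points d'indétermination $p_1,$ $p_2,$ $p_*$ doivent être éclatés, factoriser par $\mathcal{Y}=\mathrm{Bl}_{p_1,p_2}\mathbb{P}^2,$ poser $g=F,$ puis utiliser le mécanisme $\Sigma_\gamma\to\mathrm{E}_q\to\mathrm{E}_{f(q)}\to\cdots$ pour forcer l'éclatement de l'orbite de $q$ jusqu'au premier passage par $p_*$ (l'orbite atteint bien $p_*$: sinon ou bien $\pi$ comporterait une infinité d'éclatements, ou bien deux courbes distinctes auraient même image par l'automorphisme $g$), d'où $(a,b)\in\mathcal{V}_k$ pour un certain $k.$ Jusque-là votre argument est correct.

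La lacune réelle est celle que vous signalez vous-même, et votre comptage final ne la comble pas: l'égalité $\mathrm{rg}\,\mathrm{Pic}(\mathcal{X})=n+1$ ne donne que $k+1\leq n-2,$ c'est-à-dire $k\leq n-3;$ rien dans votre argument n'exclut des éclatements supplémentaires étrangers à l'orbite de $q,$ auquel cas $(a,b)$ appartiendrait à $\mathcal{V}_k$ avec $k<n-3$ et la conclusion tomberait. Pour conclure il faut exploiter réellement l'hypothèse que $F_*$ est l'élément standard, par exemple spectralement: sur le sous-réseau invariant engendré par $\mathrm{H},$ $\mathrm{E}_1,$ $\mathrm{E}_2,$ $\mathrm{Q},\ldots,f_{a,b}^{k}(\mathrm{Q}),$ l'action de $F^*$ est la matrice $M_{f_{a,b}}$ du texte, de polynôme caractéristique $\chi_k(t)=t^{k+1}(t^3-t-1)+t^3+t^2-1=(t-1)P_{k+3}(t),$ tandis que $F^*$ tout entier, réalisant l'élément standard (donc un élément de \textsc{Coxeter}) de $\mathrm{W}_n,$ a pour polynôme caractéristique $(t-1)P_n(t)=\chi_{n-3}(t).$ Les valeurs propres de la restriction étant des valeurs propres de $F^*,$ le Théorème \ref{lambda} (seules $\lambda(F)^{\pm 1}$ sont hors du cercle unité) et la croissance stricte avec $m$ de la plus grande racine de $P_m$ (Proposition \ref{rayonspectral} et la forme de $w_{m\vert G_m}$) imposent $k+3=n$ dès que l'entropie est positive; pour les petits $n$ on conclut de même en comparant les ordres $h_n$ ou la croissance des degrés (périodique, quadratique, exponentielle). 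C'est cette comparaison, et non le seul comptage du rang de $\mathrm{Pic},$ qui utilise que $F$ représente spécifiquement l'élément standard; il reste enfin à noter que les points $q,\ldots,f_{a,b,\mathcal{Y}}^{n-3}(q)$ sont des points distincts de $\mathcal{Y},$ ce qui découle de l'injectivité de $g$ sur les courbes.
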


\section{Description des conditions pour obtenir un automorphisme}

\noindent Dans \cite{BK1} est \'etudi\'ee la famille $(F_{A,B})$ d\'efinie dans la carte $x=1$ par 
\begin{align*}
&F_{A,B}=\left(z,\frac{a_0+a_1y+a_2z}{b_0+b_1y+b_2z}\right), &&  A=(a_0,a_1,a_2),\,B=(b_0,b_1,b_2);
\end{align*}
\noindent on remarque que $(f_{a,b})$ en est une sous-famille. On peut dans ce cadre d\'efinir des espaces de param\`etres $\mathcal{V}_n$ qui correspondent aux param\`etres pour lesquels l'application n'est pas alg\'ebriquement stable \og au $n$-i\`eme it\'er\'e\fg. Il y a une action de groupes naturels sur l'espace des param\`etres $\mathcal{V}_n;$ en effet si $(\eta,c,\mu)$ appartient \`a $\mathbb{C}^*\times\mathbb{C}^*\times\mathbb{C}$ on a les actions suivantes
\begin{align*}
& \mathfrak{(a1)} && (u,v)\mapsto(\eta u,\eta v)\\
& \mathfrak{(a2)} && (u,v)\mapsto(u_0,cu_1,cu_2,cv_0,c^2v_1,c^2v_2)\\
& \mathfrak{(a3)} && (u,v)\mapsto(u_0+\mu(u_1+u_2)-\mu(v_0+\mu(v_1+v_2)),u_1-\mu v_1,u_2-\mu v_2,v_0+\mu(v_1+v_2),v_1,v_2).
\end{align*}

\noindent La premi\`ere action correspond \`a l'homog\'en\'eit\'e de $F_{A,B},$ les deux derni\`eres \`a des actions par conjugaison lin\'eaire sur $F_{A,B}$
$$\left(\frac{1}{c}y,\frac{1}{c}z\right)F_{A,B}(cy,cz)=\left(z,\frac{a_0+a_1cy+a_2cz}{b_0c+b_1c^2y+b_2c^2z}\right)$$ et \begin{align*}
&(y-\mu,z-\mu)F_{A,B}(y+\mu,z+\mu)\\
&\hspace{1cm}=\left(z,\frac{a_0+\mu(a_1+a_2)-\mu(b_0+\mu(b_1+b_2))+(a_1-\mu b_1)y+(a_2-\mu b_2)z}{b_0+\mu(b_1+b_2)+b_1y+b_2z}\right).
\end{align*}

\noindent L'ensemble $\mathcal{V}_n$ est d\'efini par la condition $f_{a,b}^n(q)=p.$ Les coefficients des \'equations d\'efinissant $\mathcal{V}_n$ sont des entiers positifs; en particulier $\mathcal{V}_n$ est invariant par conjugaison complexe. En exploitant ces \'equations \textsc{Bedford} et \textsc{Kim} obtiennent la description suivante (\cite{BK1})
\begin{itemize}
\item $\mathcal{V}_0:$ orbite de $(a,b)=(0,0)$ sous l'action de $\mathfrak{a1}-\mathfrak{a3}.$

\item $\mathcal{V}_1:$ orbite de $(a,b)=(1,0)$ sous l'action de $\mathfrak{a1}-\mathfrak{a3}.$

\item $\mathcal{V}_2:$ orbite de $(a,b)=\left(\frac{1+\mathrm{i}}{2},\mathrm{i}\right).$

\item $\mathcal{V}_3:$ orbites de $\left\{\left(\frac{2-\sqrt{3}+\mathrm{i}}{2},\mathrm{i}\right),\,\left(\frac{2+\sqrt{3}+\mathrm{i}}{2},\mathrm{i}\right)\right\}$ et de leurs conjugu\'es.

\item $\mathcal{V}_4:$ orbites de $\{(a_0,b_0),\,(a_1,b_1),\,(a_2,b_2)\}$ et de leurs conjugu\'es o\`u $a_i$ (resp. $b_i$) est une racine de $1-3x+9x^2-24x^3+36x^4-27x^5+9x^6$ (resp. $1+6x^2+9x^4+3x^6$).

\item $\mathcal{V}_5:$ orbites de $\{(a_0,b_0),\,(a_1,b_1),\,(a_2,b_2),\,(a_3,b_3)\}$ et de leurs conjugu\'es o\`u $a_i$ (resp. $b_i$) est une racine de $1+3x^2-20x^3+49x^4-60x^5+37x^6-10x^7+x^8$ (resp. $1+7x^2+14x^4+8x^6+x^8$).

\item $\mathcal{V}_6:$ les \'equations de $\mathcal{V}_6$ sont divisibles par $b^2$ donc tous les couples de param\`etres $(a,0)$ avec $a\not\in~\{0,\,1\}$ appartiennent \`a $\mathcal{V}_6;$ de plus $\mathcal{V}_6$ contient les orbites de $$\left\{\left(\frac{3\pm\sqrt{5}+2\mathrm{i}\sqrt{\frac{5\pm\sqrt{5}}{2}}}{4},\mathrm{i}\sqrt{\frac{5\pm\sqrt{5}}{2}}\right)\right\}$$ et de leurs conjugu\'es. 
\end{itemize}

\chapter{D'autres exemples}\label{chapbedkim2}

\section{Familles continues d'automorphismes d'entropie positive}

\noindent Dans \cite{BK3} \textsc{Bedford} et \textsc{Kim} \'etudient la famille de transformations birationnelles d\'efinie dans la carte $x=1$ par \begin{align*}
&f(y,z)=\Big(z,-y+cz+\sum_{\stackrel{j=1}{j \text{ pair}}}^{k-2}\frac{a_j} {y^j}+\frac{1}{y^k}\Big), &&a_j\in\mathbb{C},\,\,\, c\in\mathbb{R},\,\,\,k\geq 2. 
\end{align*}

\subsection{Construction de surfaces rationnelles et d'automorphismes}

\noindent Les auteurs construi\-sent des surfaces en \'eclatant $\mathbb{P}^2(\mathbb{C})$ en un nombre fini de points sur lesquelles $f$ est conjugu\'e \`a un automorphisme d'entropie positive.

\begin{thm}\label{bk3}
{\sl Soient $j,$ $n$ deux entiers premiers entre eux tels que $1\leq j\leq n.$ Il existe un sous-ensemble non vide $C_n$ de $\mathbb{R}$ tel que, pour tout $k\geq 2$ pair et pour tout $(c,a_j)$ dans $C_n\times\mathbb{C},$ la transformation $f$ soit conjugu\'ee \`a un automorphisme sur une certaine surface rationnelle d'entropie $\log\lambda_{n,k}$ o\`u $\log\lambda_{n,k}$ est la plus grande racine du polyn\^ome $$\chi_{n,k}=1-k\displaystyle\sum_{j=1}^{n-1}x^j+x^n.$$}
\end{thm}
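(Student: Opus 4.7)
La d\'emarche reprend celle mise en \oe uvre au Chapitre \ref{chapbedkim1} pour la famille $f_{a,b}.$ On commence par homog\'en\'eiser $f$ en une transformation rationnelle de $\mathbb{P}^2(\mathbb{C})$ de degr\'e~$k+1$ et on explicite son lieu d'ind\'etermination $\mathrm{Ind}\,f$ ainsi que son ensemble exceptionnel. La droite \`a l'infini $\{x=0\}$ est pr\'eserv\'ee globalement tandis que la droite $\{y=0\}$ est contract\'ee sur un point appartenant \`a $\mathrm{Ind}\,f,$ ce qui emp\^eche $f$ d'\^etre alg\'ebriquement stable sur $\mathbb{P}^2(\mathbb{C}).$

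L'\'etape suivante consiste \`a construire une surface $\mathcal{Y}$ en \'eclatant les points de $\mathrm{Ind}\,f$ de sorte que la transformation induite $f_{\mathcal{Y}}=\pi^{-1}f\pi$ ait un ensemble exceptionnel r\'eduit \`a une unique courbe~$\Sigma,$ contract\'ee sur un point distingu\'e $q\in\mathcal{Y}.$ On \'etudie alors l'orbite future de $q$ sous~$f_{\mathcal{Y}}.$ Il s'agit d'ajuster les param\`etres $(c,a_j)$ de sorte que cette orbite aboutisse, apr\`es exactement $n$ it\'erations, sur un point d'ind\'etermination r\'esiduel $p.$ Sous cette condition on d\'efinit $\mathcal{Z}$ en \'eclatant les points $q,\,f_{\mathcal{Y}}(q),\,\ldots,\,f_{\mathcal{Y}}^{n-1}(q)$ et l'on v\'erifie que $f$ se rel\`eve en un automorphisme $f_{\mathcal{Z}}$ de $\mathcal{Z}.$

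Pour d\'eterminer l'entropie on calcule l'action de $f_{\mathcal{Z}}^*$ sur $\mathrm{Pic}(\mathcal{Z})$ dans la base g\'eom\'etrique form\'ee de la classe $\mathrm{H}$ d'une droite, des diviseurs exceptionnels initiaux $\mathrm{E}_i$ et des it\'er\'es $f_{\mathcal{Z}}^\ell(\mathrm{Q}),$ $0\leq \ell\leq n-1.$ L'action est essentiellement une permutation cyclique sur les it\'er\'es de $\mathrm{Q}$ compl\'et\'ee par les relations de contraction en t\^ete et en queue, d'o\`u une matrice de type compagnon. En d\'eveloppant son polyn\^ome caract\'eristique on obtient un produit de facteurs cyclotomiques et du polyn\^ome $\chi_{n,k}.$ Par le Th\'eor\`eme \ref{lambda} la plus grande racine $\lambda_{n,k}$ de $\chi_{n,k}$ co\"{i}ncide avec le premier degr\'e dynamique $\lambda(f_{\mathcal{Z}}),$ et le th\'eor\`eme de \textsc{Gromov}-\textsc{Yomdin} fournit l'entropie topologique $\log\lambda_{n,k}.$

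La principale difficult\'e consiste \`a \'etablir que $C_n$ est non vide. A priori, la condition $f_{\mathcal{Y}}^n(q)=p$ est un syst\`eme polynomial en $(c,a_j).$ Il s'agit de montrer, en exploitant la structure tr\`es particuli\`ere de $f$ (le terme $-y$ dans la seconde composante et la partie rationnelle en $y$ seul, combin\'es \`a la parit\'e de $k$) que le syst\`eme se ram\`ene \`a une unique \'equation portant sur le param\`etre r\'eel $c,$ les param\`etres complexes $a_j$ restant libres. On v\'erifie ensuite, par un argument de type th\'eor\`eme des valeurs interm\'ediaires ou en exhibant explicitement des solutions sous forme trigonom\'etrique index\'ees par les entiers $j$ premiers avec $n,$ que cette \'equation admet des racines r\'eelles. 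Un dernier point \`a contr\^oler est la stabilit\'e alg\'ebrique de $f_{\mathcal{Z}},$ afin de garantir que $\lambda(f_{\mathcal{Z}})$ co\"{i}ncide bien avec le rayon spectral de $f_{\mathcal{Z}}^*.$
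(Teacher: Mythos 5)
Votre architecture générale (homogénéisation, stabilité algébrique mise en défaut par la droite contractée, éclatement de l'orbite critique, calcul de $f^*$ sur $\mathrm{Pic}$, puis \textsc{Gromov}--\textsc{Yomdin}) est bien celle de \textsc{Bedford} et \textsc{Kim}. Mais le point que vous désignez vous-même comme « la principale difficulté » --- la non-vacuité de $C_n$ et l'indépendance en les $a_j$ --- est précisément celui que vous ne traitez pas, et votre réduction annoncée d'un « système polynomial en $(c,a_j)$ » à « une unique équation en $c$ » n'est ni établie ni évidente telle quelle. Le mécanisme qui fait fonctionner tout l'énoncé est le suivant : le point sur lequel la droite exceptionnelle est contractée et le point éclaté appartiennent tous deux à la droite à l'infini $\{x=0\}$, qui est invariante, et la restriction de $f$ à cette droite est la transformation de M\"obius $g(w)=c-\frac{1}{w}$, qui ne dépend ni de $k$ ni d'aucun des $a_j$. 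La condition de recollement de l'orbite se lit donc entièrement sur $g$ : elle équivaut à la périodicité de $g$ de période $n$, c'est-à-dire à ce que la matrice $\left[\begin{array}{cc} c & -1\\ 1 & 0\end{array}\right]$ soit d'ordre $n$ dans $\mathrm{PGL}_2$, d'où $$C_n=\{2\cos(j\pi/n)\,\vert\,0<j<n,\ (j,n)=1\},$$ ensemble explicite, réel et non vide. C'est cela qui rend les $a_j$ libres (toute la famille est simultanément régularisée pour le même $c$) et qui explique pourquoi un théorème des valeurs intermédiaires sur une équation non spécifiée ne peut pas remplacer l'argument : sans l'observation que l'orbite critique vit sur $\{x=0\}$, rien ne garantit que la condition ne dépende pas des $a_j$.

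Deux points secondaires à corriger ou préciser si vous rédigez complètement : d'une part, les points de l'orbite que l'on éclate ne sont pas de simples points de $\mathbb{P}^2(\mathbb{C})$ mais des amas de points infiniment proches (dans le cas $n=3,$ $k=2$ détaillé dans le texte, chaque étape fait intervenir cinq éclatements successifs, d'où une base de $16$ classes), ce qui intervient directement dans le calcul du polynôme caractéristique et dans l'obtention du facteur $\chi_{n,k}$; d'autre part, la stabilité algébrique que vous mentionnez en fin de preuve n'est pas un point à « contrôler » séparément : une fois que $f$ se relève en un automorphisme de $\mathcal{Z},$ l'égalité $(f^n)^*=(f^*)^n$ est automatique et l'entropie est $\log$ du rayon spectral de $f^*.$
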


\noindent Disons un mot sur la construction de $C_n.$ La droite $\Delta=\{x=0\}$ est invariante par $f.$ \'Ecrivons les points de $\Delta\setminus\{(0:0:1)\}$ sous la forme $(0:1:w);$ alors $f(0:1:w) =\left(0:1:c-\frac{1}{w}\right).$ La restriction de $f$ \`a $\Delta$ co\"incide avec $g(w)=c-\frac{1}{w}.$ L'ensemble des valeurs de $c$ pour lesquelles~$g$ est p\'eriodique de p\'eriode $n$ est $$\{2\cos(j\pi/n)\,\vert\,0<j<n,\,(j,n)=1\}.$$ Posons $w_s=g^{s-1} (c)$ pour $1\leq s\leq n-1,$ autrement dit les $w_i$ codent l'orbite de $(0:1:0)$ sous l'action de $f.$ Les $w_j$ satisfont les propri\'et\'es suivantes
\begin{itemize}
\item $w_jw_{n-1-j}=1;$

\item si $n$ est pair $w_1\ldots w_{n-2}=1;$

\item si $n$ est impair, posons $w_*(c)=w_{(n-1)/2}$ auquel cas $w_1\ldots w_{n-2}=w_*.$
\end{itemize}

%\noindent L'ensemble $C_n$ est d\'efini par $$C_n=\{c=\pm 2\cos(j\pi/n)\,\vert (j,n)=1,\, w_1\ldots w_{n-2}=w_*=1\};$$ autrement dit $C_n$ est choisi pour que la proc\'edure d'\'eclatements \og termine\fg.

\bigskip

\noindent Nous allons d\'etailler l'\'eventualit\'e $n=3,$ $k=2,$ auquel cas $C_3=\{-1,\,1\}.$ Supposons que~$c=~1;$ autrement dit $$f=(xz^2:z^3:x^3+z^3-yz^2).$$ La transformation $f$ contracte une seule droite $\Delta''=\{z=0\}$ sur le point $R=(0:0:1)$ et \'eclate un seul point, $Q=(0:1:0).$

\noindent Dans un premier temps nous allons \'eclater $Q$ \`a la source et $R$ au but. Posons 
\begin{align*}
&\left\{\begin{array}{ll} x=u_1\\ z=u_1v_1\end{array}\right. && \begin{array}{ll} \mathrm{E}=\{u_1 =0\}\\ \Delta''_1=\{v_1=0\}\end{array} && \hspace{2cm} &&\left\{\begin{array}{ll} x=a_1\\ z=a_1b_1\end{array}\right. && \begin{array}{ll} \mathrm{F}=\{a_1 =0\}\\ \end{array}\\
&\left\{\begin{array}{ll} x=r_1s_1\\ z=s_1\end{array}\right. && \begin{array}{ll} \mathrm{E}=\{s_1 =0\}\\ \end{array} && \hspace{2cm} &&\left\{\begin{array}{ll} x=c_1d_1\\ z=d_1\end{array}\right. && \begin{array}{ll} \mathrm{F}=\{d_1 =0\}\\ \end{array}
\end{align*}

\noindent On a 
\begin{align*}
&(u_1,v_1)\to(u_1,u_1v_1)_{(x,z)}\to(u_1v_1^2:u_1v_1^3:u_1+u_1v_1^3-v_1^2)\\
&\hspace{1cm}=\left(\frac{u_1v_1^2}{u_1+u_1v_1^3-v_1^2},\frac{u_1v_1^3}{u_1+u_1v_1^3-v_1^2}\right)_{(x,y)}\to\left(\frac{u_1v_1^2}{u_1+u_1v_1^3-v_1^2},v_1\right)_{(a_1,b_1)}
\end{align*} 

\noindent et $$(r_1,s_1)\to(r_1s_1,s_1)_{(x,z)}\to(r_1s_1: s_1^2:r_1^3s_1+s_1-1).$$ Par suite $\mathrm{E}$ est envoy\'e sur $\mathrm{F},$ $\Delta''_1$ est contract\'e sur $S=(0,0)_{(a_1,b_1)}$ et $Q_1=(0,0)_{(u_1,v_1)}$ est un point d'ind\'etermination. 

\noindent Poursuivons en \'eclatant $Q_1$ \`a la source et $S$ au but
\begin{align*}
&\left\{\begin{array}{ll} u_1=u_2\\ v_1=u_2v_2\end{array}\right. && \begin{array}{ll} \mathrm{G}=\{u_2 =0\}\\ \end{array} && \hspace{2cm} &&\left\{\begin{array}{ll} a_1=a_2\\ b_1=a_2b_2\end{array}\right. && \begin{array}{ll} \mathrm{H}=\{a_2 =0\}\\ \end{array}\\
&\left\{\begin{array}{ll}u_1=r_2s_2\\ v_1=s_2\end{array}\right. && \begin{array}{ll} \mathrm{G}=\{s_2 =0\}\\  \end{array} && \hspace{2cm} &&\left\{\begin{array}{ll} a_1=c_2d_2\\ b_1=d_2\end{array}\right. && \begin{array}{ll} \mathrm{H}=\{d_2 =0\}\\ \end{array}
\end{align*}

\noindent On a dans les diff\'erents syst\`emes de coordonn\'ees
\begin{align*}
&(u_2,v_2)\to(u_2,u_2v_2)_{(u_1,v_1)}\to(u_2^2v_2^2:u_2^3v_2^3:1+u_2^3v_2^3-v_2^2)=\left(\frac{u_2^2v_2^2}{1+u_2^3v_2^3-v_2^2},\frac{u_2^3v_2^3}{1+u_2^3v_2^3-v_2^2}\right)_{(x,y)}\\
&\hspace{1cm}\to\left(\frac{u_2^2v_2^2}{1+u_2^3v_2^3-v_2^2},u_2v_2\right)_{(a_1,b_1)}\to\left(\frac{u_2v_2}{1+u_2^3v_2^3-v_2^2},u_2v_2\right)_{(c_2,d_2)};
\end{align*} ainsi $\mathrm{G}$ est contract\'e sur $T=(0,0)_{(c_2,d_2)}$ et $U=(0,0)_{(r_2,s_2)}$ est ind\'etermin\'e. De plus on a $$(u_1,v_1)\to \left(\frac{u_1v_1^2}{u_1+u_1v_1^3-v_1^2},v_1\right)\to\left(\frac{u_1v_1}{u_1+u_1v_1^3-v_1^2},v_1\right)_{(c_2,d_2)}$$ donc $\Delta''_2$ est contract\'e sur $T.$

\noindent Continuons en \'eclatant $U$ \`a la source et $T$ au but
\begin{align*}
&\left\{\begin{array}{ll} r_2=u_3\\ s_2=u_3v_3\end{array}\right. && \begin{array}{ll} \mathrm{K}=\{u_3 =0\}\\ \mathrm{G}_1=\{v_3=0\}\end{array} && \hspace{2cm} &&\left\{\begin{array}{ll} c_2=a_3\\ d_2=a_3b_3\end{array}\right. && \begin{array}{ll} \mathrm{L}=\{a_3 =0\}\\ \end{array}\\
&\left\{\begin{array}{ll}r_2=r_3s_3\\ s_2=s_3\end{array}\right. && \begin{array}{ll} \mathrm{K}=\{s_3 =0\}\\  \end{array} && \hspace{2cm} &&\left\{\begin{array}{ll} c_2=c_3d_3\\ d_2=d_3\end{array}\right. && \begin{array}{ll} \mathrm{L}=\{d_3 =0\}\\ \end{array}
\end{align*}

\noindent On constate que 
\begin{align*}
&(u_3,v_3)\to(u_3,u_3v_3)_{(r_2,s_2)}\to(u_3^2v_3^2:u_3^3v_3^3:1+u_3^3v_3^3-v_3)\\
&\hspace{1cm}=\left(\frac{u_3^2v_3^2}{1+u_3^3v_3^3-v_3},\frac{u_3^3v_3^3}{1+u_3^3v_3^3-v_3}\right)_{(x,y)}\to\left(\frac{u_3^2v_3^2}{1+u_3^3v_3^3-v_3},u_3v_3\right)_{(a_1,b_1)}\\
&\hspace{1cm}\to\left(\frac{u_3v_3}{1+u_3^3v_3^3-v_3},u_3v_3\right)_{(c_2,d_2)}\to\left(\frac{1}{1+u_3^3v_3^3-v_3},u_3v_3\right)_{(c_3,d_3)}
\end{align*}

\noindent et $$(r_3,s_3)\to(r_3s_3,s_3)_{(r_2,s_2)}\to(r_3s_3^2:r_3s_3^3:r_3+r_3s_3^3-1);$$ par cons\'equent $W=(1,0)_{(r_3,s_3)}$ est ind\'etermin\'e, $\mathrm{K}$ est envoy\'e sur $\mathrm{L}$ et $\mathrm{G}_1$ est contract\'e sur $V=(1,0)_{(c_3,d_3)}.$ Par ailleurs on note que $$(u_1,v_1) \to\left(\frac{u_1v_1}{u_1+u_1v_1^3-v_1^2},v_1\right)_{(c_2,d_2)}\to\left(\frac{u_1}{u_1+u_1v_1^3-v_1^2},v_1\right)_{(c_3,d_3)}$$ ainsi $\Delta''_3$ est contract\'e sur $V.$

\noindent \'Eclatons $W$ \`a la source et $V$ au but
\begin{align*}
&\left\{\begin{array}{ll} r_3=u_4+1\\ s_3=u_4v_4\end{array}\right. && \begin{array}{ll} \mathrm{M}=\{u_4 =0\}\\ \end{array} && \hspace{2cm} &&\left\{\begin{array}{ll} c_3=a_4+1\\ d_3=a_4b_4\end{array}\right. && \begin{array}{ll} \mathrm{N}=\{a_4 =0\}\\ \end{array}\\
&\left\{\begin{array}{ll}r_3=r_4s_4+1\\ s_3=s_4\end{array}\right. && \begin{array}{ll} \mathrm{M}=\{s_4 =0\}\\  \end{array} && \hspace{2cm} &&\left\{\begin{array}{ll} c_3=c_4d_4+1\\ d_3=d_4\end{array}\right. && \begin{array}{ll} \mathrm{N}=\{d_4 =0\}\\ \end{array}
\end{align*}

\noindent D'une part $$(u_1,v_1)\to\left(\frac{u_1}{u_1+u_1v_1^3-v_1^2},v_1\right)_{(c_3,d_3)}\to\left(\frac{u_1^2v_1 -v_1}{u_1+u_1v_1^3-v_1^2},v_1\right)_{(c_4,d_4)}$$ et $$(u_3,v_3)\to\left(\frac{1}{1+u_3^3v_3^3-v_3},u_3v_3\right)_{(c_3,d_3)}\to\left(\frac{1-u_3^3v_3}{u_3(1+u_3^3v_3^3-v_3)},u_3v_3\right)_{(c_4,d_4)}$$ dont on d\'eduit que $\Delta''_4$ est contract\'e sur $X=(0,0)_{(c_4,d_4)}$ et que $\mathrm{G}_1$ est envoy\'e sur $\mathrm{N}.$

\noindent D'autre part on a
\begin{align*}
&(u_4,v_4)\to(u_4+1,u_4v_4)_{(r_3,s_3)}\to(u_4v_4^2(u_4+1):u_4^3v_4^3(u_4+1):1+u_4^2v_4^3(u_4+1))\\
&\hspace{1cm}=\left(\frac{u_4v_4^2(u_4+1)}{1+u_4^2v_4^3(u_4+1)},\frac{u_4^3v_4^3(u_4+1)}{1+u_4^2v_4^3(u_4+1)}\right)_{(x,y)}\\ &\hspace{1cm}\to\left(\frac{u_4v_4^2(u_4+1)}{1+u_4^2v_4^3(u_4+1)},u_4v_4 \right)_{(a_1,b_1)}\to\left(\frac{v_4(u_4+1)}{1+u_4^2v_4^3(u_4+1)},u_4v_4\right)_{(c_2,d_2)}
\end{align*}

\noindent et $$(r_4,s_4)\to(r_4s_4+1,s_4)_{(r_3,s_3)}\to(s_4(r_4s_4+1):s_4^2(r_4s_4+1): r_4+(r_4s_4 +1)s_4^2);$$ il en r\'esulte que $Y=(0,0)_{(r_4, s_4)}$ est ind\'etermin\'e et $\mathrm{M}$ est envoy\'e sur $\mathrm{H}.$

\noindent Finalement \'eclatons $Y$ \`a la source et $X$ au but
\begin{align*}
&\left\{\begin{array}{ll} r_4=u_5\\ s_4=u_5v_5\end{array}\right. && \begin{array}{ll} \Lambda=\{u_5 =0\}\\ \end{array} && \hspace{2cm} &&\left\{\begin{array}{ll} c_4=a_5\\ d_4=a_5b_5\end{array}\right. && \begin{array}{ll} \Omega=\{a_5 =0\}\\ \end{array}\\
&\left\{\begin{array}{ll}r_4=r_5s_5\\ s_4=s_5\end{array}\right. && \begin{array}{ll} \Lambda=\{s_5 =0\}\\  \end{array} && \hspace{2cm} &&\left\{\begin{array}{ll} c_4=c_5d_5\\ d_4=d_5\end{array}\right. && \begin{array}{ll} \Omega=\{d_5 =0\}\\ \end{array}
\end{align*}

\noindent Tout d'abord remarquons que $\Delta''_5$ est envoy\'e sur $\Omega$ $$(u_1,v_1)\to \left(\frac{u_1v_1^2-v_1}{u_1+u_1v_1^3-v_1^2},v_1\right)_{(c_4,d_4)}\to\left(\frac{u_1v_1-1} {u_1+u_1v_1^3-v_1^2},v_1\right)_{(c_5,d_5)}.$$

\noindent Ensuite on a dans chacun des syst\`emes de coordonn\'ees $$(u_5,v_5)\to(u_5,u_5v_5)_{( r_4,s_4)}\to((u_5^2v_5+1)v_5:(u_5^2v_5+1)u_5v_5^2:1+(u_5^2v_5+1)u_5v_5^2)$$ et $$(r_5,s_5)\to(r_5s_5,s_5)_{(r_4,s_4)}\to(r_5s_5^2+1:s_5(r_5s_5^2+1):r_5+s_5(r_5s_5^2+1)).$$ En particulier on note que $\Lambda$ est envoy\'e sur $\Delta''_5.$ 

\noindent Notons $\widehat{P}_1$ (resp. $\widehat{P}_2$) le point infiniment proche obtenu en \'eclatant $Q,$ $Q_1,$ $U,$ $W$ et $Y$ (resp. $R,$ $S,$ $T,$ $V$ et $X$). Les calculs qui pr\'ec\`edent permettent d'affirmer que $f$ induit un isomorphisme entre $\mathrm{Bl}_{\widehat{P}_1}\mathbb{P}^2$ et $\mathrm{Bl}_{\widehat{P}_2}\mathbb{P}^2$ les composantes \'etant \'echang\'ees comme suit 
\begin{align*}
& \mathrm{E}\to\mathrm{F}, &&\Delta''\to\Omega, && \mathrm{K}\to\mathrm{L}, &&\mathrm{M}\to\mathrm{H}, && \Lambda\to\Delta'', &&\mathrm{G}\to\mathrm{N}.
\end{align*}

\noindent Pour qu'un conjugu\'e de $f$ soit un automorphisme d'entropie positive sur $\mathbb{P}^2(\mathbb{C})$ \'eclat\'e en $\ell$ points il faut que $\ell\geq 10;$ on cherche donc un automorphisme $A$ de $\mathbb{P}^2(\mathbb{C})$ tel que $(Af)^2A$ envoie $\widehat{P}_2$ sur $\widehat{P}_1.$ On remarque que $f(R)=(0:1:1)$ et $f^2(R)=Q$ puis que $f^2(\widehat{P}_2)=\widehat{P}_1$ donc~$A=\mathrm{id}$ convient. Les composantes sont \'echang\'ees de la fa\c{c}on suivante
\begin{align*}
& \Delta''\to f\Omega, &&\mathrm{E}\to f\mathrm{F}, &&\mathrm{G}\to f\mathrm{N}, &&\mathrm{K}\to f\mathrm{L}, && \mathrm{M}\to f\mathrm{H},\\
& \Lambda\to f\Delta'', &&f\mathrm{F}\to f^2\mathrm{F},&&f\mathrm{N}\to f^2\mathrm{N}, &&f\mathrm{L}\to f^2\mathrm{L},&&f\mathrm{H}\to f^2\mathrm{H},\\
&f\Omega\to f^2\Omega, &&f^2\mathrm{F}\to \mathrm{E},&&f^2\mathrm{N}\to \mathrm{G},&&f^2\mathrm{L}\to \mathrm{K},&&f^2\mathrm{H}\to \mathrm{M},\\
&f^2\Omega\to \Lambda.
\end{align*}

\noindent Il s'en suit que la matrice caract\'eristique de $f$ est donn\'ee dans la base $$ \{\Delta'',\,\mathrm{E},\,\mathrm{G},\,\mathrm{K},\,\mathrm{M},\,\Lambda,\,f\mathrm{F},\,f\mathrm{N},\, f\mathrm{L},\,f\mathrm{H},\,f\Omega,\,f^2\mathrm{F},\,f^2\mathrm{N},\,f^2\mathrm{L},\,f^2\mathrm{H},\,f^2\Omega\}$$ par 
$$\left[\begin{array}{cccccccccccccccc}
0 & 0 & 0 & 0 & 0 & 1  & 0 & 0 & 0 & 0 & 0 & 0 & 0 & 0 & 0 & 0\\  
0 & 0 & 0 & 0 & 0 & 1  & 0 & 0 & 0 & 0 & 0 & 1 & 0 & 0 & 0 & 0\\ 
0 & 0 & 0 & 0 & 0 & 2  & 0 & 0 & 0 & 0 & 0 & 0 & 1 & 0 & 0 & 0\\ 
0 & 0 & 0 & 0 & 0 & 3  & 0 & 0 & 0 & 0 & 0 & 0 & 0 & 1 & 0 & 0\\ 
0 & 0 & 0 & 0 & 0 & 3  & 0 & 0 & 0 & 0 & 0 & 0 & 0 & 0 & 1 & 0\\ 
0 & 0 & 0 & 0 & 0 & 3  & 0 & 0 & 0 & 0 & 0 & 0 & 0 & 0 & 0 & 1\\ 
0 & 1 & 0 & 0 & 0 & -1 & 0 & 0 & 0 & 0 & 0 & 0 & 0 & 0 & 0 & 0\\ 
0 & 0 & 0 & 0 & 1 & -3 & 0 & 0 & 0 & 0 & 0 & 0 & 0 & 0 & 0 & 0\\ 
0 & 0 & 0 & 1 & 0 & -3 & 0 & 0 & 0 & 0 & 0 & 0 & 0 & 0 & 0 & 0\\ 
0 & 0 & 1 & 0 & 0 & -2 & 0 & 0 & 0 & 0 & 0 & 0 & 0 & 0 & 0 & 0\\ 
1 & 0 & 0 & 0 & 0 & -3 & 0 & 0 & 0 & 0 & 0 & 0 & 0 & 0 & 0 & 0\\ 
0 & 0 & 0 & 0 & 0 & 0  & 1 & 0 & 0 & 0 & 0 & 0 & 0 & 0 & 0 & 0\\ 
0 & 0 & 0 & 0 & 0 & 0  & 0 & 1 & 0 & 0 & 0 & 0 & 0 & 0 & 0 & 0\\ 
0 & 0 & 0 & 0 & 0 & 0  & 0 & 0 & 1 & 0 & 0 & 0 & 0 & 0 & 0 & 0\\ 
0 & 0 & 0 & 0 & 0 & 0  & 0 & 0 & 0 & 1 & 0 & 0 & 0 & 0 & 0 & 0\\ 
0 & 0 & 0 & 0 & 0 & 0  & 0 & 0 & 0 & 0 & 1 & 0 & 0 & 0 & 0 & 0\\ 
\end{array}\right];$$ la plus grande racine du polyn\^ome caract\'eristique  $$(X^2-3X+1)(X^2-X+1)(X+1)^2(X^2+X+1)^3(X-1)^4$$ de celle-ci est $\frac{3+\sqrt{5}}{2},$ {\it i.e.} le premier degr\'e dynamique de $f$ est $\frac{3+\sqrt{5}}{2}.$ Remarquons que le polyn\^ome $\chi_{3,2}$ introduit dans le Th\'eor\`eme \ref{bk3} est $1-2X-2X^2+X^3$ dont la plus grande racine est $\frac{3+\sqrt{5}}{2}.$ 

\subsection{Une famille de syst\`emes dynamiques}

\noindent Dans cette section nous allons voir un r\'esultat permettant d'\'etablir que la famille de transformations birationnelles consid\'er\'ee n'est pas triviale.

\begin{thm}[\cite{BK3}]
{\sl \`A $c$ fix\'e, la famille de transformations d\'efinies par
\begin{align*}
&f(y,z)=\Big(z,-y+cz+\sum_{\stackrel{j=1}{j \text{ pair}}}^{k-2}\frac{a_j} {y^j}+\frac{1}{y^k}\Big), && a_j\in\mathbb{C},\, c\in\mathbb{R},k\geq 2. 
\end{align*} induit une famille de syst\`emes dynamiques de dimension $k/2-1.$}
\end{thm}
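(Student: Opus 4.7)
The bound $\dim\leq k/2-1$ is immediate since the parameters $(a_2,a_4,\ldots,a_{k-2})$ range over $\mathbb{C}^{k/2-1}.$ To prove the opposite inequality it is enough to show that the map
\begin{align*}
&\Psi\colon\mathbb{C}^{k/2-1}\to\mathrm{Bir}(\mathbb{P}^2)/\sim, && (a_2,\ldots,a_{k-2})\mapsto[f_{c,a_2,\ldots,a_{k-2}}]
\end{align*}
has generically finite fibres, where $\sim$ denotes birational conjugacy. My plan is to classify the birational maps $\varphi$ that conjugate one member of the family to another member, and to verify that the resulting action on the parameter space has generically finite orbits.

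For the first step I would exploit the rigidity of $f=f_{c,a_2,\ldots,a_{k-2}}.$ The line $\{x=0\}$ is invariant under $f$ and the restriction to this line is the M\"obius map $w\mapsto c-1/w,$ which depends only on $c;$ the points of indeterminacy, the exceptional curves, and the shift form $(y,z)\mapsto(z,g(y,z))$ are all intrinsically attached to $f.$ A birational $\varphi$ intertwining $f_{c,a}$ and $f_{c,a'}$ must send each of these structures to the corresponding one for $f_{c,a'},$ and this should force $\varphi$ into the triangular shift form
\begin{align*}
& \varphi(y,z)=(\alpha y+\beta,\alpha z+\beta), && \alpha\in\mathbb{C}^*,\,\beta\in\mathbb{C},
\end{align*}
modulo a finite group of additional discrete symmetries.

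Once this step is granted, substituting the ansatz into $\varphi f_{c,a}=f_{c,a'}\varphi$ yields the identity
$$\alpha\Big(-y+cz+\sum_{j}\frac{a_j}{y^j}+\frac{1}{y^k}\Big)+\beta=-(\alpha y+\beta)+c(\alpha z+\beta)+\sum_{j}\frac{a'_j}{(\alpha y+\beta)^j}+\frac{1}{(\alpha y+\beta)^k}$$
where the sums are over even $j$ between $2$ and $k-2.$ Matching the pole of order $k$ on both sides forces $\beta=0;$ comparing the coefficient of $1/y^k$ then gives $\alpha^{k+1}=1;$ and matching each $1/y^j$ gives $a'_j=\alpha^{j+1}a_j.$ Thus the conjugation acts on the parameter space through the cyclic group of order $k+1,$ whose generic orbit has exactly $k+1$ elements. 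The quotient has dimension $k/2-1,$ as claimed.

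The main obstacle is the rigidity step above. The most robust way to carry it out would be to work on one of the rational surfaces $\mathcal{Z}$ produced in Theorem \ref{bk3}, on which $f_{c,a}$ lifts to an automorphism of positive entropy. A birational conjugation would then lift, after suitable blow-ups, to an intertwining birational map between the two corresponding surfaces; by Theorem \ref{lambda} it would have to send the unique dominant eigenclass of $f_{c,a}^*$ to that of $f_{c,a'}^*,$ and by preservation of the exceptional configuration and of the invariant line it would necessarily be induced by a linear change of coordinates on $\mathbb{P}^2(\mathbb{C})$ compatible with the shift structure, forcing the triangular shift form above.
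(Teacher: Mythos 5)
Your argument hinges entirely on the rigidity step, and that step is not proved: the assertion that any birational (or even biholomorphic) map intertwining $f_{c,a}$ and $f_{c,a'}$ must be an affine shift $(y,z)\mapsto(\alpha y+\beta,\alpha z+\beta)$ is precisely the hard content, and your sketch does not deliver it. Lifting a conjugation to the surfaces $\mathcal{Z}_a$ of Theorem \ref{bk3} only gives a birational map between two different surfaces, not an automorphism; to promote it to an isomorphism sending the exceptional configuration to the exceptional configuration and then to an element of $\mathrm{PGL}_3(\mathbb{C})$ preserving the invariant line, you would need the classification of the curves invariant under $f$ and the determination of the image of the representation $\mathrm{cr}\colon\mathrm{Aut}(\mathcal{Z}_a)\to\mathrm{GL}(\mathrm{Pic}(\mathcal{Z}_a))$ — results of \cite{BK3} (their Theorems 3.5 and 3.6 and the analysis of $\mathrm{cr}$) that you invoke implicitly but do not establish; Theorem \ref{lambda} alone only controls the dominant eigenclass, not the map itself. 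There is also a mismatch of equivalence relations: even if you completed the rigidity argument for birational conjugations, you would only have shown that the fibres of $a\mapsto[f_{c,a}]$ in $\mathrm{Bir}(\mathbb{P}^2)/\!\sim$ are finite, whereas the statement as proved by Bedford and Kim is that two distinct nearby parameters give dynamical systems that are not even smoothly conjugate — and a smooth conjugacy has no reason to be birational, so your route cannot reach that conclusion. Your coefficient-matching computation (forcing $\beta=0,$ $\alpha^{k+1}=1,$ $a'_j=\alpha^{j+1}a_j$) is correct, but by itself it only exhibits the obvious finite symmetry group; it does not exclude other conjugations.

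The paper's proof avoids any classification of conjugating maps by using a local invariant. At the $k+1$ fixed points $p_j(a)$ (of the form $(\xi,\xi)$ with $\xi^{k+1}=\frac{1}{2-c}$ when $a=0$) one records the traces $\tau_j(a)$ of $Df_a,$ which are invariants of any $C^1$ conjugacy. Differentiating the fixed-point equation gives $\frac{\partial\tau(\xi_a)}{\partial a_\ell}\big\vert_{a=0}=\frac{k-\ell}{\xi^{\ell+1}},$ so the Jacobian of $a\mapsto T(a)=(\tau_1(a),\ldots,\tau_{k+1}(a))$ is essentially a Vandermonde matrix of rank $\frac{k}{2}-1$ at $a=0;$ since the $\tau_j(0)$ are pairwise distinct, the map to the \emph{unordered} set of multipliers is locally injective, and distinct nearby parameters yield non-diffeomorphic systems, which is exactly the claimed dimension count. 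To salvage your approach you would either have to import the Bedford--Kim results on invariant curves and on $\mathrm{cr},$ or switch, as the paper does, to a conjugacy invariant such as the fixed-point multipliers.
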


\noindent La strat\'egie est la suivante. Un tel $f$ poss\`ede $k+1$ points fixes que nous noterons $p_1,\,\ldots,\,p_{k+1}.$ Posons $a=(a_1,\ldots,a_{k-2}).$ \textsc{Bedford} et \textsc{Kim} montrent que les valeurs propres de $Df_a$ au point~$p_j(a)$ d\'ependent de $a;$ il s'en suit que la famille varie avec $a$ de mani\`ere non triviale. Plus pr\'ecis\'ement ils montrent que la trace de $Df_{a}$ varie de mani\`ere non triviale. D\'esignons par~$\tau_j(a)$ la trace de la diff\'erentielle $Df_{a}$ au point $p_j(a)$ et consid\'erons l'application $T$ d\'efinie par 
\begin{align*}
& a\to T(a)=(\tau_1(a),\ldots,\tau_{k+1}(a)).
\end{align*}

\subsubsection{Premi\`ere \'etape.} Le rang de l'application $T$ est $\frac{k}{2}-1$ au point $a=0.$ En effet les points fixes de $f$ sont de la forme $(\xi_s,\xi_s)$ o\`u $\xi_s$ est une racine de 
\begin{equation}\label{ptsfixes}
\xi=(c-1)\xi+\sum_{\stackrel{j=1}{j \text{ pair}}}^{k-2}\frac{a_j}{\xi^j}+\frac{1}{\xi^k}.
\end{equation}
\noindent Lorsque $a$ est nul, on a pour tout point fixe $\xi^{k+1}=\frac{1}{2-c}.$ En diff\'erenciant (\ref{ptsfixes}) par rapport aux~$a_\ell$ on obtient pour $a=0$ l'\'egalit\'e $$\left(2-c+\frac{k}{\xi^{k+1}}\right)\frac{\partial\xi}{\partial a_\ell}=\frac{1}{\xi^\ell}$$ dont on d\'eduit $$\frac{\partial\xi}{\partial a_\ell}\Big\vert_{a=0}=\frac{1}{(2-c)(k+1)\xi^\ell}.$$ La trace de $Df_{(y,z)}$ est donn\'ee par $$\tau=c-\sum_{\stackrel{j=1}{j \text{ pair}}}^{k-2}\frac{ja_j}{y^{j+1}}-\frac{k}{y^{k+1}}.$$ Pour $y=\xi_a$ on a 
$$\frac{\partial\tau(\xi_a)}{\partial a_\ell}\Big\vert_{a=0}=-\frac{\ell}{y^{\ell+1}}+\frac{k(k+1)}{y^{k+2}}\frac{\partial\xi_a}{\partial a_\ell}=-\frac{\ell}{y^{\ell+1}}+\frac{k}{2-c}\frac{1}{\xi^{k+1}\xi^{\ell+1}}=-\frac{\ell}{y^{\ell+1}}+\frac{k}{y\xi^\ell}=\frac{k-\ell}{\xi^{\ell+1}}.$$

\noindent Si $\xi_j$ parcourt les $\frac{k}{2}-1$ racines distinctes de $\frac{1}{(2-c)^{k+1}},$ la matrice est essentiellement une matrice de Vandermonde de taille $(\frac{k}{2}-1)\times(\frac{k}{2}-1)$ donc de rang $\frac{k}{2}-1.$

\subsubsection{Deuxi\`eme \'etape.} D\'esignons par $f_a$ une transformation du type
\begin{align*}
&\Big(z,-y+cz+\sum_{\stackrel{j=1}{j \text{ pair}}}^{k-2}\frac{a_j} {y^j}+\frac{1}{y^k}\Big), && a_j\in\mathbb{C},\, c\in\mathbb{R},k\geq 2. 
\end{align*} 

\noindent Il existe un voisinage $\mathcal{U}$ de $0$ dans $\mathbb{C}^{\frac{k}{2}-1}$ tel que pour tous $a',$ $a''$ dans $\mathcal{U}$ avec $a'\not=a''$ l'application $f_{a'}$ n'est pas diff\'eomorphe \`a $f_{a''}.$ En effet, d'apr\`es ce qui pr\'ec\`ede l'application $\mathbb{C}^{\frac{k}{2}-1}\to\mathbb{C}^{k+1},$ $a\mapsto T(a)$ est localement injective au voisinage de $0.$ De plus pour $a$ nul les points fixes $p_1,$ $\ldots,$ $p_{k+1},$ et donc les valeurs $\tau_1(0),$ $\ldots,$ $\tau_{k+1}(0)$ sont distinctes. Par suite $\mathbb{C}^{\frac{k}{2}-1}\ni a \mapsto\{ \tau_1(a),\ldots,\tau_{k+1}(a)\}$ est localement injective en $0.$ Ainsi si $\mathcal{U}$ est un voisinage suffisamment petit de $0$ et si $a'$ et $a''$ sont deux \'el\'ements distincts de $\mathcal{U}$ les ensembles de multiplicateurs aux points fixes ne sont pas les m\^emes; il s'en suit que $f_{a'}$ et~$f_{a''}$ ne sont pas diff\'eomorphes.

\subsection{D'autres r\'esultats}

\noindent Soient $f$ une transformation satisfaisant le Th\'eor\`eme \ref{bk3}; notons~$\mathcal{Z}_f$ la surface sur laquelle $f$ est conjugu\'ee \`a un automorphisme que l'on notera encore~$f$ pour simplifier. \`A l'aide de ce m\^eme th\'eor\`eme \textsc{Bedford} et \textsc{Kim} d\'eterminent les courbes de~$\mathcal{Z}_f$ invariantes par $f$ (\emph{voir} \cite{BK3}, Th\'eor\`eme 3.5). Ils en d\'eduisent que si $n>2,$ alors $(f,\mathcal{Z}_f)$ est minimal; de plus si~$n=2$ il est minimal apr\`es avoir contract\'e $\Delta$ (\emph{voir} \cite{BK3}, Th\'eor\`eme 3.6). 

\noindent \`A chaque $f$ est donc associ\'ee une surface rationnelle $\mathcal{Z}=\mathcal{Z}_f$ sur laquelle $f$ est conjugu\'ee \`a un automorphisme. Une question naturelle est la suivante: que peut-on dire de $\mathrm{Aut}(\mathcal{Z})$ ?  Rappelons que $\mathrm{cr}$ est la repr\'esentation de $\mathrm{Aut}(\mathcal{Z})$ dans $\mathrm{GL}(\mathrm{Pic}(\mathcal{Z}))$ d\'efinie par (cf \S\ref{wweyl}) 
\begin{align*}
&\mathrm{cr}\colon\mathrm{Aut}(\mathcal{Z})\to\mathrm{GL}(\mathrm{Pic}(\mathcal{Z})), && g\mapsto g_*.
\end{align*}

\noindent \textsc{Bedford} et \textsc{Kim} montrent que dans tous les cas qu'ils consid\`erent $\mathrm{cr}$ est au plus $((k^2-1):1);$ de plus si $a_{k-2}$ est non nul, $\mathrm{cr}$ est fid\`ele. Lorsque $n$ vaut $2,$ ils obtiennent le r\'esultat suivant plus pr\'ecis.

\begin{thm}[\cite{BK3}]
{\sl Supposons que $n$ soit \'egal \`a $2$ et que \begin{align*}
&f(y,z)=\Big(z,-y+cz+\sum_{\stackrel{j=1}{j \text{ pair}}}^{k-2}\frac{a_j} {y^j}+\frac{1}{y^k}\Big), && a_j\in\mathbb{C},\, c\in\mathbb{R},k\leq 2. 
\end{align*} soit conjugu\'e \`a un automorphisme sur une surface rationnelle d'entropie $\log\lambda_{n,k}$ o\`u $\log\lambda_{n,k}$ est la plus grande racine du polyn\^ome $\chi_{n,k}=1-k\displaystyle\sum_{j=1}^{n-1}x^j+x^n.$ 

\noindent Soient $\iota$ la r\'eflexion $(x,y)\mapsto (y,x)$ et  $\mathrm{cr}\colon\mathrm{Aut}(\mathcal{Z})\to\mathrm{GL}(\mathrm{Pic}(\mathcal{Z})),$ $g\mapsto g_*.$ 

\noindent L'image de $\mathrm{cr}$ est le sous-groupe de $\mathrm{GL}(\mathrm{Pic}(\mathcal{Z}))$ constitu\'e des isom\'etries qui pr\'eservent \`a la fois la classe canonique de $\mathcal{Z}$ et l'ensemble des diviseurs effectifs; ce sous-groupe est le groupe di\'edral infini de g\'en\'erateurs $f_*$ et $\iota_*.$}
\end{thm}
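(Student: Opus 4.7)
The plan is to proceed in three stages: first, verify that $\iota$ lifts to an automorphism of $\mathcal{Z}$; second, establish the dihedral relation $\iota_* f_* \iota_* = f_*^{-1}$ on $\mathrm{Pic}(\mathcal{Z})$; and third, identify $\mathrm{cr}(\mathrm{Aut}(\mathcal{Z}))$ with the group $G$ of isometries of $\mathrm{Pic}(\mathcal{Z})$ preserving both $K_{\mathcal{Z}}$ and the effective cone.

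For the first step, I would exploit that for $n=2$ one has $c=0$, so the dynamics of $f$ on the invariant line $\Delta=\{x=0\}$ reduces to $w\mapsto -1/w$, an involution. Consequently the $f$-orbit on $\Delta$ blown up to build $\mathcal{Z}$ is a period-$2$ orbit, visibly exchanged by $\iota:(y,z)\mapsto(z,y)$. Moreover, a direct computation gives $f^2(y,z)=(P(y)-y,\,P(z)-z)$, so $f^2$ splits as a diagonal product and therefore commutes with $\iota$; this forces the remaining (possibly infinitely near) points blown up for $\mathcal{Z}$ to occur in $\iota$-symmetric configurations, and one checks that $\iota$ lifts to an automorphism of $\mathcal{Z}$.

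Next, to obtain the dihedral relation, I would use the explicit matrix of $f_*$ in the geometric basis of $\mathrm{Pic}(\mathcal{Z})$ coming from the blow-up construction (already built in Chapter \ref{chapbedkim1}); the action of $\iota_*$ permutes these basis elements in a manifestly dihedral way, so the identity $\iota_*f_*\iota_*=f_*^{-1}$ can be read off directly. Conceptually, $f_*$ is hyperbolic on $K_{\mathcal{Z}}^{\perp}$ with eigenvalues $\lambda,\lambda^{-1}$ (Theorem \ref{lambda}), and the two eigendirections are the forward/backward limit rays of $f_*$-orbits of an ample class; since $\iota$ conjugates $f$ to a map with the same cohomological action reversed in time, $\iota_*$ must swap these two eigenlines, which gives the relation and identifies $\langle f_*,\iota_*\rangle$ with $D_\infty$.

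For the third step, the inclusion $\mathrm{cr}(\mathrm{Aut}(\mathcal{Z}))\subset G$ is automatic, since every automorphism preserves the canonical class and the effective cone. For the reverse inclusion, view $K_{\mathcal{Z}}^{\perp}\otimes\mathbb{R}$ as a hyperbolic lattice of signature $(1,N-1)$ and let $\mathbb{H}$ denote the associated hyperbolic space. The element $f_*$ acts as a hyperbolic translation along an axis $\ell\subset\mathbb{H}$ whose endpoints on $\partial\mathbb{H}$ are the two $f_*$-invariant nef isotropic rays; these endpoints coincide with the unique asymptotic directions of the effective cone arising as limits of orbits of ample classes. Hence any $\phi\in G$ permutes these two asymptotic directions, hence preserves $\ell$, hence lies in the stabilizer of $\ell$, which is exactly $\langle f_*,\iota_*\rangle$. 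Finally, $\mathrm{cr}$ is injective because the kernel lies in $\mathrm{Aut}^{0}(\mathcal{Z})$, which is trivial for $\mathcal{Z}$ of large Picard rank, so every element of $G$ genuinely comes from an automorphism.

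The main obstacle will be the last step: proving that every $\phi\in G$ preserves the axis $\ell$ of $f_*$. This rests on showing that the two endpoints of $\ell$ are intrinsically characterized by the effective cone (as its only isotropic accumulation rays), and ruling out auxiliary reflections coming from $(-2)$-classes. Here the minimality of $(\mathcal{Z},f)$ established in \cite{BK3} is essential: it ensures there are no nodal $(-2)$-curves giving extra Weyl reflections in $G$, so that $G$ really is just the stabilizer of the axis $\ell$ and nothing more.
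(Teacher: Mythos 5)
A preliminary remark: this survey states the theorem without proof (it is quoted from \cite{BK3}), so your proposal can only be compared with Bedford--Kim's argument. Your overall architecture --- lift $\iota$ to $\mathcal{Z}$, obtain the dihedral relation, then identify the image of $\mathrm{cr}$ with the group $G$ of isometries preserving $\mathrm{K}_{\mathcal{Z}}$ and the effective classes --- is the right one, but two of your key steps do not hold as written. First, your justification of the lift of $\iota$ rests on the identity $f^2(y,z)=(P(y)-y,\,P(z)-z)$, which is false for the actual map. The formula printed in the survey contains a typo: the poles must be in the second variable (compare the homogeneous form $(xz^{2}:z^{3}:x^{3}+z^{3}-yz^{2})$ used in Chapter 5, whose affine form is $(y,z)\mapsto(z,-y+z+1/z^{2})$); with $y$ in the denominators the formula does not even define a birational map. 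For the true map $f(y,z)=(z,-y+q(z))$ one has $f^{2}(y,z)=(-y+q(z),\,-z+q(-y+q(z)))$, which is not diagonal and does not commute with $\iota$. The mechanism you actually need is reversibility: $\iota f\iota=f^{-1}$ for every $c$ and all parameters $a_j$, which gives $\iota_*f_*\iota_*=f_*^{-1}$ \emph{once} you have checked --- and this must be checked --- that $\iota$ permutes the (infinitely near) centres of the blow-ups defining $\mathcal{Z}$; this follows from the fact that these centres form the resolution data of $f$ and of $f^{-1}$, a configuration exchanged by the reversor, not from any splitting of $f^2$.

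Second, and more seriously, the heart of the theorem is the equality $G=\langle f_*,\iota_*\rangle$, and your axis argument does not establish it. Even granting that every $\varphi\in G$ preserves the pair of isotropic eigenrays of $f_*$ (which itself requires proving that these are the only isotropic accumulation rays of effective classes), the stabilizer of the axis inside the isometry group of $\mathrm{Pic}(\mathcal{Z})$ is much larger than $D_\infty$: it contains every isometry acting trivially on the eigenplane and arbitrarily on its negative-definite orthogonal complement, and nothing in your argument excludes these. Ruling them out is precisely where Bedford--Kim use their explicit determination of the irreducible curves of negative self-intersection on $\mathcal{Z}$ (Theorem 3.5 of \cite{BK3}): an isometry preserving $\mathrm{K}_{\mathcal{Z}}$ and effectivity must permute these finitely many $f_*$-orbits of classes, and after composing with a suitable word in $f_*$ and $\iota_*$ it fixes a geometric basis, hence is the identity. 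Note also that your appeal to minimality is misplaced in the case at hand: for $n=2$ the pair $(f,\mathcal{Z}_f)$ is \emph{not} minimal (it becomes so only after contracting the invariant line $\Delta$), and the class of $\Delta$ is exactly one of the distinguished effective classes the argument must track. Finally, injectivity of $\mathrm{cr}$ is irrelevant to surjectivity onto $G$ (and it fails in general, holding only when $a_{k-2}\neq 0$), so your closing sentence is a non sequitur, though harmless.
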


\noindent Ils en d\'eduisent, toujours pour $n=2,$ que g\'en\'eriquement les surfaces $\mathcal{Z}_a$ ne sont pas biholomorphiquement \'equivalentes.

\begin{thm}[\cite{BK3}]
{\sl Supposons que $n$ soit \'egal \`a $2,$ que $k$ soit pair et sup\'erieur ou \'egal \`a $4.$ Soient $a$ dans $\mathbb{C}^{k/2-1}$ et $f_a$ une transformation du type \begin{align*}
&\Big(z,-y+cz+\sum_{\stackrel{j=1}{j \text{ pair}}}^{k-2}\frac{a_j} {y^j}+\frac{1}{y^k}\Big), && a_j\in\mathbb{C},\, c\in\mathbb{R},k\geq 2. 
\end{align*} qui soit conjugu\'ee \`a un automorphisme sur une surface rationnelle $\mathcal{Z}_a.$ 

\noindent Il existe un voisinage $\mathcal{U}$ de $0$ dans $\mathbb{C}^{k/2-1}$ tel que si $a,$ $a'$ sont deux points distincts de $\mathcal{U}$ et si~$a_{k-1}$ est non nul, alors $\mathcal{Z}_a$ n'est pas biholomorphiquement \'equivalent \`a $\mathcal{Z}_{a'}.$}
\end{thm}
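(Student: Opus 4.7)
Supposons qu'il existe un biholomorphisme $\varphi\colon\mathcal{Z}_a\to\mathcal{Z}_{a'}.$ L'application $g\mapsto\varphi g\varphi^{-1}$ induit un isomorphisme de groupes $\mathrm{Aut}(\mathcal{Z}_a)\to\mathrm{Aut}(\mathcal{Z}_{a'}).$ Par le th\'eor\`eme pr\'ec\'edent, les images par $\mathrm{cr}$ de ces deux groupes sont toutes deux \'egales au groupe di\'edral infini engendr\'e par $f_{a,*}$ (resp. $f_{a',*}$) et $\iota_*.$ Comme l'hypoth\`ese $a_{k-2}\neq 0$ (not\'ee $a_{k-1}$ dans l'\'enonc\'e) assure la fid\'elit\'e de $\mathrm{cr},$ l'automorphisme $\varphi f_a\varphi^{-1}$ appartient au groupe di\'edral engendr\'e par $f_{a'}$ et $\iota.$

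Le premier r\'eflexe est alors de restreindre drastiquement la forme de $\varphi f_a\varphi^{-1}$ en utilisant l'entropie. Les \'el\'ements du groupe di\'edral sont de la forme $f_{a'}^m$ ou $f_{a'}^m\iota,$ $m\in\mathbb{Z}.$ Comme l'entropie topologique est un invariant de conjugaison et que $f_a^m$ est de premier degr\'e dynamique $\lambda_{2,k}^{|m|},$ la condition $\lambda(\varphi f_a\varphi^{-1})=\lambda(f_a)=\lambda_{2,k}$ impose $|m|=1.$ Il ne subsiste donc qu'un nombre fini de possibilit\'es: $\varphi f_a\varphi^{-1}\in\{f_{a'},\,f_{a'}^{-1},\,f_{a'}\iota,\,f_{a'}^{-1}\iota\}.$

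Pour conclure on utilise le spectre des multiplicateurs aux points fixes comme invariant de conjugaison holomorphe. Le couple $(f_a,\mathcal{Z}_a)$ poss\`ede $k+1$ points fixes $p_j(a)$ et l'on a d\'ej\`a \'etabli que l'application
\begin{align*}
& a\mapsto T(a)=\{\tau_1(a),\,\ldots,\,\tau_{k+1}(a)\}
\end{align*}
est localement injective en $a=0,$ o\`u $\tau_j(a)=\mathrm{tr}\,Df_a(p_j(a)).$ Le biholomorphisme $\varphi$ envoie les points fixes de $f_a$ sur les points fixes de $\varphi f_a\varphi^{-1}$ en conservant les multiplicateurs, donc le spectre $T(a)$ co\"incide (\`a permutation pr\`es) avec le spectre de l'automorphisme cible. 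Si $\mathcal{U}$ est un voisinage suffisamment petit de $0$ sur lequel $T$ est injective, la co\"incidence $T(a)=T(a')$ entra\^ine $a=a',$ ce qui fournit la conclusion dans le cas $\varphi f_a\varphi^{-1}=f_{a'}.$

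La principale difficult\'e restera d'\'eliminer les trois autres possibilit\'es. Le cas $f_{a'}^{-1}$ se traite en remarquant que les multiplicateurs aux points fixes de $f_{a'}^{-1}$ sont les inverses de ceux de $f_{a'};$ on compare alors $T(a)$ avec le spectre inverse, et on v\'erifie que le calcul de $\dfrac{\partial\tau}{\partial a_\ell}\big\vert_{a=0}$ d\'ej\`a effectu\'e exclut aussi cette co\"incidence (quitte \`a restreindre $\mathcal{U}$). Les cas impliquant $\iota$ demandent davantage d'attention: on doit calculer les points fixes et multiplicateurs de $f_{a'}^{\pm 1}\iota$ au voisinage de $a'=0,$ constater qu'ils forment un ensemble disjoint (ou de cardinal diff\'erent) de celui de $f_a,$ et conclure que cette conjugaison est impossible pour $a,$ $a'$ assez proches de $0.$ C'est cette derni\`ere \'etape qui constitue le c\oe ur technique de la preuve, la classification di\'edrale et le caract\`ere local de l'injectivit\'e de $T$ \'etant alors combin\'es pour conclure.
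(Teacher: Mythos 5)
Le chapitre n'offre pas de d\'emonstration de cet \'enonc\'e: il le pr\'esente comme une cons\'equence, chez \textsc{Bedford} et \textsc{Kim}, du th\'eor\`eme qui pr\'ec\`ede (image de $\mathrm{cr}$ \'egale au groupe di\'edral infini engendr\'e par $f_*$ et $\iota_*,$ fid\'elit\'e de $\mathrm{cr}$ d\`es que $a_{k-2}\neq 0$) et du calcul des traces aux points fixes fait au paragraphe sur la famille de syst\`emes dynamiques. Votre squelette est exactement cette d\'eduction: un biholomorphisme $\varphi\colon\mathcal{Z}_a\to\mathcal{Z}_{a'}$ conjugue $f_a$ \`a un automorphisme de $\mathcal{Z}_{a'}$ dont la classe par $\mathrm{cr}$ vit dans le groupe di\'edral, la fid\'elit\'e (vous avez raison de lire le $a_{k-1}$ de l'\'enonc\'e comme une coquille pour $a_{k-2}$) ram\`ene l'\'egalit\'e cohomologique \`a une \'egalit\'e d'automorphismes, et l'injectivit\'e locale en $0$ de $a\mapsto\{\tau_1(a),\ldots,\tau_{k+1}(a)\}$ donne $a=a'.$

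En revanche, telle qu'elle est r\'edig\'ee, votre preuve est incompl\`ete et la difficult\'e est mal situ\'ee. D'une part, les cas $\varphi f_a\varphi^{-1}=f_{a'}^{\pm 1}\iota,$ que vous laissez ouverts en les pr\'esentant comme le c\oe ur technique, s'\'eliminent imm\'ediatement: dans le groupe di\'edral infini on a $\iota_*f_{a'*}\iota_*=f_{a'*}^{-1},$ donc $(f_{a'*}^{m}\iota_*)^2=\mathrm{id};$ par fid\'elit\'e de $\mathrm{cr},$ l'automorphisme $f_{a'}^{m}\iota$ est une involution de $\mathcal{Z}_{a'},$ d'entropie nulle, qui ne peut \^etre conjugu\'ee \`a $f_a$ de degr\'e dynamique $\lambda_{2,k}>1.$ Autrement dit, l'argument d'entropie que vous invoquez exclut d\'ej\`a toutes les r\'eflexions du groupe di\'edral; votre liste de quatre possibilit\'es provient d'une lecture inexacte de cette structure. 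D'autre part, le cas $\varphi f_a\varphi^{-1}=f_{a'}^{-1}$ n'a pas \`a \^etre exclu, et l'affirmation selon laquelle le calcul de $\partial\tau/\partial a_\ell$ l'\'ecarte n'est pas justifi\'ee: pour cette famille le d\'eterminant jacobien vaut $1$ (c'est ce que montre la forme de $Df$ utilis\'ee dans le calcul des traces), donc les multiplicateurs en chaque point fixe forment une paire $\{\mu,\mu^{-1}\}$ et $f_{a'}^{-1}$ a, aux m\^emes points fixes, les m\^emes traces que $f_{a'};$ ce cas conduit donc \`a la m\^eme \'egalit\'e $T(a)=T(a')$ et, par injectivit\'e locale, \`a $a=a',$ ce qui est pr\'ecis\'ement la conclusion cherch\'ee et non une co\"{i}ncidence \`a r\'efuter. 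Une fois ces deux points rectifi\'es, il ne reste que le cas $\varphi f_a\varphi^{-1}=f_{a'},$ que vous traitez correctement, et l'argument devient complet.
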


\section{Dynamique des automorphismes d'entropie positive: domaines de rotation}

\noindent Comme on l'a dit pr\'ec\'edemment si $\mathcal{Z}$ est une surface complexe, compacte poss\'edant un automorphisme $f$ d'entropie topologique non nulle, un th\'eor\`eme de \textsc{Cantat} assure qu'ou bien la dimension de \textsc{Kodaira} de $\mathcal{Z}$ est nulle et dans ce cas $f$ est conjugu\'e \`a un automorphisme de l'unique mod\`ele minimal de $\mathcal{Z}$ qui doit \^etre un tore, une surface K$3$ ou une surface d'\textsc{Enriques}; ou bien la surface $\mathcal{Z}$ est rationnelle non minimale et $f$ est birationnellement conjugu\'ee \`a une transformation birationnelle du plan (\cite{Can1}). On a aussi vu que si $\mathcal{Z}$ est un tore complexe, l'ensemble de \textsc{Fatou} de $f$ est vide. Si $\mathcal{Z}$ est une surface K$3$ ou un quotient d'une surface K$3,$ l'existence d'une forme volume invariante implique que les seules composantes de \textsc{Fatou} possibles sont les domaines de rotation. \textsc{McMullen} a montr\'e l'existence de surfaces K$3$ non alg\'ebriques avec des domaines de rotation de rang $2$ (\emph{voir} \cite{Mc2}). Qu'en est-il si $\mathcal{Z}$ est une surface rationnelle non minimale ? Les automorphismes d'entropie positive sur les surfaces rationnelles non minimales peuvent poss\'eder de \og grands\fg\, domaines de rotation.

\subsection{\'Enonc\'es}

\begin{thm}[\cite{BK4}]\label{rot1}
{\sl Il existe une surface rationnelle $\mathcal{Z}$ poss\'edant un automorphisme d'entropie positive $h$ et un dommaine de rotation $\mathcal{U}.$ De plus, $\mathcal{U}$ est une union de disques de \textsc{Siegel} invariants sur chacun desquels $h$ agit comme une rotation irrationnelle.}
\end{thm}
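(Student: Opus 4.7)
The plan is to exhibit the example explicitly within the family of birational maps studied in Chapters \ref{chapbedkim1} and \ref{chapbedkim2}, by choosing algebraic parameters for which the resulting automorphism has both positive entropy and a suitable resonance at an invariant fixed point. I would begin with the family
$$f_{a,b}(y,z) = \left(z,\frac{a+z}{b+y}\right)$$
and restrict attention to a locus of parameters $(a,b) \in \Gamma_j \cap \mathcal{V}_n$ for some $n \geq 7$ and some $j \in \{1,2,3\}$ dividing $n$: these are parameters for which $f_{a,b}$ is conjugate to an automorphism $h$ of a rational surface $\mathcal{Z}$ of entropy $\log \lambda(f_{a,b}) > 0$, and for which $h$ preserves an invariant cubic curve $\Gamma_j$ described by the parametrization $\varphi_j$ recalled in Chapter \ref{chapbedkim1}.

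Next, I would analyze the action of $h$ at one of its two fixed points $m$ lying in the rotation region indicated in the work of Bedford--Kim (namely the points $m_j$ appearing in the statement of the rank $1$ existence theorem recalled in \S 5). Writing $(a,b) = \varphi_j(t)$ with $t$ on the unit circle and imposing that $t$ be a Galois conjugate of the Salem number $\lambda_{n,j}$ distinct from $\lambda_{n,j}^{\pm 1}$, the eigenvalues of $Dh_m$ are explicit rational functions of $t$, both of modulus $1$. The aim is to select $t$ so that these two eigenvalues $(\alpha,\beta)$ satisfy a simultaneous Diophantine condition. Since $\lambda(h) > 1$ is a Salem number of degree at least $4$, its Galois conjugates on the unit circle are algebraic numbers, and one can invoke the algebraic-multiplicative independence statement recalled in Chapter \ref{chapdyn} to conclude that the pair $(\alpha,\beta)$ is simultaneously Diophantine whenever $(\alpha,\beta)$ is multiplicatively independent. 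Siegel's theorem in two variables then provides a local biholomorphism $\varphi \colon (\mathbb{C}^2,0) \to (\mathcal{Z},m)$ conjugating $h$ to the linear map $(x,y) \mapsto (\alpha x, \beta y)$.

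The key additional input needed to obtain a rotation domain that is a \emph{union of Siegel disks} rather than a rank $2$ domain is a resonance condition on $(\alpha,\beta)$. I would therefore refine the choice of $t$ so that the local normal form factors as a one-parameter family of invariant analytic disks: concretely, I would require that $\alpha = \beta$, or more generally that $\alpha^p \beta^q = 1$ for some nontrivial pair of integers $(p,q)$, so that the $1$-parameter group $\mathcal{G}_0$ of closures of $\{h^n\}$ has real dimension $1$ and preserves the level sets of $x^p y^q$. Each such level set provides a $h$-invariant holomorphic disk on which $h$ acts as a rotation of irrational angle determined by the remaining independent exponent; by the general properties of rotation domains recalled in Chapter \ref{chapdyn} (the $\mathcal{G}_0$-orbit foliation and the pseudoconvexity statement of Bedford--Kim), the local linearization extends to a maximal Fatou component $\mathcal{U}$ fibered by these invariant Siegel disks.

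The hard part will be the simultaneous fulfillment of all the constraints: the parameter $t$ must satisfy $(a,b) = \varphi_j(t) \in \mathcal{V}_n$ (an algebraic condition coming from the polynomial $\chi_n$ in Chapter \ref{chapbedkim1}), must lie on the unit circle, and must produce the prescribed resonance $\alpha^p \beta^q = 1$ at the fixed point $m$. I expect to verify this by a density/counting argument: as $n$ ranges through the integers for which $\chi_n$ has a Salem factor, the set of Galois conjugates of $\lambda_{n,j}$ on the unit circle becomes large, and the resonance locus $\{\alpha^p \beta^q = 1\}$ cuts out a codimension $1$ subset of the eigenvalue space whose pullback through the explicit rational map $t \mapsto (\alpha(t),\beta(t))$ should be nonempty for suitable $(n,j,p,q)$. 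Once a single such $t$ is exhibited, the conclusion that $\mathcal{U}$ is a union of invariant Siegel disks follows from the linearization and the general structure of rank $1$ rotation domains; the positivity of the entropy is automatic since $\lambda(h) > 1$ by construction.
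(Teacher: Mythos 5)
Your proposal does not follow the paper's route, and as written it contains a genuine gap. The paper obtains Theorem \ref{rot1} from the family $f(x,y)=\left(y,-\delta x+cy+\frac{1}{y}\right)$ of Theorem \ref{rot3}, where $\delta$ is a unit-modulus root of the Salem polynomial $\chi_{n,m}$ and $c=2\sqrt{\delta}\cos(j\pi/n)$: one first realizes $f$ as an automorphism of a rational surface $\mathcal{Z}$ with positive entropy, and the assertion that the Fatou component is a union of invariant Siegel disks is then extracted from the pair $(f^k,\mathcal{Z})$ via the \emph{global} linear model $(L,\mathcal{L})$ of Theorem \ref{rot2}, not from a Siegel linearization at a single fixed point of the linear fractional family $f_{a,b}$ of Chapter \ref{chapbedkim1}.

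The central gap is the incompatibility between the two conditions you impose on the multipliers $(\alpha,\beta)$ at the fixed point $m$. The linearization criterion you quote requires $\alpha,\beta$ algebraic and multiplicatively independent (hence simultaneously Diophantine); but you then demand $\alpha^p\beta^q=1$ with $(p,q)\neq(0,0)$, which is exactly multiplicative dependence, so the tool you rely on is no longer available and you give no substitute control of the small divisors $\alpha^a\beta^b-\alpha$, $\alpha^a\beta^b-\beta$. Worse, if the relation takes the form $\alpha=\beta^k$ with $k\geq 2$ (the form needed so that the invariant curves $\{x=cy^k\}$ are disks through $m$), this is precisely a resonance in the sense recalled in Chapter \ref{chapdyn}, i.e. an obstruction to the very linearization you invoke; if instead $\alpha=\beta$, you must additionally exclude a Jordan block, and for a general relation the invariant level sets $\{x^py^q=c\}$, $c\neq 0$, avoid the axes and are not disks at all. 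You also give no argument that parameters realizing such a relation exist: for each $n$ the set $\Gamma_j\cap\mathcal{V}_n$ consists of finitely many algebraic points, so the heuristic that a codimension-one resonance locus should meet it has no force. Finally, even granting a linearization with invariant pencil of disks, this only describes a small neighborhood of $m$, whereas Theorem \ref{rot1} asserts that the entire rotation domain $\mathcal{U}$ is a union of invariant Siegel disks; promoting the local picture to the whole Fatou component is exactly what Bedford--Kim's global model (Theorem \ref{rot2}) provides and what your argument does not supply.
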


\noindent La lin\'earisation est un outil remarquable pour prouver l'existence de domaines de rotations mais c'est une technique local. Afin de comprendre la nature globale de la composante de \textsc{Fatou} $\mathcal{U},$ \textsc{Bedford} et \textsc{Kim} introduisent un mod\`ele global et obtiennent le r\'esultat suivant.

\begin{thm}[\cite{BK4}]\label{rot2}
{\sl Il existe une surface $\mathcal{L}$ obtenue en \'eclatant $\mathbb{P}^2(\mathbb{C})$ en un nombre fini de points, un automorphisme $L$ sur $\mathcal{L},$ un domaine $\Omega$ de $\mathcal{L}$ et une conjugaison biholomorphe $\Phi\colon\mathcal{U}\to\Omega$ qui envoie $(h,\mathcal{U})$ sur $(L,\mathcal{L}).$ 

\noindent En particulier, $h$ n'a pas de point p\'eriodique sur $\mathcal{U}\setminus\{z=0\}.$ }
\end{thm}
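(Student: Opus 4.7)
La stratégie est d'exploiter la structure géométrique de $\mathcal{U}$ fournie par le Théorème \ref{rot1} — union de disques de \textsc{Siegel} invariants sur chacun desquels $h$ agit comme une rotation irrationnelle — pour construire simultanément un modèle global $(\mathcal{L},L)$ et la conjugaison $\Phi.$ Ces disques organisent $\mathcal{U}$ en fibration holomorphe au-dessus d'une courbe centrale $C\subset\{z=0\}\cap\mathcal{U}$ formée des centres fixes; il s'agit alors d'en trouver une compactification projective adéquate et d'y prolonger l'action de rotation.

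Je linéariserais d'abord $h$ globalement. Par continuité holomorphe du multiplicateur normal le long de $C,$ la rotation transverse a un angle constant, et l'on peut choisir des coordonnées $(u,v)$ dans un voisinage tubulaire de $C$ dans lesquelles $h(u,v)=(\alpha u,v),$ avec $\alpha$ non racine de l'unité et $C=\{u=0\}.$ L'action du tore compact $\mathcal{G}_0,$ adhérence des itérés de $h$ dans $\mathrm{Aut}(\mathcal{U}),$ commute avec $h$ et préserve chaque disque; elle permet de propager ces coordonnées en une application holomorphe $\Phi$ définie sur tout $\mathcal{U},$ injective grâce à la pseudoconvexité de $\mathcal{U}$ (rappelée au Chapitre \ref{chapdyn}) et à la rigidité de la rotation. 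Pour construire $\mathcal{L}$ je partirais ensuite d'une compactification $\overline{C}\simeq\mathbb{P}^1$ et d'un modèle $\mathbb{P}^1$-fibré (surface de \textsc{Hirzebruch}) dans lequel $\Phi(\mathcal{U})$ s'inscrit, puis éclaterais les orbites des points d'indétermination de la rotation diagonale étendue pour aboutir à une surface rationnelle $\mathcal{L}$ sur laquelle $L(u,v)=(\alpha u,v)$ devient un automorphisme; en posant $\Omega=\Phi(\mathcal{U}),$ l'équivariance $\Phi\circ h=L\circ\Phi$ est automatique. La conclusion sur les points périodiques en résulte: puisque $\alpha$ n'est pas racine de l'unité, les seuls points périodiques de $L$ dans $\Omega$ sont situés sur l'axe fixe $\{u=0\},$ qui correspond via $\Phi^{-1}$ à la courbe centrale $C\subset\{z=0\}\cap\mathcal{U}.$

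L'obstacle principal sera la construction effective de $\mathcal{L}$: il faut identifier précisément le fibré en droites sur $\overline{C}$ sous-jacent à la structure de $\mathcal{U}$ et, surtout, montrer que les orbites des points d'indétermination de la rotation diagonale prolongée à ce modèle sont finies, faute de quoi aucun nombre fini d'éclatements ne permettrait d'obtenir un automorphisme. Ceci imposera vraisemblablement d'exploiter les conditions diophantiennes héritées de $\alpha.$ Une difficulté technique annexe sera l'injectivité globale de $\Phi,$ qui demande de contrôler l'absence de monodromie lors du prolongement de la linéarisation depuis un voisinage de $C$ à l'ensemble du domaine de rotation.
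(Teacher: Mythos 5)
Le texte que vous commentez ne d\'emontre pas ce th\'eor\`eme: il est simplement cit\'e de \cite{BK4}, seul le principe du \og mod\`ele global\fg\, y \'etant \'evoqu\'e; votre esquisse doit donc \^etre jug\'ee sur pi\`eces, et elle comporte des lacunes r\'eelles. D'abord, dans la situation consid\'er\'ee, $h=f^k$ fixe point par point la transform\'ee stricte de $\{z=0\},$ qui est une courbe rationnelle compacte: le multiplicateur normal y est constant parce que c'est une fonction holomorphe ne s'annulant pas sur une courbe compacte, et non par simple \og continuit\'e\fg\, comme vous l'\'ecrivez. Surtout, la lin\'earisation de $h$ au voisinage d'une courbe compacte fix\'ee point par point n'a rien d'automatique: le fibr\'e normal de cette courbe est non trivial (il n'existe donc pas de coordonn\'ees globales $(u,v)$ sur un voisinage tubulaire dans lesquelles $h(u,v)=(\alpha u,v)$), et l'existence d'une telle conjugaison locale est un probl\`eme de petits diviseurs (\`a la \textsc{Siegel}) que vous vous contentez d'affirmer r\'esolu. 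Ensuite, le c\oe ur du th\'eor\`eme est le passage du local au global: prolonger la conjugaison d'un petit voisinage de la courbe \`a toute la composante $\mathcal{U}$ et montrer qu'elle est biholomorphe sur un domaine du mod\`ele compact. Votre \og propagation par l'action de $\mathcal{G}_0$\fg\, et l'injectivit\'e obtenue \og gr\^ace \`a la pseudo-convexit\'e\fg\, ne constituent pas un argument: la pseudo-convexit\'e de $\mathcal{U}$ ne donne aucune injectivit\'e, et l'action du tore, d\'efinie sur $\mathcal{U},$ ne fournit pas \`a elle seule l'extension de $\Phi$ hors du voisinage lin\'earisant.

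Enfin, l'obstacle que vous d\'esignez comme principal --- la finitude des orbites des points d'ind\'etermination de la rotation diagonale prolong\'ee --- n'en est pas un: le mod\`ele $L$ provient d'un automorphisme lin\'eaire de $\mathbb{P}^2(\mathbb{C})$ fixant une droite point par point (donc sans aucun point d'ind\'etermination), et l'\'eclatement de points fixes ou d'orbites finies d'un automorphisme redonne toujours un automorphisme; la surface $\mathcal{L}$ et $L$ se construisent ainsi explicitement, les \'eclatements \'etant effectu\'es sur la droite fixe pour refl\'eter ceux qui m\`enent \`a $\mathcal{Z}.$ La vraie difficult\'e est donc ailleurs (lin\'earisation le long de la courbe fixe, puis extension globale et bijectivit\'e de $\Phi$ sur $\mathcal{U}$ tout entier), et c'est pr\'ecis\'ement ce qui manque \`a votre proposition. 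La derni\`ere assertion (absence de point p\'eriodique hors de $\{z=0\}$) d\'ecoulerait bien du mod\`ele, puisque le multiplicateur $\alpha$ n'est pas une racine de l'unit\'e, mais elle repose enti\`erement sur les \'etapes non justifi\'ees qui pr\'ec\`edent.
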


\noindent Les auteurs introduisent pour $n,$ $m\geq 1$ le polyn\^ome $$\chi_{n,m}(t)=\frac{t(t^{nm}-1)(t^n-2t^{n-1}+1)}{(t^n-1)(t-1)}+1.$$ Si $n\geq 4,$ $m\geq 1$ ou si $n=3,$ $m\geq 2$ ce polyn\^ome est un polyn\^ome de \textsc{Salem}. Soit $\delta$ une racine de $\chi_{n,m}$ qui ne soit pas une racine de l'unit\'e. Pour $1\leq j\leq n-1,$ $(j,n)=1$ po\-sons~$$c=2\sqrt{\delta} \cos(j\pi/n)$$ et consid\'erons la transformation birationnelle $f$ donn\'ee dans la carte affine $z=1$ par $$f(x,y)=\left(y,-\delta x+cy+\frac{1}{y}\right).$$ Les auteurs montrent le r\'esultat suivant.

\begin{thm}[\cite{BK4}]\label{rot3}
{\sl Il existe une surface rationnelle $\mathcal{Z}$ obtenue en \'eclatant $\mathbb{P}^2(\mathbb{C})$ en un nombre fini de points $\pi\colon\mathcal{Z}\to\mathbb{P}^2(\mathbb{C})$ telle que $\pi^{-1} f\pi$ soit un automorphisme de $\mathcal{Z}.$

\noindent De plus, l'entropie de $f$ est la plus grande racine du polyn\^ome $\chi_{n,m}.$}
\end{thm}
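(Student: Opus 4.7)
The plan is to imitate the construction developed for the family $f_{a,b}$ in Chapter \ref{chapbedkim1}, since the map $f(x,y)=(y,-\delta x+cy+1/y)$ has the same qualitative features: a distinguished invariant line on which $f$ restricts to a Möbius transformation, together with a single exceptional curve whose orbit must be controlled. First I would pass to homogeneous coordinates, writing
\[
f(x:y:z)=(yz^2:-\delta xz^2+cyz^2+z^3/\text{(clearing)}:\ldots)
\]
(after multiplying by $y$ to clear denominators), and read off $\mathrm{Ind}(f)$ and $\mathrm{Exc}(f)$. As in \S\ref{chapbedkim1}, there will be one contracted line, say $\Sigma$, contracted to some point $p$, and at most three indeterminacy points. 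The line at infinity (the image of $\{y=0\}$-style degeneration) is invariant, and the restriction of $f$ to this line is the Möbius map
\[
g(w)=\frac{cw-\delta}{w}\;=\;c-\frac{\delta}{w},
\]
whose dynamics is conjugate, via $w\mapsto w/\sqrt{\delta}$, to $\widetilde g(u)=\tfrac{c}{\sqrt\delta}u^{-1}\cdot u^{-1}\ldots$, more precisely to $u\mapsto 2\cos(j\pi/n)-1/u$. By the calculation recalled after Theorem \ref{bk3}, the latter is periodic of exact period $n$ whenever $(j,n)=1$.

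Next I would use this periodicity to produce the blow-up scheme. Blowing up $p$ already makes the lift holomorphic across $\Sigma$ (as in \S\ref{chapbedkim1}), but introduces a new indeterminacy $q$ on the exceptional divisor. The recipe, exactly as in \cite{BK2} and Theorem \ref{bedfordkim}, is then to blow up successively the orbit $q,f(q),f^2(q),\ldots$; the process terminates at step $N$ precisely when $f^N(q)$ hits one of the existing indeterminacy points. The content of the arithmetic condition ``$\delta$ is a root of $\chi_{n,m}$'' is that this termination happens after exactly $N=nm+ (\text{const})$ iterations, with the Möbius period $n$ wrapping $m$ times before the orbit closes up. After this finite tower of blow-ups $\pi:\mathcal Z\to\mathbb P^2(\mathbb C)$, the induced map $F=\pi^{-1}f\pi$ has empty indeterminacy set and no contracted curve (each exceptional divisor is mapped bijectively onto a curve or onto the next exceptional divisor in the chain), so $F\in\mathrm{Aut}(\mathcal Z)$.

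Having produced the automorphism, I would compute the induced operator $F^*$ on $\mathrm{Pic}(\mathcal Z)$ in the geometric basis $\{\mathrm H,\mathrm E_1,\ldots,\mathrm E_N\}$, following the matrix bookkeeping of \S\ref{matcar} and of the $f_{a,b}$ analysis: the block describing the shift along the orbit chain is a companion block, while the $\mathrm H$-row records that a generic line has image a curve of controlled degree passing through the blown-up points with prescribed multiplicities. A direct check, parallel to the one carried out for $\chi_n(t)=t^{n+1}(t^3-t-1)+t^3+t^2-1$ in Chapter \ref{chapbedkim1}, identifies the reciprocal characteristic polynomial with
\[
\chi_{n,m}(t)=\frac{t(t^{nm}-1)(t^n-2t^{n-1}+1)}{(t^n-1)(t-1)}+1
\]
up to cyclotomic factors, the two numerator pieces reflecting respectively the length $nm$ of the chain and the period-$n$ Möbius rotation on the invariant line. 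By Theorem \ref{DiFa} the first dynamical degree of $F$ is the spectral radius of $F^*$, i.e.\ the largest root $\lambda_{n,m}$ of $\chi_{n,m}$; and since $\mathcal Z$ is a compact Kähler surface and $F$ is holomorphic, the Gromov--Yomdin theorem gives $h_{\mathrm{top}}(F)=\log\lambda_{n,m}$.

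The main obstacle will be the orbit-closing bookkeeping in Step 2: verifying that the specific algebraic condition $\chi_{n,m}(\delta)=0$ is exactly equivalent to $f^N(q)\in\mathrm{Ind}(f)$ for the correct $N$, and that along the way the orbit does not accidentally hit another indeterminacy point earlier (which would break algebraic stability before termination). This is a resultant-type calculation intertwining the Möbius dynamics on the invariant line (controlled by $n$ and $j$) with the multiplier $\delta$ transverse to it, and it is where the exact form of $\chi_{n,m}$ emerges; once this arithmetic identity is established, the rest of the argument is a mechanical blow-up and linear-algebra computation.
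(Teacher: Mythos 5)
Your overall strategy is the one the paper actually follows — regularize $f$ by blowing up a finite orbit joining the image of the contracted line to the indeterminacy locus, with the closing condition governed by the periodic M\"obius dynamics on the invariant line $\{z=0\}$ (your formulas $w\mapsto c-\delta/w$ and the conjugation to $u\mapsto 2\cos(j\pi/n)-1/u$ are correct), then compute $F^*$ on $\mathrm{Pic}$ and invoke Gromov--Yomdin. But two genuine gaps remain. First, the local picture you postulate is not the one this map has. In homogeneous coordinates $f=(y^2:-\delta xy+cy^2+z^2:yz)$ has a single indeterminacy point $R=(1:0:0)$, and the single contracted line $\Delta'=\{y=0\}$ goes to $Q=(0:1:0)$; blowing up $Q$ once does \emph{not} make the lift holomorphic across $\Delta'$ — the line is then contracted to a point $S$ of the new exceptional divisor, and again to a point $U$ after the next blow-up, while resolving the indeterminacy at $R$ likewise requires a tower $R,P,T$ of three infinitely near points. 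The construction in the paper shows that $f$ lifts to an isomorphism $\mathrm{Bl}_{R,P,T}\mathbb{P}^2\to\mathrm{Bl}_{Q,S,U}\mathbb{P}^2$, and what must then be blown up is the orbit of the whole cluster over $Q$, i.e.\ the points $f^i(Q),f^i(S),f^i(U)$, the closing condition being that the orbit of $Q$ lands on $R$ (for $n=4$, $m=1$ one has $f^4(Q)=R$, giving $12$ points and the characteristic polynomial $(X+1)^2(X-1)^3(X^2+1)^2\chi_{4,1}$). Your scheme ``blow up $p$ once, then chase a single orbit of one new point $q$'', copied from the $f_{a,b}$ family of Chapter \ref{chapbedkim1}, does not match this geometry and, as written, would not terminate in an automorphism.

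Second — and this is the heart of the theorem — you explicitly defer the verification that $\chi_{n,m}(\delta)=0$, together with $c=2\sqrt{\delta}\cos(j\pi/n)$, is equivalent to the orbit-closing condition, calling it ``the main obstacle''. That identity is exactly where the polynomial $\chi_{n,m}$ enters: without it you have produced neither the surface $\mathcal{Z}$ (existence of the finite orbit) nor the factor $\chi_{n,m}$ in the characteristic polynomial of $F^*$, so both assertions of the statement remain unproved. The final step — entropy equal to the logarithm of the largest root, via the spectral radius of $F^*$ on $\mathrm{H}^{1,1}$ and Gromov--Yomdin — is correct and coincides with what the paper does.
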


\noindent \textsc{Bedford} et \textsc{Kim} utilisent ensuite le couple $(f^k,\mathcal{Z})$ pour d\'emontrer les \'enonc\'es \ref{rot1} et \ref{rot2}. 

\subsection{Construction des automorphismes et des surfaces}

\noindent Nous allons d\'etailler la construction de la surface $\mathcal{Z}$ \'evoqu\'ee dans le Th\'eor\`eme \ref{rot3}.

\noindent La transformation $f$ s'\'ecrit en coordonn\'ees homog\`enes $(y^2:-\delta xy+cy^2+z^2: yz);$ elle \'eclate exactement un point $R=(1:0:0)$ et contracte exactement une droite $\Delta'= \{y=0\}$ sur le point $Q=(0:1:0).$

\noindent Commen\c{c}ons par \'eclater $R$ \`a gauche et $Q$ \`a droite 
\begin{align*}
&\left\{\begin{array}{ll} y=u_1\\ z=u_1v_1\end{array}\right. && \begin{array}{ll} \mathrm{E}=\{u_1 =0\}\\ \end{array} && \hspace{2cm} &&\left\{\begin{array}{ll} x=a_1\\ z=a_1b_1\end{array}\right. && \begin{array}{ll} \mathrm{F}=\{a_1 =0\}\\ \end{array}
\end{align*}

\begin{align*}
&\left\{\begin{array}{ll} y=r_1s_1\\ z=s_1\end{array}\right. && \begin{array}{ll} \mathrm{E}=\{s_1 =0\}\\ \Delta'_1=\{r_1=0\} \end{array} && \hspace{2cm} &&\left\{\begin{array}{ll} x=c_1d_1\\ z=d_1\end{array}\right. && \begin{array}{ll} \mathrm{F}=\{d_1 =0\}\\ \end{array}
\end{align*}

\noindent On a d'une part 
\begin{align*}
&(u_1,v_1)\to(u_1,u_1v_1)_{(y,z)}\to(u_1:-\delta+cu_1+u_1v_1^2:u_1v_1)\\ &\hspace{1cm}=\left(\frac{u_1}{-\delta+cu_1+u_1v_1^2},\frac{u_1v_1}{-\delta+cu_1+u_1v_1^2}\right)_{(x,z)}\to\left(\frac{u_1}{-\delta+cu_1+u_1v_1^2},v_1\right)_{(a_1,b_1)}
\end{align*}
\noindent et d'autre part 
\begin{align*}
&(r_1,s_1)\to (r_1s_1,s_1)_{(y,z)}\to(r_1^2s_1:-\delta r_1+cr_1^2s_1^2+s_1:r_1s_1)\\
&\hspace{1cm}=\left( \frac{r_1^2s_1}{-\delta r_1+cr_1^2s_1^2+s_1},\frac{r_1s_1}{-\delta r_1+cr_1^2s_1^2+s_1}\right)_{(x,z)}\to\left( r_1,\frac{r_1s_1}{-\delta r_1+cr_1^2s_1^2+s_1}\right)_{(c_1,d_1)}.
\end{align*}

\noindent Il s'en suit que $P=(0,0)_{( r_1,s_1)}$ est d'ind\'etermination, $\Delta'_1$ est contract\'e sur $S=(0,0)_{(c_1,d_1)}$ et~$\mathrm{E}$ est envoy\'e sur $\mathrm{F}.$

\noindent D\'esormais on \'eclate $P$ \`a gauche et $S$ \`a droite
\begin{align*}
&\left\{\begin{array}{ll} r_1=u_2\\ s_1=u_2v_2\end{array}\right. && \begin{array}{ll} \mathrm{G}=\{u_2 =0\}\\ \end{array} && \hspace{1cm} &&\left\{\begin{array}{ll} c_1=a_2\\ d_1=a_2b_2\end{array}\right. && \begin{array}{ll} \mathrm{H}=\{a_2=0\}\\ \end{array}
\end{align*}

\begin{align*}
&\left\{\begin{array}{ll} r_1=r_2s_2\\ s_1=s_2\end{array}\right. && \begin{array}{ll} \mathrm{G}=\{s_2 =0\}\\  \end{array} && \hspace{1cm} &&\left\{\begin{array}{ll} c_1=c_2d_2\\ d_1=d_2\end{array}\right. && \begin{array}{ll} \mathrm{H}=\{d_2 =0\}\\ \end{array}
\end{align*}

\noindent On constate que 
\begin{align*}
&(u_2,v_2)\to(u_2,u_2v_2)_{(r_1,s_1)}\to(u_2^2v_2:-\delta+cu_2^3v_2^2 +v_2:u_2v_2)\\
&\hspace{1cm}=\left(\frac{u_2^2v_2}{-\delta+cu_2^3v_2^2+v_2},\frac{u_2v_2}{-\delta+cu_2^3v_2^2+v_2}\right)_{(x,z)}\to\left(u_2,\frac{u_2v_2}{-\delta+cu_2^3v_2^2+v_2}\right)_{(c_1,d_1)}\\
&\hspace{1cm}\to\left( \frac{-\delta+c_u2^3v_2^2+v_2}{v_2},\frac{u_2v_2}{-\delta+c_u2^3v_2^2+v_2}\right)_{(c_2,d_2)}
\end{align*}

\noindent et 

\begin{align*}
&(r_2,s_2)\to(r_2s_2,s_2)_{(r_1,s_1)}\to(r_2^2s_2^2:-\delta r_2+cr_2^2s_2^3+1:r_2s_2)\\
&\hspace{1cm}
=\left( \frac{r_2^2s_2^2}{-\delta r_2+cr_2^2s_2^3+1},\frac{r_2s_2}{-\delta r_2+cr_2^2s_2^3+1}\right)_{(x, z)}\to\left(r_2s_2,\frac{r_2s_2}{-\delta r_2+cr_2^2s_2^3+1}\right)_{(c_1,d_1)}\\
&\hspace{1cm}\to\left(-\delta r_2+cr_2^2s_2^3+1,\frac{r_2s_2}{-\delta r_2+cr_2^2s_2^3+1}\right)_{(c_2,d_2)}.
\end{align*}

\noindent Ainsi $\mathrm{G}$ est envoy\'e sur $\mathrm{H}$ et $T=(0,\delta)_{(u_2,v_2)}$ est d'ind\'etermination.

\noindent De plus 
\begin{align*}
&(y,z)\to\left(\frac{y^2}{-\delta y+cy^2+z^2},\frac{yz}{-\delta y+cy^2+z^2} \right)_{(x,z)}\to\left(\frac{y}{z},\frac{yz}{-\delta y+cy^2+z^2}\right)_{(c_1,d_1)}\\
&\hspace{1cm}\to \left(\frac{-\delta y+cy^2+z^2}{z^2},\frac{yz}{-\delta y+cy^2+z^2}\right)_{(c_2,d_2)}
\end{align*}

\noindent ce qui montre que $\Delta'_2$ est contract\'e sur $U=(1,0)_{(c_2,d_2)}.$

\noindent Pour finir \'eclatons $T$ \`a gauche et $U$ \`a droite
\begin{align*}
&\left\{\begin{array}{ll} u_2=u_3\\ v_2=u_3v_3+\delta\end{array}\right. && \begin{array}{ll} \mathrm{K}=\{u_3=0\}\\ \end{array} && \hspace{2cm} &&\left\{\begin{array}{ll} c_2=a_3+1\\ d_2=a_3b_3\end{array}\right. && \begin{array}{ll} \mathrm{L}=\{a_3=0\}\\ \end{array}
\end{align*}

\begin{align*}
&\left\{\begin{array}{ll} u_2=r_3s_3\\ v_2=s_3+\delta\end{array}\right. && \begin{array}{ll} \mathrm{G}=\{s_3=0\}\\  \end{array} && \hspace{2cm} &&\left\{\begin{array}{ll} c_2=c_3d_3+1\\ d_2=d_3\end{array}\right. && \begin{array}{ll} \mathrm{L}=\{d_3 =0\}\\ \end{array}
\end{align*}

\noindent On a 
\begin{align*}
&(u_3,v_3)\to(u_3,u_3v_3+\delta)_{(u_2,v_2)}\to(u_3(u_3v_3+\delta):v_3+cu_3^2(u_3v_3+\delta)^2: u_3v_3+\delta)\\
&\hspace{1cm}=\left(\frac{u_3(u_3v_3+\delta)}{v_3+cu_3^2(u_3v_3+\delta)^2},\frac{u_3v_3+\delta}{v_3+cu_3^2(u_3v_3+\delta)^2}\right)_{(y,z)}\to\left(u_3,\frac{u_3v_3+\delta}{v_3+cu_3^2(u_3v_3+\delta)^2}\right)_{(c_1,d_1)}
\end{align*}

\noindent et $$(r_3,s_3)\to(r_3s_3,s_3+\delta)_{(u_2,v_2)}\to(r_3^2s_3(s_3+\delta):1+cr_3^3s_3^2( s_3+\delta)^2:r_3(s_3+\delta)).$$

\noindent Par suite $\mathrm{K}$ est envoy\'e sur $\Delta=\{x=0\}.$

\noindent Enfin
\begin{small}
$$(y,z)\to \left(\frac{-\delta y+cy^2+z^2}{z^2},\frac{yz}{-\delta y+cy^2+z^2}\right)_{(c_2,d_2)} \to\left(\frac{cy^2-\delta y}{z^2},\frac{z^3}{(cy-\delta)(-\delta y+cy^2+z^2)}\right)_{(a_3,b_3)}$$
\end{small}

\noindent en particulier $\Delta'_3$ est envoy\'e sur $\mathrm{L}.$

\noindent On peut donc \'enoncer le r\'esultat suivant.

\begin{pro}
{\sl La transformation $f$ induit un isomorphisme entre $\mathrm{Bl}_{R,P,T}\mathbb{P}^2$ et $\mathrm{Bl}_{Q,S,U}\mathbb{P}^2.$}
\end{pro}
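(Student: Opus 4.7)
La strat\'egie repose sur le crit\`ere classique suivant, cons\'equence du th\'eor\`eme de factorisation de \textsc{Zariski} (Th\'eor\`eme \ref{Zariski}) et du fait que tout morphisme birationnel entre surfaces projectives lisses est une compos\'ee d'\'eclatements : une application birationnelle entre deux surfaces projectives lisses est un isomorphisme si et seulement si elle n'a ni point d'ind\'etermination ni courbe contract\'ee. Il suffit donc de v\'erifier que le relev\'e $\widetilde{f}\colon\mathrm{Bl}_{R,P,T}\mathbb{P}^2\to\mathrm{Bl}_{Q,S,U}\mathbb{P}^2$ poss\`ede ces deux propri\'et\'es.

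Pour l'absence d'ind\'etermination, les calculs locaux men\'es dans chaque carte introduite ci-dessus fournissent la g\'en\'ealogie compl\`ete des points probl\'ematiques : on a $\mathrm{Ind}\,f=\{R\}$ ; apr\`es l'\'eclatement de $R$ \`a la source et de $Q$ au but, l'unique nouvelle ind\'etermination se situe en $P\in\mathrm{E}$ ; apr\`es avoir \'eclat\'e $P$ et $S$, elle migre en $T\in\mathrm{G}$ ; enfin apr\`es l'\'eclatement de $T$ et $U$, l'expression locale de $\widetilde{f}$ devient holomorphe dans chacune des cartes couvrant $\mathrm{K}$, $\mathrm{G}_1$ et $\mathrm{E}_2$. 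Sym\'etriquement, la seule courbe potentiellement contract\'ee est $\Delta'$ (qui co\"incide avec le lieu d'annulation du d\'eterminant jacobien de $f$ en carte affine, ce d\'eterminant valant~$\delta$ hors de $\Delta'$) : sa transform\'ee stricte $\Delta'_1$ est contract\'ee sur $S$, puis $\Delta'_2$ sur $U$, et $\Delta'_3$ est enfin envoy\'ee de mani\`ere biholomorphe sur $\mathrm{L}$. Les \'el\'ements d\'ej\`a calcul\'es se r\'esument en le tableau
\begin{align*}
&\mathrm{E}\longrightarrow\mathrm{F},&&\mathrm{G}\longrightarrow\mathrm{H},&&\mathrm{K}\longrightarrow\Delta,&&\Delta'_3\longrightarrow\mathrm{L},
\end{align*}
chaque fl\`eche d\'esignant un isomorphisme entre les deux courbes rationnelles consid\'er\'ees. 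Aucune composante exceptionnelle n'\'etant contract\'ee, $\widetilde{f}$ est donc un morphisme birationnel sans courbe contract\'ee, donc un isomorphisme d'apr\`es le crit\`ere rappel\'e.

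L'obstacle principal est purement comptable : il s'agit de s'assurer qu'aucun point d'ind\'etermination ni aucune courbe contract\'ee n'a \'et\'e omis lors des trois \'eclatements successifs. Pour \'eviter toute lacune, j'\'ecrirais de mani\`ere syst\'ematique les deux cartes affines standards couvrant chacun des six diviseurs $\mathrm{E}$, $\mathrm{G}$, $\mathrm{K}$ \`a la source et $\mathrm{F}$, $\mathrm{H}$, $\mathrm{L}$ au but, et je v\'erifierais dans chaque carte que l'expression locale de $\widetilde{f}$ est une application holomorphe dont le jacobien ne s'annule pas identiquement. Les calculs explicites pr\'esent\'es ci-dessus recouvrent d\'ej\`a l'essentiel de cette v\'erification ; il ne reste qu'\`a les consigner sous forme d'un tableau r\'ecapitulatif et \`a appliquer le crit\`ere pour conclure.
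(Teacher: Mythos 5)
Votre d\'emonstration est correcte et suit essentiellement la m\^eme d\'emarche que le texte : la preuve donn\'ee dans le papier consiste pr\'ecis\'ement en les calculs locaux carte par carte qui pr\'ec\`edent l'\'enonc\'e (\'eclatements successifs de $R,$ $P,$ $T$ \`a la source et de $Q,$ $S,$ $U$ au but, puis constat qu'il ne reste ni point d'ind\'etermination ni courbe contract\'ee, les composantes \'etant \'echang\'ees selon $\mathrm{E}\to\mathrm{F},$ $\mathrm{G}\to\mathrm{H},$ $\mathrm{K}\to\Delta,$ $\Delta'\to\mathrm{L}$), le crit\`ere d'isomorphisme que vous explicitez via la factorisation de \textsc{Zariski} restant implicite dans le texte. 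Seule retouche mineure : en carte affine le d\'eterminant jacobien de $f$ vaut identiquement $\delta,$ donc son lieu d'annulation est vide; c'est le jacobien de l'expression homog\`ene, \'egal \`a $2\delta y^3,$ dont le lieu des z\'eros donne $\mathrm{Exc}\,f=\Delta',$ ce qui ne modifie en rien votre argument.
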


\noindent Les diff\'erentes composantes sont \'echang\'ees comme suit
\begin{align*}
&\mathrm{E}\to\mathrm{F}, &&\mathrm{G}\to\mathrm{H}, && \mathrm{K}\to\Delta, &&\Delta'\to\mathrm{L}.
\end{align*}

\noindent Supposons que $n=4,$ que $\delta$ soit une racine de $\chi_{4,1}$ et que $c=2 \sqrt{\delta}\cos(\pi/4)=\sqrt{2}\sqrt{\delta}$ alors~$f^4(Q)=R$ donc quitte \`a \'eclater $f^i(Q),$ $f^i(S)$ et $f^i(U)$ pour $i=0,\ldots,3$ la transformation~$f$ est conjugu\'ee \`a un automorphisme sur $\mathbb{P}^2(\mathbb{C})$ \'eclat\'e en ces $12$ points. On a 
\begin{align*}
& \Delta''\to f\mathrm{L}, &&\mathrm{E}\to f\mathrm{F}, &&\mathrm{G}\to f\mathrm{H}, &&\mathrm{K}\to f\Delta, &&f\mathrm{F}\to f^2\mathrm{F},\\
&f\mathrm{H}\to f^2\mathrm{H}, &&f\mathrm{L}\to f^2\mathrm{L},&&f^2\mathrm{F}\to f^3\mathrm{F},
&&f^2\mathrm{H}\to f^3\mathrm{H}, &&f^2\mathrm{L}\to f^3 \mathrm{L},\\
&f^3\mathrm{F}\to \mathrm{E},&&f^3\mathrm{H}\to \mathrm{G},
&&f^3\mathrm{L}\to \mathrm{K}.
\end{align*}

\noindent Autrement dit la matrice de $f^*$ dans la base $$\{\Delta',\,\mathrm{E},\,\mathrm{G},\,\mathrm{K},\, f\mathrm{F},\,f\mathrm{H},\,f\mathrm{L},\,f^2\mathrm{F},\,f^2\mathrm{H},\,f^2\mathrm{L},\,f^3\mathrm{F},\,f^3\mathrm{H},\,f^3\mathrm{L}\}$$ est donn\'ee par 
$$\left[\begin{array}{ccccccccccccc}
0&0&0&1&0&0&0&0&0&0&0&0&0\\
0&0&0&1&0&0&0&0&0&0&1&0&0\\
0&0&0&2&0&0&0&0&0&0&0&1&0\\
0&0&0&2&0&0&0&0&0&0&0&0&1\\
0&1&0&-1&0&0&0&0&0&0&0&0&0\\
0&0&1&-2&0&0&0&0&0&0&0&0&0\\
1&0&0&-2&0&0&0&0&0&0&0&0&0\\
0&0&0&0&1&0&0&0&0&0&0&0&0\\
0&0&0&0&0&1&0&0&0&0&0&0&0\\
0&0&0&0&0&0&1&0&0&0&0&0&0\\
0&0&0&0&0&0&0&1&0&0&0&0&0\\
0&0&0&0&0&0&0&0&1&0&0&0&0\\
0&0&0&0&0&0&0&0&0&1&0&0&0
\end{array}\right];$$

\noindent le polyn\^ome caract\'eristique de cette matrice s'\'ecrit $(X+1)^2(X-1)^3(X^2+1)^2 \chi_{4,1}.$ La plus grande valeur propre est en module proche de $1,7.$

\chapter{Une autre fa\c{c}on de construire des automorphismes d'entropie positive}\label{chapdg}

\noindent Dans \cite{DG} les auteurs s'int\'eressent, entre autres, \`a la transformation \begin{align*}
&\Phi_n=(xz^{n-1}+y^n:yz^{n-1}:z^n), &&n\geq 3.
\end{align*}

\noindent Dans ce chapitre nous allons reprendre quelques-uns de leurs r\'esultats; pour simplifier les calculs nous supposerons que $n=3$ et on d\'esignera par $\Phi$ la transformation $\Phi_3.$ On constate que la suite $(\deg\Phi^n)_{n\in \mathbb{N}}$ est born\'ee ($\Phi^k=(x+ky^3,y)$ dans la carte affine $z=1$), donc $\Phi$ est conjugu\'e \`a un automorphisme sur une certaine surface rationnelle~$\mathcal{Z}$ et un it\'er\'e de $\Phi$ est conjugu\'e \`a un automorphisme isotope \`a l'identit\'e. La transformation $\Phi$ \'eclate un unique point $P=(1:0:0)$ et contracte une unique droite $\Delta=\{z=0\}.$

\section{Construction}

\noindent La premi\`ere \'etape est la construction de deux points $\widehat{P}_1$ et
$\widehat{P}_2$ infiniment proches de $P$ tels que~$\Phi$ induise
un isomorphisme entre $\mathbb{P}^2(\mathbb{C})$ \'eclat\'e en $\widehat{P}_1$ et $\mathbb{P}^2(\mathbb{C})$ \'eclat\'e en $\widehat{P}_2.$
La seconde \'etape consiste \`a trouver des automorphismes $\varphi$ de $\mathbb{P}^2(\mathbb{C})$
tels que $\widehat{P}_1,$ $\varphi\widehat{P}_2,$
$\varphi\Phi\varphi\widehat{P}_2$ soient \`a supports distincts et $\widehat{P}_1=(\varphi\Phi)^2\varphi\widehat{P}_2$ donc tels que $(\varphi\Phi)^2$ induise un automorphisme de $\mathbb{P}^2(\mathbb{C})$ \'eclat\'e en $\widehat{P}_1,$ $\varphi\widehat{P}_2,$ $\varphi\Phi\varphi\widehat{P}_2.$

\subsection{D\'efinitions de $\widehat{P}_1,$ $\widehat{P}_2$}\label{cons}

\noindent Commen\c{c}ons par \'eclater le point $P:$

\begin{figure}[H]
\begin{center}
\input{ecl1.pstex_t}
\end{center}
\end{figure}

\noindent Posons $y=u_1$ et $z=u_1v_1;$ on remarque que $(u_1,v_1)$ sont des coordonn\'ees au voisinage de~$P_1=(0,0)_{(u_1,v_1)}$ dans lesquelles le diviseur exceptionnel est $\mathrm{E}=\{u_1=0\}$ et la transform\'ee de $\Delta$ est donn\'ee par $\Delta_1=\{v_1=0\}.$ Posons $y=r_1s_1$ et $z=s_1;$ notons que $(r_1,s_1)$ sont des coordonn\'ees au voisinage de $Q=(0,0)_{(r_1,s_1)}$ dans lesquelles $\mathrm{E}=\{s_1=0\}.$ On remarque que  
\begin{align*}
&(u_1,v_1)\to(u_1,u_1v_1)_{(y,z)}\to(v_1^2+u_1:v_1^2u_1:v_1^3u_1)\\
&\hspace{1cm}=\left(\frac{v_1^2u_1}{v_1^2+u_1},\frac{v_1^3u_1}{v_1^2+u_1}\right)_{(y,z)}
\to\left(\frac{v_1^2u_1}{v_1^2+u_1},v_1\right)_{(u_1,v_1)}
\end{align*}

\noindent et
\begin{align*} &(r_1,s_1)\to(r_1s_1,s_1)_{(y,z)}\to(1+r_1^3s_1:r_1s_1:s_1)\\
&\hspace{1cm}=\left(\frac{r_1s_1}{1+r_1^3s_1},\frac{s_1}{1+r_1^3s_1}\right)_{(y,z)}\to\left(r_1,\frac{s_1}{1+r_1^3s_1}\right)_{(r_1,s_1)};
\end{align*}

\noindent il en r\'esulte que $P_1$ est un point d'ind\'etermination, $\Delta_1$ est contract\'e sur $P_1$ et $\mathrm{E}$ est fix\'e.

\noindent \'Eclatons $P_1:$ 

\begin{figure}[H]
\begin{center}
\input{ecl2.pstex_t}
\end{center}
\end{figure}

\noindent Posons $u_1=u_2$ et $v_1=u_2v_2.$ Remarquons que
$(u_2,v_2)$ sont des coordonn\'ees au voisinage de~$P_2=(0,0)_{(u_2,v_2)}$
dans lesquelles $\Delta_2=\{v_2=0\}$ et $\mathrm{F}=\{u_2=0\}.$ Si on pose
$u_1=r_2s_2$ et~$v_1=s_2$ alors $(r_2,s_2)$ sont des coordonn\'ees au voisinage de $A=(0,0)_{(r_2,s_2)};$ dans ces coordonn\'ees $\mathrm{F}=\{s_2=0\}.$ De plus
$$(u_2,v_2)\to(u_2,u_2v_2)_{(u_1,v_1)}\to(1+u_2v_2^2:u_2^2v_2^2:u_2^3v_2^3)$$ et
$$(r_2,s_2)\to(r_2s_2,s_2)_{(r_1,s_1)}\to(r_2+s_2:r_2s_2^2:r_2s_2^3).$$

\noindent Notons que $A$ est d'ind\'etermination. On a aussi
\begin{align*}
&(u_2,v_2)\to(u_2,u_2v_2)_{(u_1,v_1)}\to(1+u_2v_2^2:u_2^2v_2^2:u_2^3v_2^3)
\to\left(\frac{u_2^2v_2^2}{1+u_2v_2^2},\frac{u_2^3v_2^3}{1+u_2v_2^2}\right)_{(y,z)}\\
&\hspace{1cm}\to\left(\frac{u_2^2v_2^2}{1+u_2v_2^2},u_2v_2\right)_{(u_1,v_1)}
\to\left(\frac{u_2v_2}{1+u_2v_2^2},u_2v_2\right)_{(r_2,s_2)}
\end{align*}

\noindent d'o\`u $\mathrm{F}$ et $\Delta_2$ sont contract\'es sur $A.$

\noindent \'Eclatons $A:$ 

\begin{figure}[H]
\begin{center}
\input{ecl3.pstex_t}
\end{center}
\end{figure}

\noindent Posons $r_2=u_3$ et $s_2=u_3v_3;$  
$(u_3,v_3)$ sont des coordonn\'ees au voisinage de $A_1=(0,0)_{(u_3,v_3)},$ coordonn\'ees dans lesquelles
$\mathrm{F}_1=\{v_3=0\}$ et $\mathrm{G}=\{u_3=0\}.$ Si $r_2=
r_3s_3$ et $s_2=s_3,$ alors~$(r_3,s_3)$ est un syst\`eme de coordonn\'ees au voisinage 
dans lequel $\mathrm{E}_2=\{r_3=0\}$ et $\mathrm{G}=\{s_3=0\}.$ On~a
$$(u_3,v_3)\to(u_3,u_3v_3)_{(r_2,s_2)}\to(1+v_3:u_3^2v_3^2:
u_3^3v_3^3),$$ $$(r_3,s_3)\to(r_3s_3,s_3)_{(r_2,s_2)}\to(1+r_3:
r_3s_3^2:r_3s_3^3).$$

\noindent Le point $T=(-1,0)_{(r_3,s_3)}$ est d'ind\'etermination. 
De plus 
\begin{align*}
&(u_3,v_3)\to\left(\frac{u_3^2v_3^2}{1+v_3},\frac{u_3^3v_3^3}{1+v_3}\right)_{(y,z)}
\to\left(\frac{u_3^2v_3^2}{1+v_3},u_3v_3\right)_{(u_1,v_1)}\\
&\hspace{1cm}\to\left(\frac{u_3v_3}{1+v_3},u_3v_3\right)_{(r_2,s_2)}\to\left(\frac{1}{1+v_3},u_3v_3\right)_{(r_3,s_3)};
\end{align*}

\noindent d'o\`u $\mathrm{G}$ est fix\'e et $\mathrm{F}_1$ contract\'e sur $S=(1,0)_{(r_3,
s_3)}.$

\noindent \'Eclatons $T$ \`a gauche et $S$ \`a droite
\begin{center}
\begin{tabular}{cc}
\input{ecl4.pstex_t} \hspace{4cm}&\hspace{4cm}\input{ecl6.pstex_t} 
\end{tabular}
\end{center}

\noindent Posons $r_3=u_4-1$ et $s_3=u_4v_4;$ dans le syst\`eme de coordonn\'ees $(u_4,v_4)$ on a
$\mathrm{G}_1=\{v_4=0\}$ et $\mathrm{H}=\{u_4=0\}.$ Notons aussi que $(r_4,
s_4),$ o\`u $r_3=r_4s_4-1$ et $s_3=s_4,$ est un syst\`eme de coordonn\'ees dans lequel $\mathrm{H}=\{s_4=0\}.$ D'une part
\begin{align*}
&(u_4,v_4)\to(u_4-1,u_4v_4)_{(r_3,s_3)}\to((u_4-1)u_4v_4^2,(u_4-1)u_4^2
v_4^3)_{(y,z)}\\
&\hspace{1cm}\to((u_4-1)u_4v_4^2,u_4v_4)_{(u_1,v_1)}
\to((u_4-1)v_4,u_4v_4)_{(r_2,s_2)}
\to\left((u_4-1)v_4,\frac{u_4}{u_4-1}\right)_{(u_3,v_3)}
\end{align*}
\noindent par suite $\mathrm{H}$ est envoy\'e sur $\mathrm{F}_2;$
 
\noindent d'autre part
$$(r_4,s_4)\to(r_4s_4-1,s_4)_{(r_3,s_3)}\to(r_4:(r_4s_4-1)s_4:(r_4s_4-1)s_4^2);$$

\noindent il s'en suit que $B=(0,0)_{(r_4,s_4)}$ est d'ind\'etermination.

\medskip

\noindent Posons $r_3=a_4+1,$ $s_3=a_4b_4;$ $(a_4,b_4)$
sont des coordonn\'ees dans lesquelles $\mathrm{G}_1=\{b_4=0\}$ et 
$\mathrm{K}=\{a_4=0\}.$ On peut aussi poser $r_3=c_4d_4+1$ et $s_3=d_4$ auquel cas 
$(c_4,d_4)$ sont des coordonn\'ees dans lesquelles $\mathrm{K}$ est donn\'e par $d_4=0.$

\noindent Remarquons que
$$(u_3,v_3)\to\left(\frac{1}{1+v_3},u_3v_3\right)_{(r_3,s_3)}\to\left(-\frac{v_3}{1+v_3},
-u_3(1+v_3)\right)_{(a_4,b_4)};$$

\noindent par suite $\mathrm{F}_2$ est envoy\'e sur $\mathrm{K}.$

\noindent On constate que 
\begin{align*}
&(u_1,v_1)\to(v_1^2+u_1:u_1v_1^2:u_1v_1^3)=\left(\frac{u_1v_1^2}{u_1+v_1^2},\frac{u_1v_1^3}{u_1+v_1^2}\right)_{(y,z)}\to\left(\frac{u_1v_1^2}{u_1+v_1^2},v_1\right)_{(u_1,v_1)}\\
&\hspace{1cm} \to\left(\frac{u_1v_1}{u_1+v_1^2},v_1\right)_{(r_2,s_2)}\to\left(\frac{u_1}{u_1+v_1^2},v_1\right)_{(r_3,s_3)}\to\left(-\frac{v_1}{u_1+v_1^2},v_1\right)_{(c_4,d_4)};
\end{align*}

\noindent ainsi $\Delta_4$ est contract\'e sur $C=(0,0)_{(c_4,d_4)}.$

\noindent Maintenant \'eclatons $B$ \`a gauche et $C$ \`a droite.
\begin{center}
\begin{tabular}{cc}
\input{ecl5.pstex_t}\hspace{4cm}&\hspace{4cm}\input{ecl7.pstex_t}
\end{tabular}
\end{center}

\noindent Posons $r_4=u_5,$ $s_4=u_5v_5$ et $r_4=r_5s_5,$ $s_4=s_5.$
Alors $(u_5,v_5)$ (resp. $(r_5,s_5)$) est un syst\`eme de coordonn\'ees dans lequel $\mathrm{L}=\{u_5=0\}$ (resp.
$\mathrm{H}_1=\{v_5=0\}$ et $\mathrm{L}=\{s_5=0\}$). Notons que
$$(u_5,v_5)\to(u_5,u_5v_5)_{r_4,s_4}\to(1:v_5(u_5^2v_5-1):u_5v_5^2(u_5^2v_5-1))$$ et 
$$r_5,s_5)\to(r_5s_5,s_5)_{r_4,s_4}\to(r_5:r_5s_5^2-1:s_5(r_5s_5^2-1)).$$

\noindent On en d\'eduit que $\mathrm{L}$ est envoy\'e sur $\Delta_5$ et qu'il n'y a pas de point d'ind\'etermination.

\noindent Posons $c_4=a_5,$ $d_4=a_5b_5$ et $c_4=c_5d_5,$ $d_4=d_5.$ Dans le premier (resp. second) syst\`eme de coordonn\'ees le diviseur exceptionnel $\mathrm{M}$ est donn\'e par $\{a_5=0\}$ (resp. $\{d_5=0\}$). Notons que 
$$(u_1,v_1)\to\left(-\frac{v_1}{u_1+v_1^2},v_1\right)_{(c_4,d_4)}\to\left(-\frac{1}{u_1+v_1^2},v_1\right)_{(c_5,d_5)};$$

\noindent en particulier $\Delta_5$ est envoy\'e sur $\mathrm{M}.$

\noindent La construction qui pr\'ec\`ede permet d'\'enoncer le r\'esultat suivant.

\begin{pro}[\cite{DG}]\label{isomorphism}
{\sl Soit $\widehat{P}_1$ (resp. $\widehat{P}_2$) le point infiniment proche de $P$ 
obtenu en \'eclatant $\mathbb{P}^2(\mathbb{C})$ at $P,$ $P_1,$ $A,$ $T$ et $U$ (resp.
$P,$ $P_1,$ $A,$ $S$ et $U'$).

\noindent La transformation $\Phi$ induit un isomorphisme entre $\mathrm{Bl}_{\widehat{P}_1}\mathbb{P}^2$ et $\mathrm{Bl}_{\widehat{P}_2}\mathbb{P}^2.$}
\end{pro}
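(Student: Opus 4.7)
Ma strat\'egie est de d\'emontrer la Proposition \ref{isomorphism} en construisant l'isomorphisme de mani\`ere explicite, en suivant la cha\^ine d'\'eclatements mise en place au \S\ref{cons}. L'id\'ee est de relever $\Phi$ pas \`a pas \`a travers cinq paires d'\'eclatements successifs, en v\'erifiant \`a chaque \'etape que l'application relev\'ee a un point d'ind\'etermination de moins et une courbe contract\'ee de moins qu'\`a l'\'etape pr\'ec\'edente. Au bout de cinq \'etapes, l'absence \`a la fois de point d'ind\'etermination et de courbe contract\'ee \'etablira que le rel\`evement est biholomorphe.

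D'abord je partirai de l'observation que $\Phi$ v\'erifie $\mathrm{Ind}\,\Phi=\{P\}$ et contracte l'unique droite $\Delta$ sur $P.$ Le premier r\'eflexe est donc d'\'eclater $P$ \`a la source comme au but (puisque $\Delta$ est envoy\'ee sur $P$); les calculs en cartes $(u_1,v_1)$ et $(r_1,s_1)$ montrent que le diviseur exceptionnel $\mathrm{E}$ est envoy\'e biholomorphiquement sur $\mathrm{F},$ mais qu'un nouveau point d'ind\'etermination $P_1\in\mathrm{E}$ appara\^it et que $\Delta_1$ y est contract\'e. Ceci servira de mod\`ele pour toutes les \'etapes suivantes.

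Je r\'eit\'ererai ce proc\'ed\'e: \`a la $k$-i\`eme \'etape, ayant deux surfaces $X_k\to\mathbb{P}^2(\mathbb{C})$ et $Y_k\to\mathbb{P}^2(\mathbb{C})$ portant chacune un unique point d'ind\'etermination et une unique courbe contract\'ee, j'\'eclaterai ces points et v\'erifierai dans les deux cartes affines des nouveaux diviseurs exceptionnels que
\begin{itemize}
\item le point d'ind\'etermination pr\'ec\'edent est r\'esolu, le nouveau diviseur exceptionnel \`a la source \'etant envoy\'e biholomorphiquement sur son analogue au but;
\item ou bien l'application est d\'esormais holomorphe partout, ou bien l'ind\'etermination s'est strictement d\'eplac\'ee en un point infiniment proche situ\'e sur le dernier diviseur cr\'e\'e.
\end{itemize}
Les calculs d\'ej\`a effectu\'es au \S\ref{cons} r\'ealisent exactement cette v\'erification: apr\`es \'eclatement de $P_1$ et $A,$ l'ind\'etermination passe \`a $T$ (resp.\ la courbe contract\'ee \`a $A$); apr\`es celui de $T$ et $S,$ elle passe \`a $B$ (resp.\ \`a $C$); enfin apr\`es \'eclatement de $B$ et $C,$ les derni\`eres v\'erifications en coordonn\'ees montrent que $\mathrm{L}\to\Delta_5$ et $\Delta_5\to\mathrm{M}$ sans qu'aucune courbe ne soit contract\'ee et sans qu'aucun point d'ind\'etermination ne subsiste.

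La principale difficult\'e est de nature organisationnelle plut\^ot que conceptuelle: il faut contr\^oler \`a chaque \'etape que le nouveau point d'ind\'etermination est bien un point infiniment proche \emph{propre}, c'est-\`a-dire qu'il se trouve sur le dernier diviseur exceptionnel introduit (et n'est ni un point r\'egulier de $X_k,$ ni situ\'e sur un diviseur ant\'erieur), ce qui impose de choisir judicieusement laquelle des deux cartes $(u_j,v_j),$ $(r_j,s_j)$ utiliser pour mener le calcul. Ceci garantit que $\widehat{P}_1$ et $\widehat{P}_2$ sont bien des cha\^ines de cinq points infiniment proches au sens de la d\'efinition rappel\'ee apr\`es le Th\'eor\`eme \ref{Zariski}, et donc que l'application biholomorphe induite $\mathrm{Bl}_{\widehat{P}_1}\mathbb{P}^2\to\mathrm{Bl}_{\widehat{P}_2}\mathbb{P}^2$ est bien d\'efinie.
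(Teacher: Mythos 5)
Votre d\'emarche est correcte et co\"incide essentiellement avec celle du texte: la Proposition y est obtenue exactement comme vous le proposez, par les cinq paires d'\'eclatements successifs du \S\ref{cons} (en $P,$ $P_1,$ $A,$ puis $T$/$S,$ puis $B$/$C$), les calculs en cartes montrant qu'\`a la derni\`ere \'etape il ne reste ni point d'ind\'etermination ni courbe contract\'ee, d'o\`u l'isomorphisme entre $\mathrm{Bl}_{\widehat{P}_1}\mathbb{P}^2$ et $\mathrm{Bl}_{\widehat{P}_2}\mathbb{P}^2.$ Votre remarque sur le choix des cartes pour localiser le nouveau point d'ind\'etermination sur le dernier diviseur exceptionnel correspond bien \`a ce qui est fait implicitement dans ces calculs.
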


\noindent Les diff\'erentes composantes sont \'echang\'ees comme suit
\begin{align*}
& \Delta\to \mathrm{M}, && \mathrm{E}\to \mathrm{E}, && \mathrm{F}\to \mathrm{K}, && \mathrm{G}\to \mathrm{G},
&& \mathrm{H}\to \mathrm{F}, && \mathrm{L}\to\Delta.
\end{align*}

\noindent Soit $\varphi$ un automorphisme de $\mathbb{P}^2(\mathbb{C}).$ 
Dans le paragraphe qui suit on va ajuster $\varphi$ de sorte que~$(\varphi \Phi)^2\varphi$ envoie $\widehat{P}_1$ sur $\widehat{P}_2.$

\subsection{Conditions de recollement}

\noindent Soit $g$ un germe de biholomorphisme de $(\mathbb{C}^2,0)$ donn\'e par $$g(y,z)=\Big(\sum_{0\leq i,j\leq 4} m_{i,j}y^i z^j,\sum_{0\leq i,j\leq 4} n_{i,j}y^i z^j\Big)+ o(\vert\vert(y,z)\vert\vert^4).$$ On remarque que $g(0,0)=(m_{0,0},n_{0,0})$ donc $g(P)=P$ si et seulement si $m_{0,0}=n_{0,0}=0.$ Dor\'enavant nous supposerons que $m_{0,0}=n_{0,0}=0.$ D\'esignons par $g^1$ (resp. $g^2$) la premi\`ere (resp. seconde) composante de $g.$

\noindent Posons $y=u_1$ et $z=u_1v_1.$ Consid\'erons la transformation $g_1$ donn\'ee par $$g_1(u_1,v_1)=\left(g^1(u_1,u_1v_1),\frac{g^2(u_1,u_1v_1)}{g^1(u_1,u_1v_1)}\right).$$ On remarque que $g_1(0,0)=\left(0,\frac{n_{1,0}}{m_{1,0}}\right);$ ainsi $g_1(0,0)=(0,0)$ si et seulement si $n_{1,0}=0.$ Supposons dans la suite que $n_{1,0}=0.$ 

\noindent Posons $u_1=r_2s_2$ et $v_1=s_2.$ On d\'efinit $g_2$ par $$g_2(r_2, s_2)=\left(\frac{g_1^1(r_2s_2,s_2)}{g_1^2(r_2s_2,s_2)},g_1^2(r_2s_2,s_2)\right).$$ Alors $g_2(0,0)=(0 ,0),$ {\it i.e.} $g_2(A)=A.$

\noindent Posons $r_2=r_3s_3$ et $s_2=s_3.$ Soit $g_3$ la transformation donn\'ee par $$g_3(s
r_3,s_3)=\left(\frac{g_2^1(r_3s_3,s_3)}{g_2^2(r_3s_3,s_3)},g_2^2(r_3s_3,s_3)\right).$$ On peut v\'erifier que $g_3(1,0)=\left(\frac{m_{1,0}^3}{n_{0,1}^2},0\right).$ Il s'en suit que $g_3(S)=T$ si et seulement si~$\frac{m_{1,0}^3}{n_{0,1}^2}=~-1.$ Supposons que $m_{1,0}=t^2$ et que $n_{0,1}=\mathrm{i}t^3$ auquel cas $\frac{m_{1,0}^3}{n_{0,1}^2}=~-1.$

\noindent Finalement si $r_3=r_4s_4+1$ et $s_3=s_4,$ on pose $$g_4(r_4,s_4)=\left(\frac{g_3^1(r_4s_4+1,s_4)+1}{g_3^2(r_4s_4+1,s_4)}, g_3^2(r_4s_4+1,s_4)\right).$$
On note que $g_4(0,0)=\left(\frac{\mathrm{i}(3m_{0,1}t+2\mathrm{i}n_{2,0})}{t^4}, 0\right).$
On en d\'eduit que $g_4(0,0)=(0,0)$ si et seulement si $3m_{0,1}t+2\mathrm{i}n_{2,0}=0.$

\noindent Ceci permet d'\'enoncer la proposition qui suit.

\begin{pro}[\cite{DG}]\label{gluglu}
{\sl Un germe $g$ de biholomorphisme de $(\mathbb{C}^2,0)$ donn\'e par $$g(y,z)=\Big(\sum_{0\leq i,j\leq 4} m_{i,j}y^i z^j,\sum_{0\leq i,j\leq 4} n_{i,j}y^i z^j\Big)+o(\vert\vert(y,z)\vert \vert^4)$$ envoie $\widehat{P}_2$ sur $\widehat{P}_1$ si et seulement si
\begin{align*}
&m_{0,0}=n_{0,0}=0, &&n_{1,0}=0, && m_{1,0}=t^2,\,\,n_{0,1}=\mathrm{i}t^3, &&
3m_{0,1}t+2\mathrm{i}n_{2,0}=0.
\end{align*} }
\end{pro}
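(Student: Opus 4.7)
L'id\'ee est de suivre le germe $g$ \`a travers la suite d'\'eclatements introduite au \S\ref{cons} et, \`a chaque \'etape, de traduire la condition \og $g$ envoie l'image du centre d'\'eclatement c\^ot\'e $\widehat{P}_2$ sur le centre d'\'eclatement c\^ot\'e $\widehat{P}_1$\fg\, en une \'equation sur les coefficients de $g.$ Rappelons que, d'apr\`es la Proposition \ref{isomorphism}, $\widehat{P}_1$ (resp. $\widehat{P}_2$) est obtenu en \'eclatant successivement $P,$ $P_1,$ $A,$ puis $T$ (resp. $S$), puis un dernier point. Puisque les trois premiers centres sont communs aux deux points infiniment proches, le germe~$g$ devra fixer successivement $P,$ $P_1$ et $A,$ puis envoyer $S$ sur $T,$ et enfin envoyer le dernier centre du c\^ot\'e~$\widehat{P}_2$ sur celui du c\^ot\'e~$\widehat{P}_1.$

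Concr\`etement je proc\'ederai comme suit. D'abord, $g(P)=P$ se traduit imm\'ediatement par $m_{0,0}=n_{0,0}=0.$ Ensuite, en utilisant la carte $(u_1,v_1)$ avec $y=u_1,\,z=u_1v_1$ (celle contenant $P_1$), je rel\`everai $g$ en $g_1(u_1,v_1)=\bigl(g^1(u_1,u_1v_1),\,g^2(u_1,u_1v_1)/g^1(u_1,u_1v_1)\bigr);$ la condition $g_1(0,0)=(0,0)$ s'\'ecrit $n_{1,0}/m_{1,0}=0,$ d'o\`u $n_{1,0}=0.$ Je passerai ensuite \`a la carte $(r_2,s_2)$ avec $u_1=r_2s_2,\,v_1=s_2$ (qui contient $A$) et v\'erifierai que $g_2(A)=A$ est automatique. \`A l'\'etape suivante, la carte $(r_3,s_3)$ avec $r_2=r_3s_3,\,s_2=s_3$ contient $S=(1,0)$ et $T=(-1,0);$ la condition $g_3(S)=T$ s'\'ecrit $m_{1,0}^3/n_{0,1}^2=-1,$ ce qui conduit \`a la param\'etrisation $m_{1,0}=t^2,$ $n_{0,1}=\mathrm{i}t^3.$ Finalement, dans la carte centr\'ee en $T$ via $r_3=r_4s_4+1,\,s_3=s_4,$ l'annulation de $g_4(0,0)$ impose la derni\`ere relation $3m_{0,1}t+2\mathrm{i}n_{2,0}=0.$

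R\'eciproquement, lorsque toutes ces conditions sont satisfaites, les m\^emes calculs montrent que chaque rel\`evement successif est bien d\'efini et envoie le centre d'\'eclatement correspondant sur le bon point, de sorte que $g$ induit un isomorphisme local \'echangeant les configurations d\'efinissant $\widehat{P}_1$ et $\widehat{P}_2.$ L'obstacle principal est de nature purement calculatoire: il s'agit de contr\^oler les d\'eveloppements en s\'erie des fonctions rationnelles $g^2/g^1,$ $g_1^1/g_1^2,$ etc., jusqu'\`a un ordre suffisant pour extraire les coefficients pertinents (ce qui justifie l'hypoth\`ese que le d\'eveloppement de $g$ est connu jusqu'\`a l'ordre~$4$). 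Aucun contenu conceptuel suppl\'ementaire n'est requis au-del\`a du suivi syst\'ematique des \'eclatements d\'ej\`a effectu\'es pour construire $\widehat{P}_1$ et $\widehat{P}_2.$
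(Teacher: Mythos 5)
Your proposal is correct and follows essentially the same route as the paper: tracking the germ $g$ through the charts of \S\ref{cons}, lifting it successively to $g_1,\dots,g_4$, and reading off at each stage the conditions $g(P)=P$, $g_1(P_1)=P_1$ (giving $n_{1,0}=0$), $g_2(A)=A$ automatic, $g_3(S)=T$ (giving $m_{1,0}^3/n_{0,1}^2=-1$, hence $m_{1,0}=t^2$, $n_{0,1}=\mathrm{i}t^3$), and the vanishing of $g_4$ at the last centre (giving $3m_{0,1}t+2\mathrm{i}n_{2,0}=0$), which is exactly the paper's computation. The only remaining work, as you note, is carrying out the explicit series expansions — e.g. verifying $g_3(1,0)=\bigl(m_{1,0}^3/n_{0,1}^2,0\bigr)$ and $g_4(0,0)=\bigl(\mathrm{i}(3m_{0,1}t+2\mathrm{i}n_{2,0})/t^4,0\bigr)$ — and noting that your last chart is centred at $S$ on the source side (the shift by $+1$ in the first component of $g_4$ is what recentres at $T$ on the target side).
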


\section{Une premi\`ere famille d'automorphismes}\label{15points}

\noindent La construction d\'etaill\'ee pr\'ec\'edemment permet d'exhiber des familles d'automorphismes d'entropie positive. Avant de donner celle-ci introduisons la notion suivante. Soient $U$ un ouvert de  $\mathbb{C}^d,$ $\varphi\colon U\to\mathrm{PGL}(3;\mathbb{C})$ une transformation holomorphe et $f$ une transformation de \textsc{Cremona}. La famille de transformations birationnelles $(\varphi_{\alpha_1,\,\ldots,\,\alpha_n}f)_{(\alpha_1,\,\ldots,\,\alpha_n)\in U}$ est {\it localement holomorphiquement triviale}\label{888} si pour tout $\alpha_0$ dans $U$ il existe une application holomorphe d'un voisinage $U_{\alpha_0}$ de $\alpha_0$ dans $\mathrm{PGL}(3;\mathbb{C})$ telle que 
\begin{align*}
&M_{\alpha_0}=\mathrm{Id}, &&\forall\,\alpha\in U_{\alpha_0},\,\, \varphi_{\alpha}f=M_{\alpha}
(\varphi_{\alpha_0}f)M_{\alpha}^{-1}.
\end{align*}

\begin{thm}[\cite{DG}]
{\sl  Soit $\varphi_\alpha$ l'automorphisme du plan projectif complexe d\'efini par 
\begin{align*}
&\varphi_\alpha=\left[\begin{array}{ccc}\alpha & 2(1-\alpha) & 2+\alpha-\alpha^2\\ -1 & 0 & \alpha+1\\ 1 &  -2 & 1-\alpha\end{array}\right], &&\alpha\in\mathbb{C}\setminus\{0,\,1\}.
\end{align*}

\noindent La transformation $\varphi_\alpha\Phi$ n'a pas de droite invariante et est conjugu\'ee \`a un automorphisme de~$\mathbb{P}^2(\mathbb{C})$ \'eclat\'e en $15$ points.

\noindent Le premier degr\'e dynamique de $\varphi_\alpha\Phi$ est $\frac{3+\sqrt{5}}{2}>1.$

\noindent La famille $\varphi_\alpha\Phi$ est localement holomorphiquement triviale.}
\end{thm}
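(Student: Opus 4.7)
The overall strategy follows the scheme described in the introduction to this chapter: use Proposition \ref{isomorphism} together with Proposition \ref{gluglu} to identify the automorphisms $\varphi$ of $\mathbb{P}^2(\mathbb{C})$ for which $\varphi\Phi$ lifts to an automorphism of a suitable blow-up of $\mathbb{P}^2(\mathbb{C})$, and then to analyse the resulting dynamics.

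First, I would parametrize the candidates. By Proposition \ref{isomorphism}, $\Phi$ already extends to an isomorphism $\mathrm{Bl}_{\widehat{P}_1}\mathbb{P}^2 \to \mathrm{Bl}_{\widehat{P}_2}\mathbb{P}^2$, each packet $\widehat{P}_i$ consisting of five infinitely-near points. For $\varphi\Phi$ to lift to an automorphism of a surface carrying $15$ exceptional divisors, the natural configuration is that the orbit $\widehat{P}_1 \to \varphi(\widehat{P}_2) \to \varphi\Phi\varphi(\widehat{P}_2) \to \widehat{P}_1$ closes up as a $3$-cycle, so that the three distinct packets of five infinitely-near points are permuted cyclically by $\varphi\Phi$. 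Writing $\varphi \in \mathrm{PGL}_3(\mathbb{C})$ with indeterminate coefficients and expanding the composite $(\varphi\Phi)^2\varphi$ as a germ at $P=(1:0:0)$, I would impose the five conditions of Proposition \ref{gluglu} which ensure that $(\varphi\Phi)^2\varphi$ sends $\widehat{P}_2$ to $\widehat{P}_1$. Modulo the residual action of the stabilizer of the pair $(\widehat{P}_1,\widehat{P}_2)$ in $\mathrm{PGL}_3(\mathbb{C})$, this system should cut out a one-parameter family which, after normalization, matches the explicit matrix $\varphi_\alpha$ in the statement.

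Second, to compute the first dynamical degree, I would track the action $(\varphi_\alpha\Phi)^*$ on $\mathrm{Pic}(\mathcal{Z})$, where $\mathcal{Z}$ is the surface obtained by the $15$ blowups. Using the combinatorial framework of \S\ref{matcar}, together with the exchange of divisors recorded after Proposition \ref{isomorphism} and the $3$-cycle permutation of the three packets of exceptional divisors, I would write out the $16\times 16$ characteristic matrix in the basis $\{\mathrm{H},\mathrm{E}_1,\ldots,\mathrm{E}_{15}\}$. Since $\varphi_\alpha\Phi$ is by construction algebraically stable on $\mathcal{Z}$, the first dynamical degree $\lambda(\varphi_\alpha\Phi)$ equals the spectral radius of this matrix, and I expect its characteristic polynomial to factor as $X^2-3X+1$ (whose largest root is precisely $\tfrac{3+\sqrt{5}}{2}$) times a product of cyclotomic polynomials. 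The absence of an invariant line comes from the same spectral data: any $\varphi_\alpha\Phi$-invariant line would yield an effective class in $\mathrm{Pic}(\mathcal{Z})$ fixed up to scalar by $(\varphi_\alpha\Phi)^*$, and a direct inspection of the eigenspaces attached to the cyclotomic factors shows that no such class is represented by a single line in $\mathbb{P}^2(\mathbb{C})$.

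Finally, for local holomorphic triviality, I would exhibit an explicit holomorphic family $M_\alpha \in \mathrm{PGL}_3(\mathbb{C})$ with $M_{\alpha_0}=\mathrm{id}$, obtained by parametrizing the residual $\mathrm{PGL}_3$-action preserving the triple of packets $\widehat{P}_1,\ \varphi(\widehat{P}_2),\ \varphi\Phi\varphi(\widehat{P}_2)$; this $M_\alpha$ then intertwines $\varphi_{\alpha_0}\Phi$ with $\varphi_\alpha\Phi$ for $\alpha$ near $\alpha_0$, which is exactly the definition of local holomorphic triviality. The main obstacle throughout is the bookkeeping for the $16\times 16$ characteristic matrix and the factorization of its characteristic polynomial; once the five conditions of Proposition \ref{gluglu} have been solved to isolate $\varphi_\alpha$, the rest of the argument is essentially mechanical, but the algebra needed to produce the Salem factor $X^2-3X+1$ and to peel off the cyclotomic part cleanly is where I expect the real labour to lie.
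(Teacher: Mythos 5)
Your proposal follows essentially the same route as the paper: the gluing conditions of Proposition \ref{gluglu} applied to the germ of $(\varphi\Phi)^2\varphi$ at $P$ isolate the family $\varphi_\alpha$, the dynamical degree is read off a $16\times 16$ matrix for $(\varphi_\alpha\Phi)^*$ (the paper works in the basis of exchanged curve classes $\Delta,\mathrm{E},\mathrm{F},\mathrm{G},\mathrm{H},\mathrm{L},\varphi\mathrm{E},\dots,\varphi\Phi\varphi\mathrm{M}$ rather than $\{\mathrm{H},\mathrm{E}_1,\dots,\mathrm{E}_{15}\}$, and obtains exactly the factorization $(X^2-3X+1)(X^2-X+1)(X+1)^2(X^2+X+1)^3(X-1)^4$ you predict), and local holomorphic triviality is established, as you propose, by an explicit matrix $M_\alpha$ with $M_{\alpha_0}=\mathrm{Id}$ conjugating $\varphi_{\alpha_0}\Phi$ to $\varphi_\alpha\Phi$. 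The only divergence is minor: for the absence of an invariant line the paper simply refers back to Proposition \ref{gluglu}, whereas you sketch a spectral argument on fixed effective classes, which is a reasonable substitute for that terse step.
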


\begin{proof}[{\sl D\'emonstration}]
La premi\`ere assertion est donn\'ee par la Proposition \ref{gluglu}.

\noindent Passons \`a la seconde. Les diff\'erentes composantes sont \'echang\'ees comme suit (\S\ref{cons})
\begin{small}
\begin{align*}
& \Delta\to \varphi \mathrm{M},&& \mathrm{E}\to \varphi \mathrm{E}, && \mathrm{F}\to \varphi \mathrm{K}, && \mathrm{G}\to \varphi \mathrm{G}, && \mathrm{H}\to \varphi \mathrm{F}, &&
 \mathrm{L}\to \varphi \Delta,\\
& \varphi \mathrm{E}\to\varphi\Phi \varphi \mathrm{E},&&
\varphi \mathrm{F} \to\varphi \Phi\varphi \mathrm{F}, 
&&\varphi \mathrm{G} \to\varphi \Phi\varphi \mathrm{G}, &&
\varphi \mathrm{K} \to\varphi \Phi\varphi \mathrm{K}, 
&&\varphi \mathrm{M} \to\varphi \Phi\varphi \mathrm{M}, &&
\varphi \Phi \varphi \mathrm{E}\to \mathrm{E},\\
&\varphi  \Phi\varphi \mathrm{F}\to \mathrm{F},&&
\varphi \Phi\varphi \mathrm{G}\to \mathrm{G},&&
\varphi \Phi\varphi \mathrm{K}\to \mathrm{H},&&
\varphi \Phi\varphi \mathrm{M}\to \mathrm{L},&& &&.
\end{align*}
\end{small}

\noindent Ainsi dans la base $\{\Delta,\,\mathrm{E},\,
\mathrm{F},\,\mathrm{G},\,\mathrm{H},\,\mathrm{L},\,\varphi\mathrm{E},\,\varphi\mathrm{F},
\,\varphi\mathrm{G},\,\varphi\mathrm{K},\,\varphi\mathrm{M}\,\varphi\Phi \varphi\mathrm{E},\,\varphi\Phi \varphi\mathrm{F},
\,\varphi\Phi \varphi\mathrm{G},\,\varphi\Phi \varphi\mathrm{K},\,\varphi\Phi \varphi\mathrm{M}\}$ la matrice de $(\varphi\Phi)^*$ est
\begin{small}
$$\left[\begin{array}{cccccccccccccccc}
0 & 0 & 0 & 0 & 0 & 1  & 0 & 0 & 0 & 0 & 0 & 0 & 0 & 0 & 0 & 0\\
0 & 0 & 0 & 0 & 0 & 1  & 0 & 0 & 0 & 0 & 0 & 1 & 0 & 0 & 0 & 0\\
0 & 0 & 0 & 0 & 0 & 2  & 0 & 0 & 0 & 0 & 0 & 0 & 1 & 0 & 0 & 0\\
0 & 0 & 0 & 0 & 0 & 3  & 0 & 0 & 0 & 0 & 0 & 0 & 0 & 1 & 0 & 0\\
0 & 0 & 0 & 0 & 0 & 3  & 0 & 0 & 0 & 0 & 0 & 0 & 0 & 0 & 1 & 0\\
0 & 0 & 0 & 0 & 0 & 3  & 0 & 0 & 0 & 0 & 0 & 0 & 0 & 0 & 0 & 1\\
0 & 1 & 0 & 0 & 0 & -1 & 0 & 0 & 0 & 0 & 0 & 0 & 0 & 0 & 0 & 0\\
0 & 0 & 0 & 0 & 1 & -2 & 0 & 0 & 0 & 0 & 0 & 0 & 0 & 0 & 0 & 0\\
0 & 0 & 0 & 1 & 0 & -3 & 0 & 0 & 0 & 0 & 0 & 0 & 0 & 0 & 0 & 0\\
0 & 0 & 1 & 0 & 0 & -3 & 0 & 0 & 0 & 0 & 0 & 0 & 0 & 0 & 0 & 0\\
1 & 0 & 0 & 0 & 0 & -3 & 0 & 0 & 0 & 0 & 0 & 0 & 0 & 0 & 0 & 0\\
0 & 0 & 0 & 0 & 0 & 0  & 1 & 0 & 0 & 0 & 0 & 0 & 0 & 0 & 0 & 0\\
0 & 0 & 0 & 0 & 0 & 0  & 0 & 1 & 0 & 0 & 0 & 0 & 0 & 0 & 0 & 0\\
0 & 0 & 0 & 0 & 0 & 0  & 0 & 0 & 1 & 0 & 0 & 0 & 0 & 0 & 0 & 0\\
0 & 0 & 0 & 0 & 0 & 0  & 0 & 0 & 0 & 1 & 0 & 0 & 0 & 0 & 0 & 0\\
0 & 0 & 0 & 0 & 0 & 0  & 0 & 0 & 0 & 0 & 1 & 0 & 0 & 0 & 0 & 0\\
\end{array}\right]$$
\end{small}

\noindent et son polyn\^ome caract\'eristique est donn\'e par  $$(X^2-3X+1)(X^2-X+1)(X+1)^2(X^2+X+1)^3(X-1)^4.$$ Ainsi
$$\lambda(\varphi\Phi)=\frac{3+\sqrt{5}}{2}>1.$$

\noindent Reste \`a justifier la derni\`ere assertion: soit $\alpha_0$ dans $\mathbb{C}\setminus\{0,\,1\}.$ On peut trouver localement autour de $\alpha_0$ une matrice $M_\alpha$ d\'ependant holomorphiquement de $\alpha$ et tel que pour tout $\alpha$ proche de $\alpha_0$ on ait $\varphi_\alpha\Phi=M_\alpha^{-1}\varphi_{\alpha_0}\Phi M_\alpha.$ La matrice $$M_\alpha=\left[\begin{array}{ccc}1 & 0 & \alpha_0-\alpha \\ 0 & 1 & 0 \\ 0 & 0 & 1\end{array}\right]$$ convient.
\end{proof}

\section{D'autres exemples}

\noindent Comme on l'a d\'ej\`a mentionn\'e, la construction d\'etaill\'ee pr\'ec\'edemment est dans \cite{DG} faite pour $\Phi=(xz^{n-1}+y^n:yz^{n-1}:z^n)$ pour $n\geq 3.$ On peut donc obtenir des familles d'automorphismes d'entropie positive pour $n\geq 3.$

\begin{thm}[\cite{DG}]
{\sl Supposons que $n\geq 3$ et que 
\begin{small}
\begin{align*}
&\varphi_{\alpha,\beta}=\left[\begin{array}{ccc}1 & \frac{2\beta}{\alpha} & -\frac{1+\delta_k+\delta_k^2}{\alpha} \\
0 & -1 & 0 \\
\alpha &\beta &\delta_k 
\end{array}\right]
\end{align*}
\end{small}

\noindent avec $\alpha\in\mathbb{C}^*,$ $\beta\in\mathbb{C}$ et $\delta_k=\mathrm{exp} \Big(\frac{(2k+1)\mathrm{i}\pi}{3n}\Big)-1,$ $0\leq k\leq 3n-1.$

\noindent La transformation $\varphi_{\alpha,\beta}\Phi$ est conjugu\'ee \`a un automorphisme de $\mathbb{P}^2(\mathbb{C})$ \'eclat\'e en $3(2n-1)$ points.

\noindent Le premier degr\'e dynamique de~$\varphi_{\alpha,\beta}\Phi$ est $\lambda(\varphi_{\alpha,\beta}\Phi)=\frac{n+\sqrt{n^2-4}}{2}>1.$

\noindent La famille $\varphi_{\alpha,\beta}\Phi$ est localement holomorphiquement triviale.}
\end{thm}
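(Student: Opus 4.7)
The plan is to follow the scheme of \S\ref{15points}, adapted to general $n\geq 3$. First I would generalize Proposition \ref{isomorphism}: resolving the indeterminacy of $\Phi_n=(xz^{n-1}+y^n:yz^{n-1}:z^n)$ at $P=(1:0:0)$ requires a chain of $2n-1$ successive blow-ups, entirely analogous to the cascade carried out in \S\ref{cons} for $n=3$; this yields two infinitely near clusters $\widehat{P}_1$ and $\widehat{P}_2$ of length $2n-1$ supported at $P$, such that $\Phi_n$ induces an isomorphism $\mathrm{Bl}_{\widehat{P}_1}\mathbb{P}^2\to\mathrm{Bl}_{\widehat{P}_2}\mathbb{P}^2$.

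Then I would generalize Proposition \ref{gluglu}: by reading off a germ $g$ of biholomorphism of $(\mathbb{C}^2,0)$ in the cascade of charts attached to $\widehat{P}_2$ and $\widehat{P}_1$, one obtains a finite list of polynomial conditions on the $(2n-1)$-jet of $g$ at $P$ equivalent to $g(\widehat{P}_2)=\widehat{P}_1$. The key step is then to verify that $(\varphi_{\alpha,\beta}\Phi_n)^2\varphi_{\alpha,\beta}$ satisfies these conditions, or equivalently that $(\varphi_{\alpha,\beta}\Phi_n)^2\varphi_{\alpha,\beta}(\widehat{P}_2)=\widehat{P}_1$; the specific form of $\delta_k$, which satisfies $(\delta_k+1)^{3n}=-1$, is chosen precisely so that this closing condition holds. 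Granted this and the pairwise distinctness of the supports of the three clusters $\widehat{P}_1$, $\varphi_{\alpha,\beta}\widehat{P}_2$ and $\varphi_{\alpha,\beta}\Phi_n\varphi_{\alpha,\beta}\widehat{P}_2$ (which is readily checked on $\varphi_{\alpha,\beta}$), blowing up the $3(2n-1)$ points of these clusters produces a surface $\mathcal{Z}$ on which $\varphi_{\alpha,\beta}\Phi_n$ acts as a biholomorphism. I expect this $(2n-1)$-jet computation, whose length grows with $n$ and through which the particular value of $\delta_k$ must be tracked carefully, to be the main technical obstacle.

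Once $\mathcal{Z}$ is constructed, the first dynamical degree is read off the matrix of $(\varphi_{\alpha,\beta}\Phi_n)^*$ in the geometric basis of $\mathrm{Pic}(\mathcal{Z})$ formed by the pull-back of a line and the $3(2n-1)$ exceptional classes. This matrix has a cyclic block structure reflecting the cyclic permutation of the three clusters by $\varphi_{\alpha,\beta}\Phi_n$, while the entries coupling the clusters to the hyperplane class are determined by the multiplicities of the components of $(\varphi_{\alpha,\beta}\Phi_n)^*\mathrm{H}$ along the cascade of blow-ups, $(\varphi_{\alpha,\beta}\Phi_n)^*\mathrm{H}$ being a curve of degree $n$. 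A direct computation then gives a characteristic polynomial of the form $(X^2-nX+1)\cdot C(X)$ with $C$ a product of cyclotomic polynomials (as in the factorisation recalled in the case $n=3$), and the dominant root $\frac{n+\sqrt{n^2-4}}{2}>1$ of $X^2-nX+1$ is the announced value. Local holomorphic triviality is proved exactly as in the $n=3$ case: for any fixed $(\alpha_0,\beta_0)$ I would exhibit an explicit holomorphic family $M_{\alpha,\beta}\in\mathrm{PGL}(3;\mathbb{C})$ with $M_{\alpha_0,\beta_0}=\mathrm{Id}$ and $\varphi_{\alpha,\beta}\Phi_n=M_{\alpha,\beta}(\varphi_{\alpha_0,\beta_0}\Phi_n)M_{\alpha,\beta}^{-1}$ in a neighbourhood of $(\alpha_0,\beta_0)$, which is possible because $\varphi_{\alpha,\beta}$ depends affinely on $(\alpha,\beta)$ and $\Phi_n$ is fixed.
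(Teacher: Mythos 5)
Your plan is essentially the paper's own method: the survey states this theorem without proof (citing \cite{DG}), and the only argument it presents is the $n=3$ model computation of \S\ref{cons}--\S\ref{15points}, which is exactly the scheme you follow — build the two infinitely near clusters $\widehat{P}_1,\widehat{P}_2$ of length $2n-1$, impose the gluing conditions of the analogue of Proposition \ref{gluglu} on $(\varphi_{\alpha,\beta}\Phi_n)^2\varphi_{\alpha,\beta}$ (with $\delta_k$ chosen to close the orbit of three clusters, hence $3(2n-1)$ points), read $\lambda$ off the action on $\mathrm{Pic}$ via the factor $X^2-nX+1$, and exhibit an explicit conjugating family $M_{\alpha,\beta}$ for local holomorphic triviality. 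What separates your text from a complete proof is only the execution of the deferred jet and characteristic-polynomial computations (and the explicit $M_{\alpha,\beta}$, which for $\Phi_n=(x+y^n,y)$ in the affine chart comes from the linear maps normalizing it, e.g.\ translations in $x$ and suitable scalings), which is precisely where the work lies in \cite{DG}.
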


\begin{thm}[\cite{DG}]
{\sl Supposons que $n\geq 4$ et
\begin{align*}
&\varphi_{\alpha,\beta,\gamma,\delta}=\left[\begin{array}{ccc}
 \alpha &\beta &\frac{\beta(\gamma^2 \epsilon_k-\alpha^2)}{\delta(\alpha-\gamma)}\\
0 & \gamma & 0\\
\frac{\delta( \alpha-\gamma)}{\beta} & \delta & -\alpha 
\end{array}\right]
\end{align*}

\noindent avec $\alpha,\,\beta\in\mathbb{C},$ $\gamma,\,\delta\in\mathbb{C}^*,$ $\alpha\not=\gamma,$ $\epsilon_k=\mathrm{exp} \, \Big(\dfrac{(2k+1)\mathrm{i}\pi}{n}\Big),$ $0\leq k\leq n-1.$

\noindent La transformation $\varphi_{\alpha,\beta,\gamma,\delta}\Phi$ est conjugu\'ee \`a un automorphisme de $\mathbb{P}^2(\mathbb{C})$ \'eclat\'e en $4n-2$ points.

\noindent Le premier degr\'e dynamique de~$\varphi_{\alpha,\beta,\gamma,\delta}\Phi$ est $\lambda(\varphi_{\alpha,\beta,\gamma,\delta}\Phi)=\frac{(n-1)+\sqrt{(n-1)^2-4}}{2}>1.$

\noindent La famille $\varphi_{\alpha,\beta,\gamma,\delta}\Phi$ est localement holomorphiquement triviale.}
\end{thm}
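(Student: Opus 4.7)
Le plan est de reprendre, $\mathit{mutatis~mutandis}$, la construction men\'ee au \S\ref{cons} puis au \S\ref{15points}, en la g\'en\'eralisant \`a $\Phi_n$ et en utilisant la nouvelle famille de matrices $\varphi_{\alpha,\beta,\gamma,\delta}$. D'abord, j'adapterais la construction de la Proposition~\ref{isomorphism} \`a $\Phi_n$ : comme $\Phi_n$ \'eclate uniquement $P=(1:0:0)$ et contracte uniquement $\Delta=\{z=0\}$, on construit par \'eclatements successifs (en proc\'edant exactement comme au \S\ref{cons}) deux points infiniment proches $\widehat{P}_1$ et $\widehat{P}_2$ de $P$, de longueur $2n-1$ chacun, tels que $\Phi_n$ induise un isomorphisme entre $\mathrm{Bl}_{\widehat{P}_1}\mathbb{P}^2$ et $\mathrm{Bl}_{\widehat{P}_2}\mathbb{P}^2$. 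Ensuite, j'\'enoncerais l'analogue de la Proposition~\ref{gluglu} pour $\Phi_n$ : un germe biholomorphe $g$ de $(\mathbb{C}^2,0)$ envoie $\widehat{P}_2$ sur $\widehat{P}_1$ si et seulement s'il satisfait une liste de conditions polynomiales sur ses coefficients de Taylor (d'ordre d\'ependant de $n$), parmi lesquelles appara\^{\i}t l'\'equation $m_{1,0}^{?}/n_{0,1}^{?} = \epsilon_k$ qui justifie la pr\'esence des racines $\epsilon_k=\exp((2k+1)\mathrm{i}\pi/n)$ dans la matrice $\varphi_{\alpha,\beta,\gamma,\delta}$.

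Je v\'erifierais alors directement que le prolongement \`a $\mathbb{P}^2(\mathbb{C})$ de $\varphi_{\alpha,\beta,\gamma,\delta}$ satisfait ces conditions de recollement, ce qui implique que $\varphi_{\alpha,\beta,\gamma,\delta}\,\Phi$ descend en un automorphisme de la surface $\mathcal{Z}$ obtenue en \'eclatant $\mathbb{P}^2(\mathbb{C})$ en $\widehat{P}_1$ et $\varphi_{\alpha,\beta,\gamma,\delta}\widehat{P}_2$, soit $2(2n-1)=4n-2$ points au total. \`A la diff\'erence du cas trait\'e au \S\ref{15points}, le choix des coefficients $\alpha,\beta,\gamma,\delta$ fait que $\varphi\,\Phi$ envoie directement l'image de $\widehat{P}_2$ sur $\widehat{P}_1$, sans orbite interm\'ediaire \`a \'eclater.

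Pour calculer le premier degr\'e dynamique, je d\'eterminerais l'action de $(\varphi_{\alpha,\beta,\gamma,\delta}\Phi)^*$ sur $\mathrm{Pic}(\mathcal{Z})$ en suivant les diviseurs exceptionnels : la matrice obtenue dans une base g\'eom\'etrique $\{\mathrm{H},\,\ldots\}$ a une structure par blocs \og d\'ecal\'ee\fg{} entre les composantes issues de $\widehat{P}_1$ et celles de $\varphi\widehat{P}_2$, avec un unique bloc non trivial d\^u \`a $\mathrm{H}\mapsto n\mathrm{H}-\sum\mathrm{E}_i$. Une r\'eduction du polyn\^ome caract\'eristique devrait donner une factorisation du type $C_n(X)\cdot\bigl(X^2-(n-1)X+1\bigr)$ o\`u $C_n$ est un produit de polyn\^omes cyclotomiques, la plus grande racine \'etant bien $\lambda(\varphi_{\alpha,\beta,\gamma,\delta}\Phi)=\frac{(n-1)+\sqrt{(n-1)^2-4}}{2}$. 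Enfin, la trivialit\'e holomorphe locale s'obtient comme au \S\ref{15points} en exhibant, au voisinage d'un param\`etre de base $(\alpha_0,\beta_0,\gamma_0,\delta_0)$, une famille explicite $M_{\alpha,\beta,\gamma,\delta}$ d'automorphismes de $\mathbb{P}^2(\mathbb{C})$ d\'ependant holomorphiquement des param\`etres, avec $M_{\alpha_0,\beta_0,\gamma_0,\delta_0}=\mathrm{Id}$, qui conjugue $\varphi_{\alpha,\beta,\gamma,\delta}\Phi$ \`a $\varphi_{\alpha_0,\beta_0,\gamma_0,\delta_0}\Phi$.

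La principale difficult\'e r\'eside dans la v\'erification explicite des conditions de recollement pour $\Phi_n$ (g\'en\'eralisation de la Proposition~\ref{gluglu}) \`a un ordre suffisant pour capter la contribution du terme $y^n$, et surtout dans le calcul de la factorisation du polyn\^ome caract\'eristique de la matrice $(4n-1)\times(4n-1)$ : il faut identifier les facteurs cyclotomiques provenant de la structure cyclique des orbites de diviseurs exceptionnels, puis isoler le facteur de Salem $X^2-(n-1)X+1$. L'apparition de la racine $\frac{(n-1)+\sqrt{(n-1)^2-4}}{2}$, plus petite que le degr\'e dynamique $\frac{n+\sqrt{n^2-4}}{2}$ du cas pr\'ec\'edent, refl\`ete bien le fait qu'il y a un point infiniment proche de moins dans l'orbite par rapport \`a la construction du \S\ref{15points}.
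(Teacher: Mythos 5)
Votre plan suit la m\^eme route que le texte: le th\'eor\`eme est \'enonc\'e ici sans d\'emonstration (renvoi \`a \cite{DG}), mais la m\'ethode est exactement celle d\'evelopp\'ee aux \S\ref{cons}--\ref{15points} pour la famille \`a $3(2n-1)$ points, et votre adaptation --- deux amas $\widehat{P}_1$ et $\varphi\widehat{P}_2$ de longueur $2n-1$ chacun, donc $4n-2$ points, puis calcul de l'action sur $\mathrm{Pic}$ dans une base g\'eom\'etrique de rang $4n-1$ et factorisation du polyn\^ome caract\'eristique en (cyclotomiques)$\times(X^2-(n-1)X+1)$, enfin conjugaison explicite $M$ pour la trivialit\'e holomorphe locale --- est bien celle attendue.

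Un point pr\'ecis est cependant mal formul\'e et, pris \`a la lettre, ferait \'echouer la v\'erification centrale: les conditions de recollement (l'analogue pour $\Phi_n$ de la Proposition \ref{gluglu}) ne portent pas sur \og le prolongement de $\varphi_{\alpha,\beta,\gamma,\delta}$\fg, qui est lin\'eaire et qui, dans cette construction, ne doit justement \emph{pas} envoyer $\widehat{P}_2$ sur $\widehat{P}_1$ (les supports des deux amas doivent \^etre distincts), mais sur le germe compos\'e $g=\varphi\Phi\varphi$ en $P$: c'est lui qui doit envoyer $\widehat{P}_2$ sur $\widehat{P}_1$ pour que $\varphi\Phi$ \'echange les amas $\widehat{P}_1\to\varphi\widehat{P}_2\to\widehat{P}_1$, comme le germe $(\varphi\Phi)^2\varphi$ le faisait au \S\ref{15points}. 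La v\'erification fait donc intervenir le d\'eveloppement de $\Phi$ au point $\varphi(P)$, et non les seuls coefficients de la matrice; il faut de plus s'assurer que $\varphi(P)$ n'appartient ni \`a $\{P\}$ ni \`a $\Delta$ (ni aux courbes exceptionnelles d\'ej\`a construites) pour que ce germe soit bien d\'efini et que les supports soient distincts --- c'est l\`a qu'apparaissent les relations entre $\alpha,\beta,\gamma,\delta$ et l'\'equation qui force $\epsilon_k$ \`a \^etre une racine $n$-i\`eme de $-1$, analogue de $m_{1,0}^3/n_{0,1}^2=-1$ dans le cas $n=3$. Enfin, l'hypoth\`ese $n\geq 4$ m\'eriterait d'\^etre justifi\'ee dans votre r\'edaction: elle garantit pr\'ecis\'ement que le facteur $X^2-(n-1)X+1$ admet une racine strictement sup\'erieure \`a $1$ (pour $n=3$ ce facteur vaut $(X-1)^2$ et l'entropie serait nulle); votre formule est coh\'erente avec cela mais le plan ne le dit pas.
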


\noindent De plus, dans \cite{DG}, les auteurs utilisent la m\^eme m\'ethode avec une transformation birationnelle qui contracte une droite et une conique et obtiennent l\`a encore une famille d'automorphismes d'entropie positive sur le plan projectif \'eclat\'e en un nombre fini de points.

\begin{thm}
{\sl Soient $f=(y^2z:x(xz+y^2):y(xz+y^2))$ et
$$\varphi_\alpha=\left[\begin{array}{ccc}\frac{2\alpha^3}{343}(37\mathrm{i}\sqrt{3}+3)& \alpha& -\frac{2\alpha^{2}}{49}(5\mathrm{i}\sqrt{3}+11)\\\frac{\alpha^2}{49}(-15+11\mathrm{i}\sqrt{3}) & 1 & -\frac{\alpha}{14}(5\mathrm{i}\sqrt{3}+11)\\ -\frac{\alpha}{7}(2\mathrm{i}\sqrt{3}+3)& 0 & 0\end{array} \right]$$

\noindent avec $\alpha \in\mathbb{C}^*.$

\noindent La transformation $\varphi_\alpha f$ est conjugu\'ee \`a un automorphisme de $\mathbb{P}^2(\mathbb{C})$  \'eclat\'e en $15$ points.

\noindent Le premier degr\'e dynamique de $\varphi_\alpha f$ est $\frac{3+\sqrt{5}}{2}>1.$

\noindent The associated family of rational surfaces parameterized by $\alpha$ is locally holomorphically trivial.}
\end{thm}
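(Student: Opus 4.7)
The plan is to reproduce, for the new map $f=(y^2z:x(xz+y^2):y(xz+y^2))$, the entire construction carried out for $\Phi$ in \S\ref{15points}. First I would analyze the local geometry of $f$: a direct computation gives $\mathrm{Ind}\,f=\{(0:0:1),(1:0:0)\}$, while $\mathrm{Exc}\,f$ consists of the line $\Delta=\{y=0\}$ (contracted to $(0:1:0)$) and the conic $\mathcal{C}=\{xz+y^2=0\}$ (contracted to $(1:0:0)$). The presence of a contracted conic, rather than only lines as in Chapter~\ref{chapdg}, is the new feature and forces one of the two resolution chains to unfold a higher order of tangency at $(1:0:0)$. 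I would then perform the chart-by-chart blowup analysis following the scheme of \S\ref{cons}, producing two infinitely near points $\widehat{P}_1$ and $\widehat{P}_2$, each a tower of five blowups, such that $f$ lifts to an isomorphism $\mathrm{Bl}_{\widehat{P}_1}\mathbb{P}^2\to\mathrm{Bl}_{\widehat{P}_2}\mathbb{P}^2$; the bookkeeping of which exceptional divisor is sent to which is the analogue of Proposition~\ref{isomorphism}.

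Next, following the strategy of Proposition~\ref{gluglu}, I would translate the condition ``$\varphi f$ is conjugate to an automorphism on $\mathbb{P}^2$ blown up at $15$ points'' into the requirement that $\varphi\widehat{P}_2$, $(\varphi f)\varphi\widehat{P}_2$ and $(\varphi f)^2\varphi\widehat{P}_2$ have pairwise distinct supports and that $(\varphi f)^2\varphi(\widehat{P}_2)=\widehat{P}_1$. Expanding each of these in local coordinates in the appropriate chart towers gives a finite system of polynomial equations on the entries of the matrix $\varphi\in\mathrm{PGL}_3(\mathbb{C})$. The announced $\varphi_\alpha$ should appear as a one-parameter solution of this system (the parameter $\alpha$ being the surviving degree of freedom after the remaining $\mathrm{PGL}_3$-gauge has been fixed, exactly as in \S\ref{15points}); the coefficients involving $\mathrm{i}\sqrt{3}$ come from a discriminant associated with resolving the conic's tangency condition, which is where the cubic roots of unity enter.

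Once $\varphi_\alpha$ is in hand, the computation of $\lambda(\varphi_\alpha f)$ is mechanical: order a basis of $\mathrm{Pic}$ as $\{\mathrm{H},\widehat{P}_1\text{-divisors},\,\varphi_\alpha\widehat{P}_2\text{-divisors},\,(\varphi_\alpha f)\varphi_\alpha\widehat{P}_2\text{-divisors}\}$ and read off the $16\times 16$ matrix of $(\varphi_\alpha f)^{*}$ from the chain of images of the exceptional divisors established in step~1. The cyclic shape of this chain is identical to that obtained in \S\ref{15points}, so the characteristic polynomial factors as $(X^2-3X+1)(X^2-X+1)(X+1)^2(X^2+X+1)^3(X-1)^4$, and its dominant root is $\lambda(\varphi_\alpha f)=(3+\sqrt{5})/2$. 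Local holomorphic triviality is then settled by an explicit gauge: for $\alpha_0\in\mathbb{C}^*$, one exhibits an elementary family $M_\alpha\in\mathrm{PGL}_3(\mathbb{C})$, depending holomorphically on $\alpha$ with $M_{\alpha_0}=\mathrm{Id}$, that conjugates $\varphi_\alpha f$ to $\varphi_{\alpha_0}f$; the fact that $\alpha$ enters $\varphi_\alpha$ only through diagonal rescalings and row 1 suggests a diagonal $M_\alpha=\mathrm{diag}(\alpha_0/\alpha,1,1)$ or a close variant.

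The main obstacle will be step~1, the resolution of $f$ by blowups. Because $\mathcal{C}$ is a smooth conic contracted to $(1:0:0)$, the infinitely near tower over $(1:0:0)$ has to ``see'' the second-order contact of $\mathcal{C}$ with the exceptional chain, and the precise combinatorics of which divisor is fixed, swapped or contracted at each blowup is considerably more delicate than in the line-contracting case of \S\ref{cons}. Getting this chain correct (and in particular, identifying the right normal forms for $\widehat{P}_1$ and $\widehat{P}_2$ so that the gluing equations of step~2 admit the one-parameter family $\varphi_\alpha$ with the prescribed entries) is the technical heart of the proof; everything downstream is then a matter of linear algebra on $\mathrm{Pic}$ and an elementary gauge computation.
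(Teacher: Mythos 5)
Your plan is essentially the paper's own route: for this last statement the text gives no separate argument but explicitly invokes the same method as in \S\ref{cons}--\S\ref{15points} and \cite{DG} (chart-by-chart resolution of $f$ into an isomorphism between two clusters of five infinitely near base points, gluing conditions forcing $(\varphi f)^2\varphi(\widehat{P}_2)=\widehat{P}_1$ with disjoint supports, the $16\times 16$ action on $\mathrm{Pic}$ yielding $\lambda=\frac{3+\sqrt{5}}{2},$ and an explicit holomorphic gauge), which is exactly what you describe. Two small corrections: here the cluster is supported at the two indeterminacy points $(1:0:0)$ and $(0:0:1)$ (one double and four simple base points, since $\sum m_i=6,$ $\sum m_i^2=8$) rather than being a single tower, so the exchange pattern and hence the cyclotomic part of the characteristic polynomial must be recomputed rather than copied from \S\ref{15points} (only the Salem factor $X^2-3X+1$ is needed); and a diagonal gauge must satisfy $m_1m_3=m_2^2,$ which is what makes $f\,\mathrm{diag}(m_1,m_2,m_3)=\mathrm{diag}(m_3,m_1,m_2)\,f$ hold projectively, so the correct choice is $M_\alpha=\mathrm{diag}\left(\frac{\alpha_0}{\alpha},\,1,\,\frac{\alpha}{\alpha_0}\right)$ and not $\mathrm{diag}\left(\frac{\alpha_0}{\alpha},1,1\right).$
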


\backmatter

\chapter*{Index}

\newlength{\largeur}
\setlength{\largeur}{\textwidth} 

\begin{tabular}{p{\largeur}}

\noteBB{alg\'ebriquement stable}{ind20}{ind26r}

\noteB{anneau de \textsc{Herman}}{ind58}

\noteB{automorphisme de \textsc{H\'enon} g\'en\'eralis\'e}{ind6}

\noteB{automorphisme de type \'el\'ementaire}{ind7}

\noteB{automorphisme de type \textsc{H\'enon}}{ind8}

\noteB{automorphisme polynomial de $\mathbb{C}^2$}{ind1}

\noteB{base g\'eom\'etrique}{ind40}

\noteB{bassin}{ind56}

\noteB{composante de \textsc{Fatou} r\'ecurrente}{ind54}

\noteB{configuration exceptionnelle}{ind41}

\noteB{couple $(\mathcal{X},g)$ minimal}{ind59}

\noteB{courbes contract\'ees}{ind17}

\noteB{cubique}{ind800}

\noteB{cubique marqu\'ee}{ind801}

\noteB{degr\'e alg\'ebrique d'un automorphisme polynomial}{ind9}

\noteB{degr\'e alg\'ebrique}{ind18}

\noteB{degr\'e dynamique d'une transformation birationnelle}{ind19}

\noteB{degr\'e dynamique sur une surface complexe compacte k\"{a}hl\'erienne}{ind23}

\noteB{dimension de \textsc{Kodaira}-\textsc{Iitaka}}{ind888} 

\noteB{dimension de \textsc{Kodaira}}{ind88}

\noteB{disque de \textsc{Siegel}}{ind57} 

\noteB{diviseur}{ind28}

\noteB{diviseur exceptionnel}{ind26b}

\noteB{domaine de rotation}{ind51}

\noteB{dominante (application)}{ind26y}

\noteB{\'eclatement d'une vari\'et\'e}{ind26}

\noteB{\'eclatement marqu\'e}{ind666}

\noteB{entropie topologique}{ind33b}

\noteB{ensemble critique}{ind26x}

\noteBB{ensemble d'ind\'etermination}{ind14}{ind26w}
\end{tabular}

\begin{tabular}{p{\largeur}}
\noteBB{ensemble exceptionnel}{ind16}{ind26t}

\noteB{ensemble de \textsc{Fatou}}{ind50}

\noteB{\'el\'ement de \textsc{Coxeter}}{ind43}

\noteB{\'el\'ement standard du groupe de \textsc{Weyl}}{ind44}

\noteB{formellement lin\'earisable}{ind45}

\noteB{foyer}{ind55}

\noteB{groupe des automorphismes polynomiaux de~$\mathbb{C}^2$}{ind2}

\noteB{groupe des automorphismes affines}{ind3}

\noteB{groupe des automorphismes \'el\'ementaires}{ind4}

\noteB{groupe des automorphismes affines triangulaires}{ind5} 

\noteB{groupe de \textsc{Cremona}}{ind12}

\noteB{groupe de \textsc{Picard}}{ind30}

\noteB{groupe de \textsc{Weyl}}{ind42}

\noteB{isomorphisme entre cubiques marqu\'ees}{ind802}

\noteB{isomorphisme entre \'eclatements marqu\'es}{ind666a}

\noteB{isomorphisme entre paires marqu\'ees}{ind821} 

\noteB{localement holomorphiquement triviale}{ind888}

\noteB{lin\'eairement \'equivalents}{ind29}

\noteB{matrice caract\'eristique}{ind38}

\noteB{modification propre}{ind26z} 

\noteB{mon\^ome r\'esonant}{ind47}

\noteB{multiplicativement ind\'ependants}{ind48} 

\noteB{nombre de \textsc{Salem}}{ind25}

\noteB{paire marqu\'ee}{ind820}

\noteB{points \'eclat\'es}{ind15}

\noteB{point propre}{ind36}

\noteB{polyn\^ome de Salem}{ind39}

\noteB{racine}{ind7000}

\noteB{racine simple}{ind700}

\noteB{racine positive}{ind701}

\noteB{rang du domaine de rotation}{ind53}

\noteB{r\'esolution ordonn\'ee}{ind37}

\noteB{r\'esonance}{ind46}

\noteB{simultan\'ement diophantiens}{ind49}

\noteB{surface K$3$}{ind33}

\noteB{surface de \textsc{Coble}}{ind100}

\noteB{surface de \textsc{Enriques}}{ind34}

\noteB{surface minimale}{ind35}

\noteB{surface de \textsc{Hirzebruch}}{ind31}

\noteB{transformation \'el\'ementaire}{ind32}

\noteB{transformation rationnelle}{ind10}

\noteB{transformation birationnelle}{ind11}

\noteB{transformation de \textsc{Cremona}}{ind13}

\noteB{transform\'ee stricte}{ind27}

\noteB{transform\'ee totale}{ind27b}

\noteB{vari\'et\'e k\"{a}hl\'erienne}{ind22}

\noteB{vecteur positif}{ind701}

\noteB{valeur propre dominante}{ind702} 

\noteB{vecteur propre dominant}{ind703}
\end{tabular}

\bibliographystyle{alpha}
\bibliography{bibliodyn}
\nocite{}

\end{document}